\newtheorem{thm}{Theorem}[section]
\newtheorem{prop}[thm]{Proposition}
\newtheorem{conj}[thm]{Conjecture}
\newtheorem{cor}[thm]{Corollary}
\newtheorem{lem}[thm]{Lemma}
\theoremstyle{definition}
\newtheorem{define}[thm]{Definition}
\theoremstyle{remark}
\newtheorem{rem}[thm]{Remark}
\newtheorem{example}[thm]{Example}
\newcommand{\ve}[1]{\boldsymbol{\mathbf{#1}}}
\newcommand{\R}{\mathbb{R}}
\newcommand{\Z}{\mathbb{Z}}
\newcommand{\N}{\mathbb{N}}
\newcommand{\Q}{\mathbb{Q}}
\renewcommand{\d}{\partial}
\renewcommand{\subset}{\subseteq}
\renewcommand{\supset}{\supseteq}
\renewcommand{\tilde}{\widetilde}
\newcommand{\iso}{\cong}
\DeclareMathOperator{\Aut}{{Aut}}
\DeclareMathOperator{\can}{{can}}
\DeclareMathOperator{\ch}{{ch}}
\DeclareMathOperator{\lc}{{lc}}
\DeclareMathOperator{\End}{{End}}
\DeclareMathOperator{\gr}{{gr}}
\DeclareMathOperator{\Hom}{{Hom}}
\DeclareMathOperator{\id}{{id}}
\DeclareMathOperator{\im}{{im}}
\DeclareMathOperator{\Mod}{{Mod}}
\DeclareMathOperator{\Spin}{{Spin}}
\DeclareMathOperator{\Tw}{{Tw}}
\DeclareMathOperator{\Span}{{Span}}
\DeclareMathOperator{\Sym}{{Sym}}
\DeclareMathOperator{\Tors}{{Tors}}
\newcommand{\lk}{\mathrm{lk}}
\newcommand{\bA}{\mathbb{A}}
\newcommand{\bB}{\mathbb{B}}
\newcommand{\bE}{\mathbb{E}}
\newcommand{\bF}{\mathbb{F}}
\newcommand{\bH}{\mathbb{H}}
\newcommand{\bI}{\mathbb{I}}
\newcommand{\bT}{\mathbb{T}}
\newcommand{\bX}{\mathbb{X}}
\newcommand{\bY}{\mathbb{Y}}
\newcommand{\cA}{\mathcal{A}}
\newcommand{\cC}{\mathcal{C}}
\newcommand{\cD}{\mathcal{D}}
\newcommand{\cH}{\mathcal{H}}
\newcommand{\cK}{\mathcal{K}}
\newcommand{\cL}{\mathcal{L}}
\newcommand{\cM}{\mathcal{M}}
\newcommand{\cO}{\mathcal{O}}
\newcommand{\cR}{\mathcal{R}}
\newcommand{\cS}{\mathcal{S}}
\newcommand{\cX}{\mathcal{X}}
\newcommand{\cY}{\mathcal{Y}}
\newcommand{\frA}{\mathfrak{A}}
\newcommand{\frD}{\mathfrak{D}}
\newcommand{\frK}{\mathfrak{K}}
\newcommand{\frL}{\mathfrak{L}}
\newcommand{\frR}{\mathfrak{R}}
\newcommand{\frS}{\mathfrak{S}}
\newcommand{\frs}{\mathfrak{s}}
\newcommand{\frt}{\mathfrak{t}}
\newcommand{\frz}{\mathfrak{z}}
\newcommand{\scA}{\mathscr{A}}
\newcommand{\scB}{\mathscr{B}}
\newcommand{\scE}{\mathscr{E}}
\newcommand{\scF}{\mathscr{F}}
\newcommand{\scH}{\mathscr{H}}
\newcommand{\scO}{\mathscr{O}}
\newcommand{\scU}{\mathscr{U}}
\newcommand{\scV}{\mathscr{V}}
\newcommand{\cCFL}{\mathcal{C\!F\!L}}
\newcommand{\cCFK}{\mathcal{C\hspace{-.5mm}F\hspace{-.3mm}K}}
\newcommand{\CF}{\mathit{CF}}
\newcommand{\HF}{\mathit{HF}}
\newcommand{\opp}{\mathrm{opp}}
\newcommand{\PD}{\mathit{PD}}
\newcommand{\CFD}{\mathit{CFD}}
\newcommand{\CFA}{\mathit{CFA}}
\newcommand{\xs}{\ve{x}}
\newcommand{\ys}{\ve{y}}
\newcommand{\zs}{\ve{z}}
\newcommand{\ws}{\ve{w}}
\newcommand{\ps}{\ve{p}}
\newcommand{\qs}{\ve{q}}
\newcommand{\as}{\ve{\alpha}}
\newcommand{\bs}{\ve{\beta}}
\newcommand{\gs}{\ve{\gamma}}
\newcommand{\Xs}{\ve{X}}
\newcommand{\Ys}{\ve{Y}}
\newcommand{\Dt}{\Delta}
\renewcommand{\a}{\alpha}
\renewcommand{\b}{\beta}
\newcommand{\g}{\gamma}
\newcommand{\dt}{\delta}
\newcommand{\veps}{\varepsilon}
\DeclareMathOperator{\Cone}{{Cone}}
\newcommand{\Sss}[1]{\scriptscriptstyle{#1}}
\numberwithin{equation}{section}
\newcommand{\ar}{\mathrm{a.r.}}
\newcommand{\llsquare}{[\hspace{-.5mm}[}
\newcommand{\rrsquare}{]\hspace{-.5mm}]}
\newcommand{\cell}{\mathrm{cell}}
\newcommand{\vecotimes}{\mathrel{\vec{\otimes}}}
\newcommand{\Map}{\scF}
\DeclareRobustCommand{\cev}[1]{%
  {\mathpalette\do@cev{#1}}%
}
\newcommand{\do@cev}[2]{%
  \vbox{\offinterlineskip
    \sbox\z@{$\m@th#1 x$}%
    \ialign{##\cr
      \hidewidth\reflectbox{$\m@th#1\vec{}\mkern4mu$}\hidewidth\cr
      \noalign{\kern-\ht\z@}
      $\m@th#1#2$\cr
    }%
  }%
}
\newcommand{\cevotimes}{\mathrel{\cev{\otimes}}}
\newcommand{\tildeotimes}{\mathrel{\tilde{\otimes}}}
\newcommand{\lb}{\left \langle }
\newcommand{\rb}{\right \rangle }
\newcommand{\lbmed}{\big\langle }
\newcommand{\rbmed}{\big \rangle }
\newcommand{\lbsm}{\langle }
\newcommand{\rbsm}{\rangle }
\newcommand{\Fil}{\mathrm{Fil} }
\title{A general Heegaard Floer surgery formula}
\author{Ian Zemke}
\address{Department of Mathematics\\Princeton University\\  Princeton, NJ, USA}
\email{izemke@math.princeton.edu}
\thanks{IZ was partially supported by NSF grant DMS-2204375.}
\begin{document}
\maketitle

\begin{abstract} We give several new perspectives on the Heegaard Floer Dehn surgery formulas of Manolescu, Ozsv\'{a}th and Szab\'{o}. Our main result is a new exact triangle in the Fukaya category of the torus which gives a new proof of these formulas. This exact triangle is different from the one which appeared in Ozsv\'{a}th and Szab\'{o}'s original proof. This exact triangle simplifies a number of technical aspects in their proofs and also allows us to prove several new results. A first application is an extensions of the link surgery formula to arbitrary links in closed 3-manifolds, with no restrictions on the link being null-homologous. A second application is a proof that the modules for bordered manifolds with torus boundaries, defined by the author in a previous paper, are invariants. Another application is a simple proof of a version of the surgery formula which computes knot and link Floer complexes in terms of subcubes of the link surgery hypercube. As a final application, we show that the knot surgery algebra is homotopy equivalent to an endomorphism algebra of a sum of two decorated Lagrangians in the torus, mirroring a result of Auroux concerning the algebras of Lipshitz, Ozsv\'{a}th and Thurston.
\end{abstract}

\tableofcontents

\section{Introduction}

In this paper, we study the Heegaard Floer groups of Ozsv\'{a}th and Szab\'{o} \cite{OSDisks}. If $Y$ is a closed, oriented 3-manifold, we focus on the invariant $\ve{\CF}^-(Y)$, which is a finitely generated, free chain complex over the power series ring $\bF\llsquare U\rrsquare$. 

  Ozsv\'{a}th and Szab\'{o} developed a technique for computing the Heegaard Floer homology of Dehn surgeries in terms of the knot Floer complex \cite{OSKnots} \cite{OSIntegerSurgeries}. This was later extended to links by Manolescu and Ozsv\'{a}th \cite{MOIntegerSurgery}. To a null-homologous knot $K\subset Y$ with integral framing $\lambda$,  Ozsv\'{a}th and Szab\'{o} constructed a mapping cone chain complex
\[
\bX_\lambda(Y,K)=\Cone( v+h_\lambda\colon \bA(Y,K)\to \bB(Y,K))
\]
which is homotopy equivalent to
\[
\ve{\CF}^-(Y_\lambda(K)):=\CF^-(Y_\lambda(K))\otimes_{\bF[U]} \bF\llsquare U \rrsquare.
\]

In a different direction, Lipshitz, Ozsv\'{a}th and Thurston \cite{LOTBordered} introduced a Heegaard Floer theory  for 3-manifolds with boundary called \emph{bordered Heegaard Floer homology}. To a closed and oriented surface $Z$ (suitably parametrized) they defined an algebra $\cA(Z)$. To a 3-manifold $Y$ with boundary $Z$, they associated an $A_\infty$-module and a type-$D$ module (i.e. a projective $dg$-module over $\cA$), denoted
\[
\widehat{\CFA}(Y)_{\cA(Z)} \quad \text{and} \quad {}^{\cA(Z)}\widehat{\CFD}(Y),
\]
respectively.
If $Y_1$ and $Y_2$ are 3-manifolds with boundaries $Z$ and $-Z$ (resp.), then $\widehat{\CF}(Y_1\cup Y_2)$ may be recovered by a suitable derived tensor product of the corresponding modules for $Y_1$ and $Y_2$. Lipshitz, Ozsv\'{a}th and Thurston have recently constructed a minus version of their theory for the torus algebra. See \cite{LOT-minus-algebra} \cite{LOT-minus-modules} for recent developments.

 Dehn surgery is itself naturally an operation involving 3-manifolds with torus boundary components. Inspired by the invariants of Lipshitz, Ozsv\'{a}th and Thurston,  the author \cite{ZemBordered} showed that the surgery formulas of Manolescu, Ozsv\'{a}th and Szab\'{o} have natural interpretations in terms of a new algebra, the \emph{knot surgery algebra} $\cK$. This is an algebra over an idempotent ring with two elements 
 \[
 \ve{I}=\ve{I}_0\oplus \ve{I}_1, \quad \text{where} \quad \ve{I}_i\iso \bF=\Z/2.
 \]
  We set
\[
\ve{I}_0\cdot \cK\cdot\ve{I}_0=\bF[\scU,\scV]\quad \text{and} \quad \ve{I}_1 \cdot \cK\cdot \ve{I}_1=\bF[U,T,T^{-1}].
\]
We set $\ve{I}_0\cdot \cK\cdot \ve{I}_1=0$. The subspace $\ve{I}_1 \cdot \cK \cdot \ve{I}_0$ contains two special algebra elements $\sigma$ and $\tau$. These are subject to the relations
\[
\sigma \scU=U T^{-1} \sigma,\quad \sigma \scV= T \sigma,\quad \tau \scU=T^{-1} \tau,\quad  \tau \scV= UT \tau.
\]

To a knot $K\subset S^3$, the author showed that the data of the knot surgery complex $\bX_\lambda(Y,K)$ of Ozsv\'{a}th and Szab\'{o} can naturally be encoded using a type-$D$ module $\cX_\lambda(Y,K)^{\cK}$ over the algebra $\cK$. Furthermore, the author constructed a type-$A$ module ${}_{\cK} \cD_0$ for the 0-framed solid torus and observed
\begin{equation}
\bX_\lambda(Y,K)\iso \cX_\lambda(Y,K)^{\cK}\boxtimes {}_{\cK} \cD_0,\label{eq:tensor-prod-intro}
\end{equation}
where $\boxtimes$ denotes the \emph{box tensor product} (a model of the derived tensor product) of Lipshitz, Ozsv\'{a}th and Thurston.

The bordered perspective from \cite{ZemBordered} generalizes naturally to encode Manolescu and Ozsv\'{a}th's link surgery formula. To an integrally framed link $L\subset S^3$ with $\ell$ components, the author reinterpreted the link surgery complex of Manolescu and Ozsv\'{a}th in terms of a type-$D$ module
\[
\cX_{\Lambda}(S^3,L)^{\cL_\ell},
\]
where 
\[
\cL_\ell:=\overbrace{\cK\otimes_{\bF}\cdots \otimes_{\bF} \cK}^{\ell}.
\]
A central result from \cite{ZemBordered}*{Section~1.3} was a connected sum formula for the modules $\cX_{\Lambda}(S^3,L)^{\cL_\ell}$. The connected sum formula has a natural interpretation in terms of gluing bordered manifolds with torus boundaries together, motivated by the following topological fact: if $K_1$ and $K_2$ are two knots in $S^3$, with integral framings $\lambda_1$ and $\lambda_2$, then
\begin{equation}
S^3_{\lambda_1+\lambda_2}(K_1\# K_2)\iso (S^3\setminus \nu(K_1))\cup_\phi (S^3\setminus \nu(K_2)).
\label{eq:Dehn-surgery-connected-sum}
\end{equation}
 In the above, $\phi$ is the diffeomorphism of boundaries which sends the meridian $\mu_1$ to $\mu_2$, and which sends the longitude $\lambda_1$ to $-\lambda_2$. The above fact generalizes to arbitrary Morse framed (i.e. longitudinally framed) links in any 3-manifold.

In this paper, our main goal is to understand the properties of the modules $\cX_{\Lambda}(S^3,L)^{\cL_n}$ and extend their construction naturally to arbitrary links in general 3-manifolds. A secondary goal is to give efficient proofs of the existing surgery formulas in Heegaard Floer theory, which are easier to generalize.

\subsection{A new equivalence}

 \begin{wrapfigure}{r}{5cm}
 	\vspace{-.5cm}
	\begin{center}
		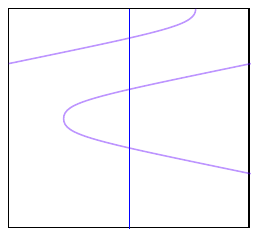
	\end{center}
	\caption{Lagrangians $\b_\lambda$, $\b_0$ and $\b_1$. }
	\label{fig:torus_intro}
\end{wrapfigure}
The main result of this paper is a new proof of the knot and link surgery formulas of Manolescu, Ozsv\'{a}th and Szab\'{o}.
 Our proof follows from a statement about three Lagrangians $\b_{\lambda},$ $\b_0$ and $\b_1$ in the torus. See Figure~\ref{fig:torus_intro}. 

 To $\b_0$ and $\b_1$, we associate the $\bF\llsquare U\rrsquare$-modules $E_0=\bF\llsquare \scU,\scV\rrsquare$ and $E_1=\bF\llsquare U, T,T^{-1}\rrsquare$, respectively. On $E_0$, $U$ acts by $\scU\scV$. We assume that the torus is decorated with a \emph{knot shadow} $K$, by which we mean a simple closed curve intersecting $\b_0$ in a single point. We place two basepoints, $w$ and $z$, along $K$ on either side of $\b_0$. There is a natural way to view $E_0$ and $E_1$ as determining local systems of $\bF\llsquare U\rrsquare$-modules over $\b_0$ and $\b_1$ respectively.  We write $\b_0^{E_0}$ and $\b_1^{E_1}$ for the Lagrangians equipped with the above data.

 We note that unlike $\b_1^{E_1}$, the Lagrangian $\b_0^{E_0}$ is slightly non-traditional as a Lagrangian with a local system. Roughly speaking, the monodromy map for $\b_0^{E_0}$ takes into account both the fundamental groupoid of $\b_0$ (as in a traditional local system) and also multiplicity of a holomorphic polygon at the basepoint $w$. See Section~\ref{sec:knot-surgery-formula} for precise details.
 
  In this setting, a Floer morphism from $\b_0^{E_0}$ to $\b_1^{E_1}$ consists of a pair $\lb \xs, \phi\rb$ where
\[
\xs\in \b_0\cap \b_1\quad \text{and }\quad \phi\in \Hom_{\bF\llsquare U\rrsquare} (E_0,E_1).
\]
 There are two distinguished morphisms
\[
\lb \theta_\sigma^+,\phi^\sigma\rb, \lb \theta_\tau^+, \phi^\tau\rb\colon  \b_0^{E_0}\to \b_1^{E_1}.
\]

We prove the following:
\begin{thm}\label{thm:equivalence-intro}
There is an equivalence in the Fukaya category
\[
\b_{\lambda}\simeq  \Cone\left(\lb \theta_\sigma^+,\phi^\sigma\rb+ \lb \theta_\tau^+, \phi^\tau\rb\colon \b_0^{E_0}\to \b_1^{E_1}\right).
\]
\end{thm}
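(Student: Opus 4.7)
The plan is to interpret the statement as a categorified version of the topological fact that $\b_\lambda$ is obtained from $\b_0$ and $\b_1$ by Lagrangian surgery at their intersection points, and to realize the cone differential explicitly via holomorphic triangle counts on the torus. This is a surface-level surgery exact triangle in the spirit of Seidel's exact triangle, refined to take into account the local systems $E_0^{}$, $E_1^{}$ and the two basepoints $w$, $z$.

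First I would work in a standard model of the torus in which all three Lagrangians $\b_\lambda$, $\b_0$, $\b_1$ lift to straight lines in $\bR^2$. In this model the intersection $\b_0 \cap \b_1$ consists of the two points $\theta_\sigma^+$ and $\theta_\tau^+$ appearing in the statement, and holomorphic triangles with boundary on any triple among $\{\b_\lambda,\b_0,\b_1\}$ reduce to (rigid) immersed triangles in the universal cover; their weights coming from $w$, $z$ and the local systems are monomials in $\scU$, $\scV$, $U$, $T$ determined by which fundamental domains they cover.

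Next I would construct a candidate morphism $F\co \b_\lambda \to \Cone\bigl(\lb\theta_\sigma^+,\phi^\sigma\rb + \lb \theta_\tau^+,\phi^\tau\rb\bigr)$ by the usual holomorphic-triangle map with seed $\theta_\sigma^+$ (resp.\ $\theta_\tau^+$) for the $\b_1^{E_1}$ summand and its preimage under the cone differential for the $\b_0^{E_0}$ summand. That $F$ is a chain map follows from the $A_\infty$ associativity relation for the triple $(\b_\lambda,\b_0,\b_1)$. Build the reverse morphism $G$ analogously. To verify that $F$ and $G$ are mutual inverses up to homotopy, apply associativity for the quadruples $(\b_\lambda,\b_0,\b_1,\b_\lambda)$ and $(\b_0,\b_1,\b_\lambda,\b_0)$, which expresses $G\circ F$ and $F\circ G$ as the identity plus a rectangle count with boundary on four Lagrangians two of which are Hamiltonian isotopic; after isotoping to canonical position this rectangle count reduces to an elementary disk count that is either zero or explicitly null-homotopic.

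The main obstacle is the bookkeeping of the nontraditional local system on $\b_0^{E_0}$: its monodromy mixes parallel transport along $\b_0$ with the multiplicity $n_w(\psi)$ of a polygon at $w$, so each polygon must simultaneously be weighted by $\scU^{n_w(\psi)}\scV^{n_z(\psi)}$ and by an element of $\Hom_{\bF\llsquare U\rrsquare}(E_0,E_1)$ induced by $\psi$. Checking that these combined weights are consistent with the knot-surgery-algebra relations
\[
\sigma\scU = UT^{-1}\sigma,\quad \sigma\scV=T\sigma,\quad \tau\scU=T^{-1}\tau,\quad \tau\scV=UT\tau
\]
is exactly the calculation that forces the cone differential to be the particular sum $\lb\theta_\sigma^+,\phi^\sigma\rb + \lb\theta_\tau^+,\phi^\tau\rb$ and no other combination; once these algebraic identifications are in place, the remaining rectangle counts are combinatorial and the equivalence follows.
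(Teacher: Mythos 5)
Your overall strategy matches the paper's: construct explicit morphisms in both directions, prove they are cycles via cancelling triangle counts, and then check the two compositions are homotopic to the identity via rectangle (and higher polygon) counts. But there is a genuine gap in your treatment of the composition $\Psi\circ\Phi$ (cone-to-cone direction). You claim that after isotoping to canonical position, this "reduces to an elementary disk count that is either zero or explicitly null-homotopic." In fact this composition is not close to the identity: one of its components is the projection $\delta_0$ of $E_0$ onto Alexander grading zero (a map that kills almost everything), and there is an additional cross-term of the form $\b_1^{E_1}\to\b_0'^{E_0}$ that should not appear in an identity morphism of the cone. Producing a homotopy $H$ with $\mu_2(\Psi,\Phi)+\mu_1(H)=\mathrm{id}$ requires writing down an explicit morphism $Z:\b_1^{E_1}\to\b_0'^{E_0}$ whose local-system components are \emph{infinite series} of the form
\[
\eta_\sigma=\sum_{i\ge 0}\scV^i\circ\Delta\circ\Pi\circ T^{-i},\qquad \eta_\tau=\sum_{i\ge 1}\scU^i\circ\Delta\circ\Pi\circ T^{i},
\]
and then verifying both that $\mu_1(Z)$ cancels the cross-term and that composing $Z$ with the cone arrows recovers the missing Alexander gradings $\sum_{i\ne 0}\delta_i$. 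This infinite-series homotopy is the technical heart of the proof and requires a convergence argument (in the chiral or $U$-adic topology); it does not "reduce to an elementary disk count." Your proposal also leaves the local-system morphisms in $\Phi$ and $\Psi$ unspecified: the paper must use $\Delta(U^i)=(\scU\scV)^i$ and $\Pi(U^iT^j)=U^i$ if $j=0$, else $0$, and the cancellation between the $\sigma$- and $\tau$-triangles in the cycle verification hinges on these precise choices. Finally, the associativity check at the level of pentagons (the $\mu_3$ with a $Z$-morphism inserted) also needs to be ruled out explicitly by a holomorphic-polygon count; it is not automatic. You have the right framework but are missing the key constructions that make the homotopy exist.
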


 \begin{wrapfigure}{r}{5cm}
 	\vspace{-.5cm}
	\begin{center}
\begingroup%
  \makeatletter%
  \providecommand\color[2][]{%
    \errmessage{(Inkscape) Color is used for the text in Inkscape, but the package 'color.sty' is not loaded}%
    \renewcommand\color[2][]{}%
  }%
  \providecommand\transparent[1]{%
    \errmessage{(Inkscape) Transparency is used (non-zero) for the text in Inkscape, but the package 'transparent.sty' is not loaded}%
    \renewcommand\transparent[1]{}%
  }%
  \providecommand\rotatebox[2]{#2}%
  \newcommand*\fsize{\dimexpr\f@size pt\relax}%
  \newcommand*\lineheight[1]{\fontsize{\fsize}{#1\fsize}\selectfont}%
  \ifx\svgwidth\undefined%
    \setlength{\unitlength}{109.62697886bp}%
    \ifx\svgscale\undefined%
      \relax%
    \else%
      \setlength{\unitlength}{\unitlength * \real{\svgscale}}%
    \fi%
  \else%
    \setlength{\unitlength}{\svgwidth}%
  \fi%
  \global\let\svgwidth\undefined%
  \global\let\svgscale\undefined%
  \makeatother%
  \begin{picture}(1,0.91515196)%
    \lineheight{1}%
    \setlength\tabcolsep{0pt}%
    \put(0,0){\includegraphics[width=\unitlength,page=1]{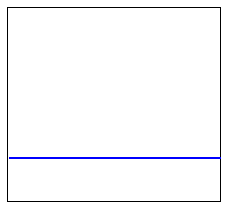}}%
    \put(0.39438558,0.47294454){\makebox(0,0)[rt]{\lineheight{1.25}\smash{\begin{tabular}[t]{r}$w$\end{tabular}}}}%
    \put(0,0){\includegraphics[width=\unitlength,page=2]{fig54.pdf}}%
    \put(0.77783425,0.36149513){\color[rgb]{1,0,0}\makebox(0,0)[lt]{\lineheight{1.25}\smash{\begin{tabular}[t]{l}$\g_0$\end{tabular}}}}%
    \put(0.5897117,0.43327574){\color[rgb]{0,0.77254902,0}\makebox(0,0)[rt]{\lineheight{1.25}\smash{\begin{tabular}[t]{r}$\g_2$\end{tabular}}}}%
    \put(0.5944003,0.15795228){\color[rgb]{0,0,1}\makebox(0,0)[t]{\lineheight{1.25}\smash{\begin{tabular}[t]{c}$\g_1$\end{tabular}}}}%
  \end{picture}%
\endgroup%

	\end{center}
	\caption{Lagrangians $\g_0,$ $\g_1$ and $\g_2$. }
	\label{fig:54}
\end{wrapfigure}
Theorem~\ref{thm:equivalence-intro} is the most important result of our paper. It is interesting to note that the original surgery exact triangle of Ozsv\'{a}th and Szab\'{o}
\begin{equation*}
\cdots \to \ve{\HF}^-(Y_{\lambda})\to \ve{\HF}^-(Y_{\lambda+1})\to \ve{\HF}^-(Y)\to\cdots\hspace{5.5cm}
\end{equation*}
 may be interpreted as a parallel equivalence $\g_0\simeq \Cone(\theta\colon \g_1\to \g_2)$, for the three Lagrangians $\g_0,\g_1,\g_2$ in $\bT^2$ shown in Figure~\ref{fig:54}. See work of Lekili and Perutz \cite{LPFukaya}*{Theorem~1}, as well as \cite{SeidelTwist} \cite{AbouzaidSurface}*{Lemma~5.3}.

\subsection{A general surgery formula}
\label{sec:intro-gen-surgery-formula}

From Theorem~\ref{thm:equivalence-intro} we may quickly derive a very general version of Ozsv\'{a}th and Szab\'{o}'s surgery formula, valid for any knot in any closed 3-manifold.

 We consider a Heegaard knot diagram $(\Sigma,\as,\bs_0,w,z)$ which has a distinguished knot shadow $K\subset \Sigma$, passing through the points $w$ and $z$. We assume that $K$ intersects a distinguished component $\b_0\in \bs_0$ in a single point, but is otherwise disjoint from $\bs_0$. We let $\bs_{\lambda}$ be obtained by replacing $\b_0$ with a curve $\b_\lambda$ which is parallel to $K$. A neighborhood of $\b_{\lambda}\cup \b_0$ is a punctured torus, and we pick a third set of attaching curves $\bs_1$ by replacing $\b_0$ with a curve $\b_1$ as in Figure~\ref{fig:torus_intro}. We observe that a choice of shadow determines a Morse (longitudinal) framing on the knot $K$.

 In Section~\ref{sec:iterating-background}, we describe how to extend Theorem~\ref{thm:equivalence-intro} to prove another equivalence
 \[
 \bs_{\lambda}\simeq \Cone\left(\lb \theta_\sigma^+,\phi^\sigma\rb+ \lb \theta_\tau^+, \phi^\tau\rb\colon \bs_0^{E_0}\to \bs_1^{E_1}\right)
 \]
 of decorated Lagrangians in $\Sym^g(\Sigma)$. 
 
If we apply the $A_\infty$-functor $\ve{\CF}^-(\as,-)$, we obtain a homotopy equivalence of chain complexes
\[
\ve{\CF}^-(\as, \bs_{\lambda})\simeq \Cone\left(v+h\colon \ve{\CF}^-(\as,\bs_0^{E_0})\to \ve{\CF}^-(\as, \bs_1^{E_1})\right).
\]
In the above, $v$ counts holomorphic triangles with inputs $\theta_\sigma^+$, and $h$ counts triangles with inputs $\theta_\tau^+$. For a null-homologous knot $K\subset Y$, the above complex can be identified with the mapping cone complex $\bX_{\lambda}(Y,K)$ of Ozsv\'{a}th and Szab\'{o}, where $\lambda$ is the longitudinal framing induced by the embedding of $K$ on the surface $\Sigma$.

 In more detail, we observe that
\begin{enumerate}
\item $\ve{\CF}^-(\as,\bs_{\lambda})$ is  $\ve{\CF}^-(Y_{\lambda}(K))$.
\item $\ve{\CF}^-(\as, \bs_0^{E_0})$ is naturally isomorphic to $\bA(K)$, which is a completed version of the knot Floer complex $\cCFK(Y,K)$ over $\bF[\scU,\scV]$.
\item $\ve{\CF}^-(\as, \bs_1^{E_1})$ is identified with $\bB(K)$ when $K$ is null-homologous. More generally, $\ve{\CF}^-(\as, \bs_1^{E_1})$ is isomorphic to a completion of the twisted Floer complex $\underline{\CF}^-(Y;M_{[K]})$, where $M_{[K]}$ denotes the $\bF[H^1(Y)]$-module $\bF[T,T^{-1}]$ with action
$e^{\eta}\cdot T^i=T^{i+\eta([K])}$, where $\eta\in H^1(Y)$.
\end{enumerate}

We describe in Section~\ref{sec:iterating-background} how to iterate the equivalence in Theorem~\ref{thm:equivalence-intro} to obtain a link surgery formula for a Morse framed link $L$ in closed 3-manifold $Y$. This takes the form of a chain complex $\bX_{\Lambda}(Y,L)$ which is homotopy equivalent to $\ve{\CF}^-(Y_{\Lambda}(L))$.
 The algebraic structure of the chain complex $\bX_{\Lambda}(Y,L)$ is most naturally interpreted in terms of a finitely generated type-$D$ module $\cX_\Lambda(Y,L)^{\cL_\ell}$ which encodes $\bX_{\Lambda}(Y,L)$ similarly to Equation~\eqref{eq:tensor-prod-intro}. 
 
 The local nature of the above proof of the surgery formula makes our proof particularly flexible for extensions and refinements of the surgery formula. In a future work, we use it to build a bimodule between the algebra $\cK$ and the torus algebra of Lipshitz, Ozsv\'{a}th and Thurston, which we hope to use to relate the two theories.

\subsection{Invariance of modules}

In this paper, we adopt the following convention:

\begin{define}  A \emph{bordered manifold with torus boundaries} is a compact, oriented 3-manifold $Y$ with boundary, such that each component $Z$ of $\d Y$ is a torus which is equipped with an oriented basis $(\mu,\lambda)$ of $H_1(Z)$. 
\end{define}

In \cite{ZemBordered}, we observed that it is possible to use the link surgery for links in $S^3$ to define a type-$D$ module for a  bordered 3-manifold $Y$ with torus boundaries. The description given therein required $Y$ to be presented as Dehn surgery on a framed link $L$ in the complement of an unlink in $S^3$. The dependence of the resulting module on the choice Dehn surgery presentation of $Y$ was not clear, though we conjectured in \cite{ZemBordered} that the resulting modules were invariants of $Y$.
 
 The exact triangle from Section~\ref{sec:intro-gen-surgery-formula} gives a proof of this conjecture, and also gives a more direct construction of the modules:

\begin{thm}
\label{thm:intro-invariance}
If $Y$ is a bordered 3-manifold with torus boundaries, then the chain homotopy type of the type-$D$ module $\cX(Y)^{\cL_n}$ is an invariant of $Y$.
\end{thm}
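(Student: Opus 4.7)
The plan is to give a direct construction of $\cX(Y)^{\cL_n}$ from the bordered manifold $Y$ itself (rather than from a surgery presentation as in \cite{ZemBordered}), using the general link surgery formula of Section~\ref{sec:intro-gen-surgery-formula}, and then to derive invariance from the naturality of Theorem~\ref{thm:equivalence-intro} combined with standard Heegaard Floer invariance. The first observation is that the data of a bordered 3-manifold $Y$ with $n$ torus boundary components, each parametrized by a basis $(\mu_i,\lambda_i)$, is equivalent to the data of a triple $(\hat{Y},L,\Lambda)$, where $\hat{Y}$ is the closed 3-manifold obtained by Dehn filling each boundary torus $Z_i$ along $\mu_i$, the link $L=L_1\cup\cdots\cup L_n\subset \hat{Y}$ consists of the cores of the filled solid tori, and the Morse framing $\Lambda_i$ is the longitude $\lambda_i$. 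I would then define $\cX(Y)^{\cL_n}:=\cX_\Lambda(\hat{Y},L)^{\cL_n}$ to be the link surgery type-$D$ module of this triple, constructed from a Heegaard link diagram for $(\hat{Y},L)$ with distinguished knot shadows $K_i$ and meridional curves $\b_{0,i}$ by iterating the equivalence of Theorem~\ref{thm:equivalence-intro} along each meridional curve, as in Section~\ref{sec:iterating-background}.

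For invariance, I would first verify that this construction recovers the module of \cite{ZemBordered} when $Y$ is presented as surgery on a framed link in the complement of an unlink in $S^3$; since both constructions encode the same underlying link surgery complex, this reduces to matching conventions. Next, I would show that $\cX_\Lambda(\hat{Y},L)^{\cL_n}$ is a chain homotopy invariant of the triple $(\hat{Y},L,\Lambda)$. Standard Heegaard Floer techniques handle independence of the analytic data, $\a$-handleslides, isotopies and handleslides of $\b$-curves disjoint from the meridional curves, and stabilizations performed away from all $\b_{0,i}$ and $K_i$. The remaining moves, which affect the $\b_{0,i}$ themselves or the knot shadows, are handled using the $A_\infty$-naturality of Theorem~\ref{thm:equivalence-intro} under continuation morphisms between the Lagrangians $\b_0$, $\b_1$, and $\b_\lambda$. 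Since a diffeomorphism of bordered manifolds $Y\cong Y'$ induces a diffeomorphism of triples $(\hat{Y},L,\Lambda)\cong (\hat{Y}',L',\Lambda')$, invariance of $\cX(Y)^{\cL_n}$ under such diffeomorphisms follows.

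The hard part will be assembling the various continuation maps and handleslide homotopies into a coherent chain homotopy equivalence of type-$D$ modules over $\cL_n$, rather than merely of the underlying chain complexes. This requires tracking higher $A_\infty$-operations so that the equivalence intertwines the $\cL_n$-actions correctly, and is naturally organized via a hyperbox of Heegaard diagrams along the lines of Manolescu--Ozsv\'{a}th, with one direction per meridional curve being replaced by the mapping cone of Theorem~\ref{thm:equivalence-intro}. A related technical point is controlling the interaction between handleslides on different components of the link, which I expect to address using the general machinery developed in Section~\ref{sec:iterating-background} for iterating the exact triangle over multiple link components.
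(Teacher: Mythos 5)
Your overall scheme is the same as the paper's: define $\cX(Y)^{\cL_n}$ as the link surgery module $\cX_{\Lambda}(\hat{Y},L)^{\cL_n}$ of the cores of the filling solid tori with framing given by the $\lambda_i$, and prove Heegaard-diagram independence by a Manolescu--Ozsv\'{a}th-style argument. But the step you yourself flag as ``the hard part'' --- producing the equivalence as a morphism of type-$D$ modules over $\cL_n$ rather than of underlying complexes --- is exactly where your proposal has a gap, and the tool you propose for it is the wrong one. Theorem~\ref{thm:equivalence-intro} is not what handles moves of the meridians $\b_{0,i}$ or of the shadows; it enters only in the surgery formula itself and in the Dehn-surgery functoriality statement. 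What the diagram-independence argument actually needs is the hypercube-filling procedure for morphisms $\scB_{\Lambda}\to\scB_{\Lambda}'$ between the beta hypercubes, and the key input there is a computation of $\HF^-_{\Fil}$ of the pure-beta diagrams decorated with the local systems $E_{\veps}$ (Lemma~\ref{lem:generalized-top-generator} and Remark~\ref{rem:refined-hypercube}): in the relevant $\Spin^c$ structure and Alexander grading zero the top-degree class is unique up to homotopy, and --- crucially --- the filling chains can be chosen with morphism part an $\bF\llsquare U_1,\dots,U_\ell\rrsquare$-multiple of $\phi^{\scO}$. That last refinement is precisely what makes $\mu_2(a^+,\mu_2(\cdot\,,b^+))$ a type-$D$ morphism over $\frL$ (and hence $\cL$). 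Without some statement of this kind, ``tracking higher $A_\infty$-operations'' does not by itself produce an $\cL_n$-equivariant equivalence, so your outline is missing the decisive lemma.

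Two further points. First, you need a concrete moves lemma: any two meridianal diagrams with shadows for $(\hat{Y},L,\Lambda)$ are related by ambient isotopy, alpha/beta moves fixed near the special disks $D_i$, handleslides of the meridians over the non-special beta curves fixed in $D_i$, stabilizations, and isotopies/handleslides of the shadows over the non-special beta curves; this is proved by a sutured-manifold Morse-theory argument and is where the framing data is controlled --- it is not covered by ``standard Heegaard Floer techniques'' plus naturality. (Shadow handleslides are then handled not by any continuation-map naturality but by the observation that, for suitably chosen $\scB_{\Lambda}$, the slid and unslid shadows have identical intersections with all beta curves, so the complex is literally unchanged; shadow isotopies decompose into curve moves and ambient isotopy.) Second, the comparison with the construction of \cite{ZemBordered} does not ``reduce to matching conventions'': the old module is built from a surgery presentation on a link in an unlink complement in $S^3$, and identifying it with the present one requires the statement that tensoring with ${}_{\cK}\cD_0$ effects Dehn surgery on a component, i.e.\ part of the invariance theorem that is proved using the iterated exact triangle of Theorem~\ref{thm:iterated-cone}. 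Finally, the paper proves invariance over $\frL$ and deduces the $\cL$ statement via the chiralizing functor on linearly compact modules; your proposal ignores the completion issues entirely, which matter for the morphisms you construct to converge.
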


\begin{rem} In \cite{ZemBordered}, we also considered an additional decoration, called an \emph{arc system}. The construction in this paper is equivalent to choosing an \emph{alpha-parallel} arc systems, in the terminology from \cite{ZemBordered}.
\end{rem}

Theorem~\ref{thm:intro-invariance} has several concrete applications. The first of which is functoriality of the modules under Dehn surgery on link components.

\begin{thm} Suppose that $(Y',L')$ is a link with Morse framing $\Lambda'$, obtained from a link $(Y,L)$ with Morse framing $\Lambda$ by performing Dehn surgery on one component of $L$ according to its framing. Then
\[
\cX_{\Lambda'}(Y',L')^{\cL_{\ell-1}}\iso \cX_{\Lambda}(Y,L)^{\cL_\ell}\boxtimes{}_{\cK} \cD_0.
\]
\end{thm}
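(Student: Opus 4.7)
The plan is to combine Theorem~\ref{thm:intro-invariance} with the construction of the modules $\cX_\Lambda(Y,L)^{\cL_\ell}$ in terms of ambient link surgery presentations in $S^3$. First I would set up the topology. Write $K\subset L$ for the component being surgered, with Morse framing $\lambda_K\in \Lambda$, and let $L'=L\setminus K$. Then the link exterior $Y'\setminus \nu(L')$ is obtained from $Y\setminus\nu(L)$ by Dehn filling the boundary torus coming from $K$ with a solid torus whose meridian is identified with $\lambda_K$. In the conventions of this paper, this is precisely the $\lambda$-framed (equivalently, $0$-framed) solid torus whose type-$A$ module is ${}_{\cK}\cD_0$.

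Next, I would invoke Theorem~\ref{thm:intro-invariance}, which asserts that the modules on both sides of the claimed equivalence depend only on their underlying bordered exteriors. In particular, we are free to use any convenient Dehn surgery presentation to compute them. Choose an auxiliary framed link $L_{\aux}\subseteq S^3$, disjoint from a pushed-off copy of $L$, which presents $Y$ as Dehn surgery on $L_{\aux}$. Then $Y'$ is simultaneously presented as Dehn surgery on $L_{\aux}\cup K$, and we have the tautological identity of framed links $L\cup L_{\aux}=L'\cup (L_{\aux}\cup K)$ in $S^3$.

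With this setup in place, the identification reduces to matching definitions. By the construction of Section~\ref{sec:iterating-background} (following \cite{ZemBordered}), the module $\cX_\Lambda(Y,L)^{\cL_\ell}$ is built from the ambient link surgery type-$D$ module $\cX(S^3, L\cup L_{\aux})^{\cL_{\ell+|L_{\aux}|}}$ by box tensoring with one copy of ${}_{\cK}\cD_0$ along each $\cK$-factor corresponding to a component of $L_{\aux}$. Similarly, $\cX_{\Lambda'}(Y',L')^{\cL_{\ell-1}}$ is built from $\cX(S^3, L'\cup (L_{\aux}\cup K))^{\cL_{\ell+|L_{\aux}|}}$ by box tensoring with ${}_{\cK}\cD_0$ along each factor corresponding to $L_{\aux}\cup K$. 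Since the two underlying link surgery type-$D$ modules coincide and the box product is associative, the desired identification
\[
\cX_{\Lambda'}(Y',L')^{\cL_{\ell-1}}\iso \cX_\Lambda(Y,L)^{\cL_\ell}\boxtimes {}_{\cK}\cD_0
\]
follows, with the final $\boxtimes$ taken along the $\cK$-factor for $K$.

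The main obstacle is ensuring that the framing conventions are correctly coordinated across the construction: the type-$A$ module ${}_{\cK}\cD_0$ must correspond precisely to the solid torus filling that realizes Morse-framed Dehn surgery along $\lambda_K$, and the box product must act on the correct $\cK$-factor. Once Theorem~\ref{thm:intro-invariance} is established (which constitutes the real work), the remainder is essentially a careful manipulation of the definitions and an application of the associativity of $\boxtimes$.
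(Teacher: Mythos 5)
There is a genuine gap, and it is circularity. The step you label ``matching definitions'' --- that $\cX_{\Lambda}(Y,L)^{\cL_\ell}$ ``is built from'' $\cX(S^3,L\cup L_{\aux})^{\cL}$ by box tensoring with ${}_{\cK}\cD_0$ along the components of $L_{\aux}$ --- is not a definition in this paper. Here $\cX_{\Lambda}(Y,L)^{\cL_\ell}$ is defined intrinsically, from a meridianal Heegaard diagram for $(Y,L)$ and the hypercube $\scB_{\Lambda}$ of Section~\ref{sec:background-link-surgery}; the presentation-based description via links in the complement of an unlink in $S^3$ is the \emph{old} construction from \cite{ZemBordered}, and the assertion that the intrinsic module agrees with the result of tensoring the $S^3$ module with copies of ${}_{\cK}\cD_0$ is exactly an iterated application of the statement you are trying to prove (``$\boxtimes\,{}_{\cK}\cD_0$ implements Morse-framed Dehn surgery on a component''). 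Theorem~\ref{thm:intro-invariance} (invariance of the intrinsically defined modules under Heegaard moves) does not supply this bridge: part (1) of Theorem~\ref{thm:invariance} lets you change diagrams for a fixed $(Y,L,\Lambda)$, but it says nothing about what happens when you fill a component, which is precisely part (2). So your reduction assumes the theorem for the auxiliary components in order to prove it for $K$, and the appeal to associativity of $\boxtimes$ never gets off the ground. (A related symptom: even the ``base case'' in $S^3$, that tensoring $\cX(S^3,L\cup L_{\aux})^{\cL}$ with $\cD_0$ along $L_{\aux}$ recovers the module of the surgered pair, is the content of the theorem, not a tautology.)

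The paper's proof is instead direct and local. One takes a meridianal diagram for $(Y,L)$, replaces the meridian $\b_0$ of the surgered component $K$ by the longitude $\b_\lambda$ to obtain a diagram for $(Y',L')$, and uses the iterated mapping-cone equivalence of Theorem~\ref{thm:iterated-cone} to produce a homotopy equivalence of hypercubes of attaching curves $b^+\colon \scB_{\Lambda}\to \scB'_{\Lambda'}$; pairing with $\as$ gives $\bX_{\Lambda}(Y,L)\simeq \bX_{\Lambda'}(Y',L')$. The essential extra step --- which your outline has no analogue of --- is a grading/$\Spin^c$ argument showing that $b^+$ may be arranged to have only length-$1$ components of the form $\lb \theta^+,\,\id_{E_{\veps_1}}\otimes\cdots\otimes\id_{E_{\veps_{\ell-1}}}\otimes\Pi\rb$, so that $\mu_2^{\Tw}(-,b^+)$ (and likewise the reverse map and homotopies) is induced by a morphism of type-$D$ modules, which is what upgrades the chain-level equivalence to the claimed equivalence $\cX_{\Lambda}(Y,L)^{\cL_\ell}\boxtimes{}_{\cK}\cD_0\simeq \cX_{\Lambda'}(Y',L')^{\cL_{\ell-1}}$. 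If you want to salvage your approach, you would first need to prove, independently, the comparison between the intrinsic construction and the $S^3$-presentation construction --- but that proof is essentially the paper's argument for part (2) of Theorem~\ref{thm:invariance}.
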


In \cite{ZemBordered}*{Section~1.3}, the author proved a connected sum formula for the link surgery formula. Combining our proof of invariance of the modules with this connected sum formula as well as the topological perspective from Equation~\eqref{eq:Dehn-surgery-connected-sum} relating Dehn surgery on connected sums and gluing along torus boundary components, we obtain a gluing formula for the bordered modules.

  To state our result,  we recall first that in \cite{ZemBordered}*{Section~8.3} we described an operation which transforms a type-$D$ action of $\cK$ into a type-$A$ action of $\cK$.  Using this operation, if $Y$ was a bordered manifold with torus boundaries and one boundary component was distinguished, we constructed a $DA$-bimodule ${}_{\cK} \cX(Y)^{\cL_{n-1}},$
where $n=|\d Y|$.

\begin{thm}
\label{thm:naturality-gluing-intro} Suppose $Y_1$ and $Y_2$ be two bordered manifolds with torus boundaries, each with a distinguished boundary component $Z_1$ and $Z_2$, respectively. We let $\phi\colon Z_1\to Z_2$ be the orientation reversing diffeomorphism sending $\mu_1$ to $\mu_2$ and $\lambda_1$ to $-\lambda_2$. Let $\cX(Y_1)^{\cL_{n_1-1}\otimes \cK}$ be the type-$D$ module for $Y_1$ and let ${}_{\cK} \cX(Y_2)^{\cL_{n_2-1}}$ be the type-$DA$ bimodule for $Y_2$. Here the copies of $\cK$ correspond to the distinguished boundary components. Then
\[
\cX(Y_1)^{\cL_{n_1-1}\otimes \cK}\boxtimes {}_{\cK}\cX(Y_2)^{\cL_{n_2-1}}\simeq \cX(Y_1\cup_\phi Y_2)^{\cL_{n_1+n_2-1}}. 
\]
\end{thm}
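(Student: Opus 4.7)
The plan is to combine three ingredients: the invariance Theorem~\ref{thm:intro-invariance}, the connected sum formula from \cite{ZemBordered}*{Section~1.3}, and the topological identity \eqref{eq:Dehn-surgery-connected-sum}. For each $i=1,2$, I would present $Y_i$ as the result of Morse-framed Dehn surgery on a link $L_i$ in $S^3\setminus \nu(U_i)$, where $U_i$ is an unlink whose components correspond to the boundary components of $Y_i$, and $K_i\subset U_i$ corresponds to the distinguished boundary $Z_i$. By Theorem~\ref{thm:intro-invariance}, $\cX(Y_i)^{\cL_{n_i-1}\otimes\cK}$ is computed, up to homotopy equivalence, by the link surgery module associated to $L_i\cup U_i$ with the $U_i$-components left unsurgered.

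Next, I would invoke \eqref{eq:Dehn-surgery-connected-sum} locally along $K_1$ and $K_2$. This realizes $Y_1\cup_\phi Y_2$ as Dehn surgery on the framed link
\[
L = L_1 \sqcup L_2 \sqcup (K_1\# K_2) \quad \text{in} \quad S^3\setminus \nu\bigl((U_1\setminus K_1)\sqcup (U_2\setminus K_2)\bigr),
\]
where the framing on $K_1\# K_2$ is the sum of the Morse framings of $K_1$ and $K_2$, and where the gluing diffeomorphism realized by \eqref{eq:Dehn-surgery-connected-sum} is exactly the $\phi$ in the theorem statement.

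Now I would apply the connected sum formula of \cite{ZemBordered}*{Section~1.3} to the distinguished component $K_1\# K_2$. That formula expresses the link surgery module for the connected-sum link as a box tensor product of the two link surgery modules for $L_1\cup U_1$ and $L_2\cup U_2$ over the extra copy of $\cK$ associated to the connected-sum component, after applying the type-$D$ to type-$A$ transformation of \cite{ZemBordered}*{Section~8.3} to one of the factors. Applying invariance one more time to identify the factors with $\cX(Y_1)^{\cL_{n_1-1}\otimes \cK}$ and ${}_{\cK}\cX(Y_2)^{\cL_{n_2-1}}$ then yields the theorem.

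The main obstacle will be the bookkeeping of framings and algebra decorations. One has to check that the framing on $K_1\# K_2$ produced by \eqref{eq:Dehn-surgery-connected-sum}, together with the action of $\phi$ on the meridian/longitude basis $(\mu,\lambda)$, matches the algebraic output of the connected sum formula; and that the $D$-to-$A$ conversion appearing in the connected sum formula agrees with the one used in the definition of ${}_{\cK}\cX(Y_2)^{\cL_{n_2-1}}$ from \cite{ZemBordered}*{Section~8.3}. Both are routine identifications, but they are the only place where the orientation convention $\phi_*\mu_1=\mu_2,\ \phi_*\lambda_1=-\lambda_2$ enters the argument, so care is needed. Once these decorations are aligned, the proof reduces to combining the three inputs listed above.
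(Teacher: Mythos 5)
Your proposal follows essentially the same route as the paper's proof: present each $Y_i$ as Morse surgery on a link in an unlink complement, apply Theorem~\ref{thm:invariance} (diagram independence plus functoriality under Dehn surgery), realize the gluing via Equation~\eqref{eq:Dehn-surgery-connected-sum} as connected sum followed by surgery on the connect-sum component, and conclude with the connected sum formula of \cite{ZemBordered} together with the type-$D$ to type-$A$ conversion defining ${}_{\cK}\cX(Y_2)^{\cL_{n_2-1}}$. The convention-matching issues you flag (framings and the identification of the $D$-to-$A$ conversion with ${}_{\cK|\cK}[\bI^{\Supset}]$) are exactly the points the paper also leaves to the cited results, so no further changes are needed.
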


\subsection{Sublinks}

Theorem~\ref{thm:intro-invariance} also gives a conceptually simple proof of the following technique for computing knot and link Floer complexes in terms of the link surgery formula. This technique is a folklore result known to experts, though a general proof has not appeared in the literature. 

By definition, if $K\subset Y$ is a framed knot, then
\[
\cX_{\lambda}(Y,K)^{\cK}\cdot \ve{I}_0\iso \cCFK(Y,K),
\]
where we view the knot Floer complex $\cCFK(Y,K)$ as a type-$D$ module over $\bF\llsquare\scU,\scV\rrsquare$. 

In particular, if $(Y,L)$ is a link with Morse framing $\Lambda$ and $K\subset L$ is a distinguished component, then we can tensor $\cX_{\Lambda}(Y,L)^{\cL}$ with the type-$A$ modules ${}_{\cK} \cD_0$ for solid tori for all components of $L$ except for $K$. We are left with a type-$D$ module over $\cK$, which we denote $\cX_{\Lambda}(Y,L;K)^{\cK}$.  Theorem~\ref{thm:intro-invariance}  implies that idempotent 0 of this module coincides with $\cCFK(Y',K)$, where $Y'$ is the 3-manifold obtained by surgering $Y$ along $L\setminus K$.

 In terms of the link surgery formula of Manolescu and Ozsv\'{a}th, our invariance statement from Theorem~\ref{thm:intro-invariance} has the following interpretation.  We consider the complex 
\[
\bX_{\Lambda}(Y,L;K)\subset \bX_{\Lambda}(Y,L)
\]
consisting of the codimension 1 subcube where the coordinate for $K$ is 0. Then the subcube $\bX_{\Lambda}(Y,L;K)$ is naturally a module over $\bF\llsquare \scU,\scV \rrsquare$, and furthermore
\begin{equation}
{}_{\bF\llsquare \scU,\scV\rrsquare}\bX_{\Lambda}(Y,L;K)\simeq {}_{\bF\llsquare \scU,\scV\rrsquare} \cCFK(Y',K).\label{eq:compute-CFK}
\end{equation}
A similar discussion holds when $K$ is a link with several components. See Section~\ref{sec:sublink-surgery-formula}. The above fact is used in \cite{BLZLatticeLink} to compute the link Floer complexes of all algebraic links in $S^3$. 

\begin{rem}  It is likely that one could adapt the work of Manolescu and Ozsv\'{a}th \cite{MOIntegerSurgery} to prove Equation~\eqref{eq:compute-CFK} and similar extensions (at least when $K$ is a null-homologous knot). Our techniques avoid some technical challenges which would appear in such a proof, such as truncation arguments, and are more user-friendly and flexible.
\end{rem}

\subsection{Endomorphism algebras}

Our framework also gives a natural analog of Auroux's description \cite{AurouxBordered} of the Lipshitz--Ozsv\'{a}th--Thurston algebra $\cA(\bT^2)$ in terms of the endomorphism algebra of the Fukaya category of the torus. We consider the endomorphism algebra of $\b_0^{E_0}\oplus \b_1^{E_1}$, where $\b_0^{E_1}$ and $\b_1^{E_1}$ are the decorated Lagrangians appearing in the statement of Theorem~\ref{thm:equivalence-intro}. In Section~\ref{sec:endomorphisms}, we define a restricted subspace of endomorphisms of $\b_0^{E_0}\oplus \b_1^{E_1}$ which we call \emph{filtered} morphisms, and which we denote $\End_{\Fil}(\b_0^{E_0}\oplus \b_1^{E_1})$. We prove:

\begin{thm} There is a homotopy equivalence of $A_\infty$-algebras:
\[
\cK\simeq \End_{\Fil}\left(\b_0^{E_0}\oplus \b_1^{E_1}\right).
\]
\end{thm}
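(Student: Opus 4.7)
The plan is to exhibit an $A_\infty$-quasi-isomorphism by matching generators and then comparing operations via holomorphic polygon counts on the torus. As a structural guide, since $\cK$ is concentrated in its degree-zero part (viewed as an ordinary, ungraded associative algebra), it is enough to (i) identify the cohomology of $\End_{\Fil}(\b_0^{E_0}\oplus\b_1^{E_1})$ with $\cK$ as an associative algebra, and (ii) show that the $A_\infty$-algebra $\End_{\Fil}(\b_0^{E_0}\oplus\b_1^{E_1})$ is formal.

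First I would decompose the endomorphism algebra along the idempotents of $\b_0^{E_0}\oplus\b_1^{E_1}$. On cohomology, $\End_{\Fil}(\b_0^{E_0})$ should recover $\ve{I}_0\cdot\cK\cdot\ve{I}_0=\bF\llsquare\scU,\scV\rrsquare$: since $\b_0$ is a circle, its self Floer complex has two generators in the Morse model, and after passing to the filtered subspace the $\bF\llsquare U\rrsquare$-endomorphisms of $E_0$, combined with the non-traditional local system structure (which weights a polygon boundary on $\b_0$ by $\scU$ or $\scV$ according to its multiplicity at $w$ or $z$), should produce exactly the two formal variables $\scU,\scV$ along with the identification $U=\scU\scV$. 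Similarly, $\End_{\Fil}(\b_1^{E_1})$ should give $\bF\llsquare U,T,T^{-1}\rrsquare$, with $T$ the genuine monodromy of $E_1$ around $\b_1$ and $U$ recording disk multiplicity at $w$. The filtration should be set up so that $\Hom_{\Fil}(\b_1^{E_1},\b_0^{E_0})=0$, matching $\ve{I}_0\cdot\cK\cdot\ve{I}_1=0$.

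Next I would identify the generators $\sigma,\tau\in\ve{I}_1\cdot\cK\cdot\ve{I}_0$ with the distinguished morphisms $\lb\theta_\sigma^+,\phi^\sigma\rb$ and $\lb\theta_\tau^+,\phi^\tau\rb$ appearing in Theorem~\ref{thm:equivalence-intro}, and then verify the four commutation relations $\sigma\scU=UT^{-1}\sigma$, $\sigma\scV=T\sigma$, $\tau\scU=T^{-1}\tau$, $\tau\scV=UT\tau$ by enumerating immersed bigons and small holomorphic triangles with one corner at $\theta_\sigma^+$ or $\theta_\tau^+$. On the torus these counts are combinatorially explicit: the boundary multiplicity at $w$ records the $U$-power, the winding of the boundary around $\b_1$ records the $T$-power, and the $w,z$ multiplicities on the $\b_0$ segment record the $\scU,\scV$-powers, so each relation becomes the assertion that two specific polygons produce equal weights. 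In the same way one computes the products $\Hom_{\Fil}(\b_0^{E_0},\b_1^{E_1})\otimes\End_{\Fil}(\b_0^{E_0})\to\Hom_{\Fil}(\b_0^{E_0},\b_1^{E_1})$ and similarly for the other factor, establishing the algebra isomorphism on cohomology.

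Finally, to upgrade this identification to a full $A_\infty$-equivalence I must rule out higher operations, i.e.\ show formality. The natural approach is to introduce an internal multigrading on both sides (by $(\scU,\scV,U,T)$-weights together with the Maslov grading), lifted to the Fukaya side via disk multiplicities at $w,z$ and boundary winding numbers along $\b_0$ and $\b_1$, and to apply Kadeishvili's theorem to produce an $A_\infty$-map $\cK\to\End_{\Fil}(\b_0^{E_0}\oplus\b_1^{E_1})$ lifting the cohomological isomorphism. The main obstacle I anticipate is controlling $\mu^n$ for $n\ge 3$: because of the non-traditional local system on $\b_0^{E_0}$ one must account for polygons winding nontrivially around $w$, and obstructions must either land in bidegrees that are empty in $\cK$ or be gauged away by an $A_\infty$-automorphism preserving the abelian subalgebras $\bF\llsquare\scU,\scV\rrsquare$ and $\bF\llsquare U,T,T^{-1}\rrsquare$. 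This mirrors the analogous formality argument of Auroux \cite{AurouxBordered} for the torus algebra, and I would expect the grading bookkeeping (rather than any new geometric input) to be the principal technical hurdle.
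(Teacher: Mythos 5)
Your step (i) — computing the cohomology of $\End_{\Fil}(\b_0^{E_0}\oplus \b_1^{E_1})$ idempotent by idempotent and matching $\sigma,\tau$ with $\lb\theta_\sigma^+,\phi^\sigma\rb,\lb\theta_\tau^+,\phi^\tau\rb$ — is essentially what the paper does (Lemmas~\ref{lem:homology-I0-I0}--\ref{lem:homology-I1-I1}), though you underestimate it: the self-Hom space of $\b_0^{E_0}$ is not a two-generator Morse complex but the full space of filtered continuous endomorphisms of $E_0$ at each of $\theta^{\pm}$, with twisted differential $f\mapsto f+\scV\circ f\circ\scV^{-1}$; identifying its homology with $\bF\llsquare\scU,\scV\rrsquare$ requires the staircase computation, the filtered condition (via Lemma~\ref{lem:filtered-morphism-compose-Vs}), and convergence of the explicit homotopy $h$ in the chosen topology. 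That is work, but it is the same route.

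The genuine gap is step (ii), formality. You defer the vanishing of higher operations to "grading bookkeeping" plus gauge transformations, starting from the (incorrect) premise that $\cK$ is concentrated in degree zero; for the relevant Maslov/Alexander gradings it is not, and gradings alone do not exclude higher transferred products. Concretely, a transferred $\mu_4$ with all four inputs equal to $U=\scU\scV$ in idempotent $0$ would shift $(\gr_w,\gr_z)$ by $(+2,+2)$ and preserve Alexander grading, so its target bidegree contains $\scU^3\scV^3$ — a nonzero bidegree of $\cK$ — and ruling it out requires either a Hochschild-cohomology computation in the bidegrees $HH^{n,2-n}$ (which you do not supply and which is not obviously trivial) or a chain-level argument. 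The paper avoids intrinsic formality altogether: it builds a strong deformation retraction whose inclusion $I$ \emph{and} homotopy $H$ both have image spanned by top-degree intersection points $\theta^+$, and then Lemma~\ref{lem:model-computation-End}(3) shows that any polygon map $\mu_n$, $n\ge 3$, with $\theta^+$-type inputs vanishes because the output intersection point would need $\gr_w\ge n-2>0$; combined with $H(\lb\theta^+,f\rb)=0$ and the $\mu_2$ model computations, every perturbation tree with more than one interior vertex dies, so the transferred structure is exactly $\cK$ with its multiplication. So the vanishing you need is a feature of the specific retraction and the geometric grading bound on intersection points, not of the ring $\cK$ and its gradings in the abstract; as written, your proposal does not close this step (and, secondarily, a quasi-isomorphism-plus-formality argument would still owe a continuity/completion check in the chiral and $U$-adic topologies, which the paper's explicit retraction handles directly).
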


 \subsection{Comparison with existing techniques}

  We now describe some ways in which our Theorem~\ref{thm:equivalence-intro} simplifies previous descriptions of the surgery formulas
of Manolescu, Ozsv\'{a}th and Szab\'{o}.

 If $N>0$, Ozsv\'{a}th and Szab\'{o} \cite{OSIntegerSurgeries} prove an equivalence in the Fukaya category of $\bT^2$ which gives a homotopy equivalence 
\[
\ve{\CF}^-(Y_{\lambda}(K))\simeq \Cone\left(F_W\colon  \ve{\CF}^-(Y_{\lambda+N}(K))\to \underline{\ve{\CF}}^-(Y)\right).
\] Here $\underline{\ve{\CF}}^-$ is a version of Floer homology with twisted coefficients and $F_W$ is a cobordism map. Note that this mapping cone is not the same as $\bX_{\lambda}(Y,K)$.  Instead, Ozsv\'{a}th and Szab\'{o}'s proof requires two additional technical steps:
  \begin{enumerate}
  \item \emph{A large surgery formula}. If $N\gg 0$ they identify $\ve{\CF}^-(Y_{\lambda+N}(K))$ with a subspace of the knot Floer complex of $K$ in $Y$.
  \item \emph{A sequence of truncation arguments}. They describe several algebraic truncation operations. One such operation is quotienting by the image of $U^{\delta}$ for some $\delta \gg 0$. Another reduces $\bX_{\lambda}(Y,K)$ to a finitely generated complex over $\bF\llsquare U\rrsquare$. 
   \end{enumerate}
      They show that all suitably large truncations of $\ve{\CF}^-(Y_{\lambda}(K))$ and $\bX_{\lambda}(Y,K)$ are homotopy equivalent, and conclude that therefore $\ve{\CF}^-(Y_{\lambda}(K))$ and $\bX_{\lambda}(Y,K)$ must be homotopy equivalent. Manolescu and Ozsv\'{a}th's proof of the link surgery formula follows from similar logic, with a substantial increase in complexity of the algebraic operations involved. Note that the large surgery formula only holds for rationally null-homologous knots and hence a direct adaptation of their proof for homologically essential knots seems challenging.

 Our approach to Theorem~\ref{thm:equivalence-intro} avoids both the large surgery formula and the truncation procedure, thereby avoiding the above steps in the original proof of Ozsv\'{a}th and Szab\'{o}.

\subsection{Additional properties and remarks}

Our framework also allows us to prove several new properties about the link surgery formulas. Of particular note are formulas which compute the $H_1(Y)/\Tors$ action using the link surgery complex. See Section~\ref{sec:H1-action}. Additionally, our techniques give simple proofs of Maslov and Alexander grading formulas on the link surgery formula and its refinements. See Section~\ref{sec:gradings}.

Additionally, we investigate in this paper the role of completions within the knot surgery formula. In \cite{ZemBordered}, we equipped $\cK$ with a linear topology from a filtration on $\cK$ by right ideals. The algebra $\cK$ was naturally a \emph{linear topological chiral algebra}, in the terminology of \cite{BeilinsonChiral}. In this paper, we also investigate a different topology, the $U$-adic topology $\cK$, arising from the filtration by the two sided ideals $(U^i)$, $i\ge 0$. We prove that this topology can also be used in the surgery formula. In a future work, we investigate \emph{self-gluing} in the context of these theories. Interestingly, a self-gluing formula seems only to be possible with the $U$-adic topology, and not with respect to the chiral topology.

We also note that that there exist several other interesting relations between the Heegaard Floer surgery formulas and the Fukaya category of the torus. Recently, Hanselman \cite{HanselmanCFK} has interpreted the data of the surgery formula in terms of immersed curves on the torus in a similar spirit to \cite{HRWImmersedCurves}. Compare also the earlier work of Kotelskiy, Watson, and Zibrowius \cite{KWZMnemonic}. It would be interesting to understand the relation between the above perspectives and the work of this paper.

\subsection*{Acknowledgments}

 The author thanks Jonathan Hanselman, Adam S. Levine,  Robert Lipshitz, Ciprian Manolescu and Peter Ozsv\'{a}th for interesting conversations. The author is also very thankful to his collaborators Maciej Borodzik, Kristen Hendricks, Jennifer Hom, Beibei Liu, Matthew Stoffregen  for their insights on related projects.

\section{Preliminaries}

\subsection{Type-\texorpdfstring{$D$}{D} and \texorpdfstring{$A$}{A} modules}

We assume the reader is familiar with the framework of type-$D$ and $A$ modules of Lipshitz, Ozsv\'{a}th and Thurston \cite{LOTBordered} \cite{LOTBimodules}. We recall these notations very briefly. Suppose that $\cA$ is an associative algebra over an idempotent ring $\ve{i}$, which is of characteristic 2. We write $\mu_2$ for multiplication. We recall that a \emph{right type-$D$} module over $\cA$, denoted $X^\cA$, is a pair $(X,\delta^1)$ where $X$ is a right $\ve{i}$-module and
\[
\delta^1\colon X\to X\otimes_{\ve{i}} \cA
\]
is an $\ve{i}$-linear map which satisfies
\[
(\id_X\otimes \mu_2)\circ (\delta^1\otimes \id_\cA)\circ\delta^1=0
\]

A \emph{left type-$A$ module} ${}_{\cA} Y$ in their notation is identical to an $A_\infty$-module. See \cite{KellerNotes} for additional background. Briefly, an $A_\infty$-module is a left $\ve{i}$-module $Y$ with a collection of $\ve{i}$-linear maps 
\[
m_{j+1}\colon \overbrace{\cA\otimes_{\ve{i}}\cdots \otimes_{\ve{i}} \cA}^{j}\otimes_{\ve{i}} X\to X, \quad j\ge 0,
\]
which satisfy
\[
\begin{split}
&\sum_{0\le i\le  n} m_{n-i+1}(a_n,\dots, m_{i+1}(a_{i},\dots, a_1,x))\\
+&\sum_{1\le i\le n-1} m_{n}(a_n,\dots, \mu_2(a_{i+1},a_{i}),\dots, a_1, x)=0
\end{split}
\]

Finally, given a type-$D$ and type-$A$ modules $X^{\cA}$ and ${}_{\cA} Y$, then under suitable boundedness hypotheses there is a model of the derived tensor product, called the \emph{box tensor product}, which is a chain complex
\[
X^{\cA}\boxtimes {}_{\cA} Y=(X\otimes_{\ve{i}} Y, \d_{\boxtimes})
\]
One defines $\delta^j\colon X\to X\otimes \otimes^j \cA$ by iterating $\delta^1$ $j$-times (with $\delta^0=\id_X$). Write $T^* \cA=\prod_{j\ge 0} \otimes^j \cA$ and set $\delta\colon X\to X\otimes T^* \cA$ to be the sum of all $\delta^j$. The differential $\d_{\boxtimes}$ is then given as the composition
\[
\begin{tikzcd} X\otimes Y\ar[r, "\delta\otimes \id"]& X\otimes T^* \cA\otimes  Y \ar[r, "\id\otimes m_*"] & X\otimes Y
\end{tikzcd}
\]
We refer the reader to the work of Lipshitz, Ozsv\'{a}th and Thurston for more details.

\subsection{Hypercubes of chain complexes}

We now recall the formalism of \emph{hypercubes of chain complexes}, due to Manolescu and Ozsv\'{a}th \cite{MOIntegerSurgery}*{Section~5}. We write $\bE_n=\{0,1\}^n$ for the $n$-dimensional cube. If $\veps,\veps'\in \bE_n$, we write $\veps\le \veps'$ the inequality holds for all components.

\begin{define} A \emph{hypercube of chain complexes} $\cC=(C_{\veps},D_{\veps, \veps'})_{\veps\in \bE_n}$ consists of a collection of vector spaces $C_{\veps}$, together with a linear map
\[
D_{\veps,\veps'}\colon C_{\veps}\to C_{\veps'}
\]
whenever $\veps\le \veps'$. Furthermore, we assume that if $\veps\le \veps''$, then
\[
\sum_{\substack{\veps'\in \bE_n\\ \veps\le \veps'\le \veps''}} D_{\veps',\veps''}\circ D_{\veps,\veps'}=0. 
\]
\end{define}

\subsection{Twisted complexes}

We briefly recall the formalism of twisted complexes. See \cite{BondalKapronov} \cite{SeidelFukaya}*{Sections~3k,l}  for further details.  Suppose $\cC$ is an $A_\infty$-category. For convenience, we assume the morphism spaces have characteristic 2. The additive enlargement $\Sigma \cC$ is as follows. Objects of $\Sigma \cC$ consist of collections $(X_i,V_i)_{i\in I}$ such that $I$ is a finite index set, each $X_i$ is an object of $\cC$ and each $V_i$ is a finite dimensional, graded vector space.  If $X=(X_i,V_i)_{i\in I}$ and $Y=(Y_j,W_j)_{j\in J}$ are objects of $\Sigma \cC$, then $\Hom(X,Y)$ is defined to be the direct sum over $(i,j)\in I\times J$ of $\Hom(V_i,W_j)\otimes \Hom_{\cC}(X_i,Y_j).$ It is straightforward to verify that $\Sigma \cC$ is naturally an $A_\infty$-category, where the $A_\infty$ composition $\mu_n^{\Sigma \cC}$ of a sequence of morphisms is computed by applying $\mu_n$ to the $\cC$ factors of the morphisms, and composing (in the normal sense) the linear map components of the morphisms.

A \emph{twisted complex} in $\cC$ consists of an object $X$ of $\Sigma \cC$, together with an endomorphism $\delta_X\in \Hom(X,X)$ of degree $-1$, such that
\[
\sum_{n \ge 1} \mu_{n}^{\Sigma \cC}(\delta_X,\dots, \delta_X)=0. 
\]
Note that some assumption is necessary to ensure that the above sum is finite. Seidel assumes that $\delta_X$ is \emph{strictly lower triangular} with respect a filtration on $X$. Although this is somewhat restrictive, this condition will typically be satisfied in the cases of interest in our paper since we focus on  hypercubes (which are filtered by $\bE_n:=\{0,1\}^n$) and similarly filtered chain complexes. 

Twisted complex $\Tw(\cC)$ naturally form an $A_\infty$-category. Morphisms are the same as in $\Sigma \cC$. Given a composable sequence of morphisms
\[
\begin{tikzcd}
X_0\ar[r, "f_{0,1}"] & \cdots \ar[r, "f_{n-1,n}"] & X_n,
\end{tikzcd}
\]
one defines
\[\begin{split}
&\mu_n^{\Tw}(f_{0,1},\dots, f_{n-1,n})\\
=&\sum_{i_0,\dots, i_n\ge 0} \mu_{n+i_0+\cdots+i_n}^{\Sigma \cC}(\overbrace{\delta_{X_0},\dots, \delta_{X_0}}^{i_0}, f_{0,1}, \overbrace{\delta_{X_1},\dots, \delta_{X_1}}^{i_1}, \dots, f_{n-1,n}, \overbrace{\delta_{X_n},\dots, \delta_{X_n}}^{i_n})
\end{split}
\]
If $f\colon X\to Y$ is a morphism of twisted complexes which satisfies $\mu_1(f)=0$, then we may naturally construct the mapping cone $\Cone(f)$, which is also a twisted complex. The underlying space of the cone is the union of the elements of $X$, and $\delta_{\Cone(f)}$ is the sum of $\delta_X$, $\delta_Y$ and $f$.

The most important examples are as follows, which are standard in the literature:
\begin{enumerate}
\item If $\cA$ is an $A_\infty$-algebra over an idempotent ring $\ve{i}$, then we may view $\cA$ as an $A_\infty$ category whose objects are idempotents $i\in \ve{i}$, and such that $\Hom(i,j)=i\cdot \cA\cdot j$. The category of twisted complexes $\Tw(\cA)$ is the same as the category of type-$D$ modules of Lipshitz, Ozsv\'{a}th and Thurston.
\item If $(W,\omega)$ is a symplectic manifold (satisfying suitable conditions for the Fukaya category to be defined, e.g. exactness), a \emph{hypercube of Lagrangians} $(L_\veps, \theta_{\veps,\veps'})_{\veps\in \bE_n}$ in the Fukaya category of $W$ is the same as a twisted complex of Lagrangians which is filtered by $\bE_n$. Concretely, this consists of a collection of Lagrangians $L_{\veps}$ indexed by $\bE_n$, together with morphisms $\theta_{\veps,\veps'}\in \CF(L_{\veps},L_{\veps'})$ ranging over $\veps<\veps'$. Furthermore, the following compatibility condition is satisfied whenever $\veps<\veps'$:
\[
\sum_{\veps=\veps_0<\cdots<\veps_n=\veps'} \mu_n(\theta_{\veps_0,\veps_1},\dots, \theta_{\veps_{n-1},\veps_n})=0.
\]
\end{enumerate}

\subsection{Lagrangians with local systems}
\label{sec:local-systems}

In Lagrangian Floer theory, one often considers Lagrangians with local systems. We recall that a local system over a Lagrangian consists of a vector space $E$ together with a monodromy representation of the fundamental groupoid of $L$ into $\Aut(E)$.
 We are interested in a special case of this construction where we have a choice of oriented, codimension 1 hypersurface $S$ in the Lagrangian $L$ such that the monodromy map is determined by the intersection number of a path with $S$, via a morphism of algebras
 \[
 \rho\colon \bF[\Z]\to \End_{\bF[U]}(E).
 \]
 We write elements of $\bF[\Z]$ as $e^{n}$, and refer $\rho$ as the monodromy.

 We will be focused on the case that the Lagrangian is the torus $\bT_{\g}=\g_1\times \cdots \times \g_n\subset \Sym^n(\Sigma)$ for a set of attaching curves $\gs\subset \Sigma$, and the hypersurface is 
\[
(K\times \Sym^{n-1}(\Sigma))\cap \bT_\g
\]
where $K$ is a simple closed curve on the Heegaard diagram.  If $L=\bT_{\g}$ for a set of attaching curves $\gs\subset \Sigma$, then we frequently abbreviate $\gs^{E}$ for a triple $(\bT_{\g}, E, \rho)$.

A Floer morphism from $\gs^{E}$ to $\gs^{E'}$ consists of a pair $\lb\xs,\phi\rb$ where $\xs\in \bT_{\g}\cap \bT_{\g'}$ and $\phi\colon E\to E'$ is an $\bF[U]$-equivariant map.

\begin{rem}
In practice, some of the local systems we consider are more naturally viewed as determining a monodromy map $\rho$ which takes values in $\End_{\bF[U]}(E,U^{-1}\cdot E)$ (i.e. negative powers of $U$ are allowed). See specifically the monodromy on $E_0$ in Section~\ref{sec:knot-surgery-formula}. 
\end{rem}

If $\gs_1^{E_1},\dots, \gs_n^{E_n}$ are attaching curves with local systems, with monodromy maps $\rho_1,\dots, \rho_n$, respectively, we now explain the holomorphic polygon maps.  If
\[
\lb \xs_{i,i+1}, \phi_{i,i+1}\rb \colon \gs_i^{E_i}\to \gs_{i+1}^{E_{i+1}}
\] 
is a sequence of Floer morphisms, for $1\le i\le n-1$, then the holomorphic polygon counts holomorphic $n$-gons of index $3-n$ with the inputs $\xs_{i,i+1}$. The outputs will consist of pairs $\lb \ys, f\rb$ where $f\colon E_1\to E_n$. If $\psi$ is a class of polygons, the output $f$ will be as follows. Let $m_i:=\# (\d_{\g_i}(\phi)\cap K)$. The morphism contributed by the polygon counting map is the composition
\begin{equation}
U^{n_w(\phi)}\left( \rho_n(e^{m_{n}})\circ \phi_{n-1,n}\circ \rho_{n-1}(e^{m_{n-1}})\circ \cdots \circ \phi_{1,2}\circ \rho_1(e^{m_1})\right).
\label{eq:local-systems}
\end{equation}
Schematically, we think of each Floer morphism $[\xs_{i,i+1}, \phi_{i,i+1}]$ as determining an $\bF[U]$-module map from $E_i$ to $E_{i+1}$ at each vertex of the polygon, and each $\rho_i(e^{m_i})$ as determining an $\bF[U]$-module endomorphism of $E_i$ for each edge of the polygon. The $\bF[U]$-module map in the output is the composition of all of these maps. See Figure~\ref{fig:45} for a schematic.

\begin{figure}[h]
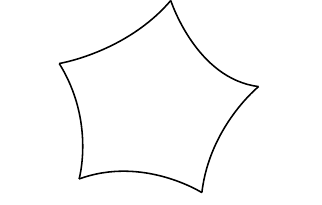
\caption{The $\bF[U]$-module map in the output is obtained by composing the shown morphisms in clockwise order, and then multiplying by $U^{n_w(\phi)}$.}
\label{fig:45}
\end{figure}

\begin{rem} 
Given an attaching curve $\gs$, we may always view it as having a trivial local system by setting $E=\bF[U]$ and $\rho(e^n)=\id$ for all $n$. 
\end{rem}

We will also be interested in the case that we have a collection of curves $K_1,\dots, K_n\subset \Sigma$ and an $\bF[U_1,\dots, U_n]$-module $E$. In this case, we are interested in a map
\[
\rho\colon \bF[\Z^n]\to \End_{\bF[U_1,\dots, U_n]}(E).
\]
 In the holomorphic polygon maps, we  use
\[
\rho\left(e^{(\#\d_{\g}(\phi)\cap K_1,\dots, \#\d_{\g}(\phi)\cap K_n)}\right),
\]
instead of $\rho(e^{\# \d_\g(\phi)\cap K})$.

See Section~\ref{sec:knot-surgery-formula} for the examples of local systems that appear in the surgery formula.

\subsection{Linear topological spaces}

\label{sec:linear-space-background}

We now recall some basic material on completions. A \emph{linear topological vector space} $X$ is a vector space $X$ together with a decreasing filtration by subspaces $X_{i}$ indexed by $i\in I$ for some partially ordered, directed set $I$. Such a filtration induces a topology on $X$, which has a basis of open sets at a point $x\in X$ of the form $x+X_{i}$, ranging over $i\in I$.

  Equivalently, one can consider vector spaces $X$ equipped with topologies such that addition is continuous and $0$ has a basis of open sets consisting of subspaces of $X$. For such a topology, we obtain a filtration consisting of the open subspaces, ordered by reverse inclusion.
  
  The \emph{completion} of a linear topological vector space $X$ is given by the inverse limit
  \[
\ve{X}:=  \varprojlim_{i\in I} X/X_{i}.
  \]
  Alternatively, one can identify the above completion with the set of Cauchy nets in $X$ (or Cauchy sequences if $X$ is first countable). Note that $\ve{X}$ is also a linear topological space.
  
  For our purposes, we define the set of morphisms between two linear topological spaces $X$ and $Y$ to be the set of continuous linear maps between completions, $\Hom(\ve{X},\ve{Y})$, equipped with the uniform topology. In particular, in this formulation of the category of linear topological spaces, every $X$ is isomorphic to its completion $\ve{X}$.

 \begin{example} The \emph{product topology} in the category of linear topological spaces coincides with the product topology in the category of topological spaces, i.e. if $(X_i)_{i\in I}$, are linear topological spaces, then a basis of open subspaces in $\prod_{i\in I} X_i$ consists of $\prod_{i\not \in S} X_i\times \prod_{i\in S} U_i$, where $S$ ranges over finite subsets of $I$ and $U_i\subset X_i$ are open subspaces. 
 \end{example}
  
  We recall that given two linear topological spaces $X$ and $Y$, there are multiple ways to topologize the tensor product $X\otimes Y$. For our purposes, the following two tensor products are the most important:
  \begin{enumerate}
  \item The \emph{standard} tensor product, $X\otimes^! Y$. A subspace $E\subset X\otimes Y$ is open if and only if there are open subspaces $U\subset X$ and $V\subset Y$ such that
  \[
  U\otimes Y+X\otimes V\subset E.
  \]
  \item The \emph{chiral} tensor product, $X\vecotimes Y$. A subspace $E\subset X\otimes Y$ is open if and only if there is an open subspace $U\subset X$ so that $U\otimes Y\subset E$, and for each $x\in X$, there is an open subspace $V_x\subset Y$ so that $ x \otimes V_x\subset E$. 
  \end{enumerate}
  The chiral tensor product is from Beilinson \cite{BeilinsonChiral}. See \cite{Positelski-Linear}*{Section~12} for additional background and exposition. Of course we can switch the roles of $X$ and $Y$ and consider $X\cevotimes Y$.
  
 \begin{rem} The motivation for these two tensor product operations is as follows. If $\cA$ is an associative algebra which is topologized by a family of 2-sided ideals $I_n$, then the map
  \[
  \mu_2\colon \cA\otimes^! \cA\to \cA
  \]
  is continuous. On the other hand, if $\cA$ satisfies the following:
  \begin{enumerate}
  \item $\cA$ is topologized by a family of right ideals $I_n$ (i.e. $I_n\cdot \cA\subset I_n$); and
  \item For each $x\in \cA$, the map $\mu_2\colon x\otimes \cA\to \cA$ is continuous,
  \end{enumerate}
  then the map 
  \[
  \mu_2\colon \cA\vecotimes \cA\to \cA
  \]
  is continuous. 
\end{rem}

Note that if $\cA$ is an achiral linear topological algebra, we can consider achiral linear topological type-$A$ modules ${}_{\cA} M$. These are linear topological $\ve{i}$-modules $M$ with a collection of maps
\[
m_{j+1}\colon \underbrace{\cA\otimes^!\cdots \otimes^! \cA}_{j}\otimes^! M\to M, \quad j\ge 0,
\]
which are continuous and satisfy the $A_\infty$-module relations.

\begin{rem} If unspecified, a linear topological algebra or module will mean an achiral linear topological algebra or module.
\end{rem}

If $\cA$ is a linear topological chiral algebra, we can also consider the category ${}_{\cA}\Mod_{\ch}$ \emph{chiral type-$\cA$ modules}, which are collections as above such that $m_{j+1}$ is continuous as a map
\[
m_{j+1}\colon \cA\vecotimes \cdots \vecotimes \cA\vecotimes M\to M.
\]
Naturally, we can also consider chiral type-$D$ modules, which are pairs $X^{\cA}=(X,\delta^1)$ where
\[
\delta^1\colon X\to X\vecotimes \cA
\]
is continuous and satisfies the type-$D$ structure relations. 

\begin{rem}
For any linear topological spaces $X$ and $Y$, there is a continuous map
\[
X\vecotimes Y\to X\otimes^! Y.
\]
In particular, any achiral algebra may naturally be viewed as a chiral linear topological algebra.
\end{rem}

\section{The knot and link surgery complexes}
\label{sec:describe-surgery-complexes}

In this section, we describe  the link surgery complex for a Morse framed link $L$ in a closed 3-manifold $Y$.
For null-homologous knots and links, we will see in Theorem~\ref{thm:MO=our-version} that the complex is isomorphic to the knot and link surgery formulas of Manolescu, Ozsv\'{a}th, and Szab\'{o} \cite{OSIntegerSurgeries} \cite{MOIntegerSurgery}. We note, however, that our description extends without complication to any link in a closed 3-manifold (in particular, we make no assumption that $L$ is even rationally null-homologous).

\subsection{Heegaard diagrams}

We begin by describing the Heegaard diagrams which appear in our description of the link surgery formula. We pick a Heegaard link diagram $(\Sigma,\as,\bs,\ws,\zs)$ for $(Y,L)$. By definition \cite{OSLinks}*{Section~1.2}, this consists of the following:
\begin{enumerate}
\item A Heegaard splitting $\Sigma$ of $Y$ such that $L$ intersects $\Sigma$ transversely at the finite  set $\ws\cup \zs$. Write $U_{\a}$ and $U_{\b}$ for the two components of $Y\setminus \Sigma$. 
\item We assume that $L\cap U_{\a}$ and $L\cap U_{\b}$ consist of boundary parallel tangles.
\item $\as$ consists of a collection of pairwise disjoint, simple closed curves on $\Sigma$ which cut $\Sigma$ into a union of planar surfaces, each component of which contains exactly one point from $\ws$ and one point from $\zs$. Furthermore, the two points of $\ws\cup \zs$ contained in a component of $\Sigma\setminus \ws\cup \zs$ are from the same component of $L$. The same condition holds for $\bs$. 
\end{enumerate}

\begin{define}
We say that a Heegaard link diagram $(\Sigma,\as,\bs,\ws,\zs)$ is a \emph{meridianal link diagram} if for each component $K_i$ of $L$, there is a special beta curve $\b_{0,i}$ such that $w_i$ and $z_i$ are on opposite sides of $\b_{0,i}$, as in Figure~\ref{fig:46}. 
\end{define}

\begin{figure}[h]
\begingroup%
  \makeatletter%
  \providecommand\color[2][]{%
    \errmessage{(Inkscape) Color is used for the text in Inkscape, but the package 'color.sty' is not loaded}%
    \renewcommand\color[2][]{}%
  }%
  \providecommand\transparent[1]{%
    \errmessage{(Inkscape) Transparency is used (non-zero) for the text in Inkscape, but the package 'transparent.sty' is not loaded}%
    \renewcommand\transparent[1]{}%
  }%
  \providecommand\rotatebox[2]{#2}%
  \newcommand*\fsize{\dimexpr\f@size pt\relax}%
  \newcommand*\lineheight[1]{\fontsize{\fsize}{#1\fsize}\selectfont}%
  \ifx\svgwidth\undefined%
    \setlength{\unitlength}{119.02111852bp}%
    \ifx\svgscale\undefined%
      \relax%
    \else%
      \setlength{\unitlength}{\unitlength * \real{\svgscale}}%
    \fi%
  \else%
    \setlength{\unitlength}{\svgwidth}%
  \fi%
  \global\let\svgwidth\undefined%
  \global\let\svgscale\undefined%
  \makeatother%
  \begin{picture}(1,0.42759897)%
    \lineheight{1}%
    \setlength\tabcolsep{0pt}%
    \put(0,0){\includegraphics[width=\unitlength,page=1]{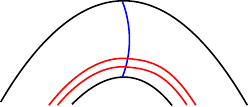}}%
    \put(0.48748632,0.21416549){\makebox(0,0)[rt]{\lineheight{1.25}\smash{\begin{tabular}[t]{r}$w$\end{tabular}}}}%
    \put(0.54857101,0.21416569){\makebox(0,0)[lt]{\lineheight{1.25}\smash{\begin{tabular}[t]{l}$z$\end{tabular}}}}%
    \put(0.54107791,0.31224269){\color[rgb]{0,0,1}\makebox(0,0)[lt]{\lineheight{1.25}\smash{\begin{tabular}[t]{l}$\b_0$\end{tabular}}}}%
    \put(0.26426818,0.10391035){\color[rgb]{1,0,0}\makebox(0,0)[rt]{\lineheight{1.25}\smash{\begin{tabular}[t]{r}$\a$\end{tabular}}}}%
  \end{picture}%
\endgroup%

\caption{A special beta curve $\b_0$ of a meridianal Heegaard link diagram.}
\label{fig:46}
\end{figure}

\subsection{The knot and link Floer complexes}

In this section, we recall some background on knot Floer homology \cite{OSKnots} \cite{RasmussenKnots} and link Floer homology \cite{OSLinks}. 

To a knot $K\subset Y$, there is a group
\[
\cCFK(Y,K)
\]
defined using a Heegaard knot diagram $(\Sigma,\as,\bs,w,z)$. This complex is freely generated over intersection points $\xs\in \bT_{\a}\cap \bT_{\b}$ by the polynomial ring $\bF[\scU,\scV]$. The differential counts pseudo-holomorphic disks via the formula:
\begin{equation}
\d \xs=\sum_{\substack{\phi\in \pi_2(\xs,\ys)\\ \mu(\phi)=1}} \# (\cM(\phi)/\R) \scU^{n_w(\phi)} \scV^{n_z(\phi)} \cdot \ys. \label{eq:knot-floer-differential}
\end{equation}

If $L\subset Y$ is a link, there is an analogous complex $\cCFL(Y,L)$. Given a Heegaard link diagram $(\Sigma,\as,\bs,\ws,\zs)$, we define $\cCFL(Y,L)$ to be free generated over $\bF[\scU_1,\scV_1,\dots, \scU_\ell, \scV_\ell]$ by intersection points $\xs\in \bT_{\a} \cap \bT_{\b}$. The differential is similar to Equation~\eqref{eq:knot-floer-differential}, except weighting a disk by the product of $\scU_i^{n_{w_i}(\phi)} \scV_i^{n_{z_i}(\phi)}$. 

We say that any collection $\ve{p}\subset \ws\cup \zs$ which contains one point from each link component is a \emph{complete collection} of basepoints. Given a complete collection $\ve{p}$, there is a map
\[
\frs_{\ps}\colon \bT_{\a}\cap \bT_{\b}\to \Spin^c(Y).
\]
See \cite{OSDisks}*{Section~2.6}. We note that
\[
\frs_{\ws}(\xs)-\frs_{\zs}(\xs)=\PD[L].
\]

There is a well-defined grading $\gr_{\ve{p}}$, defined on intersection points where $\gr_{\ve{p}}(\xs)$ is torsion. This is extended over the ring $\bF[\scU_1,\scV_1,\dots, \scU_\ell, \scV_\ell]$ by declaring $\scU_i$ to have $\gr_{\ve{p}}$-grading $-2$ if $w_i\in \ve{p}$ and grading 0 otherwise. Similarly $\scV_i$ is given grading $-2$ if $z_i\in \ve{p}$ and 0 otherwise.

If $L$ is a null-homologous link in $S^3$, then there is an $\ell$-component Alexander grading $A=(A_1,\dots, A_\ell)$ which takes values in the lattice
\[
\bH(L)=\prod_{i=1}^\ell \Z+\lk(L_i,L\setminus L_i). 
\]

\subsection{The knot surgery complex}
\label{sec:knot-surgery-formula}

We begin by formulating the knot surgery complex for a knot $K\subset Y$.  We pick a meridianal Heegaard diagram $(\Sigma,\as,\bs,w,z)$ for $(Y,K)$.

 We pick a \emph{shadow} of the knot $K$ on $\Sigma$, by which we mean a concatenation of two embedded arcs connecting $w$ to $z$, such that one arc avoids $\as$ and the other arc avoids $\bs$. Pushing the interior of the arc which avoids $\as$ (resp. $\bs$) into the alpha-handlebody (resp. beta-handlebody) yields a copy of the knot $K$. Abusing notation, we write $K\subset \Sigma$ also for the shadow. Observe that a choice of a shadow of $K$ on $\Sigma$ induces a Morse framing of the knot $K$ (i.e. the framing which is parallel to $T\Sigma$). 
 
 By construction, the shadow $K$ intersects only the special meridianal beta curve $\b_0$, and no others. In particular, the boundary of a neighborhood of $K\cup \b_0$ is a punctured torus which is disjoint from the beta curves except for $\b_0$. We call this torus the \emph{special toroidal region for $K$}. In this region, we let $\b_1$ denote the curve shown in Figure~\ref{fig:47}, obtained by winding $\b_0$ as shown. We form a collection of curves $\bs_1$ by taking $\b_1$, and adjoining small translates of the curves from $\bs\setminus \b_0$.

We will decorate $\bs_0$ and $\bs_1$ with the following local systems of $\bF[U]$-modules:
\begin{enumerate}
\item We decorate $\bs_0$ with the module $E_0:=\bF[\scU,\scV]$. The monodromy 
\[
\rho_0\colon \bF[\Z]\to \Hom_{\bF[U]}(E_0, U^{-1} E_0)
\]
sends $e^n$ to $\scV^{n}\cdot \id$.
\item We decorate $\bs_1$ with the module $E_1:=\bF[U,T,T^{-1}]$. The monodromy 
\[
\rho_1\colon \bF[\Z]\to \Hom_{\bF[U]}(E_1,E_1)
\]
 sends $e^n$ to $T^n\cdot \id$.
\end{enumerate}

\begin{rem}
\label{rem:negative-powers}
\begin{enumerate}
\item As in Equation~\eqref{eq:local-systems}, the output $\bF[U]$-module morphism is determined by the $\bF[U]$-module morphism inputs, the evaluation of the monodromy $\rho$ along the boundary of a polygon, and also an overall factor of $U^{n_w(\phi)}$. 
\item  Since the action on $E_0$ involves $\scV^n\cdot \id$ for $n\in \Z$, \emph{a-priori} negative powers of $\scV$ may appear.  Hence it is not immediately clear from our initial descriptions that these maps are well-defined. We verify in Lemma~\ref{lem:positivity-simple}, below, that  no negative powers of $U$, $\scU$ or $\scV$ appear in surgery complex. See also Section~\ref{sec:positivity}.  
\end{enumerate}
\end{rem}

We now describe two morphisms
\[
\lb\theta^+_\sigma,\phi^\sigma\rb, \lb\theta^+_\tau, \phi^\tau\rb\colon \b_0^{E_0}\to \b_1^{E_1}.
\]
In the above, we define
\[
\phi^\sigma, \phi^\tau\colon \bF[\scU,\scV]\to \bF [U,T,T^{-1}]
\]
via the formulas
\begin{equation}
\phi^\sigma(\scU^i\scV^j)=U^i T^{j-i}\quad \text{and}\quad \phi^\tau(\scU^i \scV^j)=U^j T^{j-i}.
\label{eq:phi-sigma-tau-maps}
\end{equation}
The intersection points of $\theta^+_{\sigma}$ and $\theta^+_\tau$ are shown in Figure~\ref{fig:47}. Outside of the special genus 1 region they coincide with top degree intersection points between curves of $\bs$ and their translates.

\begin{figure}[h]
\begingroup%
  \makeatletter%
  \providecommand\color[2][]{%
    \errmessage{(Inkscape) Color is used for the text in Inkscape, but the package 'color.sty' is not loaded}%
    \renewcommand\color[2][]{}%
  }%
  \providecommand\transparent[1]{%
    \errmessage{(Inkscape) Transparency is used (non-zero) for the text in Inkscape, but the package 'transparent.sty' is not loaded}%
    \renewcommand\transparent[1]{}%
  }%
  \providecommand\rotatebox[2]{#2}%
  \newcommand*\fsize{\dimexpr\f@size pt\relax}%
  \newcommand*\lineheight[1]{\fontsize{\fsize}{#1\fsize}\selectfont}%
  \ifx\svgwidth\undefined%
    \setlength{\unitlength}{115.18321052bp}%
    \ifx\svgscale\undefined%
      \relax%
    \else%
      \setlength{\unitlength}{\unitlength * \real{\svgscale}}%
    \fi%
  \else%
    \setlength{\unitlength}{\svgwidth}%
  \fi%
  \global\let\svgwidth\undefined%
  \global\let\svgscale\undefined%
  \makeatother%
  \begin{picture}(1,0.90638695)%
    \lineheight{1}%
    \setlength\tabcolsep{0pt}%
    \put(0,0){\includegraphics[width=\unitlength,page=1]{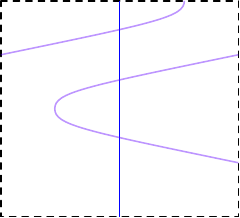}}%
    \put(0.51477588,0.6810825){\color[rgb]{0,0,1}\makebox(0,0)[lt]{\lineheight{1.25}\smash{\begin{tabular}[t]{l}$\b_0$\end{tabular}}}}%
    \put(0.81918578,0.27853692){\color[rgb]{0.37647059,0,1}\makebox(0,0)[lt]{\lineheight{1.25}\smash{\begin{tabular}[t]{l}$\b_1$\end{tabular}}}}%
    \put(0.52841397,0.36031568){\makebox(0,0)[lt]{\lineheight{1.25}\smash{\begin{tabular}[t]{l}$\theta_\sigma^{+}$\end{tabular}}}}%
    \put(0.47174178,0.0502421){\makebox(0,0)[rt]{\lineheight{1.25}\smash{\begin{tabular}[t]{r}$\theta_\tau^+$\end{tabular}}}}%
    \put(0,0){\includegraphics[width=\unitlength,page=2]{fig47.pdf}}%
    \put(0.42650522,0.18215764){\makebox(0,0)[rt]{\lineheight{1.25}\smash{\begin{tabular}[t]{r}$w$\end{tabular}}}}%
    \put(0.55460331,0.18215764){\makebox(0,0)[lt]{\lineheight{1.25}\smash{\begin{tabular}[t]{l}$z$\end{tabular}}}}%
    \put(0,0){\includegraphics[width=\unitlength,page=3]{fig47.pdf}}%
    \put(0.82640425,0.11773934){\makebox(0,0)[lt]{\lineheight{1.25}\smash{\begin{tabular}[t]{l}$K$\end{tabular}}}}%
  \end{picture}%
\endgroup%

\caption{The diagram $(\bT^2,\b_0,\b_1,w,z)$. }
\label{fig:47}
\end{figure}

The diagram $(\bT^2,\b_0,\b_1,w,z)$ is a knot diagram for $S^1\times \{pt\}\subset S^1\times S^2$. Furthermore, there are two $\Spin^c$ structures represented by intersection points on the diagram. We define $\theta_\sigma^+$ denotes the top degree generator in the torsion $\Spin^c$ structure with respect to the $w$ basepoint. The intersection point $\theta_\tau^+$ denotes the top degree generator in the torsion $\Spin^c$ structure with respect to the $z$ basepoint.

\begin{lem}\label{lem:theta-tau/sigma-cycles}
 The morphisms $\lb\theta_\sigma^+,\phi^\sigma\rb$ and $\lb\theta^+_\tau,\phi^\tau\rb$ are both cycles.
\end{lem}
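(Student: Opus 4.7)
I would show $\mu_1\langle \theta_\sigma^+,\phi^\sigma\rangle=0$ directly; the argument for $\langle \theta_\tau^+,\phi^\tau\rangle$ is the mirror argument with $w$ and $z$ swapped. Unpacking the differential according to Equation~\eqref{eq:local-systems}, a Maslov index $1$ bigon $\psi$ from $\theta_\sigma^+$ to an intersection point $\ys\in\bT_{\bs_0}\cap\bT_{\bs_1}$ with $m_0=\#\partial_{\b_0}(\psi)\cap K$ and $m_1=\#\partial_{\b_1}(\psi)\cap K$ contributes $\langle \ys,f_\psi\rangle$ where
\[
f_\psi=U^{n_w(\psi)}\,\rho_1(e^{m_1})\circ \phi^\sigma\circ \rho_0(e^{m_0})=U^{n_w(\psi)}\, T^{m_1}\,\phi^\sigma\,\scV^{m_0}.
\]

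\textbf{Step 1 (localization to the toroidal region).} Outside the special toroidal region for $K$, the beta curves $\bs_0$ and $\bs_1$ are small translates of one another. I would therefore reduce, via a standard product argument, to holomorphic disks in the local diagram $(\bT^2,\b_0,\b_1,w,z)$ of Figure~\ref{fig:47}. This local diagram represents $(S^1\times S^2,\,S^1\times\{\mathrm{pt}\})$.

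\textbf{Step 2 ($\Spin^c$ obstruction).} By construction, $\theta_\sigma^+$ and $\theta_\tau^+$ lie in the torsion $\Spin^c$ structures with respect to $w$ and $z$ respectively. Since $\frs_w-\frs_z=\PD[K]$ and $[K]$ is nontrivial in $H_1(S^1\times S^2)$, no topological disk class joins $\theta_\sigma^+$ to $\theta_\tau^+$. Hence every Maslov index $1$ disk in $\mu_1\langle \theta_\sigma^+,\phi^\sigma\rangle$ runs from $\theta_\sigma^+$ to itself.

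\textbf{Step 3 (enumeration and cancellation in pairs).} I would then read off from Figure~\ref{fig:47} that the Maslov index $1$ disks from $\theta_\sigma^+$ to itself organize into symmetric pairs $(\psi_L,\psi_R)$, obtained from one another by the natural involution of the toroidal region exchanging the $w$-side and the $z$-side. For such a pair one verifies
\[
n_w(\psi_L)+m_0(\psi_L)=n_w(\psi_R)+m_0(\psi_R)
\quad \text{and}\quad m_1(\psi_L)=m_1(\psi_R),
\]
(the $\scV$-monodromy absorbs a difference in $n_w$ between the two sides). Using the intertwining relations
\[
\phi^\sigma\circ \scV=T\circ\phi^\sigma,\qquad \phi^\sigma\circ \scU=UT^{-1}\circ\phi^\sigma,
\]
which are immediate from Equation~\eqref{eq:phi-sigma-tau-maps} and match the defining relations of $\sigma$ in $\cK$, each $f_{\psi_*}$ reduces to the same monomial in $U$, $T$, $\phi^\sigma$; the two contributions therefore cancel in characteristic $2$. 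The analogous relations $\phi^\tau\circ \scV=UT\circ\phi^\tau$ and $\phi^\tau\circ \scU=T^{-1}\circ\phi^\tau$ handle the $\tau$ case.

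\textbf{Main obstacle.} The nontrivial step is Step 3: identifying the Maslov index $1$ disks in the local picture and checking that the intersection numbers $n_w,\,m_0,\,m_1$ combine so that paired disks produce the \emph{same} weighted output (rather than merely related ones). This is not formal — it uses the specific shape of the winding in Figure~\ref{fig:47} — but once the symmetry is set up, the algebraic verification is exactly the assertion that $\phi^\sigma$ and $\phi^\tau$ were chosen to intertwine the monodromies of $E_0$ and $E_1$. In effect, the lemma is the infinitesimal (cochain-level) shadow of the fact that $\sigma$ and $\tau$ are well-defined generators of $\cK$.
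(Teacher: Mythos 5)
Your overall route is the same as the paper's: reduce to the genus-one local diagram $(\bT^2,\b_0,\b_1,w,z)$ (the paper says ``destabilize''), enumerate the index-$1$ bigons there, and cancel them in pairs mod $2$ using the intertwining relations $\phi^\sigma\circ\scV=T\circ\phi^\sigma$, etc. However, Steps 2--3 contain concrete errors. Your $\Spin^c$ argument only rules out disks ending at $\theta_\tau^{\pm}$; it does not follow that the index-$1$ disks run from $\theta_\sigma^+$ ``to itself.'' In fact they run to the other generator $\theta_\sigma^-$ in the same $\Spin^c$ class: in the local model there are exactly two index-$1$ bigons from $\theta_\sigma^+$ to $\theta_\sigma^-$, each with a unique holomorphic representative by the Riemann mapping theorem, and both have $n_w=n_z=0$. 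One has boundary disjoint from $K$, contributing $\langle\theta_\sigma^-,\phi^\sigma\rangle$; the other has $\#(\d_{\b_0}\cap K)=1$ and $\#(\d_{\b_1}\cap K)=-1$, contributing $\langle\theta_\sigma^-,\,T^{-1}\circ\phi^\sigma\circ\scV\rangle=\langle\theta_\sigma^-,\phi^\sigma\rangle$, and the two cancel.

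Your stated matching conditions for a cancelling pair are also not the right invariant and fail for this actual pair: the weight of a bigon is $U^{n_w}\,T^{m_1}\circ\phi^\sigma\circ\scV^{m_0}=U^{n_w}\,T^{m_0+m_1}\circ\phi^\sigma$, so two contributions agree precisely when $n_w$ and $m_0+m_1$ agree separately. For the two bigons above, $(m_0,m_1)$ is $(0,0)$ versus $(1,-1)$, so your conditions ``$n_w+m_0$ equal and $m_1$ equal'' do not hold (and in general ``the $\scV$-monodromy absorbs a difference in $n_w$'' is false, since postcomposing $\phi^\sigma$ with $\scV$ changes the $T$-power of the output, not the $U$-power). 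These are fixable slips rather than a wrong strategy — the key algebraic point you identify, that $\phi^\sigma$ and $\phi^\tau$ intertwine the monodromies of $E_0$ and $E_1$, is exactly what makes the two bigons cancel — but as written the enumeration and the cancellation criterion are incorrect and need to be replaced by the computation above.
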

\begin{proof} By destabilizing, it is sufficient to consider the case when the genus is 1. We focus on $\lb\theta_\sigma^+,\phi^\sigma\rb$, since the argument for the $\tau$-generator is not substantially different. There are two index 1 bigons from $\theta_\sigma^+$ to $\theta_\sigma^-$, which both have a unique holomorphic representative by the Riemann mapping theorem. One bigon has boundary which is disjoint from the knot shadow $K$. This contributes $\lb\theta_\sigma^-,\phi^\sigma\rb$ to $\d \lb\theta_\sigma^+,\phi^\sigma\rb$. The other bigon class $\psi$ has
\[
\#(\d_{\b_0}(\psi)\cap K)=1\quad \text{and} \quad \# (\d_{\b_1}(\psi)\cap K)=-1.
\]
Hence, the contribution to $\d\lb\theta_\sigma^+,\phi^\sigma\rb$ is
\[
\lb\theta_\sigma^-, T^{-1}\circ \phi^\sigma\circ \scV\rb=\lb\theta_\sigma^-,\phi^\sigma\rb.
\]
This cancels the other term in $\d\lb\theta_\sigma^+,\phi^\sigma\rb$, proving the claim.
\end{proof}

We write $\lambda$ for the above Morse framing of $K$. The mapping cone complex $\bX_{\lambda}(K)$ is defined to be the $\bF[U]$-chain complex
\[
\bX_{\lambda}(K)=\Cone\left(v+h\colon \ve{\CF}^-(\as, \bs_0^{E_0})\to \ve{\CF}^-(\as,\bs_1^{E_1})\right),
\]
where $v$ counts holomorphic triangles with inputs $\lb\theta_\sigma^+,\phi^\sigma\rb$ and $h$ counts holomorphic triangles with inputs $\lb\theta_\tau^+,\phi^\tau\rb$. Phrased another way, the mapping cone complex is $\ve{\CF}^-(\as, \scB_\lambda)$, where $\scB_\lambda$ is the twisted complex of attaching curves
\[
\scB_\lambda=\begin{tikzcd}[column sep=3cm] \bs_0^{E_0}\ar[r, "{\lb\theta_\sigma^+,\phi^\sigma\rb+\lb\theta_\tau^+,\phi^\tau\rb}"]& \bs_1^{E_1}.
\end{tikzcd}
\]

\begin{rem} Note that in Ozsv\'{a}th and Szab\'{o}'s framework, the map $h$ depended on the framing $\lambda$, whereas $v$ was independent. In our framework, both $v$ and $h$ depend on the framing. See Section~\ref{sec:reformulation} for the relation between the two descriptions.
\end{rem}

\begin{lem}\label{lem:positivity-simple} The differential on $\bX_{\lambda}(K)$ involves no negative powers of $\scU$, $\scV$ or $U$.
\end{lem}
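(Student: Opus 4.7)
The plan is to reduce the claim to a single topological identity relating the monodromy exponents to the basepoint multiplicities. After this identity is in place, positivity becomes a direct bookkeeping exercise.

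First, I will establish the identity
\[
\#(\d_{\b_0}(\phi) \cap K) = n_z(\phi) - n_w(\phi)
\]
for every class $\phi$ of disks or polygons whose boundary contains a segment on $\b_0$. In the special toroidal region, $K$ meets $\b_0$ transversely at a single point $p$ which lies between the region containing $w$ and the region containing $z$; moreover $K$ is disjoint from every other beta curve. Thus all crossings of $\d_{\b_0}(\phi)$ with $K$ occur at $p$, and their signed count equals the jump in local multiplicity of $\phi$ across $p$, namely $n_z(\phi) - n_w(\phi)$. This can be made rigorous by checking the identity on a spanning set of $\pi_2(\xs,\ys)$ (small bigons near each intersection point, together with periodic domains).

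Next, I will use the identity to verify positivity of each component of the differential on $\bX_\lambda(K)$. For the internal differential on $\ve{\CF}^-(\as, \bs_0^{E_0})$, a disk $\phi$ acting on $(\xs, \scU^a \scV^b)$ contributes
\[
U^{n_w(\phi)} \cdot \scV^{\#(\d_{\b_0}(\phi) \cap K)} \cdot \scU^a \scV^b = \scU^{a + n_w(\phi)} \, \scV^{b + n_z(\phi)}
\]
after substituting $U = \scU\scV$ and applying the identity. Since $n_w(\phi), n_z(\phi) \ge 0$ for any pseudo-holomorphic representative, all exponents are non-negative; in fact this is precisely the classical knot Floer contribution $\scU^{n_w}\scV^{n_z}$. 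For the internal differential on $\ve{\CF}^-(\as, \bs_1^{E_1})$, the contribution is $U^{n_w(\phi)} T^{\#(\d_{\b_1}(\phi) \cap K)}$; since $T$ is invertible in $E_1$, only $n_w(\phi) \ge 0$ is required, which is automatic.

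Finally, for the triangle maps $v$ and $h$ I will directly compute the composition $U^{n_w(\psi)} \cdot T^{\#(\d_{\b_1}(\psi) \cap K)} \circ \phi^{\sigma/\tau} \circ \scV^{\#(\d_{\b_0}(\psi) \cap K)}$ on a monomial $\scU^a \scV^b$, using the formulas in \eqref{eq:phi-sigma-tau-maps}. Interpreting intermediate steps in $U^{-1} E_0$ if $b + \#(\d_{\b_0}(\psi)\cap K) < 0$, a short calculation yields outputs whose $U$-exponent is $a + n_w(\psi)$ for the $v$-map and, after invoking the key identity, $b + n_z(\psi)$ for the $h$-map, each times a Laurent monomial in $T$. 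In both cases the exponent of $U$ is non-negative, so the output lies in $E_1 = \bF[U, T, T^{-1}]$. The hardest step is giving a careful proof of the key identity with consistent sign conventions; once this is settled, the remaining checks are mechanical.
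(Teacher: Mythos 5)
Your proof is correct and takes essentially the same route as the paper: the identity $\#(\d_{\b_0}(\phi)\cap K)=n_z(\phi)-n_w(\phi)$ is exactly what the paper invokes ``by construction'' (it is spelled out in the remark following Proposition~\ref{prop:general-HE}), and your triangle computation is the paper's displayed calculation once $U^{n_w(\psi)}$ is commuted through the $\bF[U]$-equivariant maps to rewrite the weight as $T^{\#(\d_{\b_1}(\psi)\cap K)}\circ\phi^{\sigma/\tau}\circ\scU^{n_w(\psi)}\scV^{n_z(\psi)}$. One small slip: $K$ is \emph{not} disjoint from every other beta curve (it meets $\b_1$ once), but this does not affect your key identity, since only $\b_0$ crosses the short subarc of $K$ between $w$ and $z$, and you already account for the $\b_1$ crossings through the invertible $T$-power.
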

\begin{proof} Consider holomorphic disks from in $\ve{\CF}^-(\as, \bs_0^{E})$. By construction, holomorphic disks are counted with weights $\scV^{n_z(\phi)-n_w(\phi)} U^{n_w(\phi)}$. Since $U=\scU\scV$, this is $\scU^{n_w(\phi)} \scV^{n_z(\phi)}$, and hence no negative powers appear because $n_w(\phi),n_z(\phi)\ge 0$.

We now consider holomorphic triangles counted by $v$ (i.e. with input $\lb \theta^+_\sigma, \phi^\sigma\rb$). A holomorphic triangle is weighted by
\[
U^{n_w(\phi)}T^{\#\d_1(\phi) \cap K} \circ \phi^\sigma \circ \scV^{n_z(\phi)-n_w(\phi)}= T^{\#\d_1(\phi) \cap K}\circ \phi^\sigma \circ \scU^{n_w(\phi)} \scV^{n_z(\phi)}
\]
which clearly maps $E_0$ to $E_1$ with no negative $U$-powers. 
\end{proof}

 \subsection{A symmetry}
 \label{sec:symmetry}
 Our construction of the local systems $E_0$ and $E_1$ is asymmetric with respect to the basepoints $w$ and $z$ since the monodromy map associated with $E_0$ is $\scV^{\# \d_{\b_0}(\phi)\cap K}$, and we multiply all of the maps by an overall factor of $U^{n_w(\phi)}$. We call these maps the \emph{$w$-pointed} maps. We could symmetrically define \emph{$z$-pointed} maps, where we equip $E_0$ with the morphism $\rho(e^n)=\scU^{-n}$, and multiply an output morphism by an overall factor of $U^{n_z(\phi)}$.

 It turns out that the $w$-pointed and the $z$-pointed maps coincide in a fairly general context, as we now describe. Suppose that $\Sigma$ is a Heegaard surface with a distinguished knot trace $K\subset \Sigma$, with $w,z\in K$ distinct points. Suppose that $\gs_{\veps_1}^{E_{\veps_1}},\dots, \gs_n^{E_{\veps_n}}$ are attaching curves with local systems on a Heegaard diagram, such that each $E_{\veps_i}$ is one of $E_0$, $E_1$ or the trivial local system (i.e. $\bF[U]$). Suppose that the curves labeled $E_0$ intersect only the component of $K\setminus \{w,z\}$ oriented from $w$ to $z$, and all other attaching curves are disjoint from this subarc.

 \begin{lem}  
In the above situation, the $w$-pointed maps coincide with the $z$-pointed maps for any choices of Floer morphisms.
 \end{lem}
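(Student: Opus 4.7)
The plan is to compare the two conventions polygon-by-polygon, showing that they agree on every term. Let $\phi$ be a class of holomorphic polygons and write $m_i = \#(\d_{\g_i}(\phi) \cap K)$. The first observation is that the monodromies $\rho(e^n) = T^n \cdot \id$ on $E_1$ and $\rho(e^n)=\id$ on trivial local systems do not reference either basepoint, so those edges contribute identically in both conventions. Thus the only places where the $w$-pointed and $z$-pointed outputs can differ are (a) the overall scalar factor $U^{n_w(\phi)}$ versus $U^{n_z(\phi)}$, and (b) the monodromy factors $\scV^{m_i}$ versus $\scU^{-m_i}$ at each $E_0$-labeled edge.

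Next, I would perform the edge-by-edge rewriting at $E_0$-labeled edges. Using that $U$ acts on $E_0$ as $\scU \scV$, we have
\[
\scV^{m_i} = U^{m_i}\cdot \scU^{-m_i}.
\]
Because $U$ is central in all of the morphism spaces appearing in the polygon count (it acts as the same $\bF[U]$-scalar on every local system involved), these $U^{m_i}$ factors can be pulled out of the composition and gathered into the overall scalar. After doing this at every $E_0$-labeled edge, the $w$-pointed output is rewritten as $U^{n_w(\phi) + \sum_i m_i}$ times the identical composition that defines the $z$-pointed output with its scalar $U^{n_z(\phi)}$.

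The final step is the identity $n_z(\phi) - n_w(\phi) = \sum_i m_i$, with the sum over $E_0$-labeled edges. This is the standard Heegaard Floer multiplicity computation: $n_z(\phi) - n_w(\phi)$ equals the signed intersection of $\d D(\phi)$ with any oriented arc from $w$ to $z$, in particular the subarc $\alpha_{wz}\subset K$ from $w$ to $z$. By hypothesis, every attaching curve other than the $E_0$-labeled ones is disjoint from $\alpha_{wz}$, and for an $E_0$-labeled curve $\g_i$ we have $\g_i \cap K = \g_i \cap \alpha_{wz}$, so $\#(\d D(\phi) \cap \alpha_{wz}) = \sum_i m_i$. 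Combining this with the previous paragraph makes $U^{n_w(\phi) + \sum_i m_i} = U^{n_z(\phi)}$, and the two conventions produce identical outputs.

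The principal subtlety is the sign convention in the identity $n_z(\phi) - n_w(\phi) = \sum_i m_i$: the exponents are integers (not merely $\bmod\ 2$ quantities), so one must verify that $m_i$, defined via $\#(\d_{\g_i}(\phi) \cap K)$ with its given signs, matches the signed intersection used in the multiplicity identity once $K$ is oriented from $w$ to $z$ along $\alpha_{wz}$. This is the familiar relation between boundary intersections and jumps of the multiplicity function of a domain, and once orientations are fixed it is a direct check.
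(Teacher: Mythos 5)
Your proof is correct and follows essentially the same route as the paper: both arguments note the two conventions count the same curves, rewrite each $E_0$-edge factor via $\scV^{m}=U^{m}\scU^{-m}$, use $\bF[U]$-equivariance to collect the $U^{m_i}$ into the overall factor, and conclude with the identity $\sum_i m_i=n_z(\phi)-n_w(\phi)$ coming from the fact that only $E_0$-labeled curves cross the subarc from $w$ to $z$. Your explicit justification of that multiplicity identity is slightly more detailed than the paper's (which folds it into a sign normalization), but the argument is the same.
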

 \begin{proof} The $w$-pointed and the $z$-pointed maps count the same holomorphic curves, so it suffices to address the associated $\bF[U]$-module maps. We consider a homology class $\phi$ of polygons. Let $\delta_1,\dots, \delta_j$ denote the multiplicities of the curves along $0$-subarc from $w$ to $z$, normalized in sign so that 
 \[
 \delta_1+\dots+\delta_j=n_z(\phi)-n_w(\phi).
 \]
 (We recall that by construction the only attaching curves which intersect the short arc from $w$ to $z$ are small translates of $\b_0$). The $w$-pointed maps count
 curves representing $\phi$ with composite factors of $\scV^{\delta_i}$ and an overall factor of $U^{n_w(\phi)}$. The $z$-pointed maps will instead have composite factors of $\scU^{-\delta_i}$ and an overall factor of $U^{n_z(\phi)}$.
 We observe 
 \[
 \scV^{\delta_i}=U^{\delta_i} \scU^{-\delta_i}.
 \]
  Since all maps are $\bF[U]$-equivariant, we may factor out $U^{\delta_i}$ from the $w$-pointed map which has factors of $\scU^{-\delta_i}$, and with an overall factor of
 \[
 U^{n_w(\phi)} U^{\delta_1+\cdots +\delta_j}=U^{n_z(\phi)}.
 \] 
 The $U$-power of the $z$-pointed map, so the proof is complete.
 \end{proof}

\subsection{The link surgery complex}

\label{sec:background-link-surgery}

The construction of the link surgery complex is obtained by iterating the construction of the knot surgery complex for each link component. We now describe the construction using the framework described in the previous section.

We take a meridianal Heegaard link diagram $(\Sigma,\as,\bs,\ws,\zs)$ for $(Y,L)$ and we assume that we pick embedded shadows of the components $K_1,\dots, K_n$ of $L$ which are pairwise disjoint. The union of the shadow of $K_i$ with its special beta meridianal curve is a punctured torus. Write $\b_{0,i}$ for the special meridian of $K_i$, and write $\b_{1,i}$ for a copy of the $\b_1$ curve from Figure~\ref{fig:47}, placed into this special genus 1 region.

We now construct curves $\bs_{\veps}$ indexed by $\veps\in \bE_\ell$. We write the original beta curves as $\bs=\bs'\cup \{\b_{0,1},\dots, \b_{0,n}\}$. We form $\bs_{\veps}$ by taking small translates of $\bs'$, as well as a small translate of $\b_{0,i}$ for $i$ such that $\veps_i=0$, and a small translate of $\b_{1,i}$ for $i$ such that $\veps_i=1$. We apply small Hamiltonian translations to the curves in each $\bs_{\veps}$ to achieve admissibility and ensure that if two copies of a given beta curve appear in two collections, then they intersect in a pair of points.

To the attaching curve $\bs_{\veps}$, we associate the $\bF[U_1,\dots, U_n]$-module
\[
E_{\veps}:=E_{\veps_1}\otimes_{\bF} \cdots \otimes_{\bF} E_{\veps_n}. 
\]
 We set
\[
\phi_i^{\sigma}=\underbrace{\id\otimes \cdots\otimes \id}_i \otimes  \phi^\sigma\otimes \underbrace{\id \otimes  \cdots \otimes  \id}_{n-i-1}.
\]
We define $\phi_i^\tau$ similarly.

If $\veps<\veps'$ and $|\veps'-\veps|_{1}=1$ (where $|-|_{1}$ denotes the $L^1$-norm) we define two cycles
\[
\lbmed\theta_{\sigma,\veps,\veps'}^+, \phi^\sigma_i\rbmed \quad \text{and} \quad \lbmed\theta_{\tau,\veps,\veps'}^+, \phi^\tau_i \rbmed.
\]
Here, $\theta_{\sigma,\veps,\veps'}^+$ denotes the top $\gr_{\ws}$-graded intersection point  $\xs\in \bT_{\b_{\veps}}\cap \bT_{\b_{\veps'}}$ such that $\frs_{\ws}(\xs)$ is torsion.
 Similarly $\theta_{\tau,\veps,\veps'}^+$ is the top $\gr_{\zs}$-graded intersection point over intersection points $\xs\in \bT_{\b_{\veps}}\cap \bT_{\b_{\veps'}}$ such that $\gr_{\ve{z}}(\xs)$ is torsion. If $\veps'$ and $\veps$ differ only in coordinate $i$, we  write $\theta^+_{\sigma,i}$ and $\theta^+_{\tau,i}$.

\begin{lem} 
The diagram $\scB_{\Lambda}$ is a hypercube of attaching curves with local systems.
\end{lem}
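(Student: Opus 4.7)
The plan is to verify the hypercube compatibility relations for $\scB_\Lambda$, taking the higher-length morphisms $\theta_{\veps,\veps'}$ to be zero whenever $|\veps'-\veps|_1\ge 2$. Only the length-$1$ edges $\lb\theta^+_{\sigma,i},\phi^\sigma_i\rb+\lb\theta^+_{\tau,i},\phi^\tau_i\rb$ then contribute, and one must check that $\mu_k$ applied to any composable sequence of such edges vanishes for every $k\ge 1$.

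First I would handle the length-$1$ relation, that each edge morphism is a cycle. By destabilization in the genus, this reduces to the single-component situation handled in Lemma~\ref{lem:theta-tau/sigma-cycles}. The stabilization argument is standard once one observes that the local system factorizes as $E_\veps=E_{\veps_1}\otimes_{\bF}\cdots\otimes_{\bF}E_{\veps_n}$ and that the monodromy maps in directions $j\ne i$ act trivially along disks whose $\bs_\veps$-boundary sits inside the $i$-th genus-$1$ region.

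Next, for each $k\ge 2$ and each strictly increasing chain $\veps_0<\cdots<\veps_k$ in $\bE_\ell$, I would show that $\mu_k$ applied to the corresponding length-$1$ edge morphisms vanishes. The crucial structural input is that the coordinate directions $i_1,\ldots,i_k$ in which successive $\veps_j$ differ must be pairwise distinct, because once $\veps_{j,i}$ jumps from $0$ to $1$ it cannot change again. Since the shadows $K_1,\ldots,K_n$ are pairwise disjoint and the different curves $\b_{*,i}$ live in disjoint genus-$1$ regions of $\Sigma$, the relevant holomorphic polygon counts in $\Sym^g(\Sigma)$ can be localized to each genus-$1$ region separately. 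In regions where no change of attaching curve occurs, the contribution of a polygon degenerates to a constant triangle (after small Hamiltonian perturbation enforcing admissibility), while in the $i_j$-th region one enumerates the small triangles explicitly via the Riemann mapping theorem, precisely as in the proof of Lemma~\ref{lem:theta-tau/sigma-cycles}.

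The main obstacle I anticipate is the bookkeeping on the local system side. For each length-$k$ chain, contributions from small polygons in the different genus-$1$ regions combine with the algebra maps $\phi^\sigma_{i_j},\phi^\tau_{i_j}$ and with the monodromy maps $\rho_{\veps_{i_j}}(e^{m_{i_j}})$ along the edges; the claim is that the resulting tensor of $\bF[U_1,\dots,U_n]$-module maps cancels in pairs. The algebraic identities that make this work are precisely those encoded in Equation~\eqref{eq:phi-sigma-tau-maps}, together with the definitions of $\rho_0$ and $\rho_1$: the powers of $T_{i_j}$ and of $\scU_{i_j}\scV_{i_j}$ produced by different small polygons line up so that exchanging the roles of $\theta^+_\sigma$ and $\theta^+_\tau$ in any single factor produces a matching pair. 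Once this purely local algebraic check is completed in the $k=2$ case, the general $k\ge 3$ case follows from the same localization and the fact that no new small polygons arise beyond those already analyzed.
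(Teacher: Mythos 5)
Your plan sketches the right statement to verify but misses the two ideas the paper actually uses, and the approach you substitute has real gaps. The most serious issue is your treatment of $\mu_k$ for $k\ge 3$: you assert that this ``follows from the same localization and the fact that no new small polygons arise,'' but that is precisely what would need to be proven, and localization of holomorphic polygon counts on $\Sym^g(\Sigma)$ to disjoint genus-$1$ regions is not automatic (it requires a neck-stretching or gluing argument, not merely disjointness of the regions). The paper instead kills all $\mu_k$ with $k\ge 3$ by a short Maslov grading observation: for any composable sequence of cube chains one can pick a complete collection of basepoints $\ve{p}\subset\ws\cup\zs$ so that every input is the top-degree cycle in the torsion $\Spin^c$ structure, and since $\mu_k$ shifts $\gr_{\ve{p}}$ by $k-2$, the output lands above the top degree and must vanish. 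This replaces the entire holomorphic-curve analysis you propose for $k\ge 3$.

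Your $k=2$ argument also has a gap in the proposed cancellation mechanism. The hypercube relation pairs the two orderings $\mu_2(\theta_i,\theta_j)$ and $\mu_2(\theta_j,\theta_i)$; it is not a cancellation obtained by ``exchanging the roles of $\theta^+_\sigma$ and $\theta^+_\tau$ in any single factor.'' The paper first notes that, ignoring local systems, both orderings represent the top-degree cycle in the torsion $\Spin^c$ structure, so it suffices to match the $\bF[U]$-module morphisms. Each contributing triangle outputs $T_1^{m_1}\cdots T_n^{m_n}\cdot\phi_i^\circ\circ\phi_j^{\circ'}$, and the key input showing $m_1=\cdots=m_n=0$ is that $\mu_2$ preserves the $n$-component Alexander grading because $K_\ell\cap\d D(\psi)=0$ for every class $\psi$ --- here $\d D(\psi)$ is a boundary $1$-cycle and $K_\ell$ is closed. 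This topological observation, which you do not identify, replaces the explicit Riemann-mapping enumeration of small triangles and the local-system bookkeeping you flag as ``the main obstacle.'' I would rewrite the proof around these two grading facts rather than the localization strategy.
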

\begin{proof}  We observe firstly that all compositions $\mu_{i}$ of sequences of chains in the cube vanish when $i>2$. Indeed given any sequence $\theta_1,\dots, \theta_n$ of composable morphisms in the cube, there is a complete collection of basepoints $\ve{p}\subset \ws\cup \zs$ such that each $\theta_i$ is the top degree Floer cycle in the torsion $\Spin^c$ structure with respect to $\ve{p}$. Composing morphisms gives a chain in $\gr_{\ve{p}}$-grading $i-2$ (where we view the top degree element of homology as being in $0$). Hence, the composition must vanish unless $i\in \{1,2\}$.

The length 1 hypercube relations amount to the claim that $[\theta_{\sigma,i}^+,\phi_i^\sigma]$ and $[\theta_{\tau,i}^+,\phi_i^\tau]$ are cycles, which is proven similarly to the genus one case in Lemma~\ref{lem:theta-tau/sigma-cycles}. The length 2 hypercube relations amount to the claim that for each $i\neq j$ and $\circ,\circ'\in \{\sigma,\tau\}$, we have
\[
\mu_2\left(\lbmed\theta_{\circ,i}^+, \phi_i^{\circ}\rbmed, \lbmed\theta_{\circ',j}^+,\phi_i^{\circ'}\rbmed\right)+\mu_2\left(\lbmed\theta_{\circ',j}^+,\phi_i^{\circ'}\rbmed,\lbmed\theta_{\circ,i}^+, \phi_i^{\circ}\rbmed\right)=0
\]
To see the above, observe that if we ignore the local systems, both of the products above must represent the top degree cycle in the torsion $\Spin^c$ structure in $\ve{\CF}^-(\bs_{\veps}, \bs_{\veps+e_i+e_j}, \ve{p})$ for some complete collection of basepoints $\ve{p}$. We claim the associated $\bF[U]$-module morphism in the output of both pairings is $\phi_i^\circ \circ \phi_j^{\circ'}$. Observe that each triangle counted by the two pairings will output a morphism of the form $T_1^{m_1}\cdots T_n^{m_n}\cdot \phi_i^\circ \circ \phi_j^{\circ'}$ for some $m_1,\dots, m_n\in \Z$. To see $m_1=\cdots =m_n=0$ we observe that the maps $\mu_i$ preserve the $n$-component Alexander grading on the Floer complexes under consideration. This is because the intersection of any component of $\d D(\psi)$ with a knot shadow $K_\ell$, $\ell\in \{1,\dots, n\}$ contributes $\pm 1$ (depending on the sign of the intersection) to the Alexander grading of the output morphism. However, $K_\ell \cap \d D(\psi)=0$ since $\d D(\psi)$ is a boundary and $K_\ell$ is closed.
 Hence the output Alexander grading will vanish and the proof is complete.
\end{proof}

 The link surgery complex is obtained by pairing
\[
\bX_{\Lambda}(Y,L):=\ve{\CF}^-(\as, \scB_{\Lambda}),
\]
where we view $\as$ as being decorated with the trivial local system $\bF[U_1,\dots, U_n]$ (with $\rho$ acting by the identity, in the notation of Section~\ref{sec:local-systems}). In Section~\ref{sec:reformulation}, we verify that $\bX_{\Lambda}(Y,L)$ is isomorphic to the construction of Manolescu and Ozsv\'{a}th for links in $S^3$. As a consequence, Manolescu and Ozsv\'{a}th's main theorem gives the following:

\begin{thm}[\cite{MOIntegerSurgery}]\label{thm:MOlink-surg} If $L\subset S^3$ is given integral framing $\Lambda$, then there is a homotopy equivalence
\[
\bX_{\Lambda}(S^3,L)\simeq \ve{\CF}^-(S^3_{\Lambda}(L))
\]
of chain complexes over $\bF[U]$. On the left, $U$ acts by some $U_i$ (all have chain homotopic action). 
\end{thm}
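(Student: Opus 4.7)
The plan is to derive Theorem~\ref{thm:MOlink-surg} by iterated application of Theorem~\ref{thm:equivalence-intro}, following the strategy sketched in Section~\ref{sec:intro-gen-surgery-formula}. The input would be a meridianal Heegaard link diagram $(\Sigma,\as,\bs,\ws,\zs)$ for $(S^3,L)$, equipped with pairwise disjoint shadows on $\Sigma$ realizing the Morse framings in $\Lambda$. Write $\bs_{\Lambda}$ for the attaching curves obtained by replacing each meridianal $\b_{0,i}$ by its longitudinal counterpart $\b_{\lambda_i}$, so that $(\Sigma,\as,\bs_{\Lambda},\ws)$ is a Heegaard diagram for $S^3_{\Lambda}(L)$.

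First I would upgrade the Fukaya equivalence of Theorem~\ref{thm:equivalence-intro} from $\bT^2$ to $\Sym^g(\Sigma)$. For each component $K_i$, the curves $\b_{\lambda_i}$, $\b_{0,i}$ and $\b_{1,i}$ differ only inside the special toroidal region for $K_i$, so the triple sits in a punctured-torus subsurface of $\Sigma$. Stabilizing with the remaining attaching curves, the polygon counts that produce the torus-level equivalence can be transplanted to $\Sym^g(\Sigma)$, yielding, for each $i$, an equivalence of twisted complexes of decorated attaching curves in which $\b_{\lambda_i}$ is replaced by the cone of $\lb\theta_\sigma^+,\phi^\sigma_i\rb+\lb\theta_\tau^+,\phi^\tau_i\rb\colon \b_{0,i}^{E_0}\to \b_{1,i}^{E_1}$, with all other curves unchanged.

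Next I would iterate. Inductively, for $0\le k\le\ell$, I would build a twisted complex $\scC_k$ in which the first $k$ components have been resolved via the exact triangle, so that $\scC_0=\bs_{\Lambda}$ and $\scC_\ell=\scB_{\Lambda}$. At each step the single-component equivalence applies, and the length-$\ge 2$ hypercube relations follow from the Alexander-grading argument used in the lemma preceding the statement of the theorem. Applying the $A_\infty$-functor $\ve{\CF}^-(\as,-)$ then gives
\[
\ve{\CF}^-(S^3_{\Lambda}(L))\;=\;\ve{\CF}^-(\as,\bs_{\Lambda})\;\simeq\;\ve{\CF}^-(\as,\scB_{\Lambda})\;=\;\bX_{\Lambda}(S^3,L),
\]
which is the desired homotopy equivalence of $\bF[U]$-complexes. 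An alternative route, as indicated in the paper, is to verify directly in Section~\ref{sec:reformulation} that $\bX_{\Lambda}(S^3,L)$ agrees with Manolescu and Ozsv\'{a}th's link surgery complex and then quote their main theorem.

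The hardest step will be the iteration when $\ell\ge 2$: granted Theorem~\ref{thm:equivalence-intro}, one must verify that after composing the $\ell$ local equivalences the structure maps of the final cube are exactly the pairs $\lb\theta^+_{\sigma,\veps,\veps'},\phi^\sigma_i\rb$ and $\lb\theta^+_{\tau,\veps,\veps'},\phi^\tau_i\rb$, with no surviving ``diagonal'' or higher-length terms. This is precisely where the Alexander-grading constraint on holomorphic polygons is essential to eliminate spurious $T_i^{m_i}$ factors, and where one must simultaneously track the continuation maps introduced by the small Hamiltonian perturbations needed to achieve admissibility across all $\bs_\veps$. Granted Theorem~\ref{thm:equivalence-intro} itself, which is established by a direct polygon count in $\bT^2$, the remainder is careful bookkeeping of local systems, Alexander gradings, and polygon counts.
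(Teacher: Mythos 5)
Your proposal is correct and matches the paper's approach: the paper obtains this statement both by identifying $\bX_{\Lambda}(S^3,L)$ with the Manolescu--Ozsv\'{a}th complex (Theorem~\ref{thm:MO=our-version}) and citing their theorem, and, independently, via the equivalence $\bs_{\Lambda}\simeq \scB_{\Lambda}$ of Theorem~\ref{thm:iterated-cone} paired with $\ve{\CF}^-(\as,-)$, exactly as you outline. The only difference is organizational: rather than resolving components one at a time inside $\Sym^g(\Sigma)$, the paper resolves them all simultaneously by tensoring the genus-one equivalences on a disjoint union of tori (Lemma~\ref{lem:tensor-product-hypercube}) and then passes to the connected, higher-genus diagram with the 1-handle functor (Proposition~\ref{prop:1-handle-functor}), which packages the same stabilization and Alexander-grading inputs you invoke.
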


Note that $\bX_{\Lambda}(Y,L)$ naturally has a filtration by the cube $\bE_\ell$, and is an $\ell$-dimensional hypercube of chain complexes, where $\ell=|L|$. If $\vec{M}$ is an oriented sublink of $L$, we write $\Phi^{\vec{M}}$ for the component of the hypercube differential on $\bX_{\Lambda}(L)$ which counts holomorphic polygons with inputs $\theta_i^{\sigma}$ for $i$ such that $+K_i\subset \vec{M}$, and $\theta_i^{\tau}$ for $i$ such that $-K_i\subset \vec{M}$, and which does not increase any other components of the cube. If $\veps<\veps'$ are points of the cube, and $(\veps_i'-\veps_i)=1$ if and only if $\pm K_i\in \vec{M}$, sometimes we will also write $\Phi^{\vec{M}}_{\veps,\veps'}$ for the summand of $\Phi^{\vec{M}}$ moving from cube point $\veps$ to $\veps'$.

Note that although the diagram of attaching curves $\scB_{\Lambda}$ has only length 1 morphisms, the complex $\bX_{\Lambda}(Y,L)$ will typically have morphisms of longer length, obtained by the $A_\infty$-pairing of $\as$ and $\scB_{\Lambda}$.

\section{The knot surgery algebra}

In this section, we consider the knot surgery algebra from \cite{ZemBordered}.

\subsection{The surgery algebra}

 We now recall the knot surgery algebra $\cK$ from \cite{ZemBordered}. We recall that $\cK$ is itself an algebra over an idempotent ring of two elements, denoted
\[
\ve{I}=\ve{I}_0\oplus \ve{I}_1
\]
where $\ve{I}_{\veps}\iso \bF$. We set
\[
\ve{I}_0\cdot \cK\cdot \ve{I}_0\iso \bF[\scU,\scV],\quad \ve{I}_1\cdot \cK\cdot \ve{I}_1\iso \bF[U,T,T^{-1}]\quad \text{and} \quad \ve{I}_0\cdot \cK\cdot \ve{I}_1=0.
\]
We set $\ve{I}_1\cdot \cK\cdot \ve{I}_0$ to be two copies of $\bF[U,T,T^{-1}]$, generated as a vector space by monomials of the form
\[
U^i T^j \sigma\quad \text{and }\quad U^i T^j \tau,
\]
for $i\ge 0$ and $j\in \Z$.
These elements are subject to the relations that
\[
\sigma \cdot a=\phi^\sigma(a)\cdot \sigma\quad \text{and} \quad \tau \cdot a=\phi^\tau(a)\cdot \tau,
\]
for $a\in \ve{I}_0\cdot \cK\cdot \ve{I}_0$. Here $\phi^\sigma$ and $\phi^{\tau}$ are the maps defined in Equation~\eqref{eq:phi-sigma-tau-maps}, viewed as maps from $\ve{I}_0\cdot \cK\cdot \ve{I}_0$ to $\ve{I}_1\cdot \cK\cdot \ve{I}_1$. 

\subsection{Topologies on the surgery algebra}

In \cite{ZemBordered}*{Section~6} we described a topology on $\cK$. This topology was generated by a sequence of right ideals
\[
\cK=J_0\supset J_1\supset J_2\supset\cdots.
\]
We call this the \emph{chiral topology} (the terminology is inspired by \cite{BDChiral} \cite{BeilinsonChiral}). We recall $J_n$ presently. We write 
\[
J_n^0\subset \ve{I}_0\cdot \cK\cdot \ve{I}_0=\bF[\scU,\scV]
\] for the subspace spanned by $\scU^i\scV^j$ where $\max(i,j)\ge n$. We write 
\[
J_n^1\subset \ve{I}_1\cdot \cK\cdot \ve{I}_1=\bF[U,T,T^{-1}].
\]
Then $J_n$ is the subspace spanned over $\bF$ by $J_n^0$, $J_n^1$ and
\[
J_n^1\cdot \Span_{\bF}(\sigma,\tau)+\Span_{\bF}(\sigma,\tau)\cdot J_n^0.
\]

Multiplication is continuous with respect to the chiral topology
\[
\mu_2\colon \cK\vecotimes \cK\to \cK.
\]
See \cite{ZemBordered}*{Proposition~6.4}.

\begin{rem}
Multiplication on $\cK$ does give a continuous map $\mu_2\colon \cK\otimes^! \cK\to \cK$, so $\cK$ cannot be viewed as an achiral linear topological algebra. For example, the sequence $x_n=T^{-n}\otimes T^n \sigma$ approaches 0 in $\cK\otimes^! \cK$, while $\mu_2(x_n)=\sigma\not\to 0$. 
\end{rem}

\begin{rem} The topology $\cK\vecotimes \cK$ is not first countable.
\end{rem}

There is additionally the $U$-adic topology on the algebra, whose topology is generated by the 2-sided ideals $(U^i)$ ranging over $i \ge 0$. We will think of this topology as determining a different linear topological algebra, for which we write
$\frK.$ The map
\[
\mu_2\colon \frK\otimes^! \frK\to \frK
\]
is continuous. 

\begin{rem}
Although the definition of the chiral topology appears more complicated than the definition of the $U$-adic topology, type-$D$ modules over $\cK$ are typically easier to work with than $\frK$. Over $\cK$ there are more convergent series in the algebra, so there are more type-$D$ morphisms. As a concrete example, we encourage the reader to compare the proofs of the surgery exact triangles over $\cK$ \cite{ZemBordered}*{Section~18.3} and $\frK$  (Section~\ref{sec:U-adic-surgery}.)
\end{rem}

\subsection{Type-\texorpdfstring{$D$}{D} modules over \texorpdfstring{$\cK$}{K}}

In \cite{ZemBordered}, we described how to repackage the knot surgery formula in terms of a type-$D$ algebra over the algebra $\cK$. We recall this construction.

We suppose that $K\subset Y$ is a knot with Morse framing $\lambda$.  We can repackage the information of $\bX_{\lambda}(Y,K)$ as a type-$D$ module $\cX_{\lambda}(Y,K)^{\cK}$, as we now describe.

 In idempotent 0, we view $\cX_{\lambda}(Y,K)$ as being generated over $\bF$ by elements of $\bT_{\a}\cap \bT_{\b_0}$. In idempotent 1, the generators of $\cX_{\lambda}(Y,K)$ coincide with elements of $ \bT_{\a}\cap \bT_{\b_1}$. We can reinterpret the differential on $\ve{\CF}^-(\as, \scB_{\lambda})$ as giving a structure map
\[
\delta^1\colon \cX_{\lambda}(Y,K)\to \cX_{\lambda}(Y,K)\otimes_{\mathrm{I}} \cK,
\]
as follows:
\begin{enumerate} 
\item If $\xs_0,$ $\ys_0$ are in idempotent 0 and $\d(\xs)$ contains a summand of $\ys_0 \scU^i \scV^j$, then $\delta^1(\xs_0)$ contains a summand of $\ys_0\otimes \scU^i\scV^j$. 
\item If $\xs_0$ and $\ys_1$ are in idempotents 0 and 1 respectively, and $v(\xs_0):=f_{\a,\b_0,\b_1}(\xs_0, \theta_\sigma^+)$ (resp. $h(\xs_0):=f_{\a, \b_0,\b_1}(\xs_0, \theta_\tau^+)$) contains a summand of $\ys_1 U^i T^j$, then $\delta^1(\xs_0)$ contains a summand of $\ys_1\otimes U^i T^j\sigma$ (resp. $\ys_1\otimes U^i T^j\tau$).
\item If $\xs_1,$ and $\ys_1$ are in idempotent 1 and $\d(\xs)$ contains a summand of $\ys_0 U^i T^j$, then $\delta^1(\xs_1)$ contains a summand of $\ys_1\otimes U^i T^j$. 
\end{enumerate}

There is a chiral type-$A$ module ${}_{\cK} \cD_0$ for the solid torus. This module has $m_j=0$ for $j\neq 2$. In idempotent 0, $\cD_0$ is isomorphic to $\bF[\scU,\scV]$ and in idempotent 1 it is isomorphic to $\bF[U,T,T^{-1}]$. The elements of $\ve{I}_0\cdot \cK\cdot \ve{I}_0\iso \bF[\scU,\scV]$ and $\ve{I}_1\cdot \cK\cdot \ve{I}_1\iso \bF[U,T,T^{-1}]$ act on idempotents 0 and 1 of $\cD_0$ in the obvious way. The elements $\sigma$ and $\tau$ of the algebra act by the maps $\phi^\sigma$ and $\phi^\tau$. The knot surgery complex is related to the above modules as follows:
\[
\bX_{\lambda}(Y,K)\iso \cX_{\lambda}(Y,K)^{\cK}\boxtimes {}_{\cK} \cD_0.
\]

If we view ${}_{\cK} \cD_0$ as the direct sum of a copy of $\bF$ over each Alexander and Maslov grading supported by the module, then the chiral topology coincides with the product topology. The completion is given by the direct product of a copy of $\bF$ in each of these gradings, or equivalently
\[
\bF\llsquare \scU,\scV\rrsquare \oplus \bF\llsquare U, T,T^{-1}\rrsquare.
\]
It is shown in \cite{ZemBordered}*{Lemma~8.1} that multiplication gives a continuous map
\[
m_2\colon \cK\vecotimes \cD_0\to \cD_0.
\]

We define the $U$-adic type-$A$ module ${}_{\frK} \frD_0$ similarly. As a linear topological vector space, we define it is $\bF[\scU,\scV]\oplus \bF[U,T,T^{-1}]$ equipped with the $U$-adic topology (where we view $U$ as acting by $\scU\scV$ on $\bF[\scU,\scV]$). The action $m_2$ is the same as for $\cD_0$.

\subsection{On completions}
\label{sec:different-topologies}

We now briefly discuss completions in the context of the surgery formula. In particular, we compare the $U$-adic completion and the chiral completion in the context of the knot surgery formula, focusing on concrete examples and the knot surgery complex.

 Let $K$ be a knot in $S^3$ and let $A_s$ denote the subspace of $\cCFK(K)$ in Alexander grading $s$, and let $B_s$ denote a copy of $\CF^-(S^3)$. The topology on $\bX_{\lambda}(K)=\Cone(v+h\colon \bA(K)\to \bB(K))$ corresponding to the chiral theory is given by equipping each $A_s$ with the $U$-adic topology, and then equipping $\bA(K)=\prod_{s\in \Z} A_s$ with the product topology. Similarly, we equip $\bB(K)=\prod_{s\in \Z} B_s$ with the product topology. The completion of this space coincides with the description given in \cite{MOIntegerSurgery}. The $U$-adic completion of $\bX_\lambda(K)$ is different. For this topology, we set $\bA(K)=\bigoplus_{s\in \Z} A_s$ and $\bB(K)=\bigoplus_{s\in \Z} B_s$ and complete using the submodules $U^i\cdot \bA(K)$ and $U^i \cdot \bB(K)$, ranging over $i\in \N$.

 Note that the completions of $\cD_0$ and $\frD_0$ are \emph{not} naturally isomorphic as vector spaces. The validity of both topologies on $\bX_{\lambda}(K)$ to compute $\ve{\CF}^-(S^3_{\lambda}(K))$ is proven in Section~\ref{sec:admissibility}. 

As an example, we consider the knot surgery complexes of the $+1$ and $-1$ framed unknots.  These complexes take the following form
\[
\bX_{+1}(U)=
\begin{tikzcd}[labels=description]
\cdots
	\ar[dr]
&\xs_{-2}
	\ar[d, "U^2"]
	\ar[dr, "1"]
& \xs_{-1}
	\ar[d, "U"]
	\ar[dr, "1"]
&\xs_{0}
	\ar[d, "1"]
	\ar[dr, "1"]
& \xs_1
	\ar[d, "1"]
	\ar[dr, "U"]
&\xs_2
	\ar[d, "1"]
	\ar[dr, "U^2"]
&\cdots\\
\cdots&\ys_{-2}& \ys_{-1}& \ys_0& \ys_{1}& \ys_2&\cdots
\end{tikzcd}
\]
\[
\bX_{-1}(U)=
\begin{tikzcd}[labels=description]
\cdots
&\xs_{-2}
	\ar[d, "U^2"]
	\ar[dl, "1"]
& \xs_{-1}
	\ar[d, "U"]
	\ar[dl, "1"]
&\xs_{0}
	\ar[d, "1"]
	\ar[dl, "1"]
& \xs_1
	\ar[d, "1"]
	\ar[dl, "U"]
&\xs_2
	\ar[d, "1"]
	\ar[dl, "U^2"]
&\cdots
	\ar[dl]
\\
\cdots&\ys_{-2}& \ys_{-1}& \ys_0& \ys_{1}& \ys_2&\cdots
\end{tikzcd}
\]
In the above, each generator denotes a copy of $\bF\llsquare U\rrsquare$. Elements in the completion with respect to the chiral topology consist of infinite sums
\[
\sum_{i\in \Z} (\a_i\cdot \xs_i+ \b_i\cdot \ys_i)
\]
 where $\a_i,\b_i\in \bF\llsquare U\rrsquare$. In the $U$-adic topology, elements instead consist of infinite sums of the form 
\[
\sum_{i\ge 0} U^i\cdot \ve{X}_i,
\]
where $\ve{X}_i$ is a finite sum of $\xs_j$ and $\ys_j$ generators.

We observe that in both topologies, $H_*(\bX_{+1}(U))$ (resp. $H_*(\bX_{-1}(U))$) is isomorphic to $\bF\llsquare U\rrsquare$ (resp. $\bF\llsquare U\rrsquare$), spanned by $\xs_0+\sum_{i\ge 1} U^{i(i-1)/2}(\xs_{i}+\xs_{-i})$ (resp. $\ys_0$).

 \subsection{Examples}
\label{sec:examples}

 We now consider the type-$D$ module for an $n$-framed solid torus for $n\in \Z$, (i.e. the module for the complement of an $n$-framed unknot). See Figure~\ref{fig:37} for the $-2$-framed solid torus. There are two generators $\xs_0$ and $\xs_1$, which are concentrated in idempotents 0 and 1, respectively. There are two  holomorphic triangles which contribute to $\delta^1$. We obtain the type-$D$ module
 \[
 \begin{tikzcd}[column sep=2cm] \xs_0 \ar[r, "\sigma+T^{-2} \tau"] &\xs_1.
 \end{tikzcd}
 \]
 The computation easily generalizes to the $n$-framed solid torus.

 \begin{figure}[h]
 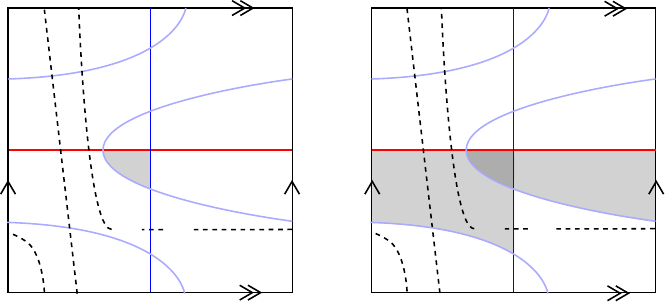
 \caption{Triangles on the Heegaard triple which computes $\cD_{-2}^{\cK}$, the $-2$-framed unknot complement. The dashed line is the shadow of $K$.  The left triangle contributes to the $\sigma$ summand of $\delta^1$, and the right triangle contributes to the $\tau$ summand.}
 \label{fig:37}
 \end{figure}
 
 We now consider the $\infty$-framed solid torus (i.e. the knot surgery complex for $S^1\times pt\subset S^1\times S^2$).  A diagram computing this is shown in Figure~\ref{fig:38}. We write $\cD_\infty^{\cK}$ for this complex.
 
 \begin{figure}[h]
 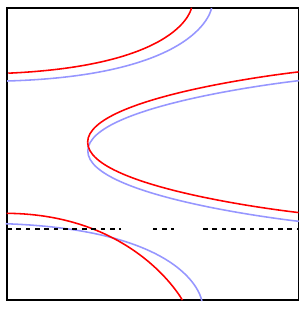
 \caption{A Heegaard triple which computes $\cD_{\infty}^{\cK}$, the type-$D$ module for an $\infty$-framed solid torus. Shaded is a triangle which contributes $\zs_1^+\otimes \sigma$ to $\delta^1(\ys_0^+)$. }
 \label{fig:38}
 \end{figure}

 We now compute the complex. In idempotent $0$, the complex has four generators with differential as follow:
 \[
 \begin{tikzcd} \xs_0^+\ar[r, "1+\scU"] &\xs_0^-
 \end{tikzcd}\qquad  \begin{tikzcd} \ys_0^+\ar[r, "1+\scV"] &\ys_0^-.
  \end{tikzcd}
 \]
 The differential counts only bigons. In idempotent $1$, the complex takes the form
 \[
 \begin{tikzcd} \zs^+_1\ar[r, "1+T"] & \zs^-_1
 \end{tikzcd}.
 \]
We now count the summands of $\delta^1$ which are weighted by $\sigma$ or $\tau$.  In our diagram, there are four such triangles. Combining all of the information, we get the following diagram:
 \begin{equation}
\cD^{\cK}_\infty= \begin{tikzcd}
\xs_0^+
	\ar[r, "1+\scU"]
	\ar[dr, "T^{-1}\tau"]
&
\xs_0^-
	\ar[dr, "\tau", pos=.2,swap]
&
\ys_0^+
	\ar[r, "1+\scV"]
	\ar[dl, "\sigma", pos=.2, crossing over]
	&
\ys_0^-
	\ar[dl, "\sigma"]
\\[1cm]
& \zs_1^+
	\ar[r, "1+T"]
&\zs_1^-
\end{tikzcd}
 \label{eq:infinity-framed-solid-torus}
 \end{equation}

 \begin{rem} 
  Over $\cK$, the above complex can be simplified further since $1+\scU$ and $1+\scV$ are units in $\ve{I}_0\cdot \cK\cdot \ve{I}_0$.  Hence, $\cD_\infty^{\cK}$ is homotopy equivalent to $ \begin{tikzcd} \zs^+_1\ar[r, "1+T"] & \zs^-_1
   \end{tikzcd}$.
 
 On the other hand, $1+\scU$ and $1+\scV$ are not units in $\ve{I}_0\cdot \frK\cdot \ve{I}_0$ (the algebra with $U$-adic completions). Hence the corresponding module over the $U$-adic topology $\frD_{\infty}^{\frK}$ does not admit the above simplification.
 \end{rem}

\subsection{Operator topologies}
\label{sec:operator-topologies}

We may view the knot surgery algebra as being a set of operators (i.e. $\bF[U]$-module endomorphisms) on $E_0\oplus E_1$. Ignoring completions, we define an action of the knot surgery algebra on $E_0\oplus E_1$ as follows. Algebra elements in $\ve{I}_0 \cdot \cK \cdot \ve{I}_0$ act on $E_0$ by polynomial multiplication. Elements of $\ve{I}_1\cdot \cK \cdot \ve{I}_1$ act on $E_1$ similarly. The elements $\sigma$ and $\tau$ map $E_0$ to $E_1$ via the maps $\phi^\sigma$ and $\phi^\tau$. In particular, if we ignore completions, we may identify
\[
\cK\subset \End_{\bF[U]}(E_0\oplus E_1)
\]

 Recall that if $X$ and $Y$ are linear topological spaces, then we can endow $\Hom(X,Y)$ with the \emph{uniform topology}. In this topology, the fundamental open subspaces in $\Hom(X,Y)$ take the form $\cO_U:=\{\phi\colon X\to Y: \phi(X)\subset U\}$, ranging over open subspaces $U\subset Y$.  In particular, depending on the topology on $E_0\oplus E_1$, we will naturally obtain a different topology on the endomorphism algebra.

 We are interested in the following topologies:
\begin{enumerate}
\item (Chiral) The topology obtained by viewing $E_0\oplus E_1$ a direct sum over each Alexander and Maslov grading supported, and then using the product topology.
\item ($U$-adic) The $U$-adic topology on $E_0\oplus E_1$.
\end{enumerate}

The topologies that the knot surgery algebra obtains via the subspace topology are familiar:

\begin{prop}
\,
\begin{enumerate}
\item If we equip $E_0\oplus E_1$ with the product topology, then the knot surgery algebra obtains (as a subspace of $\End_{\bF[U]}(E_0\oplus E_1)$) the chiral topology $\cK$.
\item If we equip $E_0\oplus E_1$ with the $U$-adic topology, then the knot surgery algebra obtains the $U$-adic topology $\frK$.
\end{enumerate}
\end{prop}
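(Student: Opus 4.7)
Both statements follow the same template: unpack the definition of the uniform topology on $\End_{\bF[U]}(E_0\oplus E_1)$, exhibit a basis of open subspaces of $E_0\oplus E_1$ in the chosen topology, and compare the corresponding $\cO_U$'s intersected with $\cK$ against the prescribed basis of open subspaces of $\cK$ (the right ideals $J_n$ in the chiral case, the two-sided ideals $(U^N)$ in the $U$-adic case). The key structural fact is that $\cK$ admits a basis of bigrading-homogeneous monomials from the four blocks $\ve{I}_i\cdot\cK\cdot\ve{I}_j$, and each such monomial acts on $E_0\oplus E_1$ by a fixed Alexander--Maslov bigrading shift determined by the commutation relations $\sigma\scU=UT^{-1}\sigma$, $\sigma\scV=T\sigma$, $\tau\scU=T^{-1}\tau$, $\tau\scV=UT\tau$. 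This reduces the problem in each case to a combinatorial comparison of monomial supports.

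For part (1), a basis of open subspaces of $E_0\oplus E_1$ in the product topology consists of $U_S:=\bigoplus_{g\notin S}\bF_g$ as $S$ ranges over finite sets of bigradings. Since a direct sum of bigrading-shifted pieces is contained in $U_S$ if and only if each piece is, an element $\phi\in\cK$ lies in $\cO_{U_S}$ if and only if every bigrading-homogeneous summand of $\phi$ has image avoiding $S$. I would then analyze each of the four blocks of $\cK$ in turn. For example, $\scU^I\scV^J$ acts on $E_0$ by $\scU^a\scV^b\mapsto \scU^{a+I}\scV^{b+J}$, so its image corresponds to the exponent quadrant $\{(a',b'):a'\geq I,\ b'\geq J\}$, and avoids $S$ precisely when $(I,J)$ is sufficiently deep in the positive quadrant relative to the exponents realizing $S$. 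Parallel statements hold for the $\bF[U,T,T^{-1}]$ block and the two blocks $\bF[U,T,T^{-1}]\cdot\sigma$ and $\bF[U,T,T^{-1}]\cdot\tau$, where the output exponents are determined by composing with $\phi^\sigma$ and $\phi^\tau$. Matching these inequalities to the defining conditions for $J_n^0$, $J_n^1$, $J_n^1\cdot\Span(\sigma,\tau)$ and $\Span(\sigma,\tau)\cdot J_n^0$, and choosing $S$ to be a suitable finite truncation of the support at level $n$, yields both inclusions $J_n\subset\cO_{U_S}\cap\cK$ and $\cO_{U_S}\cap\cK\subset J_n$, so the topologies coincide.

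For part (2), a basis of open subspaces in the $U$-adic topology is $U^N\cdot(E_0\oplus E_1)$ for $N\geq 0$, and I claim $\cO_{U^N(E_0\oplus E_1)}\cap\cK=U^N\cdot\cK$. The inclusion $\supset$ is immediate from $\bF[U]$-linearity of the action. For $\subset$, decompose $\phi\in\cK$ block-wise as $\phi=\phi_{00}+\phi_{11}+f(U,T)\sigma+g(U,T)\tau$ and evaluate $\phi$ on $1\in E_0$ and $1\in E_1$; the outputs are respectively $\phi_{00}\in E_0$, $f(U,T)=f(U,T)\cdot\phi^\sigma(1)\in E_1$, $g(U,T)=g(U,T)\cdot\phi^\tau(1)\in E_1$, and $\phi_{11}\in E_1$. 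The hypothesis that each output is in $U^N$ times its ambient module forces each of $\phi_{00}, \phi_{11}, f, g$ to be divisible by $U^N$ in its respective commutative ring, using that $E_0=\bF[\scU,\scV]$ is $U$-adically separated with $U=\scU\scV$; hence $\phi\in U^N\cdot\cK$. The main subtlety throughout is correctly tabulating the bigrading shifts of $U^iT^j\sigma$ and $U^iT^j\tau$, which intertwine $E_0$ and $E_1$ nontrivially; once these are pinned down, part (1) becomes a routine combinatorial check of quadrants against finite sets, and part (2) works uniformly across all four blocks precisely because it is insensitive to the $T$-coefficient (as $T$ is a unit).
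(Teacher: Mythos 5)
Your overall framework---decompose $\cK$ block-wise, use the uniform topology, reduce to monomial/bigrading-homogeneous calculations---matches the paper's proof, and the part (1) sketch (working with arbitrary finite $S$ rather than the paper's cofinal family $W_n$) is a sound variant. However, there is a concrete error in your part (2).

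You claim that evaluating $\phi=\phi_{00}+\phi_{11}+f(U,T)\sigma+g(U,T)\tau$ at $1\in E_0$ and $1\in E_1$ gives four separate outputs $\phi_{00}$, $f$, $g$, $\phi_{11}$. It does not. The two evaluations produce only three pieces of data: $\phi(1_{E_1})=\phi_{11}$, and $\phi(1_{E_0})$ has $E_0$-component $\phi_{00}$ and $E_1$-component $f\cdot\phi^\sigma(1)+g\cdot\phi^\tau(1)=f+g$. Thus the hypothesis only tells you $\phi_{00}$, $\phi_{11}$, and the \emph{sum} $f+g$ are divisible by $U^N$; it does not separate $f$ from $g$. (Indeed the pair $f=g=U^{N-1}T$ has $f+g=0$, so divisibility of the sum gives no control on the individual terms.) To close the gap you must evaluate at at least one more monomial: for instance $\phi(\scU)$ has $E_1$-component $T^{-1}(fU+g)$, which together with $f+g\in (U^N)$ yields $f(1+U)\in (U^N)$ in $\bF[U,T,T^{-1}]$. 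Since $U$ and $1+U$ are coprime in that ring (or, equivalently, $1+U$ is a unit after $U$-adic completion), this forces $f\in (U^N)$ and hence $g\in (U^N)$. With that extra step, your part (2) argument is correct. Note that the same ``$\sigma$ and $\tau$ both map $E_0$ into $E_1$'' issue is also lurking implicitly in part (1)---a general element of the off-diagonal block of $\cK$ cannot be reduced to a single monomial analysis without an argument of this kind---but the paper's own proof glosses over this in the same way.
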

\begin{proof}
We focus first on the product topology. Write $\cK_{\End}$ for the topology induced from the subspace topology with respect to the product topology. The open subspaces of $E_0$ in the product topology are given by $W_n=\Span(\scU^i\scV^j: \max(i,j)\ge n)$. By definition, this means that the open subspaces of $\ve{I}_0\cdot \cK_{\End}\cdot \ve{I}_0$ have a basis of opens given by $\{\phi: \im(\phi)\subset W_n\}$. It is clear that $\scU^i\scV^j\cdot \id$  has image in $W_n$ if and only if $\max(i,j)\ge n$.   Similarly, the fundamental open subspaces of $E_1$ in the product topology are $W_n'=\Span(U^i T^j: i\ge n \text{ or } |j|\ge n)$. It is clear that $U^i T^j\cdot \id$  has image in $W_n'$ if and only if $i\ge n$, so the two topologies coincide on $\ve{I}_1\cdot \cK\cdot \ve{I}_1$. The analysis of $\ve{I}_1\cdot \cK\cdot \ve{I}_0$ is entirely analogous.

 A nearly identical argument holds for the $U$-adic topology, and we leave the details to the reader.
\end{proof}

\begin{rem}\label{rem:continuity-of-composition}
The perspective of the algebras as spaces of linear maps naturally explains the appearance of the chiral topology on $\cK$, via the following general fact. If $A$, $B$ and $C$ are linear topological spaces, then composition always gives a continuous map
\[
\circ\colon\Hom(B,C)\vecotimes \Hom(A,B)\to \Hom(A,C),
\]
if we equip the $\Hom$-spaces with the uniform topology.  In relation to the $U$-adic topology, we observe that if $A$, $B$ and $C$ are $\bF[U]$-modules equipped with the $U$-adic topology, then
\[
\circ\colon \Hom_{\bF[U]}(B,C)\otimes^! \Hom_{\bF[U]}(A,B)\to \Hom_{\bF[U]}(A,C)
\]
is also continuous. We will extend the perspective of the algebras $\cK$ and $\frK$ as endomorphism algebras in Section~\ref{sec:endomorphisms}.  
\end{rem}

\subsection{Module categories and functors}

In this section, we describe several module categories and functors which are important in this paper.

We recall from Section~\ref{sec:linear-space-background} that there are the categories of chiral and achiral type-$D$ modules over $\frK$, and the category of chiral type-$D$ modules over $\cK$:
\[
\Mod^{\frK},\quad \Mod^{\frK}_{\ch}, \quad \Mod^{\cK}_{\ch}.
\]
There are also type-$A$ and $DA$ versions of the above categories as well.

Next, we observe that inclusion $\bI\colon \frK\to \cK$
is continuous, and hence there is a bimodule
\[
{}_{\frK}[\bI]^{\cK}.
\]
Note that when tensoring the above bimodule, we cannot naturally mix chiral and achiral tensor products. Hence, we view the above only as a chiral $DA$-bimodule. In particular, the above gives functors
\[
\Mod^{\frK}_{\ch}\to \Mod^{\cK}_{\ch} \quad \text{and} \quad  {}_{\cK}\Mod_{\ch}\to {}_{\frK} \Mod_{\ch}.
\]
\begin{rem}
\label{rem:chiralizing-functor}
There is one setting where we may obtain a functor from achiral to chiral modules. This is by restricting to \emph{linearly compact spaces} (cf. \cite{LefschetzAT}). Recall that a space $\cX$ is called \emph{linearly compact} if $\cX/U$ is finite dimensional for every open subspace $U\subset \cX$. For such spaces, it is straightforward to see that the map natural map
\[
\cX\vecotimes Y\to \cX\otimes^! Y
\]
is a homeomorphism for any $Y$. We can phrase this by saying there is a \emph{chiralizing} functor
\[
 \Mod_{\lc}^{\frK}\to \Mod_{\lc,\ch}^{\frK}
\]
where $\Mod_{\lc}^{\frK}$ denotes the category of linearly compact type-$D$ modules over $\frK$. Note that finite dimensional spaces are linearly compact, so $\Mod_{\lc,\ch}^{\frK}$ contains all finitely generated type-$D$ modules. Composing with the functor from $\Mod_{\ch}^{\frK}$ to $\Mod_{\ch}^{\cK}$ gives a functor
\begin{equation}
\Mod^{\frK}_{\lc}\to \Mod^{\cK}_{\lc,\ch}, \label{eq:functor-achiral-to-chiral}
\end{equation}
 which  transforms linearly compact, achiral type-$D$ modules over $\frK$ into chiral type-$D$ modules over $\cK$. 
\end{rem}

 Note also that the link surgery modules $\cX(Y,L)^{\frL}$ (as constructed in this paper)  are finitely generated.

\begin{rem} There does not seem to be a natural analog of Remark~\ref{rem:chiralizing-functor}  for type-$A$ modules. We note that \[{}_{\cK}\cD_0=
\bF\llsquare \scU, \scV \rrsquare \oplus \bF\llsquare U,T,T^{-1}\rrsquare
\]
whereas ${}_{\frK} \frD_0$ (the $U$-adic solid torus module) is given by the $U$-adic completion of $\bF[\scU,\scV]\oplus \bF[U,T,T^{-1}]$.
\end{rem}

\section{A local surgery formula}
\label{sec:mapping-cone}

In this section, we give our new proof of the knot and link surgery formulas.

\subsection{The statement}
\label{sec:preliminary-defs-morphisms}

We now introduce the Lagrangians and morphisms which appear in our description of the mapping cone formula. We begin by considering three simple closed curves
\[
\b_{\lambda},\b_0,\b_1\subset \bT^2.
\] 
 We additionally put two basepoints, $w,z\in \bT^2$. See Figure~\ref{fig:17}.

\begin{figure}[ht]
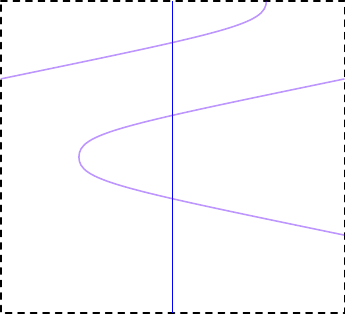
\caption{The curves $\b_\lambda$, $\b_0$ and $\b_1$, as well as the intersection points $\theta_\sigma^+$ and $\theta_\tau^+$. Also shown is the curve $K$.}
\label{fig:17}
\end{figure}

 We give $\b_\lambda$, $\b_0$ and $\b_1$ the following local systems. We associate $\b_{\lambda}$ with the trivial local system $\bF[U]$. We give $\b_0$ and $\b_1$ the local systems described in Section~\ref{sec:knot-surgery-formula}, which we recall briefly. We associate $\b_0$ with $E_0:=\bF[\scU,\scV]$. We associate $\b_1$ with $E_1:=\bF[U, T,T^{-1}]$. We define monodromy maps $\rho_{\lambda}$, $\rho_0$ and $\rho_1$ as follows. We set $\rho_{\lambda}(e^n)=\id$ for all $n$. We set $\rho_0(e^n)=\scV^{n}\cdot \id$ for all $n$. We set $\rho_1(e^n)=T^n\cdot\id$ for all $n$. To apply the monodromy maps, we use a closed curve $K$ parallel to $\b_{\lambda}$ which passes through $w$ and $z$.

 The main theorem of this section is the following:
 
 \begin{prop}
 \label{prop:general-HE} There is a homotopy equivalence in the Fukaya category of $\bT^2$:
 \[
 \b_\lambda\simeq  \Cone\left(\lb\theta_\tau^+,\phi^\tau\rb+\lb\theta_\sigma^+,\phi^\sigma\rb\colon \b_0^{E_0}\to \b_1^{E_1} \right).
 \]
 (Here, we view $\b_{\lambda}$ as having the trivial local system $\bF[U]$). 
 \end{prop}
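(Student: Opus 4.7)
The plan is to prove the equivalence by constructing explicit Floer morphisms $f\colon \b_\lambda \to C$ and $f'\colon C \to \b_\lambda$ in the Fukaya category of $\bT^2$ and verifying they are mutually homotopy inverse. Here I abbreviate $C := \Cone(g)$, with $g = \lb\theta_\sigma^+,\phi^\sigma\rb+\lb\theta_\tau^+,\phi^\tau\rb$. A morphism $f\colon \b_\lambda \to C$ is a pair $(f_0,f_1)$ with $f_0 \in \CF(\b_\lambda,\b_0^{E_0})$ and $f_1\in \CF(\b_\lambda,\b_1^{E_1})$ satisfying the cone relation $\mu_1(f_1)+\mu_2(g,f_0)=0$; the condition $\mu_1(f_0)=0$ will be automatic.

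Reading Figure~\ref{fig:17}, the curves $\b_\lambda$ and $\b_0$ meet transversely in a single point $p$, while $\b_\lambda$ and $\b_1$ meet in a sequence of points $q_i$ whose positions reflect the winding of $\b_1$ near the neighborhood of $\b_\lambda$. I would take $f_0 = \lb p,\iota\rb$, where $\iota\colon\bF[U]\to E_0$ is the canonical $\bF[U]$-linear inclusion $1\mapsto 1$ (recalling that $U$ acts on $E_0$ as $\scU\scV$), and then compute $\mu_2(g,f_0)$ by counting holomorphic triangles with corners at $\theta_\sigma^+$ or $\theta_\tau^+$, $p$, and the $q_i$. In $\bT^2$ the Riemann mapping theorem gives each embedded triangle a unique holomorphic representative, and its output is weighted by $U^{n_w}$, by the $\bF[U]$-module map $\phi^\sigma$ or $\phi^\tau$ inserted at the appropriate corner, and by the monodromy factors $\scV^{m_0}$ and $T^{m_1}$ counting intersections of the triangle boundary with the knot shadow $K$. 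The winding produces an infinite family of triangles of growing $n_w$ and $T$-exponent; I then define $f_1$ as the unique series in the $q_i$ with $\Hom(\bF[U],E_1)$-coefficients enforcing the cone relation, and verify convergence in the chiral (or $U$-adic) topology. An entirely analogous construction produces a candidate inverse $f'\colon C\to \b_\lambda$.

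Finally I would compute the compositions $f'\circ f$ and $f\circ f'$ and show each equals the identity up to $\mu_1$-exact terms. Each composition decomposes into counts of holomorphic polygons (bigons, triangles, and quadrilaterals) in $\bT^2$ whose embedded representatives are enumerable by the Riemann mapping theorem; the null-homotopies are once again unique convergent series whose existence is forced by the local-system monodromies. The main obstacle is the careful bookkeeping of the infinite families of polygons produced by the winding of $\b_1$ against $\b_\lambda$. The whole point of choosing the specific local systems $(E_0,\rho_0)$ and $(E_1,\rho_1)$ with $T$- and $\scV$-valued monodromies is precisely to collapse these geometric series into convergent objects, so the central technical step is matching the intersection-point coefficients against the $\phi^\sigma$, $\phi^\tau$ weights and verifying convergence in the topology on the morphism spaces.
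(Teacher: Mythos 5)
Your outline follows the same skeleton as the paper's proof: explicit Floer morphisms in both directions decorated by $\bF[U]$-module maps, verification of the cone relations, and a direct computation of the two compositions with an explicit null-homotopy, with the local systems controlling convergence. Your $f_0=\lb p,\iota\rb$ is exactly the paper's morphism $\lb\theta_{\b_\lambda,\b_0},\Delta\rb$, where $\Delta(U^i)=(\scU\scV)^i$. However, the proposal defers the actual content of the argument to claims that are not correct as stated, and at one point it misidentifies the mechanism that makes the statement true.

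First, the cone-cycle condition. You plan to define $f_1$ as "the unique series enforcing the cone relation," i.e.\ a primitive of $\mu_2(g,f_0)$. Such a primitive need not exist a priori (that $\mu_2(g,f_0)$ is $\mu_1$-exact in $\CF(\b_\lambda,\b_1^{E_1})$ is something to prove), and it is certainly not unique. In fact the correct statement is that $\mu_2(g,f_0)=0$: exactly two triangles contribute, one with input $\lb\theta_\sigma^+,\phi^\sigma\rb$ and one with $\lb\theta_\tau^+,\phi^\tau\rb$, and their outputs agree because $\phi^\sigma\circ\Delta$ and $U\,(T\circ\phi^\tau\circ\scV^{-1}\circ\Delta)$ are both the map $U^i\mapsto U^iT^0$. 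So the morphism needs no component into $\b_1^{E_1}$; this cancellation (and its counterpart $\Pi\circ\phi^\sigma=\Pi\circ\phi^\tau$ for the reverse morphism built from the projection $\Pi(U^iT^j)=\delta_{j,0}U^i$) is precisely where the specific choices of $\phi^\sigma,\phi^\tau$ and the monodromies enter, and your sketch does not locate it. Relatedly, your geometric picture is off: the winding does not produce infinite families of index-appropriate triangles here — the relevant polygon counts are finite (two triangles for each cycle condition, a single rectangle showing one composition equals $\lb\theta^+_{\b_\lambda,\b_\lambda'},\id\rb$ on the nose).

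Second, the homotopy. Saying the null-homotopies are "unique convergent series whose existence is forced by the local-system monodromies" assumes the conclusion. One composition is the identity exactly; for the other, the paper must exhibit an explicit morphism $Z\colon\b_1^{E_1}\to\b_0'^{E_0}$ with coefficients $\eta_\sigma=\sum_{i\ge0}\scV^i\circ\Delta\circ\Pi\circ T^{-i}$ and $\eta_\tau=\sum_{i\ge1}\scU^i\circ\Delta\circ\Pi\circ T^i$, check $\mu_1(Z)$ kills the off-diagonal term of the composition, check that the triangle counts $\mu_2(g,Z)$ and $\mu_2(Z,g)$ produce the complementary Alexander-projections $\sum_{i\ne0}\delta_i$ so that the total is $\id$, and verify that the quadrilateral and pentagon counts vanish. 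None of this is formal bookkeeping, and it is the heart of the proof; without producing $Z$ (or some substitute argument) your plan does not yet establish the homotopy equivalence.
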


Holomorphic polygons are counted as in Section~\ref{sec:local-systems}. In a bit more detail, the $\bF[U]$-module morphism associated to the boundary edge of a holomorphic polygon mapping to $\b_\veps$ will be $\rho_\veps(e^{\# (\d_{\b_\veps}(\phi)\cap K)})$. We also multiply the output morphism by an overall factor of $U^{n_w(\phi)}$.

 \begin{rem}The orientation of $K$ is so that if $\a$ is a Lagrangian on $\bT^2$ (not passing through the short arc connecting $w$ and $z$ which intersects $\b_0$), a holomorphic disk on a diagram $(\bT^2,\a,\b_0,w,z)$ will be weighted by 
\[
U^{n_w(\phi)} \scV^{\# (\d_{\b_0}(\phi)\cap K)}=U^{n_w(\phi)} \scV^{n_z(\phi)-n_w(\phi)}=\scU^{n_w(\phi)} \scV^{n_z(\phi)}.
\] 
 \end{rem}
 
Our proof will be to construct morphisms of twisted complexes of Lagrangians which fit into the following diagram:
\begin{equation}
\begin{tikzcd}[column sep=3cm]
\b_{\lambda}
	\ar[r, "\Phi", shift left=1.1mm]
	& \Cone(\lb \theta_\tau^+,\phi^\tau\rb+\lb \theta_\sigma^+, \phi^\sigma\rb) \ar[l, "\Psi", shift left=1.1mm] \ar[loop below, "Z"]
\end{tikzcd}
\label{eq:overview-diagram-HE}
\end{equation}
We will show that $\d(\Psi)=0$, $\d(\Phi)=0$ and
\[
\Psi\circ \Phi=\id_{\b_{\lambda}}\quad \text{and} \quad \Phi\circ \Psi=\id_{\Cone}+\d(H).
\]
In the above, of course $\Psi\circ \Phi$ means $\mu_2(\Phi,\Psi)$, and $\d(H)$ means $\mu_1(H)$. (The difference in ordering between $\Psi\circ \Phi$ and $\mu_2(\Phi,\Psi)$ is because we are thinking of $\Psi$ and $\Phi$ as morphisms of beta attaching curves).

 \subsection{Morphisms}

 In this section, we define the morphisms $\Phi$ and $\Psi$ which appear in Equation~\eqref{eq:overview-diagram-HE}. 

There is a natural Floer chain
\[
\lb\theta_{\b_1,\b_{\lambda}},\Pi\rb\colon \b_1^{E_1}\to \b_\lambda.
\]
Here, $\Pi$ is the map given by
\[
\Pi(U^i T^j)=\begin{cases} U^i& \text{ if } j=0,\\
0& \text{ otherwise}.
\end{cases}
\]

Next, there is a Floer chain
\[
\lb \theta_{\b_{\lambda},\b_0},\Delta\rb \colon \b_\lambda\to \b_0^{E_0}.
\]
Here, $\Delta$ is the map $\Delta(U^i)= (\scU\scV)^i$.  It is straightforward to see that $\lb\theta_{\b_1,\b_\lambda},\Pi\rb$ and $\lb\theta_{\b_\lambda,\b_0},\Delta\rb$ are both cycles in their respective Floer complexes.

We now define our morphisms between  $\b_{\lambda}$ and the twisted complex $\Cone(\lb\theta_\tau^+,\phi^\tau\rb+\lb\theta_\sigma^+,\phi^\sigma\rb)$. For our morphism $\Phi$ from $\b_{\lambda}$ to the cone, we use the morphism $\lb\theta_{\b_\lambda,\b_0}, \Delta\rb$ (and the 0 morphism from $\b_\lambda$ to $\b_1$). This is shown in the diagram below (solid arrows denote the morphism, and dashed arrows denote internal differentials):
\[
\Phi=\begin{tikzcd}[column sep=2cm]
\b_\lambda \ar[d, "{\lb\theta_{\b_\lambda,\b_0},\Delta\rb}"]
\\
\b_0^{E_0}\ar[r,dashed]&\b_1^{E_1}
\end{tikzcd}
\]

Similarly, we define a $\Psi$ morphism in the opposite direction as follows. It is helpful to view the morphism in the opposite direction as going from $\Cone(\lb\theta_\sigma^+,\phi^\sigma\rb+\lb\theta_\tau^+,\phi^\tau\rb)$ to $\b'_\lambda$, where $\b'_\lambda$ is a small Hamiltonian translation of $\b_\lambda$, intersecting it in two points. (We also equip $\b'_{\lambda}$ with the trivial local system). We define $\Psi$ to be the solid arrow in the diagram below:
\[
\Psi=
\begin{tikzcd}[column sep=2cm]
\b_0^{E_0}\ar[r,dashed]&\b_1^{E_1}
	\ar[d, "{\lbmed\theta_{\b_1,\b_\lambda'},\Pi\rbmed}"]
\\
&\b_\lambda'
\end{tikzcd}
\]

\begin{lem}
 The morphisms $\Phi$ and $\Psi$ satisfy $\mu_1(\Phi)=0$ and $\mu_1(\Psi)=0$. 
\end{lem}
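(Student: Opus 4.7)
The key observation is that, because the source $\b_\lambda$ of $\Phi$ and the target $\b'_\lambda$ of $\Psi$ carry the trivial local system and have no internal structure, $\mu_1^{\Tw}$ of these morphisms decomposes into just two contributions: the Floer differential $\mu_1$ of the underlying chain, and the holomorphic triangle map $\mu_2$ against the cone's internal differential $\lb\theta_\sigma^+,\phi^\sigma\rb + \lb\theta_\tau^+,\phi^\tau\rb$. The first contribution already vanishes by the observation, recorded just above the lemma, that $\lb\theta_{\b_\lambda,\b_0},\Delta\rb$ and $\lb\theta_{\b_1,\b'_\lambda},\Pi\rb$ are Floer cycles in their respective complexes. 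Higher $\mu_n$ contributions with $n\ge 3$ vanish for grading reasons, exactly as in the hypercube compatibility check of Section~\ref{sec:background-link-surgery}: all inputs are top-degree cycles in appropriate torsion $\Spin^c$ structures, so an $n$-fold composition sits in grading $n-2$ and only $n\le 2$ contributes.

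Thus the plan reduces to verifying the two triangle identities
\begin{align*}
\mu_2\bigl(\lb\theta_{\b_\lambda,\b_0},\Delta\rb,\; \lb\theta_\sigma^+,\phi^\sigma\rb + \lb\theta_\tau^+,\phi^\tau\rb\bigr) &= 0, \\
\mu_2\bigl(\lb\theta_\sigma^+,\phi^\sigma\rb + \lb\theta_\tau^+,\phi^\tau\rb,\; \lb\theta_{\b_1,\b'_\lambda},\Pi\rb\bigr) &= 0,
\end{align*}
each a count of holomorphic triangles on a genus-one triple: $(\b_\lambda,\b_0,\b_1)$ for the first, and $(\b_0,\b_1,\b'_\lambda)$ for the second. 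In both cases I would enumerate triangles directly via the Riemann mapping theorem on the punctured torus shown in Figure~\ref{fig:17}.

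The expected picture is that for each output intersection point, the enumeration produces a canonically matched pair of small triangles --- one with corner at $\theta_\sigma^+$ and basepoint $w$ in its interior, the other with corner at $\theta_\tau^+$ and basepoint $z$ in its interior --- with identical intersection numbers $\#(\d_{\b_\veps}(\phi)\cap K)$ along each boundary component. Plugging these data into Equation~\eqref{eq:local-systems} with monodromies $\rho_0(e^n)=\scV^n$ and $\rho_1(e^n)=T^n$, the formulas $\phi^\sigma(\scU^i\scV^j)=U^iT^{j-i}$, $\phi^\tau(\scU^i\scV^j)=U^jT^{j-i}$, and the relation $U=\scU\scV$, the two triangles contribute the \emph{same} $\bF[U]$-module morphism to the output and therefore cancel over $\bF=\Z/2$.

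The main obstacle will be verifying the claimed pairing of $\sigma$- and $\tau$-triangles together with the precise local-system bookkeeping. Geometrically, the pairing is forced by the fact that $\theta_\sigma^+$ and $\theta_\tau^+$ sit on opposite sides of the short arc from $w$ to $z$ in the special genus-one region, so each small triangle through one corner can be completed to a matching triangle through the other by reflecting across this arc and adjusting the basepoint multiplicities by a compensating power of $\scU$ or $\scV$. The resulting algebraic cancellation is essentially the same identity that underlies Lemma~\ref{lem:theta-tau/sigma-cycles}, promoted from bigons to triangles.
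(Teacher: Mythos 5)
Your overall strategy is the paper's: reduce $\mu_1^{\Tw}$ to the statement that the underlying chains $\lb\theta_{\b_\lambda,\b_0},\Delta\rb$ and $\lb\theta_{\b_1,\b_\lambda'},\Pi\rb$ are cycles plus the two triangle-count identities you display, then enumerate the genus-one triangles (one through $\theta_\sigma^+$, one through $\theta_\tau^+$) and check that their associated $\bF[U]$-module maps agree so they cancel mod $2$. That part is sound, and the higher-$\mu_n$ terms indeed cause no trouble (in fact they vanish for structural reasons, since the cone's internal differential cannot be inserted twice, even before invoking gradings).

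The one inaccuracy is your anticipated geometric picture of the cancellation. The two triangles are not a reflection-symmetric pair with identical intersection numbers $\#(\d_{\b_\veps}(\phi)\cap K)$ and with $w$ and $z$ distributed one to each: in the actual count the $\sigma$-triangle misses both basepoints and has boundary disjoint from $K$, contributing $\phi^\sigma\circ\Delta$ on the nose, while the $\tau$-triangle covers $w$ and crosses $K$ so that it contributes $U\,(T\circ\phi^\tau\circ\scV^{-1}\circ\Delta)$, which only after using $U=\scU\scV$ simplifies to $\phi^\tau\circ\Delta$. Had the boundary data really been identical with the basepoints split as you describe, the two outputs would be $U\cdot X\circ\phi^\sigma\circ Y$ versus $X\circ\phi^\tau\circ Y$ and would not cancel. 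The true source of the cancellation is not a symmetry of the triangles but the algebraic identities $\phi^\sigma\circ\Delta=\phi^\tau\circ\Delta$ (both send $U^i\mapsto U^iT^0$) and, for $\Psi$, $\Pi\circ\phi^\sigma=\Pi\circ\phi^\tau$ (both send $\scU^i\scV^j$ to $U^i$ if $i=j$ and to $0$ otherwise). Since your plan does call for computing the weights via Equation~\eqref{eq:local-systems}, carrying it out would surface and correct this, but as written the justification of the key cancellation rests on a picture that is not what the diagram gives.
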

\begin{proof}  It suffices to show that $\mu_1(\lb \theta_{\b_\lambda,\b_0}, \Dt \rb)=0$ and that
\begin{equation}
\mu_2\left( \lb\theta_{\b_\lambda,\b_0},\Delta \rb,\lb\theta_\tau^+,\phi^\tau\rb+\lb\theta_\sigma^+,\phi^\sigma\rb \right)=0.\label{eq:composition-mu_1-Phi=0}
\end{equation}
The relation $\mu_1(\lb \theta_{\b_\lambda,\b_0}, \Dt \rb)=0$ is straightforward. For Equation~\eqref{eq:composition-mu_1-Phi=0}, observe that there are two holomorphic triangles which contribute.  They are shown in Figure~\ref{fig:18}. It remains to show that the linear maps from the two triangles coincide.
\begin{figure}[ht]
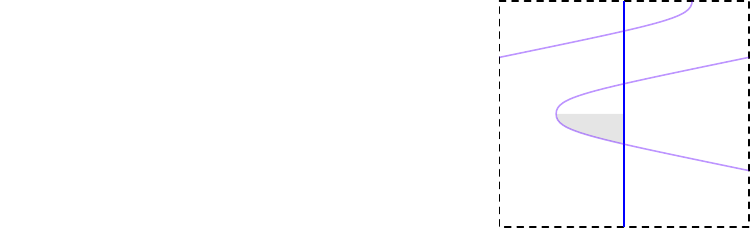
\caption{Triangles on the diagram $(\bT^2,\b_\lambda,\b_0,\b_1)$. The computation is shown on a cover of the torus.}
\label{fig:18}
\end{figure}

For the $\sigma$-labeled triangle, the linear map is
\[
\phi^\sigma\circ \Delta=(U^i\mapsto U^iT^0).
\]
For the $\tau$-labeled triangle, the linear map is
\[
U (T\circ \phi^\tau\circ \scV^{-1}\circ \Delta)=\phi^\tau\circ \Delta=(U^i\mapsto U^i T^0).
\]
Hence the $\sigma$ and $\tau$ labeled triangles make canceling contribution, completing the proof. 

We now consider the morphism $\Psi$. It is sufficient to show that
\[
\mu_2\left(\lb\theta_\tau^+,\phi^\tau\rb+\lb\theta_\sigma^+,\phi^\sigma\rb, \lbmed\theta_{\b_1, \b_\lambda'},\Pi\rbmed\right)=0.
\]
In this case, the holomorphic triangles are essentially the same as considered for the map $\Phi$. It remains to consider the vector space morphisms associated to them. The $\sigma$-labeled triangle now has morphism-weight
\[
\Pi\circ \phi^\sigma.
\]
The $\tau$-labeled triangle has weight
\[
U(\Pi\circ T\circ \phi^\tau\circ \scV^{-1})=\Pi\circ \phi^\tau.
\]
We observe that
\[
\Pi\circ \phi^\sigma=\Pi\circ \phi^\tau
\]
since both maps send $\scU^i\scV^j$ to $U^i$ if $i=j$ and to 0 if $i\neq j$. Hence the two triangles make canceling contribution, so $\mu_1(\Psi)=0$.
\end{proof}

\subsection{Homotopy equivalence}

We now show that the morphisms $\Phi$ and $\Psi$, described in the previous section form a homotopy equivalence. One composition is easier than the other:

\begin{prop}\label{prop:PhiPsi}
 There is an equality
\[
\mu_2(\Phi, \Psi) =\lbmed\theta^+_{\b_\lambda,\b_\lambda'},\id\rbmed.
\]
\end{prop}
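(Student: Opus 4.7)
The plan is to reduce the composition to an explicit count of holomorphic rectangles on the quadruple Heegaard diagram $(\bT^2,\b_\lambda,\b_0,\b_1,\b_\lambda')$, and then verify that the bulk of the contributions cancel in pairs, leaving exactly the canonical unit $\lbmed\theta^+_{\b_\lambda,\b_\lambda'},\id\rbmed$.

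First I would unwind what $\mu_2(\Phi,\Psi)$ means in $\Tw(\Fuk(\bT^2))$. Since $\Phi$ lands in the $\b_0^{E_0}$ summand of the cone and $\Psi$ departs from the $\b_1^{E_1}$ summand, exactly one insertion of the internal differential $\delta_{\Cone}=\lb\theta_\sigma^+,\phi^\sigma\rb+\lb\theta_\tau^+,\phi^\tau\rb$ is needed to bridge the two. The expansion of $\mu_2^{\Tw}$ therefore collapses to
\[
\mu_2(\Phi,\Psi)=\mu_3^{\Sigma\cC}\bigl(\lb\theta_{\b_\lambda,\b_0},\Delta\rb,\; \lb\theta_\sigma^+,\phi^\sigma\rb+\lb\theta_\tau^+,\phi^\tau\rb,\;\lbmed\theta_{\b_1,\b_\lambda'},\Pi\rbmed\bigr),
\]
which is computed by counting index-zero pseudo-holomorphic rectangles on $(\bT^2,\b_\lambda,\b_0,\b_1,\b_\lambda')$ with the indicated corners, each weighted by the appropriate linear map from Equation~\eqref{eq:local-systems}.

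Next I would enumerate the relevant homology classes of rectangles, working on the universal cover of $\bT^2$ as in Figure~\ref{fig:18}. Fix the output corner to be $\theta^+_{\b_\lambda,\b_\lambda'}$ (other outputs give contributions of lower Maslov grading, which must vanish by the grading-counting argument already used in the paper). The analysis naturally splits into two families according to whether the middle corner is $\theta_\sigma^+$ or $\theta_\tau^+$. By the Riemann mapping theorem, each rectangle of index zero has a unique holomorphic representative. I expect to find, in each family, a distinguished "small" rectangle supported in a thin region between $\b_\lambda$ and $\b_\lambda'$, plus a collection of wider rectangles that sweep around $K$.

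For each rectangle $\psi$ in the enumeration, I would compute the associated $\bF[U]$-module morphism, which takes the form
\[
U^{n_w(\psi)} \cdot \Pi\circ T^{\#(\d_{\b_1}(\psi)\cap K)}\circ \phi^{\circ}\circ \scV^{\#(\d_{\b_0}(\psi)\cap K)}\circ \Delta,
\]
where $\circ\in\{\sigma,\tau\}$. A direct calculation, using $\Delta(U^i)=(\scU\scV)^i$ and the definitions of $\phi^\sigma,\phi^\tau,\Pi$, simplifies each such composition to $U^N\cdot \id$ for some non-negative integer $N$ determined by the multiplicities of $\psi$ at $w$ and its winding around $K$. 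The core combinatorial identity $\Pi\circ \phi^\sigma=\Pi\circ \phi^\tau$ (already used in $\mu_1(\Psi)=0$) then pairs each non-minimal $\sigma$-rectangle with a $\tau$-rectangle of the same $U$-power, forcing them to cancel.

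The main obstacle will be controlling this enumeration and cancellation globally: one has to verify that the involution that pairs a $\sigma$-rectangle with its $\tau$-partner is well-defined (and free) on all rectangles except for the distinguished small one whose $\b_0$- and $\b_1$-multiplicities along $K$ are zero. For that small rectangle, $n_w(\psi)=0$, and the output morphism is simply $\Pi\circ\phi^\sigma\circ\Delta=\id_{\bF[U]}$, yielding exactly $\lbmed\theta^+_{\b_\lambda,\b_\lambda'},\id\rbmed$. This parity-style cancellation parallels the arguments in Lemmas~\ref{lem:theta-tau/sigma-cycles}, and I would structure the proof so that the pairing is produced by a local move near the $\{w,z\}$ region, making the sign/weight bookkeeping as transparent as possible.
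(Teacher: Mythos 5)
Your reduction of $\mu_2(\Phi,\Psi)$ to the single quadrilateral count
\[
\mu_3\bigl(\lb\theta_{\b_\lambda,\b_0}, \Delta\rb,\; \lb\theta_\sigma^+,\phi^\sigma\rb+\lb\theta_\tau^+,\phi^\tau\rb,\; \lbmed\theta_{\b_1,\b_\lambda'}, \Pi\rbmed\bigr)
\]
is exactly right and matches the paper. But the plan you lay out for evaluating it does not match what the torus geometry actually produces, and this is a genuine gap. You expect, in each of the $\sigma$- and $\tau$-families, a ``small'' rectangle plus infinitely many wider rectangles wrapping around $K$, requiring a parity-style cancellation pairing $\sigma$- and $\tau$-rectangles of equal $U$-power, with a surviving small $\sigma$-rectangle giving $\Pi\circ\phi^\sigma\circ\Delta=\id$. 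That is not what happens. On the diagram $(\bT^2,\b_\lambda,\b_0,\b_1,\b_\lambda')$ (the paper's Figure~5.5), there is exactly \emph{one} embedded index $-1$ quadrilateral class with nonnegative domain and the correct corners, and it has the $\tau$-input $\theta_\tau^+$; there are \emph{no} quadrilaterals at all with the $\sigma$-input $\theta_\sigma^+$. No cancellation mechanism is needed: the single $\tau$-quadrilateral has multiplicity $n_w=1$, boundary windings $\#(\d_{\b_0}(\psi)\cap K)=-1$ and $\#(\d_{\b_1}(\psi)\cap K)=1$, and its weight is
\[
U\bigl(\Pi\circ T\circ\phi^\tau\circ\scV^{-1}\circ\Delta\bigr)=\id,
\]
which alone gives $\lbmed\theta^+_{\b_\lambda,\b_\lambda'},\id\rbmed$. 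So a proof along your lines would have to actually carry out the enumeration on the cover, and if done correctly it would short-circuit your cancellation argument entirely; as written, your plan asserts a structure (many rectangles, a $\sigma$-survivor) that is false, so the proposal is not a valid proof. One further small point: in the convention of this paper, quadrilaterals contributing to $\mu_3$ are index $-1$ (index $3-n$ for an $n$-gon), not index zero.
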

\begin{proof} The composition of the morphisms $\Phi$ and $\Psi$ has a single contribution, which is the holomorphic quadrilateral count:
\[
\mu_3\left(\lb\theta_{\b_\lambda,\b_0}, \Delta\rb, \lb\theta_\tau^+,\phi^\tau\rb+\lb\theta_\sigma^+,\phi^\sigma\rb, \lbmed\theta_{\b_1,\b_\lambda'}, \Pi\rbmed\right).
\]
This holomorphic quadrilateral count is performed in Figure~\ref{fig:19}. For the configuration shown, there is exactly one index $-1$ holomorphic quadrilateral with the $\tau$ input, and there are none with the $\sigma$ input. The morphism associated to the unique holomorphic quadrilateral is
\[
U(   \Pi \circ T\circ \phi^{\tau}\circ \scV^{-1}\circ\Delta)=\id.
\]
Hence, the output of the quadrilateral count is $[\theta^+_{\b_\lambda,\b_\lambda'},\id]$, completing the proof.
\end{proof}

\begin{figure}[ht]
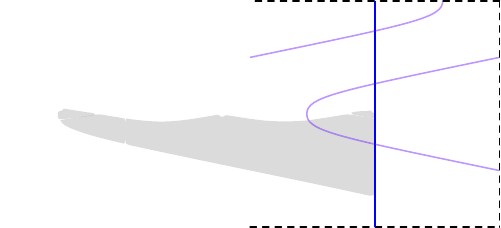
\caption{A holomorphic rectangle on the diagram $(\bT^2,\b_\lambda,\b_0,\b_1,\b_\lambda')$.}
\label{fig:19}
\end{figure}

We now consider the composition $\mu_2(\Psi, \Phi)$. In contrast to Proposition~\ref{prop:PhiPsi}, we have the following:

\begin{prop}\label{prop:PsiPhi}There is a morphism 
\[
H\colon \Cone(\b_0^{E_0}\to \b_1^{E_1})\to \Cone(\b_0'^{E_0}\to \b_1'^{E_0})
\]
 so that
\[
\mu_2(\Psi,\Phi)+ \lbmed\theta_{\b_0,\b_0'}^+,\id\rbmed+\lbmed\theta_{\b_1,\b_1'}^+,\id\rbmed=\mu_1 (H).
\]
\end{prop}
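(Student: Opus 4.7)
The plan is to compute $\mu_2^{\Tw}(\Psi,\Phi)$ by decomposing it into its natural components as a map of cones, identify each component with an explicit holomorphic polygon count on the torus, and then construct $H$ as a single Floer chain whose $\mu_1$-boundary simultaneously absorbs the off-diagonal contributions and the diagonal discrepancies.

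First, I would unpack the twisted-complex composition. Since $\Psi$ is supported on the $\b_1^{E_1}\to\b_\lambda'$ piece and $\Phi$ on the $\b_\lambda'\to\b_0'^{E_0}$ piece, and the internal differentials of both cones are length-one chains from the $\b_0$ summand to the $\b_1$ summand, the sum
\[
\mu_2^{\Tw}(\Psi,\Phi)=\sum_{i,j\in\{0,1\}}\mu_{2+i+j}^{\Sigma\cC}\bigl(\delta_{\Cone}^{i},\Psi,\Phi,\delta_{\Cone'}^{j}\bigr)
\]
decomposes into exactly four pieces: a triangle count on $(\b_1,\b_\lambda',\b_0')$ giving the $\b_1^{E_1}\to\b_0'^{E_0}$ corner; two rectangle counts on $(\b_0,\b_1,\b_\lambda',\b_0')$ and $(\b_1,\b_\lambda',\b_0',\b_1')$ giving the two diagonal corners; and a pentagon count on all five curves giving the $\b_0^{E_0}\to\b_1'^{E_1}$ corner. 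Each rectangle and the pentagon further split according to whether one inserts the $\sigma$- or the $\tau$-summand of $\delta_{\Cone}$ (or both, in the pentagon case).

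Next I would enumerate the relevant polygons on the universal cover, in parallel to Figures~\ref{fig:18} and~\ref{fig:19}. By the Riemann mapping theorem each contributing class has a unique representative, so the computation reduces to tracking linear maps and local-system weights. For the $\b_0^{E_0}\to\b_0'^{E_0}$ rectangle, the $\sigma$- and $\tau$-contributions produce maps built from $\Delta\circ\Pi\circ\phi^\sigma$ and $\Delta\circ\Pi\circ\phi^\tau$, modified by factors $T^{\pm m}$ and $\scV^{\pm m}$ from each boundary edge crossing the shadow $K$ and by an overall $U^{n_w(\phi)}$. The identities $\Pi\circ\phi^\sigma=\Pi\circ\phi^\tau$, $\phi^\sigma\circ\Delta=\phi^\tau\circ\Delta$, and $\scU\cdot\scV=U$ (already exploited in the proofs of $\mu_1(\Phi)=\mu_1(\Psi)=0$ and of Proposition~\ref{prop:PhiPsi}) collapse these rectangles either onto $\lb\theta^+_{\b_0,\b_0'},\id\rb$ or onto explicit $\mu_1$-coboundaries; the $\b_1\to\b_1'$ analysis is parallel.

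Finally I would construct $H$ as a Floer chain in $\CF(\Cone,\Cone')$ supported on the $\b_1^{E_1}\to\b_0'^{E_0}$ corner, of the form $\lb\xs,\psi\rb$ for a suitable $\xs\in\b_1\cap\b_0'$ and $\bF[U]$-module map $\psi\colon E_1\to E_0$. The boundary $\mu_1(H)$ then splits into four pieces matching those of $\mu_2^{\Tw}(\Psi,\Phi)$: its intrinsic Floer differential cancels the off-diagonal triangle; its $\mu_2$ with $\delta_{\Cone}$ or $\delta_{\Cone'}$ alone supplies the diagonal corrections; and its $\mu_3$ with both internal differentials cancels the pentagon. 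Verifying these four cancellations simultaneously reduces to a pair of cycle identities among $\phi^\sigma$, $\phi^\tau$, $\Pi$, and $\Delta$, which follow from the explicit formulas and the $\sigma\leftrightarrow\tau$ symmetry of the diagram. The main obstacle throughout is local-system bookkeeping, i.e.\ ensuring that the powers of $T$, $\scU$, $\scV$, and $U$ attached to each polygon combine into the identity morphism on $E_0\oplus E_1$ rather than a proper $\bF[U]$-endomorphism; this is the same mechanism that made $\lb\theta^+_{\sigma},\phi^\sigma\rb+\lb\theta^+_{\tau},\phi^\tau\rb$ a cycle in the first place, now extended from triangles to rectangles and pentagons.
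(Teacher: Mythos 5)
Your overall strategy coincides with the paper's: decompose $\mu_2^{\Tw}(\Psi,\Phi)$ into the triangle, two rectangle, and pentagon counts, and construct $H$ supported on the $\b_1^{E_1}\to\b_0'^{E_0}$ corner so that the four components of $\mu_1^{\Tw}(H)$ match them. The gap is in your last step, where you claim the diagonal rectangles ``collapse onto $\lbmed\theta^+_{\b_0,\b_0'},\id\rbmed$ or onto explicit $\mu_1$-coboundaries'' and that the cancellations ``reduce to a pair of cycle identities among $\phi^\sigma,\phi^\tau,\Pi,\Delta$.'' Neither holds as stated. On $(\b_0,\b_1,\b_\lambda,\b_0')$ only the $\tau$-input rectangle contributes, and its weight is $U(\Delta\circ\Pi\circ T\circ\phi^\tau\circ\scV^{-1})=\delta_0$, the projection of $E_0$ onto Alexander grading $0$, not the identity; on $(\b_1,\b_\lambda,\b_0',\b_1')$ no rectangles contribute at all. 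So $\mu_2(\Psi,\Phi)$ has diagonal parts $\delta_0$ and $0$, and $H$ must supply the complementary projections $\sum_{i\neq 0}\delta_i$ on the $\b_0$-corner and the \emph{entire} identity on the $\b_1$-corner, in addition to cancelling the off-diagonal term $\lb\theta_\sigma^-,\Delta\circ\Pi\rb+\lb\theta_\tau^-,\scU\circ\Delta\circ\Pi\circ T\rb$.

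This forces $H$ not to be a single chain with coefficient built finitely from $\phi^\sigma,\phi^\tau,\Pi,\Delta$: one needs two chains $\lbmed\theta_\sigma^+,\eta_\sigma\rbmed+\lbmed\theta_\tau^+,\eta_\tau\rbmed$, one in each $\Spin^c$ class of $\b_1\cap\b_0'$, whose coefficients solve the recursions $\eta_\sigma+\scV\circ\eta_\sigma\circ T^{-1}=\Delta\circ\Pi$ and $\eta_\tau+\scU\circ\eta_\tau\circ T=\scU\circ\Delta\circ\Pi\circ T$; the solutions are the infinite series $\eta_\sigma=\sum_{i\ge 0}\scV^i\circ\Delta\circ\Pi\circ T^{-i}$ and $\eta_\tau=\sum_{i\ge 1}\scU^i\circ\Delta\circ\Pi\circ T^i$, and one must check that these converge (they are continuous in both topologies in play), that their triangle pairings with the cone differentials on $(\b_0,\b_1,\b_0')$ and $(\b_1,\b_0',\b_1')$ produce exactly $\sum_{i\ge 1}\delta_i$, $\sum_{i\le -1}\delta_i$, and $\lbmed\theta^+_{\b_1,\b_1'},\id\rbmed$, and that the quadrilateral term $\mu_3(\delta,H,\delta)$ vanishes. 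Producing these infinite-series coefficients and verifying these counts is the real content of the proposition; a finite algebraic identity among $\phi^\sigma,\phi^\tau,\Pi,\Delta$ cannot do the job, so as written your final step would fail, even though your decomposition and the placement of $H$ agree with the paper's proof.
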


The proof of Proposition~\ref{prop:PsiPhi} is slightly  more involved than Proposition~\ref{prop:PhiPsi}, because we must compute the composition and construction a homotopy.

\begin{lem} The composition $\mu_2(\Psi, \Phi)$ is equal to the solid arrows in the diagram below:
\[
\begin{tikzcd}[column sep=2cm, row sep=1.5cm, labels=description]
\b_0^{E_0}\ar[r,dashed] \ar[dr,"X"]& \b_1^{E_1}\ar[d,"Y"]\\
& \b_0'^{E_0}\ar[r,dashed]& \b_1'^{E_1}
\end{tikzcd}
\]
where:
\begin{enumerate}
\item $X=\lbmed\theta^+_{\b_0,\b_0'},\delta_0\rbmed$, where $\delta_0\colon \bF[\scU,\scV]\to \bF[\scU,\scV]$ denotes projection to Alexander grading 0. 
\item $Y=\lb\theta_\sigma^-,\Delta\circ \Pi\rb+\lb\theta_\tau^-,\scU\circ \Delta\circ \Pi\circ T\rb$.
\end{enumerate}
\end{lem}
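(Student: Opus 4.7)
The strategy is to expand $\mu_2(\Psi, \Phi)$ using the $A_\infty$ composition law for twisted complexes and then identify each resulting polygon count with a specific slot of the diagram. Writing $X_0$ for the source cone and $X_2$ for the target cone, we have
\[
\mu_2(\Psi, \Phi) = \sum_{i_0, i_2 \geq 0} \mu_{2+i_0+i_2}\bigl(\delta_{X_0}^{i_0}, \Psi, \Phi, \delta_{X_2}^{i_2}\bigr).
\]
Since $\delta_{X_j}$ is a single chain from the $\b_0^{E_0}$ summand of its cone to the $\b_1^{E_1}$ summand, trivially $\delta_{X_j}^{i_j} = 0$ for $i_j \geq 2$, so only $(i_0, i_2) \in \{0,1\}^2$ contribute. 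These four terms land in the four slots of $\Hom(X_0, X_2)$ that organize the lemma's diagram.

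For the $(0, 0)$ term, I would perform the triangle count for $\mu_2(\Psi, \Phi)$ on $(\bT^2, \b_1, \b_\lambda', \b_0')$. Looking at the picture (a cyclic analogue of Figure~\ref{fig:18}), the Riemann mapping theorem produces exactly two index-$0$ triangles, with outputs $\theta_\sigma^-$ and $\theta_\tau^-$ in $\b_1 \cap \b_0'$. For the first, $n_w(\phi) = 0$ and the boundary intersections with $K$ all vanish, giving the module map $\Delta \circ \Pi$. For the second, $n_w(\phi) = 1$, $\#(\d_{\b_1}(\phi) \cap K) = 1$, and $\#(\d_{\b_0'}(\phi) \cap K) = -1$; applying the monodromy rule of Section~\ref{sec:local-systems} and the identity $U = \scU\scV$ gives $U\cdot \scV^{-1} \circ \Delta \circ \Pi \circ T = \scU \circ \Delta \circ \Pi \circ T$. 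Together these reproduce $Y$.

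For the $(1, 0)$ term, I would carry out the rectangle count $\mu_3(\delta_{X_0}, \Psi, \Phi)$ on $(\bT^2, \b_0, \b_1, \b_\lambda', \b_0')$, which is a cyclic cousin of the rectangle count in Proposition~\ref{prop:PhiPsi}. For each homotopy class of rectangle one tracks the factor $\phi^\sigma$ or $\phi^\tau$ coming from $\delta_{X_0}$, the monodromies $\scV^{\# \d_{\b_0}(\phi) \cap K}$ and $T^{\# \d_{\b_1}(\phi) \cap K}$, and the overall $U^{n_w(\phi)}$. Summing the $\sigma$ and $\tau$ contributions, the net module map kills every element of $E_0$ outside the diagonal subring $\bF[\scU\scV] \subset \bF[\scU, \scV]$ and acts as the identity on that subring. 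This is exactly the projector $\delta_0$, so the term equals $X = \lbmed\theta^+_{\b_0, \b_0'}, \delta_0\rbmed$.

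The remaining $(0, 1)$ and $(1, 1)$ terms land in $\CF(\b_1, \b_1')$ and $\CF(\b_0, \b_1')$ respectively, and must vanish. The key observation for $(0, 1)$ is that $\phi^\sigma \circ \Delta \circ \Pi = \phi^\tau \circ \Delta \circ \Pi$ (both send $U^i T^j \mapsto U^i$ when $j=0$ and to $0$ otherwise), so the $\sigma$ and $\tau$ rectangles pair off and cancel, in the same spirit as the proof of $\mu_1(\Psi) = 0$ above. The $(1, 1)$ pentagon admits a parallel $\sigma$/$\tau$ cancellation. The main obstacle will be the $(1, 0)$ rectangle count: identifying why precisely the Alexander-grading-zero subspace of $E_0$ survives requires a careful interleaving of the $\phi^\sigma$, $\phi^\tau$ algebra factors with both cone differentials and the local system monodromies, and is where the algebraic content of the lemma really lives.
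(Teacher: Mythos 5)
Your decomposition of $\mu_2^{\Tw}(\Psi,\Phi)$ into the four terms indexed by $(i_0,i_2)\in\{0,1\}^2$ is exactly the paper's strategy, and your $(0,0)$ triangle count matches the paper's: the $\sigma$- and $\tau$-labelled triangles contribute $\Delta\circ\Pi$ and $U(\scV^{-1}\circ\Delta\circ\Pi\circ T)=\scU\circ\Delta\circ\Pi\circ T$, which is $Y$.

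The genuine gap is in the two vanishing terms. You propose that the $(0,1)$ rectangles and $(1,1)$ pentagons cancel in $\sigma$/$\tau$ pairs, citing $\phi^\sigma\circ\Delta\circ\Pi=\phi^\tau\circ\Delta\circ\Pi$. That identity is true, but it does not give a proof: the module map attached to a polygon class also carries the monodromy weights $T^{\#\d_{\b_1}(\psi)\cap K}$, $\scV^{\#\d_{\b_0'}(\psi)\cap K}$, $T^{\#\d_{\b_1'}(\psi)\cap K}$ and the overall $U^{n_w(\psi)}$, which differ between $\sigma$- and $\tau$-labelled classes, and, more fundamentally, holomorphic representatives need not come in matched $\sigma$/$\tau$ pairs on these diagrams. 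Indeed, in the $(1,0)$ count that produces $X$, the paper finds exactly one holomorphic rectangle, with $\tau$ input and none with $\sigma$ input (Figure~\ref{fig:20}); its weight $U(\Delta\circ\Pi\circ T\circ\phi^\tau\circ\scV^{-1})=\delta_0$ alone gives $X$, so "summing the $\sigma$ and $\tau$ contributions" is not what happens there either. The actual reason the $(0,1)$ and $(1,1)$ terms vanish is geometric rather than algebraic: on $(\bT^2,\b_1,\b_\lambda,\b_0',\b_1')$ the index $-1$ rectangle classes with the given inputs have regions of negative multiplicity (Figure~\ref{fig:21}), so no quadrilaterals are counted, and similarly no index $-1$ pentagons contribute on $(\bT^2,\b_0,\b_1,\b_\lambda,\b_0',\b_1')$. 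To complete your argument you must carry out these enumerations for the chosen small translates instead of appealing to a pairing. A minor slip: the $(1,0)$ term $\mu_3(\delta_{X_0},\Psi,\Phi)$ involves only the source cone differential, not both cone differentials.
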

\begin{proof} The two claims amount to computing the  following compositions:
\begin{enumerate}
\item\label{comp-1} $\mu_3(\lbsm\theta_{\sigma}^+,\phi^\sigma\rbsm+\lbsm\theta_\tau^+,\phi^\tau\rbsm,\lbsm\theta_{\b_1,\b_\lambda},\Pi\rbsm, \lbmed\theta_{\b_\lambda,\b_0'},\Delta\rbsm)$. This is $X$.
\item\label{comp-2} $\mu_2( \lbsm\theta_{\b_1,\b_\lambda},\Pi\rbsm, \lbsm\theta_{\b_\lambda,\b_0},\Delta\rbsm )$. This is $Y$.
\item\label{comp-3}
 $\mu_3(\lbsm\theta_{\b_1,\b_\lambda},\Pi\rbsm, \lbsm\theta_{\b_\lambda,\b_0'},\Delta\rbsm,\lbsm\theta_{\sigma}^+,\phi^\sigma\rbsm+\lbsm\theta_\tau^+,\phi^\tau\rbsm)$.
\item\label{comp-4} $\mu_4(\lbsm\theta_{\sigma}^+,\phi^\sigma\rbsm+\lbsm\theta_\tau^+,\phi^\tau\rbsm,\lbsm\theta_{\b_1,\b_\lambda},\Pi\rbsm, \lbsm\theta_{\b_\lambda,\b_0'},\Delta\rbsm,\lbsm\theta_{\sigma}^+,\phi^\sigma\rbsm+\lbsm\theta_\tau^+,\phi^\tau\rbsm)$.
\end{enumerate}
We claim that the first two compositions contribute $X$ and $Y$, and that the second two compositions make trivial contribution.

The composition in~\eqref{comp-1} is computed in Figure~\ref{fig:20}. There is a single holomorphic rectangle which contributes. The $\bF[U]$-module map in the output is
\[
U(\Delta\circ \Pi \circ T\circ \phi^\tau\circ \scV^{-1})=\delta_0,
\]
as claimed.

\begin{figure}[ht]
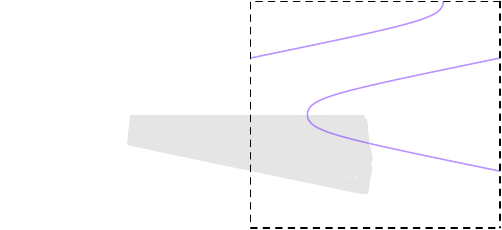
\caption{A quadrilateral on the diagram $(\b_0,\b_1,\b_\lambda,\b_0')$. This rectangle has the $\tau$-labeled generator as input. There are no rectangles with $\sigma$-labeled input on this diagram.}
\label{fig:20}
\end{figure}

Next, we consider the composition in ~\eqref{comp-2}. The two holomorphic triangles which contribute are the same ones that have appeared in our earlier computations, as shown in Figure~\ref{fig:18}. Note that since we have cyclically permuted $\b_0,\b_1,\b_\lambda$ the output is $\theta_\sigma^-$ and $\theta_\tau^-$ (as opposed to $\theta_\sigma^+$ and $\theta_\tau^+$). The vector space map output for the $\tau$-labeled triangle is
\[
U(\scV^{-1} \circ  \Delta \circ \Pi \circ T)= \scU\circ \Delta\circ \Pi\circ T.
\]
The vector space output for the $\sigma$-labeled triangle is
\[
\Delta\circ \Pi,
\]
as claimed.

We now consider the compositions~\eqref{comp-3} and~\eqref{comp-4}. For ~\eqref{comp-3}, the relevant diagram is $(\bT^2, \b_1, \b_\lambda, \b_0',\b_1')$ shown in Figure~\ref{fig:21}. There are no quadrilaterals which are counted. 

 The composition~\eqref{comp-4}  counts holomorphic pentagons on the diagram $(\bT^2, \b_0,\b_1,\b_\lambda,\b_0',\b_1')$. We leave it as a straightforward (though slightly tedious) exercise to the reader to verify that there are no holomorphic pentagons which contribute. 
 \end{proof}

\begin{figure}[ht]
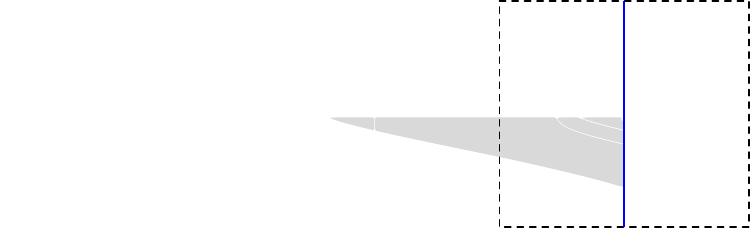
\caption{The diagram $(\bT^2,\b_1,\b_\lambda,\b_0',\b_1')$. Two index $-1$ rectangle classes are shown (solid regions indicate multiplicity 1, while wavy filled regions have multiplicity $-1$). }
\label{fig:21}
\end{figure}

We now observe that the morphism 
\[
Y\colon \b_1^{E_1}\to \b_0'^{E_0}
\]
 is a boundary:
 
 \begin{lem}
 \label{lem:d(Z)}
  Define the morphism 
  \[
  Z=Z_\sigma^++Z_\sigma^+\colon \b_1^{E_1}\to \b_0'^{E_0}
  \]
   where $Z_\sigma^+=\lbmed\theta_\sigma^+,\eta_\sigma\rbmed$ and $Z_{\tau}^+:= \lbmed\theta_\tau^+,\eta_\tau\rbmed$ and
\[
\eta_\sigma=\sum_{i=0}^\infty\scV^i \circ \Delta\circ \Pi\circ T^{-i}\quad \text{and} \quad \eta_\tau=\sum_{i =1}^\infty \scU^i \circ \Delta\circ \Pi\circ T^i.
\]
 As morphisms from $\b_1^{E_1}$ to $\b_0'^{E_0}$, 
 \[
 \mu_1( Z)=Y.
 \]
 \end{lem}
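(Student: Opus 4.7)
The plan is to compute $\mu_1(Z_\sigma^+)$ and $\mu_1(Z_\tau^+)$ directly by counting index-1 holomorphic bigons on $(\bT^2,\b_1,\b_0')$, mirroring the analysis in Lemma~\ref{lem:theta-tau/sigma-cycles} but in the opposite morphism direction. Since holomorphic bigons preserve the $\Spin^c$-structure decomposition, the outputs of $\mu_1(Z_\sigma^+)$ must lie at $\theta_\sigma^-$ and those of $\mu_1(Z_\tau^+)$ at $\theta_\tau^-$, so the two summands of $Y$ may be analyzed separately.

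In each $\Spin^c$-structure one expects precisely two index-1 bigons, identical in underlying domain to those appearing in Lemma~\ref{lem:theta-tau/sigma-cycles}. I would record the boundary-monodromy and basepoint multiplicities $(m_{\b_0'},m_{\b_1},n_w)$ as $(0,0,0)$ and $(1,-1,0)$ for the $\sigma$-pair, and $(0,0,0)$ and $(-1,1,1)$ for the $\tau$-pair. The $\tau$-multiplicities would be determined either by the $w\leftrightarrow z$ involution of the diagram, or equivalently by the cycle condition on $\lb\theta_\tau^+,\phi^\tau\rb$ from Lemma~\ref{lem:theta-tau/sigma-cycles}, which forces $n_w+m_{\b_0'}=0$ and $m_{\b_0'}+m_{\b_1}=0$ and selects the nonnegative solution. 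Accounting for the reversed composition order in $\CF(\b_1^{E_1},\b_0'^{E_0})$, one obtains, for any $\bF[U]$-linear $\phi\colon E_1\to E_0$,
\[
\mu_1(\lb\theta_\sigma^+,\phi\rb)=\lb\theta_\sigma^-,\,\phi+\scV\circ\phi\circ T^{-1}\rb,\quad \mu_1(\lb\theta_\tau^+,\phi\rb)=\lb\theta_\tau^-,\,\phi+U\scV^{-1}\circ\phi\circ T\rb.
\]

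The remainder is formal series manipulation in characteristic 2. Substituting $\phi=\eta_\sigma$, one computes $\scV\circ\eta_\sigma\circ T^{-1}=\sum_{i\ge 1}\scV^i\circ\Delta\circ\Pi\circ T^{-i}$, so the sum $\eta_\sigma+\scV\circ\eta_\sigma\circ T^{-1}$ telescopes to the surviving $i=0$ term $\Delta\circ\Pi$. Substituting $\phi=\eta_\tau$ and using the identity $U\scV^{-1}=\scU$ on $E_0$, one finds $U\scV^{-1}\circ\eta_\tau\circ T=\sum_{j\ge 2}\scU^j\circ\Delta\circ\Pi\circ T^j$, so $\eta_\tau+U\scV^{-1}\circ\eta_\tau\circ T$ telescopes to $\scU\circ\Delta\circ\Pi\circ T$. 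Summing these then yields $\mu_1(Z)=\lb\theta_\sigma^-,\Delta\circ\Pi\rb+\lb\theta_\tau^-,\scU\circ\Delta\circ\Pi\circ T\rb=Y$. The only non-routine step I anticipate is justifying the multiplicity $n_w=1$ for the second $\tau$-bigon; everything else is bookkeeping.
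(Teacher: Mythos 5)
Your proposal is correct and follows essentially the same route as the paper: one computes $\mu_1\lb\theta_\sigma^+,f\rb=\lb\theta_\sigma^-,f+\scV\circ f\circ T^{-1}\rb$ and $\mu_1\lb\theta_\tau^+,f\rb=\lb\theta_\tau^-,f+\scU\circ f\circ T\rb$ from the same two bigons as in Lemma~\ref{lem:theta-tau/sigma-cycles}, and then verifies the telescoping identities $\eta_\sigma+\scV\circ\eta_\sigma\circ T^{-1}=\Delta\circ\Pi$ and $\eta_\tau+\scU\circ\eta_\tau\circ T=\scU\circ\Delta\circ\Pi\circ T$, exactly as you do (note $U\scV^{-1}=\scU$). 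The only point you flag as non-routine, the multiplicity $n_w=1$ on the second $\tau$-bigon, is simply read off the local picture in Figure~\ref{fig:47}, so no extra argument is needed there.
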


\begin{proof} Note that $\eta_\sigma$ and $\eta_\tau$ are continuous in both the chiral and $U$-adic topologies, when viewed as $\bF[U]$-module morphisms from $E_1$ to $E_0$. For the proof that $\mu_1(Z)=Y$, we observe similarly to Lemma~\ref{lem:theta-tau/sigma-cycles} that if $f\colon E_1\to E_0$ is an $\bF[U]$-module map, then
\[
\mu_1 \lb \theta^+_\tau, f\rb=\lb\theta_\tau^-, U(\scV^{-1}\circ  f \circ T) \rb+\lb\theta_\tau^-, f\rb=\lb\theta_\tau^-, f+\scU\circ f\circ T\rb.
\]
Similarly, counting bigons gives
\[
\mu_1 \lb\theta^+_\sigma,f\rb=\lb\theta_\sigma^-, f+\scV\circ f \circ T^{-1}\rb.
\]
From the above two computations, the equation $\mu_1 (Z)=Y$ follows from the easily verified equations
\[
\eta_\sigma+\scV \circ \eta_\sigma\circ T^{-1}=\Delta\circ \Pi\quad \text{and} \quad \eta_\tau+\scU\circ \eta_\tau\circ T=\scU \circ \Delta\circ \Pi\circ T.
\]
\end{proof}

We now define the morphism $H$ from $\Cone(\b_0^{E_0}\to \b_1^{E_1})$ to $\Cone(\b_0'^{E_0}\to \b_1'^{E_1})$ via the solid arrows in the diagram below
\[
H=\begin{tikzcd}[column sep=2cm, row sep=1.5cm, labels=description]
\b_0^{E_0}\ar[r,dashed] & \b_1^{E_1}\ar[d,"Z"]\\
& \b_0'^{E_0}\ar[r,dashed]& \b_1'^{E_1}
\end{tikzcd}
\]

\begin{lem} We have
\[
\mu_2(\Psi, \Phi)+\mu_1(H)=\lbmed\theta_{\b_0,\b_0'}^+,\id\rbmed+\lbmed\theta_{\b_1,\b_1'}^+,\id\rbmed.
\]
\end{lem}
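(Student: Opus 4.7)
The plan is to expand $\mu_1(H)$ according to the $A_\infty$ structure on twisted complexes. Since $H$ has only one nontrivial component, namely $Z\colon \b_1^{E_1}\to \b_0'^{E_0}$, and since the cones $\Cone_1:=\Cone(\b_0^{E_0}\to\b_1^{E_1})$ and $\Cone_2:=\Cone(\b_0'^{E_0}\to\b_1'^{E_1})$ each have a single twisting edge given by $\lb\theta_\sigma^+,\phi^\sigma\rb+\lb\theta_\tau^+,\phi^\tau\rb$, the formula for $\mu_1$ on morphisms of twisted complexes yields exactly three nonzero contributions:
\begin{enumerate}
\item the internal Floer differential $\mu_1(Z)$, which is a morphism $\b_1^{E_1}\to\b_0'^{E_0}$;
\item the pre-composition $\mu_2\bigl(\lb\theta_\sigma^+,\phi^\sigma\rb+\lb\theta_\tau^+,\phi^\tau\rb,\,Z\bigr)$, a morphism $\b_0^{E_0}\to\b_0'^{E_0}$; and
\item the post-composition $\mu_2\bigl(Z,\,\lb\theta_\sigma^+,\phi^\sigma\rb+\lb\theta_\tau^+,\phi^\tau\rb\bigr)$, a morphism $\b_1^{E_1}\to\b_1'^{E_1}$.
\end{enumerate}
By Lemma~\ref{lem:d(Z)}, contribution (1) equals $Y$, which cancels the $Y$-summand of $\mu_2(\Psi,\Phi)$ computed in the previous lemma. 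Hence $\mu_2(\Psi,\Phi)+\mu_1(H)$ reduces to $X$ plus contributions (2) and (3), and the desired identity reduces to the two equalities
\[
X+\mu_2\bigl(\lb\theta_\sigma^+,\phi^\sigma\rb+\lb\theta_\tau^+,\phi^\tau\rb,\,Z\bigr)=\lbmed\theta_{\b_0,\b_0'}^+,\id\rbmed,
\]
\[
\mu_2\bigl(Z,\,\lb\theta_\sigma^+,\phi^\sigma\rb+\lb\theta_\tau^+,\phi^\tau\rb\bigr)=\lbmed\theta_{\b_1,\b_1'}^+,\id\rbmed.
\]

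For each equality I would count holomorphic triangles on the appropriate genus-one subdiagram --- $(\bT^2,\b_0,\b_1,\b_0')$ for the first, and $(\bT^2,\b_1,\b_0',\b_1')$ for the second. In both cases, the only intersection point in the output $\Spin^c$ structure which has the correct Maslov grading is $\theta_{\b_0,\b_0'}^+$ (resp.\ $\theta_{\b_1,\b_1'}^+$), so the content of each identity lies entirely in the associated $\bF[U]$-module morphism. The triangles split into two infinite families according to whether the boundary wraps via $\theta_\sigma^+$ or $\theta_\tau^+$ inputs, and each family is indexed by the number of times a triangle wraps around the torus, matching the sums defining $\eta_\sigma$ and $\eta_\tau$. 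Summing the contributions and using the algebraic identities
\[
\phi^\sigma\circ\eta_\sigma+\phi^\tau\circ\eta_\tau=\id_{E_1}\quad\text{and}\quad \eta_\sigma\circ\phi^\sigma+\eta_\tau\circ\phi^\tau=\id_{E_0}-\delta_0
\]
(which are direct consequences of the explicit formulas for $\phi^\sigma$, $\phi^\tau$, $\eta_\sigma$, $\eta_\tau$, $\Delta$, $\Pi$) yields the desired morphisms. The second identity above, combined with $X=\lbmed\theta^+_{\b_0,\b_0'},\delta_0\rbmed$, produces $\lbmed\theta^+_{\b_0,\b_0'},\id\rbmed$ as required.

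The main obstacle I expect is bookkeeping the infinite sums of triangle classes carefully --- in particular, verifying that wrapping triangles on the torus correspond precisely to the composite maps $\phi^\sigma\circ\scV^i\circ\Delta\circ\Pi\circ T^{-i}$ and their $\tau$-analogues, with the correct overall $U^{n_w}$-weights so that the telescoping sums really produce $\id-\delta_0$ and $\id$, respectively. This is essentially a model computation on the once-punctured torus, analogous to (and compatible with) the earlier computations of $\mu_1(\Phi)$ and $\mu_2(\Phi,\Psi)$, so the same domain analysis via the universal cover as in Figures~\ref{fig:18}--\ref{fig:21} should apply.
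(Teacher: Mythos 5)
Your overall strategy matches the paper's: cancel the $Y$-component of $\mu_2(\Psi,\Phi)$ against $\mu_1(Z)$ using Lemma~\ref{lem:d(Z)}, and then compute the two remaining components by counting triangles on $(\bT^2,\b_0,\b_1,\b_0')$ and $(\bT^2,\b_1,\b_0',\b_1')$, summing the wrapping families so that the $\b_0\to\b_0'$ contribution of $\mu_1(H)$ is $\sum_{i\neq 0}\lbmed\theta^+_{\b_0,\b_0'},\delta_i\rbmed$ (which together with $X=\lbmed\theta^+_{\b_0,\b_0'},\delta_0\rbmed$ gives the identity) and the $\b_1\to\b_1'$ contribution is $\lbmed\theta^+_{\b_1,\b_1'},\id\rbmed$. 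However, there is a genuine gap in your expansion of $\mu_1^{\Tw}(H)$: you assert it has ``exactly three nonzero contributions,'' but the twisted-complex differential also contains the term in which $Z$ is sandwiched between the twisting edges of \emph{both} cones, namely $\mu_3\bigl(\lb\theta_\sigma^+,\phi^\sigma\rb+\lb\theta_\tau^+,\phi^\tau\rb,\,Z,\,\lb\theta_\sigma^+,\phi^\sigma\rb+\lb\theta_\tau^+,\phi^\tau\rb\bigr)$. This is a potential morphism $\b_0^{E_0}\to\b_1'^{E_1}$, a component that does not appear on the right-hand side of the lemma, so it must be shown to vanish; the paper does this by a quadrilateral count on $(\bT^2,\b_0,\b_1,\b_0',\b_1')$ (Figure~\ref{fig:25}), checking that none of the eight possible choices of $\sigma/\tau$ inputs admits an index $-1$ holomorphic rectangle. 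Omitting this term, your claimed identity is not yet established.

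A secondary imprecision: the displayed algebraic identity $\eta_\sigma\circ\phi^\sigma+\eta_\tau\circ\phi^\tau=\id_{E_0}-\delta_0$ is false if read literally as composition of the maps as defined (that sum equals $\id_{E_0}$). The correct statement is that the triangle count carries monodromy weights along $\b_0$ and $\b_0'$, so the $\sigma$-family contributes $\scV^i\circ\Delta\circ\Pi\circ T^{-i}$ only for $i\ge 1$ (not $i\ge 0$), and it is this shift that produces $\id_{E_0}-\delta_0$. You flag the weight bookkeeping as the remaining work, so this is fixable, but as stated the identity you plan to invoke is not the one the holomorphic geometry gives you.
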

\begin{proof}The claim follows from the following four claims:
\begin{enumerate}
\item\label{comp-b-1} $\mu_1(Z)=Y$, where $\mu_1$ here denotes the internal differential of $\ve{\CF}^-(\b_1^{E_1},\b_0'^{E_0})$.
\item\label{comp-b-2} The holomorphic triangle count
\[
\mu_2\left(\lb\theta_{\sigma}^+,\phi^\sigma\rb+\lb\theta_\tau^+,\phi^\tau\rb,Z\right)=\sum_{i\in \Z\setminus \{0\}}\lbmed\theta_{\b_0,\b_0'},  \delta_i\rbmed.
\]
\item\label{comp-b-3} The holomorphic triangle count
\[
\mu_2\left(Z,\lb\theta_{\sigma}^+,\phi^\sigma\rb+\lb\theta_\tau^+,\phi^\tau\rb\right)=\lbmed\theta_{\b_1,\b_1'}^+,\id\rbmed.
\]
\item\label{comp-b-4} The holomorphic quadrilateral count
\[
\mu_3\left(\left[\theta_{\sigma}^+,\phi^\sigma\right]+\left[\theta_\tau^+,\phi^\tau\right],Z,\left[\theta_{\sigma}^+,\phi^\sigma\right]+\left[\theta_\tau^+,\phi^\tau\right]\right)=0.
\]
\end{enumerate}

The component $\mu_1(Z)$ from the internal differential is computed in Lemma~\ref{lem:d(Z)}.

The component in~\eqref{comp-b-2} is verified by the triangle count shown in Figure~\ref{fig:22}. Using the Figure, we compute that if $f\colon E_1\to E_0$ is a map, then 
\[
\mu_2\big(\lbmed\theta_{\sigma,\b_0,\b_1}^+,\phi^\sigma\rbmed, \lbmed\theta_{\sigma, \b_1, \b_0'}^+, f\rbmed\big)=\lbmed\theta_{\b_0,\b_0'}^+,  \scV \circ f\circ T^{-1}\circ \phi^\sigma\rbmed 
\] 
and
\[
\mu_2\big(\lbmed\theta_{\tau,\b_0,\b_1}^+,\phi^\tau\rbmed, \lbmed\theta_{\tau,\b_1,\b_0'}^+, f\rbmed\big)=\lbmed\theta_{\b_0,\b_0'}^+ , U\left(f \circ  T \circ \phi^\tau\circ  \scV^{-1}\right) \rbmed=\lbmed\theta_{\b_0,\b_0'}^+,f\circ \phi^\tau\rbmed.
\]
In particular, we compute that
\[
\mu_2(\lb \theta_\sigma^+,\phi^\sigma\rb, Z_\sigma^+)=\sum_{i \ge 0}\mu_2\big(\lb\theta_\sigma^+, \phi^\sigma \rb, \lbmed\theta_\sigma^+, \scV^i \circ \Delta \circ \Pi\circ T^{-i}\rbmed\big ) =\sum_{i \ge 1} \lbmed\theta_{\b_0,\b_0'}^+, \scV^i\circ \Delta \circ \Pi\circ T^{-i}\rbmed.
\]
We observe that the above is equal to 
\[
\sum_{i\ge 1} \lbmed\theta_{\b_0,\b_0'}^+,\delta_i\rbmed.
\]
Finally, we observe that the pairing of $\theta_\tau^+$ with $\theta_{\sigma}^+$ is trivial, because they lie in different $\Spin^c$ structures.
\begin{figure}[ht]
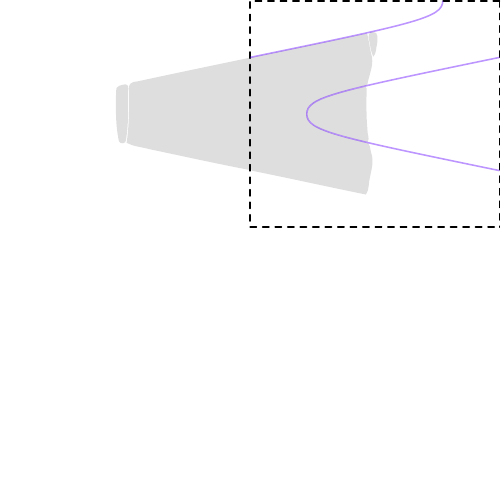
\caption{Two holomorphic triangles on $(\b_0,\b_1,\b_0')$.}
\label{fig:22}
\end{figure}

Similarly, we compute that
\[
\mu_2\big(\lb \theta_\tau^+,\phi^\tau\rb,Z_\tau^+\big)=\sum_{i \ge 1} \lbmed\theta_{\b_0,\b_0'}^+, \scU^i\circ  \Delta \circ \Pi\circ T^i\rbmed=\sum_{i\le -1} \lbmed\theta_{\b_0,\b_0'}^+, \delta_i\rbmed.
\]

We obtain that the sum of all arrows in $\mu_2(\Psi, \Phi)+\mu_1(Z)$ from $\b_0$ to $\b_0'$ is
\[
\sum_{i\in \Z}\lbmed\theta_{\b_0,\b_0'}^+,\delta_i\rbmed=\lbmed\theta_{\b_0,\b_0'}^+,\id\rbmed,
\]
as claimed.

Next, we move on to claim~\eqref{comp-b-3} and the component of $\mu_2(\Psi, \Phi)+\mu_1(Z)$ from $\b_1$ to $\b_1'$. We perform the relevant holomorphic triangle count in Figure~\ref{fig:23}.

\begin{figure}[ht]
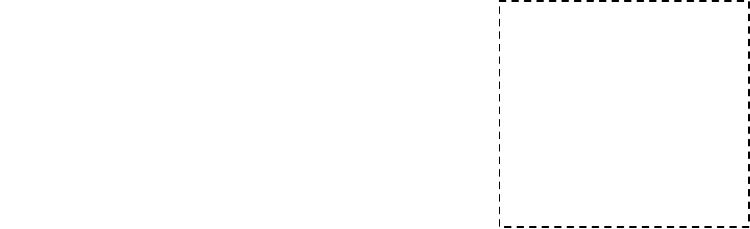
\caption{Holomorphic triangles on the diagram $(\b_1,\b_0',\b_1'$).}
\label{fig:23}
\end{figure}

Using Figure~\ref{fig:23}, we conclude that if $f\colon E_1\to E_0$ is an $\bF[U]$-linear map, then
\[
\mu_2\big(\lbmed\theta^+_{\sigma,\b_1,\b_0'},f\rbmed,\lbmed\theta_{\sigma,\b_0',\b_1'}^+,\phi^\sigma\rbmed \big)=\lbmed \theta_{\b_1,\b_1'}^+,\phi^\sigma \circ f\rbmed
\]
and
\[
\mu_2\big(\lbmed\theta^+_{\tau,\b_1,\b_0'},f\rbmed,\lbmed \theta_{\tau,\b_0',\b_1'}^+,\phi^\tau \rbmed\big)=\lbmed\theta_{\b_1,\b_1'}^+, U(T\circ \phi^\tau\circ \scV^{-1}\circ f)\rbmed=\lbmed\theta_{\b_1,\b_1'}^+, \phi^\tau\circ f\rbmed.
\]
We conclude that
\begin{equation}
\mu_2\big(Z_\sigma^+, \lbmed \theta_{\sigma,\b_0',\b_1'}^+,\phi^\sigma\rbmed\big)=\sum_{i \ge 0} \lbmed\theta_{\b_1,\b_1'}^+, T^{i}  \circ \phi^\sigma \circ \Delta\circ \Pi\circ T^{-i}\rbmed=\sum_{i \ge 0} \lbmed\theta_{\b_1,\b_1'}^+,\delta_i'\rbmed
\label{eq:comp-mu-2-Z-sigma}
\end{equation}
where $\delta_i'\colon E_1\to E_1$ is projection onto Alexander grading $i$.
Similarly, we compute that
\begin{equation}
\mu_2\big(Z_\tau^+, \lbmed\theta_{\tau,\b_0',\b_1'}^+, \phi^\tau\rbmed\big)=\sum_{i\ge 1}\lbmed\theta_{\b_1,\b_1'}^+, T^{-i} \circ \phi^\tau \circ \Delta\circ \Pi\circ T^i\rbmed=\sum_{i\le -1} \lbmed\theta_{\b_1,\b_1'}^+, \delta_i'\rbmed.
\label{eq:comp-mu-2-Z-tau}
\end{equation}
Adding Equations~\eqref{eq:comp-mu-2-Z-sigma} and ~\eqref{eq:comp-mu-2-Z-tau}, we conclude that the morphism from $\b_1$ to $\b_1'$ in $\mu_2(\Psi, \Phi)+\mu_1(H)$ coincides with $\lbmed\theta_{\b_1,\b_1'}^+, \id\rbmed$.

We now investigate ~\eqref{comp-b-4}. This is verified by analyzing the diagram shown in Figure~\ref{fig:25} and observing that there are no holomorphic quadrilaterals of index $-1$ with the given inputs. Note that there are eight total choices of inputs for a polygon (depending on the $\Spin^c$ structures for the inputs), however no choice admits a holomorphic polygon of index $-1$. We leave the verification of this final claim as an exercise to the reader. 
\end{proof}

\begin{figure}[ht]
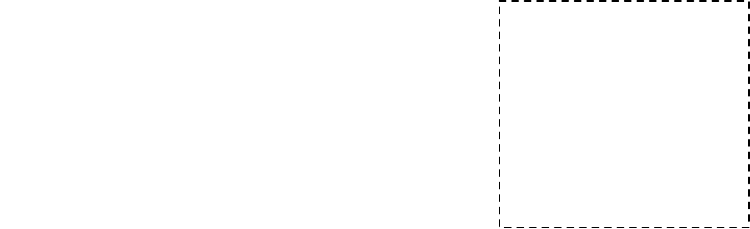
\caption{The Heegaard quadruple $(\bT^2,\b_0,\b_1,\b_0',\b_1')$}
\label{fig:25}
\end{figure}

\section{Iterating the mapping cone formula}

\label{sec:iterating-background}

We now describe how to iterate the exact triangle from Section~\ref{sec:mapping-cone} to prove the link surgery formula for all Morse framed links in 3-manifolds. In Section~\ref{sec:background-link-surgery}, we gave background on the link surgery formula. Let $L\subset Y$ with Morse framing $\Lambda$. We consider a meridianal Heegaard diagram $(\Sigma,\as,\bs_0,\ws,\zs)$ of $(Y,L)$ with a choice of knot shadows $\cS$ on $\Sigma$. Recall that in Section~\ref{sec:background-link-surgery} we described an $|\ell|$-dimensional hypercube of attaching curves $\scB_{\Lambda}$, by iterating the construction for knots in Section~\ref{sec:knot-surgery-formula}. Additionally, we can define a single collection of attaching curves $\bs_{\Lambda}\subset \Sigma$, by removing the meridianal components from $\bs_0$, and adjoining copies of the knot shadows $\cS$. Note that $(\Sigma,\as,\bs_{\Lambda},\ws)$ is an $\ell$-pointed Heegaard diagram for $Y_{\Lambda}(L)$. 

We prove the following:
\begin{thm}\label{thm:iterated-cone}
 There is a homotopy equivalence of twisted complexes of attaching curves with local systems
\[
\bs_{\Lambda}\simeq \scB_{\Lambda}.
\]
\end{thm}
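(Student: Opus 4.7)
The proof is by induction on $\ell$, the number of components of $L$, using Proposition~\ref{prop:general-HE} as the local inductive input applied at one component at a time.

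For the base case $\ell=1$, the plan is to upgrade the torus-level equivalence of Proposition~\ref{prop:general-HE} to an equivalence in $\Sym^g(\Sigma)$. The three attaching curve sets $\bs_\lambda$, $\bs_0$, $\bs_1$ agree outside the special toroidal region for the distinguished shadow $K$, so I would define morphisms $\hat\Phi$, $\hat\Psi$, and $\hat H$ in $\Sym^g(\Sigma)$ by tensoring the torus-level morphisms $\Phi$, $\Psi$, and $H$ from Section~\ref{sec:mapping-cone} with the top-degree intersection points $\theta^+$ between the small Hamiltonian translates of the remaining $\b$ curves. A standard decomposition argument---analogous to the proof of handleslide invariance of $\CF^-$ and to the triangle-map identifications in \cite{OSIntegerSurgeries}---shows that all relevant holomorphic polygons in $\Sym^g(\Sigma)$ split, up to a small perturbation, as a polygon in the toroidal region together with small triangles among the outside curves. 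Consequently, the identities $\hat\Psi\hat\Phi = \id$ and $\hat\Phi\hat\Psi = \id + \mu_1(\hat H)$ descend directly from Propositions~\ref{prop:PhiPsi} and~\ref{prop:PsiPhi}.

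For the inductive step I would single out a component $K_i$ of $L$ and decompose the hypercube as
\[
\scB_\Lambda = \Cone\!\left(\lb\theta_{\sigma,i}^+,\phi_i^\sigma\rb + \lb\theta_{\tau,i}^+,\phi_i^\tau\rb \colon \scB_\Lambda^{(i,0)} \longrightarrow \scB_\Lambda^{(i,1)}\right),
\]
where $\scB_\Lambda^{(i,j)}$ is the codimension-one subcube with $i$-th coordinate fixed to $j$. Each $\scB_\Lambda^{(i,j)}$ is structurally a hypercube for the link $L\setminus K_i$, augmented by a passive curve $\b_{j,i}$ carrying its local system; applying the inductive hypothesis to $L \setminus K_i$ (with the extra curve absorbed into the fixed background) yields equivalences $\scB_\Lambda^{(i,j)} \simeq \bs_\Lambda^{(i,j)}$, where $\bs_\Lambda^{(i,j)}$ has all components of $L\setminus K_i$ replaced by longitudes and the $i$-th component equal to $\b_{j,i}$. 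Applying the base case at $K_i$ then collapses the resulting length-one cone to $\bs_\Lambda$.

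The main obstacle lies in verifying that the inductive equivalences $\scB_\Lambda^{(i,j)} \simeq \bs_\Lambda^{(i,j)}$ intertwine the $i$-th edge morphism of the hypercube with the morphism from Proposition~\ref{prop:general-HE} applied at $K_i$, and that no higher $A_\infty$-corrections obstruct the collapse of the cone. This reduces to two ingredients: (i)~disjointness of the toroidal regions for distinct components, which forces any composable sequence of edge morphisms to factor as a product of polygon counts in disjoint regions capped by $\theta^+$ generators; and (ii)~the grading argument already used to show $\scB_\Lambda$ is a hypercube, namely that any $\mu_n$-composition of top-degree cycles with $n\ge 3$ lives in $\gr_{\ve{p}}$-grading $n-2$ with respect to a suitable complete basepoint collection $\ve{p}$, and therefore vanishes automatically. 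Together these points force all cross-terms to disappear and make the inductive collapse formally straightforward.
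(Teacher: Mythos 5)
Your overall strategy---bootstrap the genus-one equivalence of Proposition~\ref{prop:general-HE} up to the full diagram and iterate over link components---is the right one, but the two places where you wave at ``standard'' arguments are exactly where the paper has to invoke precise tools, and one of your claimed mechanisms is not correct as stated. For the base case, the assertion that every relevant holomorphic polygon in $\Sym^g(\Sigma)$ splits, after a small perturbation, into a polygon in the special toroidal region and small triangles among the outside curves is not a handleslide-invariance-style routine fact for arbitrary background beta curves: the counts you must control include quadrilaterals and pentagons weighted by the local systems (the analogues of Figures~\ref{fig:19}--\ref{fig:25}), and localization for such higher polygons requires either a neck-stretching/stabilization theorem or an LOT-type small-translate limiting argument with strong admissibility. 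The paper sidesteps any analysis on the full diagram by building it from $\ell$ disjoint genus-one pieces: Lemma~\ref{lem:tensor-product-hypercube} (external tensor product of hypercubes, which is where the small-translate limiting argument lives) handles all components at once on the disjoint union of tori, and Proposition~\ref{prop:1-handle-functor} (the 1-handle $A_\infty$-functor, valid for sufficiently stretched almost complex structure, with the extra beta curves realized as meridians of the attached tubes) transports the equivalence to the connected surface. Your base case needs one of these inputs made explicit; as written it is unsupported.

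In the inductive step there are two further problems. First, the inductive hypothesis does not literally apply to the subcubes $\scB_\Lambda^{(i,j)}$: their background contains a curve decorated with the nontrivial local system $E_0$ or $E_1$ together with the shadow $K_i$, which changes every morphism space and every polygon weight, so you would have to strengthen the statement being proved to allow decorated background curves (this is precisely what the external-tensor-product formalism packages for free). Second, your mechanism (ii) for killing cross-terms fails: the grading argument you cite applies to compositions whose inputs are all top-degree cycles in torsion $\Spin^c$ structures, but the equivalence data includes $\Psi$ (built from $\Pi$) and the homotopy $H$ (built from the infinite sums $\eta_\sigma,\eta_\tau$ of Lemma~\ref{lem:d(Z)}), and already at genus one the proof of Proposition~\ref{prop:general-HE} has nonvanishing $\mu_3$ counts (Propositions~\ref{prop:PhiPsi} and~\ref{prop:PsiPhi}). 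What you actually need is not that mixed compositions vanish but that they equal the expected tensor products with $\theta^+$ generators---which is again the content of Lemma~\ref{lem:tensor-product-hypercube}, not something that follows automatically from disjointness of the toroidal regions plus gradings. With those two tools in hand the paper's proof is a three-line formal assembly and no induction on the connected surface is needed; without them, your induction has genuine gaps at both the base case and the intertwining step.
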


As a consequence, we have the following generalization of Manolescu and Ozsv\'{a}th's link surgery formula (stated in Theorem~\ref{thm:MOlink-surg}):

\begin{cor} For any link $L$ in a closed 3-manifold $Y$ with Morse framing $\Lambda$, there is a homotopy equivalence of chain complexes
\[
\ve{\CF}^-(Y_{\Lambda}(L))\simeq \bX_{\Lambda}(Y,L).
\]
\end{cor}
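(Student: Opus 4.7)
The plan is to derive the corollary as a direct consequence of Theorem~\ref{thm:iterated-cone} by passing through the $A_\infty$-pairing functor with the alpha attaching curves. Concretely, I view $\as$ as a Lagrangian equipped with the trivial local system $\bF[U_1,\dots,U_\ell]$ (with $\rho(e^n)=\id$ for all $n$, as in Section~\ref{sec:local-systems}), and apply the $A_\infty$-functor $\ve{\CF}^-(\as,-)$ to both sides of the equivalence $\bs_{\Lambda}\simeq \scB_{\Lambda}$ of twisted complexes of beta attaching curves. Since this functor takes homotopy equivalences of twisted complexes of Lagrangians to chain homotopy equivalences of chain complexes, one obtains an equivalence of the resulting chain complexes over $\bF[U_1,\dots,U_\ell]$.

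The next step is to identify the two complexes produced by this pairing. On the left hand side, $(\Sigma,\as,\bs_{\Lambda},\ws)$ is by construction an $\ell$-pointed Heegaard diagram for $Y_{\Lambda}(L)$, since $\bs_{\Lambda}$ is obtained from $\bs_0$ by deleting the special meridianal curves $\b_{0,i}$ and replacing them with shadow curves realizing the Morse framings of the components of $L$. Consequently
\[
\ve{\CF}^-(\as,\bs_{\Lambda})=\ve{\CF}^-(Y_{\Lambda}(L)).
\]
On the right hand side, the definition of the link surgery complex in Section~\ref{sec:background-link-surgery} gives directly
\[
\ve{\CF}^-(\as,\scB_{\Lambda})=\bX_{\Lambda}(Y,L),
\]
so that combining these identifications with the equivalence of the previous paragraph yields the stated result.

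The main obstacle here is not really in the corollary itself (once Theorem~\ref{thm:iterated-cone} is in hand the argument is a one-line application of functoriality) but rather in the bookkeeping required to legitimately apply the pairing functor: ensuring admissibility of the multi-diagrams involved, ensuring that the trivial local system on $\as$ interacts correctly with the $E_0$ and $E_1$ local systems appearing in $\scB_{\Lambda}$ so that the polygon weights of Section~\ref{sec:local-systems} reproduce the differentials described in Section~\ref{sec:describe-surgery-complexes}, and verifying that the argument is compatible with the relevant completion (chiral or $U$-adic). The first is dealt with by standard Hamiltonian perturbations of the beta curves as in Section~\ref{sec:background-link-surgery}; the second follows since the trivial local system means the alpha factors contribute only the identity endomorphism at each vertex/edge of a polygon, leaving the output morphism equal to the composition of the morphisms attached to the beta inputs together with the $U^{n_w(\phi)}$ weight, exactly as in the construction of $\bX_{\Lambda}(Y,L)$; and the third is settled by the admissibility discussion of Section~\ref{sec:admissibility} referenced in the paper.
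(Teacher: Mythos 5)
Your proposal is correct and matches the paper's own argument: the corollary is obtained exactly by applying the $A_\infty$-pairing functor $\ve{\CF}^-(\as,-)$ to the equivalence $\bs_{\Lambda}\simeq \scB_{\Lambda}$ of Theorem~\ref{thm:iterated-cone}, identifying $\ve{\CF}^-(\as,\bs_{\Lambda})$ with $\ve{\CF}^-(Y_{\Lambda}(L))$ and $\ve{\CF}^-(\as,\scB_{\Lambda})$ with $\bX_{\Lambda}(Y,L)$ by definition, with admissibility and completion issues deferred to the appendices. No gaps.
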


After recalling some basic formalism, we will show how Theorem~\ref{thm:iterated-cone} quickly follows from the genus 1 case.

\subsection{The 1-handle \texorpdfstring{$A_\infty$}{A-infty}-functor}

To increase the genus, or connect different components of a disconnected Heegaard diagram, we may attach a 1-handle to the surface. We may think of this as a functor between the Fukaya categories of the two surfaces. Given a set of attaching curves $\bs\subset \Sigma$, if $S(\Sigma)$ is obtained by attaching a 1-handle to the surface $\Sigma$, then we construct a set of attaching curves $S(\bs)$ by adjoining the meridian of the 1-handle tube. To a morphism $\Theta\in \ve{\CF}^-(\bs,\bs')$, we define
\[
S(\Theta):=\Theta\times \theta^+\in \ve{\CF}^-(S(\bs),S(\bs')).
\]
Here, we assume that the curves of $S(\bs)$ and $S(\bs')$ are perturbed in the 1-handle region to intersect transversely in two points $\theta^+$ and $\theta^-$.

\begin{prop}\label{prop:1-handle-functor}
 Assuming attaching curves are chosen admissibly, the map $S$ on attaching curves and morphisms is an $A_\infty$-functor for sufficiently stretched almost complex structure. 
\end{prop}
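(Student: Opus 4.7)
The plan is to prove the stronger statement that $S$ is a \emph{strict} $A_\infty$-functor, i.e.\ that for every admissible composable sequence of Floer chains $\Theta_1,\dots,\Theta_n$ between attaching curves $\bs_0,\dots,\bs_n$ on $\Sigma$, we have
\[
\mu_n^{S(\Sigma)}\bigl(S(\Theta_1),\dots,S(\Theta_n)\bigr)\;=\;S\bigl(\mu_n^{\Sigma}(\Theta_1,\dots,\Theta_n)\bigr),
\]
for an almost complex structure on $\Sym^{g+1}(S(\Sigma))$ obtained by sufficiently stretching the neck along the circle $S^1\subset S(\Sigma)$ where the $1$-handle is attached. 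This immediately implies that $S$ extends to an $A_\infty$-functor (with $F_1=S$ and $F_n=0$ for $n\ge 2$), so the $A_\infty$-relations on $F_*$ reduce to this single family of identities.

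First I would set up a one-parameter family of almost complex structures $J_T$ on $\Sym^{g+1}(S(\Sigma))$, split along the neck of the $1$-handle tube with neck parameter $T$, following the standard stabilization framework (in either the $\Sym^g$ or the cylindrical formalism of Lipshitz). Any sequence of $J_T$-holomorphic $(n{+}1)$-gons of Maslov index $3-n$ with boundary on $S(\bs_0),\dots,S(\bs_n)$ and with the prescribed corners must, by Gromov compactness, subconverge as $T\to\infty$ to a broken curve with one piece mapping into $\Sym^g(\Sigma)$ and one piece mapping into $\Sym^1$ of the handle annulus $\mathbb T^2_{\mathrm{handle}}$. Since Maslov indices are additive under this degeneration, and since each corner inserted on the handle side is a $\theta^+$ intersection point (the top $\mathrm{gr}$-graded generator for its pair of curves), the handle-side polygon must have index $0$ and is therefore forced to be the constant polygon at $\theta^+$. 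The $\Sigma$-side polygon then contributes the full index $3-n$. Conversely, standard Floer-theoretic gluing (as used in the stabilization invariance of $\widehat{HF}$ and its extensions to triangles and polygons) shows that for $T$ large enough each such $\Sigma$-side polygon glues to a unique rigid $J_T$-polygon upstairs. This sets up a bijection of moduli spaces which, together with the observation that the local-system weight of the glued polygon equals that of its $\Sigma$-side component (the curves $S(\bs_i)$ meet the stabilization region in arcs disjoint from any knot shadow, so $\rho(e^{m_i})$ and $n_w$ contributions are unchanged), yields the displayed identity.

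The main obstacle will be ruling out extra broken configurations where part of the polygon's boundary or several corners drift across the neck as $T\to\infty$. Here I would argue as in the polygon-stabilization arguments of Ozsv\'ath--Szab\'o and Lipshitz: index and energy accounting, combined with the fact that the only intersections of $S(\bs_i)$ in the handle region are the two points $\theta^\pm$ and that $\theta^+$ is the unique index-$0$ output between any two such stabilized curves, forces any candidate handle-side component of any degeneration to be constant at $\theta^+$. One must also check that the admissibility hypothesis on the $\bs_i$ lifts to admissibility of the $S(\bs_i)$, which is routine since the meridian of the $1$-handle bounds a disk disjoint from the basepoints in the handle region. A secondary, purely bookkeeping, obstacle is verifying strict (as opposed to homotopy) equality at the chain level; this is what the bijective correspondence of rigid polygons (rather than merely a cobordism of moduli spaces) provides. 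Finally, the same argument applies uniformly in $n$, so the single neck-stretching step proves the functor property for all higher compositions simultaneously.
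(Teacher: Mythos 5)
Your proposal is essentially the paper's approach: the paper establishes the proposition by citing the stabilization theorem for holomorphic polygon counts (\cite{HHSZNaturality}*{Proposition~5.5}, cf. \cite{ZemGraphTQFT}*{Theorem~8.8}), which is exactly your strict identity $f_{\g_0c,\dots,\g_nc}(\xs_1\times\theta^+,\dots,\xs_n\times\theta^+)=f_{\g_0,\dots,\g_n}(\xs_1,\dots,\xs_n)\otimes\theta^+$, and that theorem is proved by precisely the neck-stretching, index-additivity, and gluing analysis you sketch, so $S$ is a strict $A_\infty$-functor with all higher terms zero. The only minor imprecision is that the handle-side limit is the small index-zero polygon among the Hamiltonian translates of the meridian with all corners at top-degree generators $\theta^+$ (the translates intersect transversely in two points each), not a literally constant polygon.
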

The above result generalizes Ozsv\'{a}th and Szab\'{o}'s original proof that their 1-handle map is a chain map \cite{OSTriangles}*{Section~4.3}.  Proposition~\ref{prop:1-handle-functor} follows  from the stabilization theorem in \cite{HHSZNaturality}*{Proposition~5.5} (cf. Example~5.7 therein), which has the following statement. Let $(\Sigma,\gs_0,\dots, \gs_n,\ws)$ be a Heegaard diagram. Adjoin a tube to $\Sigma$ (disjoint from $\gs_i$). In this tube, add $n$ copies of the meridian $c$, suitably translated to intersect pairwise in two points and achieve admissibility. Then, for any sequence of intersection points $\xs_i\in \bT_{\g_{i-1}}\cap \bT_{\g_i}$, we have
\[
f_{\g_0c,\dots, \g_n c}(\xs_1\times \theta^+,\dots, \xs_n\times \theta^+)=f_{\g_0,\dots, \g_n}(\xs_1,\dots, \xs_n)\otimes \theta^+.
\]
See \cite{ZemGraphTQFT}*{Theorem~8.8} for the case of holomorphic triangles.

\subsection{Disconnected surfaces}

We now recall some formalities about hypercubes of attaching curves on disconnected surfaces. If $\scB=(\bs_{\veps},\theta_{\veps,\veps'})_{\veps\in \bE_n}$ is a hypercube of attaching curves on $\Sigma$, and $\scB'
=(\bs'_{\nu},\theta'_{\nu,\nu'})_{\nu\in \bE_m}$ is a hypercube of attaching curves on $\Sigma'$, then we may construct a diagram of attaching curves $\scB\times \scB'$ of dimension $n+m$, on $\Sigma\sqcup \Sigma'$. We write $(\gs_{(\veps,\nu)}, \phi_{(\veps,\nu),(\veps',\nu')})_{(\veps,\nu)\in \bE_{n+m}}$ for $\scB\times \scB'$, whose construction we now recall. We set
\[
\gs_{(\veps,\nu)}\approx\bs_\veps\sqcup \bs'_{\nu},
\]
where $\approx$ indicates that we translate the attaching curves to achieve admissibility, and so that the copies of $\bs_{\veps}$ from $\gs_{(\veps,\nu)}$ and $\gs_{(\veps,\nu')}$ are Hamiltonian translates of each other which intersect in the minimal possible number of points while achieving admissibility. We make the parallel assumption on the curves $\bs'_{\nu}$ in different collections with the same $\nu$-coordinate.
We set 
\[
\phi_{(\veps,\nu),(\veps',\nu')}=
\begin{cases} \theta_{\veps,\veps'}\otimes \theta^+_{\nu}& \text{ if } \nu=\nu'\\
\theta^+_{\veps}\otimes \theta_{\nu,\nu'} &\text{ if } \veps=\veps'\\
0& \text{ if } \veps<\veps' \text{ and } \nu<\nu'.
\end{cases}
\]

The above construction is based on a similar construction of Lipshitz, Ozsv\'{a}th and Thurston \cite{LOTDoubleBranchedII}*{Definition~3.40}. They show in \cite{LOTDoubleBranchedII}*{Proposition~3.42} that if $\scB$ and $\scB'$ are suitably admissible and the approximations are suitably small, then $\scB\times \scB'$ is a hypercube of attaching curves. See also \cite{ZemBordered}*{Proposition~10.1} for a similar argument in our present notation. By ``suitably admissible'' we require a strong version of admissibility (such as the notion of strong $\frS$-admissibility \cite{OSDisks}*{Section~8.4.2} for a collection $\frS$ of $\Spin^c$ structures) to ensure finiteness of nonnegative homology classes of polygons with given inputs and a given Maslov index. This is satisfiable, for example, when all of the attaching curves of $\scB$ are pairwise related by handleslides and isotopies, and similarly for $\scB'$. The strong admissibility requirement is needed because the argument of Lipshitz, Ozsv\'{a}th and Thurston uses a limiting argument which applies only to a finite number of polygons classes at a time.

A special case of the proof is the following:

\begin{lem}
\label{lem:tensor-product-hypercube}
Suppose $\scB_0, \dots, \scB_n$ is a sequence of hypercubes and that we have a sequence of morphisms of hypercubes of attaching curves
\[
\begin{tikzcd}
\scB_0\ar[r, "\theta_{0,1}"]&\scB_1\ar[r, "\theta_{1,2}"]&\cdots\ar[r, "\theta_{n-1,n}"]& \scB_n.
\end{tikzcd}
\] 
Suppose that $\scB'$ is another hypercube of attaching curves. Then the composition of the corresponding morphisms between the $\scB_i\times \scB'$ is given by
 \[
 \mu_{n}^{\Tw}(\theta_{0,1}\otimes \theta^+,\dots, \theta_{n-1,n}\otimes \theta^+)=\mu_{n}^{\Tw}(\theta_{0,1},\dots, \theta_{n-1,n})\otimes \theta^+.
 \]
\end{lem}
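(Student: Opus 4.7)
The plan is to reduce everything to two inputs: (a) the disjoint-union decomposition of holomorphic polygons on the Heegaard multi-diagram living on $\Sigma \sqcup \Sigma'$, and (b) the fact that, for sufficiently close Hamiltonian translates, the top-degree generators $\theta^+$ behave as strict $A_\infty$-units in the sense that $\mu_2^{\Sigma}(\theta^+,\theta^+)=\theta^+$ and $\mu_k^{\Sigma}(\theta^+,\dots,\theta^+)=0$ for $k\ge 3$.

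First, I would unfold the definition of $\mu_n^{\Tw}$ on $\scB_0\times\scB',\dots,\scB_n\times\scB'$:
\[
\mu_n^{\Tw}(\theta_{0,1}\!\otimes\!\theta^+,\dots,\theta_{n-1,n}\!\otimes\!\theta^+)=\sum_{i_0,\dots,i_n\ge 0} \mu_{n+\sum i_j}^{\Sigma\cC}\!\left(\delta^{i_0},\theta_{0,1}\!\otimes\!\theta^+,\delta^{i_1},\dots,\theta_{n-1,n}\!\otimes\!\theta^+,\delta^{i_n}\right),
\]
where each $\delta=\delta_{\scB_j\times\scB'}$ splits as a sum of ``$\scB$-type'' terms $\theta_{\veps,\veps'}\otimes\theta^+_\nu$ (with $\veps<\veps'$) and ``$\scB'$-type'' terms $\theta^+_\veps\otimes\theta_{\nu,\nu'}$ (with $\nu<\nu'$). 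This gives a decomposition of each interspersion as a labeled sequence of $\Sigma$- and $\Sigma'$-side morphisms.

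Next, I would invoke the disjoint-union decomposition (proved for triangles in~\cite{ZemGraphTQFT}*{Theorem~8.8} and, in the form we need here, for arbitrary polygons in~\cite{HHSZNaturality}*{Proposition~5.5}, applied to the tube that might be adjoined when relating $\Sigma\sqcup\Sigma'$ to a connected surface; for strictly disconnected multi-diagrams with Lagrangians of the form $\bT_\gs\times\bT_{\gs'}$ the statement is the standard splitting of holomorphic polygons in $\Sym^{g+g'}(\Sigma\sqcup\Sigma')$ into pairs of polygons in $\Sym^g(\Sigma)$ and $\Sym^{g'}(\Sigma')$). Strong admissibility of the hypercubes $\scB_i$ and of $\scB'$ ensures that the relevant sums are finite. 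Applying this decomposition to every term in the expansion above yields
\[
\mu_{n+\sum i_j}^{\Sigma\cC}\!\left(\cdots\right)=\mu_{n+\sum i_j}^{\Sigma}(\Sigma\text{-inputs})\otimes\mu_{n+\sum i_j}^{\Sigma}(\Sigma'\text{-inputs}).
\]

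The main work, and the principal obstacle, is to show that on the $\Sigma'$-side everything collapses to $\theta^+$ and that the accompanying $\Sigma$-side sum is exactly $\mu_n^{\Tw}(\theta_{0,1},\dots,\theta_{n-1,n})$. I would group the terms of the expansion by the subsequence $J=(j_1<\cdots<j_q)$ of positions of $\scB'$-type inserts. For a fixed $J$, the $\Sigma'$-inputs consist of the $q$ morphisms $\theta_{\nu_{k-1},\nu_k}\in\CF^-(\bs'_{\nu_{k-1}},\bs'_{\nu_k})$ at the chosen positions, and $\theta^+$'s at every other position (main inputs and $\scB$-type inserts alike). Using the unital property of $\theta^+$ (surrounding $\theta^+$'s may be absorbed by adjacent $\theta$'s via $\mu_2$, while any remaining string of $\theta^+$'s feeding into a higher $\mu_k$ forces vanishing), the sum over insertion positions on the $\Sigma'$-side reduces to the expression
\[
\mu_q^{\Tw}(\theta_{\nu_0,\nu_1},\dots,\theta_{\nu_{q-1},\nu_q})
\]
in the twisted complex $\scB'$, which vanishes for $q\ge 1$ because $\scB'$ is a hypercube of attaching curves and hence a cycle in $\Tw(\Fuk(\Sigma'))$ --- except for the ``$q=0$'' stratum, which contributes $\theta^+$.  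Only the $q=0$ stratum survives, and the resulting $\Sigma$-side sum is precisely the expansion of $\mu_n^{\Tw}(\theta_{0,1},\dots,\theta_{n-1,n})$.

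The delicate part is the combinatorial collapse in the previous paragraph: both grouping the interspersions by the positions of $\scB'$-type inserts and verifying that the $\theta^+$-strings between them really do implement the $A_\infty$-structure of $\scB'$ requires a careful application of unitality together with strong admissibility to control the sums. Once this book-keeping is in place, the lemma follows immediately.
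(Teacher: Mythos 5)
Your proposal has a genuine gap, concentrated at exactly the step you flag as delicate, and it occurs at two levels. First, the factorization $\mu_N^{\Sigma\cC}(\cdots)=\mu_N^{\Sigma}(\Sigma\text{-inputs})\otimes\mu_N^{\Sigma'}(\Sigma'\text{-inputs})$ is false for $N\ge 3$: a holomorphic $(N+1)$-gon on $\Sigma\sqcup\Sigma'$ is a pair of polygons with the \emph{same} source conformal structure, so the relevant moduli spaces are fibered products over the $(N-2)$-dimensional space of conformal structures, not products of the two separate rigid counts. For instance, a rigid matched pair of rectangles can combine an index $0$ class on one side with an index $-1$ class on the other, in either distribution. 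The results you cite (\cite{HHSZNaturality}*{Proposition~5.5}, \cite{ZemGraphTQFT}*{Theorem~8.8}) only treat the case where every input on the new side is the canonical $\theta^+$ at meridians of an attached tube, and even then require a stretched complex structure; they do not justify the wholesale splitting you apply to every interspersed term. Second, you assume strict unitality of $\theta^+$ on the nose ($\mu_2(\theta^+,\theta^+)=\theta^+$, higher products with $\theta^+$ inputs vanish). This is not literally true; it is only arranged for finitely many polygon classes at a time by the Lipshitz--Ozsv\'{a}th--Thurston limiting argument with sufficiently small translates, which is precisely why strong admissibility is imposed, and the paper explicitly flags that the small-translate argument is only a ``stand-in'' for strict unitality. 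Your bookkeeping needs it simultaneously for all interspersions of all lengths.

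In addition, the cancellation mechanism you propose for the strata with $q\ge 1$ inserts of $\scB'$-type is not the correct one: a single structure morphism $\theta_{\nu,\nu'}$ of $\scB'$ does not vanish, and the hypercube (Maurer--Cartan) relation for $\scB'$ only kills sums over \emph{all} chains between fixed endpoints, not your fixed-$J$ strata. What actually kills these terms (mod $2$) is pairwise cancellation between different interleavings of the same $\delta'$-inserts relative to the $\scB$-side inputs, using (near-)unitality of $\theta^+$ -- the same mechanism that makes $\scB_i\times\scB'$ a hypercube in the first place. This is the content of \cite{ZemBordered}*{Lemma~10.6}, which is the paper's entire proof of the present lemma: it is deduced from the same finiteness-plus-small-translate argument used to construct $\scB\times\scB'$ (\cite{ZemBordered}*{Proposition~10.1}, following \cite{LOTDoubleBranchedII}*{Proposition~3.42}). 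If you want a self-contained argument, you should run that limiting argument directly: strong admissibility gives finiteness of the relevant matched polygon classes, and for sufficiently small translates the only rigid matched configurations are those whose $\Sigma'$-components are the small polygons at the $\theta^+$'s, while every term containing a $\delta'$-insert cancels in pairs rather than by appeal to the hypercube relations of $\scB'$.
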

The above follows immediately from \cite{ZemBordered}*{Lemma~10.6}.

\begin{rem} Morally, the construction $\scB\times \scB'$ is an instance of a more general operation. Firstly, if $\scA$ and $\scA'$ are two strictly unital $A_\infty$-categories (for example, strictly unital $A_\infty$-algebras), we can form another $A_\infty$-category $\scA\otimes_{\Gamma} \scA'$ as follows. As the notation indicates, this construction requires a choice of cellular diagonal $\Gamma$ of the associahedron. Tersely, this has the following meaning. Write $C_*^{\cell}(K_n)$ for the standard cell complex of the associahedron (freely generated by planar trees with $n$ inputs and no valence 2 vertices). Then $\Gamma$ is a sequence of grading preserving chain maps 
\[
\Gamma_n\colon C_*^{\cell}(K_n)\to C_*^{\cell}(K_n)\otimes C_*^{\cell}(K_n)
\]
for $n\ge 2$, which are compatible with splicing of trees. Here, we are using the notation of Lipshitz, Ozsv\'{a}th and Thurston \cite{LOTDiagonals}. See  \cite{SaneblidzeUmble} \cite{Loday} for earlier work. Objects of $\scA\otimes_{\Gamma} \scA'$ consist of pairs $(i,j)$ where $i$ is an object of $\scA$, and $j$ is an object of $\scA'$. One sets $\Hom( (i,j), (i',j'))=\Hom(i,j)\otimes_{\bF} \Hom(i',j')$. The composition maps on $\scA\otimes_{\Gamma} \scA'$ are computed using the diagonal $\Gamma$. Now, if $(X,\delta)$ and $(X',\delta')$ are two twisted complexes in $\scA$ and $\scA'$, respectively, then the pair 
\[
(X\otimes X', \delta\otimes \id+\id\otimes \delta')
\] is a twisted complex in $\scA\otimes_{\Gamma} \scA'$ for any choice of diagonal $\Gamma$. This follows easily from strict unality of $\scA$ and $\scA'$, as well as a basic fact about associahedron diagonals (see \cite{ZemBordered}*{Lemma~3.10}, though presumably this is well known). Morally, the operation $\scB\times \scB'$ is in instance of the above construction, and the limiting argument using ``sufficiently small'' translations is a stand-in for strict unality. 
\end{rem}

\subsection{Proof of Theorem~\ref{thm:iterated-cone}}

Theorem~\ref{thm:iterated-cone} is now a formal consequence of Proposition~\ref{prop:1-handle-functor} and Lemma~\ref{lem:tensor-product-hypercube}, and the genus 1 case, Proposition~\ref{prop:general-HE}. In a bit more detail, Proposition~\ref{prop:general-HE} handles the case when there is one special beta curve, and the genus of the underlying Heegaard surface is 1. Lemma~\ref{lem:tensor-product-hypercube} proves the theorem when we have $n$ special beta curves, and the Heegaard surface consists of $n$ components which are each genus 1. Proposition~\ref{prop:1-handle-functor} allows us to connect different components of the Heegaard surface and then further increase the genus by attaching 1-handles, while keeping the number of special beta curves fixed.

\section{Equivalence with the Manolescu-Ozsv\'{a}th-Szab\'{o} construction}
\label{sec:reformulation}

In this section, we prove that our construction is equivalent to the original construction of Ozsv\'{a}th and Szab\'{o} \cite{OSIntegerSurgeries} \cite{OSRationalSurgeries} and Manolescu and Ozsv\'{a}th \cite{MOIntegerSurgery}, when these are defined. We focus on the presentation given by Manolescu and Ozsv\'{a}th for links in $S^3$.

In this section, we adopt the following notation:
\begin{enumerate}
\item We write $\cX_{\Lambda}(L)^{\cL}$ and $\bX_{\Lambda}(L)$ for the type-$D$ modules and link surgery complexes constructed using the Manolescu-Ozsv\'{a}th-Szab\'{o} construction. We write $\Phi^{\vec{M}}$ for the hypercube structure maps on $\bX_{\Lambda}(L)$. 
\item We write $\cY_{\Lambda}(L)^{\cL}$ and $\bY_{\Lambda}(L)$ for the type-$D$ modules and link surgery complexes, constructed in this paper. We write $\Psi^{\vec{M}}$ for the hypercube structure maps on $\bY_{\Lambda}(L)$. 
\end{enumerate}

\begin{thm}\label{thm:MO=our-version}
 If $L\subset S^3$ is an integrally framed link, then there is a homotopy equivalence of type-$D$ modules
\[
\cX_{\Lambda}(L)^{\cL}\simeq \cY_{\Lambda}(L)^{\cL}.
\]
\end{thm}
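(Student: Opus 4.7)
The plan is to build an explicit isomorphism of $\ell$-dimensional hypercubes of chain complexes, vertex by vertex, and then match the structure maps. The comparison reduces, essentially, to identifying our triangle-count definitions of the $v$ and $h$ maps with the algebraic formulas used by Manolescu and Ozsv\'{a}th.

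First I would identify the vertices. At the corner $\veps=\ve{0}$, the complex $\ve{\CF}^-(\as,\bs_0^{E_0})$ is generated over $\bF[\scU_1,\scV_1,\dots,\scU_\ell,\scV_\ell]$ by intersection points $\xs\in \bT_\a\cap \bT_{\b_0}$, with the local system $E_0$ recording the Alexander multi-grading through its monodromies $\scV_i^{\#(\d_{\b_{0,i}}(\phi)\cap K_i)}$. Since the $K_i$ are knot shadows on a meridianal diagram, we have $\#(\d_{\b_{0,i}}(\phi)\cap K_i)=n_{z_i}(\phi)-n_{w_i}(\phi)$, so the holomorphic disk weights become $\scU_i^{n_{w_i}(\phi)}\scV_i^{n_{z_i}(\phi)}$, exactly the link Floer differential. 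After completion this identifies $\ve{\CF}^-(\as,\bs_0^{E_0})$ with $\bA(L)$. At a general vertex $\veps$, a destabilization in each special genus-$1$ region where $\veps_i=1$ identifies the contribution at that component with the completed twisted Floer complex $\underline{\CF}^-(Y;M_{[K_i]})$, and for null-homologous $L\subset S^3$ a choice of Seifert surface trivializes the twisting and reproduces the MOS $\bB_\veps(L)$ factor. Combined over all components, this matches the vertex $\veps$ of the MOS cube with ours.

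Next I would match the length-one maps. On our side, the edge from $\veps$ to $\veps+e_i$ counts triangles with input $\lb\theta_{\sigma,i}^+,\phi_i^\sigma\rb+\lb\theta_{\tau,i}^+,\phi_i^\tau\rb$ in the special genus-$1$ region for $K_i$. A standard nearest-point argument in this region, combined with the explicit formulas $\phi^\sigma(\scU^i\scV^j)=U^iT^{j-i}$ and $\phi^\tau(\scU^i\scV^j)=U^jT^{j-i}$, shows that after the Alexander-grading decomposition the $\sigma$-triangle reproduces the MOS inclusion $v$ (projection to nonnegative Alexander grading followed by the appropriate $U$-power), and the $\tau$-triangle reproduces $h$. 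This is essentially the same kind of genus-$1$ computation carried out in Section~\ref{sec:mapping-cone}.

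For higher-length maps in the hypercube, the matching follows by naturality. By Lemma~\ref{lem:tensor-product-hypercube} and the $1$-handle functoriality of Proposition~\ref{prop:1-handle-functor}, the hypercube $\scB_\Lambda$ is homotopy equivalent to a tensor product (over components) of the single-knot cubes, with only length-one morphisms at each step. The MOS hypercube admits a completely parallel decomposition: their $\Phi^{\vec M}$ for oriented sublinks $\vec M$ is obtained by iterating the single-component $v$'s and $h$'s according to the orientation on $\vec M$, which is exactly what the iterated triangle count computes on our side via Section~\ref{sec:iterating-background}. So once the length-one edges agree in the single-component case, the full hypercube maps agree by induction on $|L|$.

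The main obstacle will be performing this matching carefully with respect to completions and to the twisted-coefficient identifications: one must check that the vertex-wise isomorphisms are continuous in the chiral (or $U$-adic) topology used in \cite{MOIntegerSurgery}, and that the Alexander grading shifts coming from the trivialization of the $\bF[H^1(Y)]$-twisting at the $\veps_i=1$ vertices exactly account for the framing-dependence of the MOS $h$ maps (recall the remark in Section~\ref{sec:knot-surgery-formula} that in our framework both $v$ and $h$ depend on $\lambda$, while in MOS only $h$ does). Both of these are bookkeeping once the genus-$1$ model computation and the vertex identifications are in hand, so the proof is essentially a carefully assembled identification rather than a new geometric input.
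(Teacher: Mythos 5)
There is a genuine gap, and it is in the step where you handle the maps of length at least two ``by naturality.'' Your reduction rests on the claim that the Manolescu--Ozsv\'{a}th maps $\Phi^{\vec{M}}$ for an oriented sublink $\vec{M}$ with $|\vec{M}|\ge 2$ are obtained by iterating the single-component $v$'s and $h$'s; this is not true. In their construction these higher-length maps are independent data -- they arise from counting holomorphic polygons (compression of hyperboxes of Heegaard diagrams), and they are exactly the homotopies needed for the total differential to square to zero; they are not composites of the edge maps. Consequently, even after you have identified the vertex complexes and shown the length-one edges agree (or are chain homotopic), nothing in your induction on $|L|$ produces an identification of the two hypercubes: two cubes with the same vertices and the same edges can have non-equal, and in principle non-equivalent, total complexes unless the diagonal maps are compared as well. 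Some substitute is needed, e.g.\ a hypercube-filling/uniqueness argument using vanishing of the relevant Floer groups above the top degree, but you neither invoke nor verify such a statement.

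The paper closes exactly this gap by a different mechanism: it observes that the diagrams $(\Sigma,\as,\bs_{\veps},\ws,\zs)$ used here form a complete system in the sense of Manolescu--Ozsv\'{a}th, so \emph{all} structure maps $\Phi^{\vec{M}}_{\veps,\veps'}$ and $\Psi^{\vec{M}}_{\veps,\veps'}$ (every length, every $\vec{M}$) count the very same holomorphic polygons with the same $\scU_i,\scV_i,U_i$ weights; hence they agree after quotienting by the $T_i$-actions, and the residual ambiguity by a monomial $T_{i_1}^{a_1}\cdots T_{i_k}^{a_k}$ is pinned down by showing both families are homogeneous with respect to Alexander-type gradings ($A$ on the MOS side, the $2$-chain grading $A'$ on this side) with identical shifts. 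This yields an honest chain isomorphism, which is also what upgrades the statement to type-$D$ modules over $\cL$. Two smaller points: your identification of the $\veps_i=1$ vertices via twisted coefficients and a Seifert-surface trivialization is workable but introduces choices whose effect is precisely the $T_i$-ambiguity that must be controlled by gradings (the paper instead unpacks $\bigoplus_{\ve{s}}\frA(L_\veps,\psi_{\vec{0},\veps}(\ve{s}))\iso \cCFL(L_\veps)\otimes\bF[\Z^{|\veps|}]$ directly); and describing the MOS edge map as ``projection to nonnegative Alexander grading followed by a $U$-power'' conflates the large-surgery algebraic model with their actual polygon-counting definition, so even your genus-one matching ultimately requires the polygon-by-polygon comparison that drives the paper's proof.
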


We remark that essentially the same analysis can be made to compare our construction for Morse framed knots in rational homology 3-spheres to the one in \cite{OSRationalSurgeries}. 

To simplify the notation, we will focus on proving the isomorphism
\begin{equation}
\bX_{\Lambda}(L)\iso \bY_{\Lambda}(L).
\label{eq:surgery-complexes-isomorphic}
\end{equation}
 Note that the existence of a homotopy equivalence in Equation~\eqref{eq:surgery-complexes-isomorphic} also follows from the fact that both are homotopy equivalent to $\ve{\CF}^-(S^3_{\Lambda}(L))$, though our argument gives a canonical chain isomorphism and shows the stronger statement in Theorem~\ref{thm:MO=our-version} concerning type-$D$ modules.

\subsection{Equivalence of groups}
\label{sec:equivalence-groups}

As a first step, we describe an isomorphism of groups $\bX_{\Lambda}(L)\iso \bY_{\Lambda}(L)$. Recall that both complexes are $\ell=|L|$ dimensional hypercubes of chain complexes. We write $\bX^\veps_{\Lambda}(L)$ and $\bY^\veps_{\Lambda}(L)$ for the chain complexes at a given $\veps\in \bE_\ell$.

\begin{lem}
\label{lem:complexes-isomorphic} For each $\veps\in \bE_\ell$,  there is an isomorphism of chain complexes over $\bF[U_1,\dots, U_\ell]$
\[
\bX^{\veps}_{\Lambda}(L)\iso \bY^{\veps}_{\Lambda}(L)
\]
which is canonical up to overall multiplication by $T^{\a_1}_{i_1}\cdots T^{\a_{k}}_{i_{k}}$ for $\a_{1},\dots, \a_{k}\in \Z$, where $i_{1},\dots, i_{k}$ are the indices for which $\veps_i=1$. 
\end{lem}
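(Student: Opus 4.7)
The plan is to treat the lemma one vertex at a time and, at each vertex $\veps$, exhibit an explicit bijection of generating sets over $\bF[U_1,\dots,U_\ell]$ that intertwines the differentials. We work coordinate by coordinate: coordinate $i$ with $\veps_i=0$ contributes the meridianal curve $\b_{0,i}$ equipped with local system $E_0=\bF[\scU_i,\scV_i]$, while coordinate $i$ with $\veps_i=1$ contributes the wound curve $\b_{1,i}$ equipped with $E_1=\bF[U_i,T_i,T_i^{-1}]$. The Manolescu--Ozsv\'{a}th complex at vertex $\veps$ is, by definition, a direct sum over $s\in \bH(L)/\Lambda_\veps\bH$ of shifted link Floer $\frA^-$ complexes for the link $L$ with the $i$-th components ($\veps_i=1$) destabilized; our job is to see the same sum emerge from $\ve{\CF}^-(\as,\bs_\veps^{E_\veps})$.

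The first step handles $\veps=\ve{0}$. Here $\bs_0$ is the original meridianal beta curves and $E_0$ equips each component with $\bF[\scU_i,\scV_i]$ and monodromy $\rho_0(e^n)=\scV_i^n$. A holomorphic disk is weighted by
\[
U_i^{n_{w_i}(\phi)}\,\scV_i^{n_{z_i}(\phi)-n_{w_i}(\phi)}=\scU_i^{n_{w_i}(\phi)}\,\scV_i^{n_{z_i}(\phi)},
\]
which is exactly the link Floer differential (after completion). Thus $\bY^{\ve{0}}_\Lambda(L)$ is the appropriate completion of $\cCFL(Y,L)$, matching $\bX^{\ve{0}}_\Lambda(L)$.

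The second, main step handles a general $\veps$. For each $i$ with $\veps_i=1$, I would use the standard ``winding'' analysis of Ozsv\'{a}th--Szab\'{o}: the curve $\b_{1,i}$ is obtained from a small translate of $\b_{0,i}$ by winding sufficiently many times along the knot shadow $K_i$, so intersection points $\xs\in\bT_\a\cap\bT_{\bs_\veps}$ are naturally parametrized by pairs $(\xs',s_i)$, where $\xs'$ is an intersection point using a translate of $\b_{0,i}$ instead of $\b_{1,i}$, and $s_i\in\Z$ records the Alexander coordinate corresponding to which winding strip contains the relevant intersection. Tensoring with $E_1=\bF[U_i,T_i,T_i^{-1}]$ and identifying the monomial $T_i^{s_i}$ with the $s_i$-th winding strip then identifies, vertex by vertex,
\[
\ve{\CF}^-(\as,\bs_\veps^{E_\veps})\;\iso\;\prod_{s\in\bH(L)/\Lambda_\veps\bH}\frA^-(L_\veps,\psi^{\vec{M}_\veps}(s))
\]
as $\bF[U_1,\dots,U_\ell]$-modules. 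The monodromy $\rho_1(e^n)=T_i^n$ is precisely what accounts for the $\mathrm{PD}[L]$ shift between $\frs_{\ws}$ and $\frs_{\zs}$ on the MO side: a bigon crossing the winding region $n$ times contributes $T_i^n$, which in the MO complex corresponds to moving between Alexander coordinates by $n$. The overall multiplicative ambiguity by $T_{i_1}^{\a_1}\cdots T_{i_k}^{\a_k}$ simply reflects the freedom in choosing, for each $i$ with $\veps_i=1$, a ``base'' intersection point to correspond to $T_i^0$.

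The third step is to check that the differentials agree. Disks on $(\Sigma,\as,\bs_\veps)$ are counted using the weights of Section~\ref{sec:knot-surgery-formula}; for a coordinate with $\veps_i=1$, the $\b_{1,i}$-boundary multiplicities contribute $T_i^{\#\d_{\b_{1,i}}(\phi)\cap K_i}$, so the $T_i$-grading of a disk tracks exactly the Alexander grading jump, while the absence of $\scV_i$ in $E_1$ reflects the fact that MO's $\frA^-$ does not record the $z_i$ basepoint. Combined with the symmetry analysis of Section~\ref{sec:symmetry} (which lets us swap $w$- and $z$-pointed weights when comparing conventions), this matches the MO differential on the nose. The principal technical obstacle is the bookkeeping in this step: one must check that the winding is taken large enough (relative to any fixed energy bound) so that the disks in a given Alexander/Maslov stratum correspond under the bijection, and that the shift conventions between the two sides are tracked consistently. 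This is exactly the source of the $T$-monomial ambiguity stated in the lemma, which we absorb rather than eliminate.
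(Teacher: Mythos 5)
Your proposal for general $\veps$ takes the wrong approach, and the error is worth spelling out because it conflates two very different mechanisms. You write that ``the curve $\b_{1,i}$ is obtained from a small translate of $\b_{0,i}$ by winding sufficiently many times along the knot shadow $K_i$,'' and then parametrize intersection points as pairs $(\xs',s_i)$ with $s_i$ indexing winding strips, and later propose to ``check that the winding is taken large enough (relative to any fixed energy bound).'' This is the Ozsv\'ath--Szab\'o large-winding / truncation argument, and it is precisely what this paper is designed to avoid (see the discussion in the introduction under ``Comparison with existing techniques''). In the paper's construction, $\b_{1,i}$ is a \emph{fixed} curve, wound once, as in Figure~\ref{fig:47}; it has finitely many intersections with the alpha curves, and in particular there is no family of ``winding strips'' producing a $\Z$'s worth of intersection points. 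The $\Z$-parametrization that you are after lives entirely in the local system $E_1=\bF[U_i,T_i,T_i^{-1}]$ decorating $\b_{1,i}$, not in the geometry. Your sentence ``Tensoring with $E_1$ and identifying the monomial $T_i^{s_i}$ with the $s_i$-th winding strip'' tries to have it both ways, but if you literally wound the curve $N$ times \emph{and} tensored with $E_1$, you would overcount; and since the paper's curve is wound a fixed finite amount, the bijection $\xs\mapsto(\xs',s_i)$ you posit does not exist at the level of intersection points.

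The paper's actual proof is considerably more elementary and does not use any holomorphic-curve or winding analysis at all. It is a purely algebraic re-identification: Manolescu--Ozsv\'ath define $\bX^\veps_\Lambda(L)$ as a direct sum over $\ve{s}\in\bH(L)$ of $\frA(L_\veps,\psi_{\vec{0},\veps}(\ve{s}))$; the inclusion $\frA(L_\veps,\ve{s})\hookrightarrow\cCFL(L_\veps)$ identifying the image with the Alexander-grading-$\ve{s}$ subspace gives $\bigoplus_{\ve{s}\in\bH(L_\veps)}\frA(L_\veps,\ve{s})\iso\cCFL(L_\veps)$; and since $\psi_{\vec{0},\veps}$ is affine with fiber $\Z^{|\veps|}$, one gets $\bX^\veps_\Lambda(L)\iso\cCFL(L_\veps)\otimes\bF[\Z^{|\veps|}]$ canonically up to $T$-monomials. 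On the $\bY$ side, $\ve{\CF}^-(\as,\bs_\veps^{E_\veps})$ is by definition freely generated by $\bT_\a\cap\bT_{\b_\veps}$ over $\bigotimes_{\veps_i=0}\bF[\scU_i,\scV_i]\otimes\bigotimes_{\veps_i=1}\bF[U_i,T_i,T_i^{-1}]$, and the $T_i$-variables already supply the $\bF[\Z^{|\veps|}]$ factor; there is nothing to truncate or take a limit of. Your first step (the $\veps=\ve{0}$ case) is fine and agrees with the paper, and your observation about the $T_i$-grading of a disk tracking the Alexander jump via $T_i^{\#\d_{\b_{1,i}}(\phi)\cap K_i}$ is also correct, but the intermediate step relying on large winding should be replaced with the direct algebraic identification above.
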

\begin{proof} 
We recall Manolescu and Ozsv\'{a}th's definition of $\bX_{\Lambda}^{\veps}(L)$. To define it, write $L_{\veps}$ for the sublink of $L$ consisting of components $K_i$ for $i$ such that $\veps_i=0$. They picked a Heegaard diagram for $(S^3,L_{\veps})$ which had $|\veps|$ additional free basepoints (i.e. basepoints which are not on $L_\veps$). Given $\ve{s}\in \bH(L_{\veps})$, they defined a chain complex $\frA(L_\veps,\ve{s})$ which was generated by monomials $\xs \cdot  U_1^{j_1}\cdots U_\ell^{j_\ell}$ satisfying
\begin{equation}
A_{i}^{\veps}(\xs)-j_i\le s_i, \label{eq:Alex-grading-well-defined}
\end{equation}
for each $i$ such that $\veps_i=0$. Here, $A_i^{\veps}(\xs)$ denotes the Alexander multi-grading of the intersection point $\xs$, viewed as an element of the link Floer homology of $L_\veps$.

 There is a related complex $\cCFL(L_{\veps})$, which is freely generated by intersection points on this diagram, multiplied by products of monomials in $\bF[\scU_i,\scV_i]$ (ranging over $i$ such that $\veps_i=0$) and $\bF[U_i]$ (ranging over $i$ such that $\veps_i=1$).  There is a natural inclusion
\[
\frA(L_{\veps},\ve{s})\hookrightarrow \cCFL(L_\veps)
\]
obtained by mapping $\xs \cdot U_1^{j_1}\cdots U_n^{j_n}$ to the product of $\xs$ with $\scU^{j_i}_i \scV^{s_i-A^{\veps}_{i}(\xs)+j_i}_i$ for $i$ such that $\veps_i=0$, as well as $U_i^{j_i}$ for $i$ such that $\veps_i=1$. Equation~\eqref{eq:Alex-grading-well-defined} implies that only positive powers of $\scV_i$ appear. Furthermore, the image of this inclusion is the subspace of $\cCFL(L_{\veps})$ in Alexander grading $\ve{s}$. Therefore, we may view
\[
\cCFL(L_\veps)\iso \bigoplus_{\ve{s}\in \bH(L_\veps)} \frA(L_{\veps}, \ve{s}). 
\]

We recall that Manolescu and Ozsv\'{a}th \cite{MOIntegerSurgery}*{Section~3.7} define an affine map
\[
\psi_{\vec{0}, \veps}\colon \bH(L)\to \bH(L_\veps),
\]
whose fiber is $\Z^{|\veps|}$. (In their notation, we are considering $\psi^{M}$, where $M=L\setminus L_{\veps}$, oriented consistently with $L$). This gives a non-canonical isomorphism $\bH(L)$ with $\Z^{|\veps|}\times \bH(L_{\veps})$. They define
\[
\bX_{\Lambda}^{\veps}(L)=\bigoplus_{\ve{s}\in \bH(L)} \frA(L_{\veps}, \psi_{\vec{0},\veps}(\ve{s})). 
\]
Therefore, we obtain a chain isomorphism
\[
\bX_{\Lambda}^{\veps}(L)\iso \cCFL(L_{\veps})\otimes \bF[\Z^{|\veps|}]
\] 
where $\bF[\Z^{|\veps|}]$ is generated by products of $T_i^j$, for $j\in \Z$ and $i$ such that $\veps_i=1$. Furthermore, the isomorphism is canonical up to powers of $T_i$ for $i$ such that $\veps_i=1$. On the other hand, a parallel analysis as above shows that this is the same as our definition of $\bY_{\Lambda}^{\veps}(L)$. 
\end{proof}

\begin{rem} The completions that Manolescu and Ozsv\'{a}th consider (working over the power series ring $\bF\llsquare U_1,\dots, U_\ell\rrsquare$ and taking the direct product over Alexander gradings) coincide with the completions induced by the chiral topologies on $\cK$ and ${}_{\cK} \cD_0$. Cf. Section~\ref{sec:different-topologies}.
\end{rem}

\subsection{Alexander gradings}
\label{sec:intro-Alexander-gradings}
There are a number of Alexander multi-gradings that we can assign to each $\bX_{\Lambda}^{\veps}(L)$ and $\bY_{\Lambda}^{\veps}(L)$. We recall that the Manolescu-Ozsv\'{a}th complex $\bX_{\Lambda}^{\veps}(L)$ is defined as a direct sum of complexes 
\[
\bX_{\Lambda}^{\veps}(L)=\bigoplus_{\ve{s}\in \bH(L)} \frA(L_{\veps}, \psi_{\vec{0},\veps}(\ve{s})).
\]
In particular, we $\bX_{\Lambda}^{\veps}(L)$ is graded by $\ve{s}\in \bH(L)$. We write $A$ for this grading.

We now observe that the complexes $\bY^{\veps}_{\Lambda}(L)$ admit a parallel $\Z^n$-valued grading $A'$, as we now define. For each knot component $K_i\subset S^3$, we pick a 2-chain $S_i^\veps$ on our Heegaard surface $\Sigma$, such that $\d S_i^\veps$ is a sum of $-K_i$ (the shadow of $K_i$ on $\Sigma$, oriented negatively) as well as a linear combination of small push-offs of the curves from $\as$ and $\bs_{\veps}$. This allows us to define a $\Z^n$-valued grading $A'$ on each $\bY^{\veps}_{\Lambda}(L)$. We declare, for each intersection point
\[
A_i'(\xs)=n_{S_i^\veps}(\xs).
\]
Additionally, we declare 
\begin{equation}
A_i'(\scU_j)=-\delta_{i,j}\quad A_i'(U_j)=0\quad \text{and} \quad A_i'(\scV_j)=A_i'(T_j)=\delta_{i,j},\label{eq:declare-gradings}
\end{equation}
where $\delta_{i,j}$ is the Kronecker delta function. Note that $A_i'$ is not independent of the choice of $S_i^\veps$, and hence it is more natural to view $A_i'$ as determining a relative grading on each $\bY^{\veps}_{\Lambda}(L)$. The reader may compare Hedden and Levine's formula for the Alexander grading \cite{HeddenLevineSurgery}*{Section~2.3}.

It is also helpful to define 2-chains $\hat{S}_i$ by capping the boundary components of $S_i^\veps$ which lie along the alpha and beta circles. The resulting 2-chains do not depend on $\veps$ (up to homology) and also satisfy $\d \hat{S}_i=-K_i$.

The following lemma concerns the gradings of the hypercube structure maps $\Psi^{\vec{M}}$ on $\bY_{\Lambda}(L)$:

\begin{lem}\label{lem:grading-descent-maps}\, Suppose that $L\subset S^3$ is a link with framing $\Lambda$. 
\begin{enumerate}
\item If $\vec{M}\subset L$ is an oriented sublink, then $\Psi^{\vec{M}}$  is homogeneously graded with respect to $A'$.
\item If $\vec{M}$ and $\vec{M}'$ are two  different orientations of a sublink $M\subset L$, then
\[
A_i'\left(\Psi^{\vec{M}}\right)-A_i'\left(\Psi^{\vec{M}'}\right)=-\frac{\lk(\vec{M}, K_i)-\lk(\vec{M}',K_i) }{2}.
\]
In the above, if $K_i\subset M$, then $\lk(\vec{M}, K_i)$ and $\lk(\vec{M}',K_i)$ are computed by pushing $M$ slightly off of itself in the direction of the framing $\Lambda$.
\end{enumerate}
\end{lem}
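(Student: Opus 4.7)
The plan is to analyze directly how the grading $A_i'$ behaves under the polygon counts defining $\Psi^{\vec{M}}$. For a polygon class $\phi$ counted by $\Psi^{\vec{M}}$, with chord input $\xs\in \bT_{\a}\cap \bT_{\bs_\veps}$, theta inputs at each coordinate where $\veps'$ differs from $\veps$, and output $\ys\in \bT_{\a}\cap \bT_{\bs_{\veps'}}$ weighted by an algebra element $\omega$, I would establish an identity
\[
A_i'(\xs)-A_i'(\ys)-A_i'(\omega)=\sum_{K_j\in M} c_{\bullet_j,j}(i),
\]
where $\bullet_j\in\{\sigma,\tau\}$ is the type of theta input used at coordinate $j$, and the numbers $c_{\bullet_j,j}(i)$ are independent of $\phi$. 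Homogeneity of $\Psi^{\vec{M}}$ in part (1) follows immediately.

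To establish the identity, I would use the standard Heegaard Floer formula expressing the difference of $n_{S_i^{\veps'}}$-values at the corners of a polygon in terms of the intersection $n_{\partial S_i^{\veps'}}(\partial D(\phi))$, plus local corner contributions at the theta inputs. Writing $\partial S_i^{\veps'}=-K_i+\sum c_m\a_m^{\mathrm{po}}+\sum d_n\b_n^{\mathrm{po}}$ for push-offs of alpha and beta curves in $\bs_{\veps'}$, the $-K_i$ contribution produces $n_{w_i}(\phi)-n_{z_i}(\phi)$, which cancels the $A_i'$-contribution of the $\scU_i,\scV_i,U_i$ factors of $\omega$. The push-offs of the beta-curves in $\bs_{\veps'}$ lying on each special torus for $K_j$ match the monodromy factor $T_j^{\#\partial_{\b}(\phi)\cap K_j}$ appearing in $\omega$ after applying $\phi^\sigma$ or $\phi^\tau$. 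Choosing the chains $S_i^\veps$ so that their push-offs avoid the special tori for $K_j$ with $j\neq i$ ensures that all remaining push-off contributions vanish. What is left are local corner contributions at each theta input, which depend only on $\bullet_j$ and on local combinatorial data, producing the constants $c_{\bullet_j,j}(i)$.

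For part (2), once homogeneity is established, it suffices to compute the differences $c_{\sigma,j}(i)-c_{\tau,j}(i)$, since switching the orientation of $K_j$ in $\vec{M}$ exchanges $\theta_{\sigma,j}^+$ and $\theta_{\tau,j}^+$. For $i\neq j$, the local difference equals (up to sign) $\lk(K_i,K_j)$: on the special torus for $K_j$, the capped chain $\hat{S}_i$ satisfies $\partial\hat{S}_i=-K_i$, so its intersection number with the core $K_j$ is $\lk(K_i,K_j)$, and this controls the local multiplicity change of $S_i^{\veps'}$ between the two theta positions. For $i=j$, the difference instead comes from the framing $\lambda_i$, since the winded curve $\b_{1,i}$ wraps $\lambda_i$ times about the special torus and imparts an extra multiplicity of $\pm\lambda_i$ to $S_i^{\veps'}$ between $\theta_{\sigma,i}^+$ and $\theta_{\tau,i}^+$. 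Summing over the components of $M$ where $\vec{M}$ and $\vec{M}'$ disagree recovers the announced formula $-\tfrac{1}{2}(\lk(\vec{M},K_i)-\lk(\vec{M}',K_i))$. The main technical obstacle is the bookkeeping of the push-off contributions in $\partial S_i^\veps$; the strategy of choosing $S_i^\veps$ so as to localize all nontrivial contributions near the special tori reduces the proof to a direct local computation that mirrors the genus-one pictures already used in Section~\ref{sec:mapping-cone}.
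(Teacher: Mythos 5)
Your proposed proof rests on the same key insight the paper uses: the difference in Alexander grading shift reduces to local contributions at the theta corners, and these contributions are linking numbers (ordinary for $i\neq j$, framed self-linking for $i=j$). The difference is in how the reduction to corner contributions is implemented. The paper cones $D(\psi)$ into $Y$ to get $\hat D(\psi)$ with $\partial\hat D(\psi)=\gamma_{\ys}-\gamma_{\theta_m}-\cdots-\gamma_{\theta_1}-\gamma_{\xs}$, then pairs with the capped chains $\hat S_i$; the terms from the endpoints $\xs,\ys$ cancel, and what survives is $\sum_j\#(\gamma_{\theta_j}-\gamma_{\theta_j'})\cap\hat S_i$, with $\gamma_{\theta_j^\sigma}-\gamma_{\theta_j^\tau}$ homologous to $K_j$ by the genus-one model computation. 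Your version stays on the Heegaard surface and tries to bookkeep directly via $\partial S_i^\veps$ and polygon corner data.

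Two details in your sketch are not secure as written. First, you invoke a ``standard Heegaard Floer formula'' relating $n_{S_i^{\veps'}}$-values at polygon corners to $n_{\partial S_i^{\veps'}}(\partial D(\phi))$ plus corner terms; for bigons this is familiar, but for general polygons the precise statement is not standard, and the coning construction is exactly what one introduces to make it precise without hand-waving. Second, you claim the chains $S_i^\veps$ can be chosen so that the push-offs in $\partial S_i^\veps$ avoid the special tori for $K_j$ with $j\neq i$; but $\partial S_i^\veps$ is $-K_i$ plus a combination of push-offs of curves in $\as\cup\bs_\veps$, and $\bs_\veps$ contains the special curve $\b_{\veps_j,j}$ sitting on the torus for $K_j$. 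So the claim is really that the coefficient of each $\b_{\veps_j,j}$ in $\partial S_i^\veps$ can be taken to be zero, which is not obvious and should be argued. The coning approach avoids both issues cleanly: $\hat S_i$ has boundary exactly $-K_i$ (no push-off terms), and pairing with $\hat D(\psi)$ automatically produces only the corner $\gamma$-chains.

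Your computation for part (2) — that switching $\theta_\sigma\leftrightarrow\theta_\tau$ contributes $\pm\lk(K_i,K_j)$ for $j\neq i$ and a framing correction for $i=j$ — is the right calculation, and matches the paper's identification $\gamma_{\theta_j^\sigma}-\gamma_{\theta_j^\tau}\simeq K_j$ followed by pairing with $\hat S_i$. So the target and the key identities are right; the gaps are the two bookkeeping points above, which the paper's coning construction is precisely designed to dispatch.
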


\begin{rem}By construction, Manolescu and Ozsv\'{a}th's link surgery maps $\Phi^{\vec{M}}$ satisfy the statement of Lemma~\ref{lem:grading-descent-maps} with respect to the Alexander grading $A$. 
\end{rem}

Before proving Lemma~\ref{lem:grading-descent-maps}, we recall some standard constructions. Given a Heegaard diagram $(\Sigma,\as,\bs)$ for $Y$, we may pick compressing disks for $\as$ and $\bs$ in $Y$. We also pick identifications of these disks with a unit complex disk. Given an intersection point $\xs\in \bT_{\a}\cap \bT_{\b}$, we may form a 1-chain $\g_{\xs}\subset Y$ by coning the points of $\xs$ radially into the compressing disks for the $\as$ and $\bs$ curves. Similarly, given a class of disks $\phi\in \pi_2(\xs,\ys)$, we may cone the domain $D(\phi)$ along these compressing disks to get an integral 2-chain $\hat{D}(\phi)$ which satisfies
\[
\d \hat{D}(\phi)=\g_{\ys}-\g_{\xs}.
\]
See Figure~\ref{fig:28}.
\begin{figure}[h]
\begingroup%
  \makeatletter%
  \providecommand\color[2][]{%
    \errmessage{(Inkscape) Color is used for the text in Inkscape, but the package 'color.sty' is not loaded}%
    \renewcommand\color[2][]{}%
  }%
  \providecommand\transparent[1]{%
    \errmessage{(Inkscape) Transparency is used (non-zero) for the text in Inkscape, but the package 'transparent.sty' is not loaded}%
    \renewcommand\transparent[1]{}%
  }%
  \providecommand\rotatebox[2]{#2}%
  \newcommand*\fsize{\dimexpr\f@size pt\relax}%
  \newcommand*\lineheight[1]{\fontsize{\fsize}{#1\fsize}\selectfont}%
  \ifx\svgwidth\undefined%
    \setlength{\unitlength}{194.33285637bp}%
    \ifx\svgscale\undefined%
      \relax%
    \else%
      \setlength{\unitlength}{\unitlength * \real{\svgscale}}%
    \fi%
  \else%
    \setlength{\unitlength}{\svgwidth}%
  \fi%
  \global\let\svgwidth\undefined%
  \global\let\svgscale\undefined%
  \makeatother%
  \begin{picture}(1,0.52790243)%
    \lineheight{1}%
    \setlength\tabcolsep{0pt}%
    \put(0,0){\includegraphics[width=\unitlength,page=1]{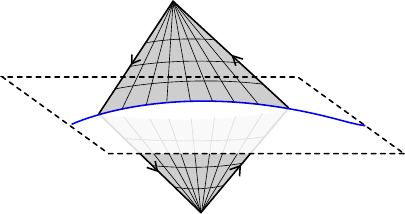}}%
    \put(0.21776759,0.19899104){\makebox(0,0)[lt]{\lineheight{1.25}\smash{\begin{tabular}[t]{l}$\xs$\end{tabular}}}}%
    \put(0.71355043,0.21333149){\makebox(0,0)[lt]{\lineheight{1.25}\smash{\begin{tabular}[t]{l}$\ys$\end{tabular}}}}%
    \put(0,0){\includegraphics[width=\unitlength,page=2]{fig28.pdf}}%
    \put(0.58327909,0.16099216){\makebox(0,0)[lt]{\lineheight{1.25}\smash{\begin{tabular}[t]{l}$\Sigma$\end{tabular}}}}%
    \put(0.76231482,0.28341072){\color[rgb]{1,0,0}\makebox(0,0)[rt]{\lineheight{1.25}\smash{\begin{tabular}[t]{r}$\a$\end{tabular}}}}%
    \put(0.84929237,0.18441311){\color[rgb]{0,0,1}\makebox(0,0)[rt]{\lineheight{1.25}\smash{\begin{tabular}[t]{r}$\b$\end{tabular}}}}%
    \put(0,0){\includegraphics[width=\unitlength,page=3]{fig28.pdf}}%
  \end{picture}%
\endgroup%

\caption{The cone $\hat{D}(\phi)$ (shaded gray) of a disk class $\phi$ from $\xs$ to $\ys$. }
\label{fig:28}
\end{figure}

The coning construction extends to classes of polygons counted on a tuple 
\[
(\Sigma,\as_m,\dots, \as_1, \bs_1,\dots, \bs_n)
\]
whenever $\Sigma\subset Y$ is a Heegaard surface such that all $\as_i$ bound compressing disks in a component $U_\a\subset Y\setminus \Sigma$, and similarly for all $\bs_j$.  For this construction, we pick compressing disks for each $\as_i$ and $\bs_j$, and we cone $\d_{\a_i}(\psi)$ into the alpha-handlebody of $Y$, and we cone each $\d_{\b_j}(\psi)$ into the beta handlebody for each $j$.

\begin{proof}[Proof of Lemma~\ref{lem:grading-descent-maps}]
We consider the claim that the maps are homogeneously graded with respect to $A'_i$.  Consider one of the Heegaard tuples $(\as,\bs_{\veps_0},\dots, \bs_{\veps_{m}})$ used to compute $\Psi^{\vec{M}}$. Here, $\veps_0< \cdots< \veps_m$ where $|\veps_{i+1}-\veps_i|_{L^1}=1$, and $m=|M|$. 
Suppose
\[
\psi\in \pi_2(\xs,\theta_1,\dots, \theta_m,\ys)\quad \text{and} \quad \psi'\in \pi_2(\xs',\theta_1',\dots, \theta_m',\ys'),
\]
are two classes of polygons on this Heegaard diagram which contribute to $\Psi^{\vec{M}}$. We observe that
\begin{equation}
\d \hat{D}(\psi)=\g_{\ys}-\g_{\theta_m}-\cdots -\g_{\theta_1}-\g_{\xs}.\label{eq:boundary-polygon}
\end{equation}

We observe firstly that the Alexander grading change associated to the algebra weights for the polygon classes $\psi$ and $\psi'$ are 
\[
\#(\d_{\b_{\veps_0}+\cdots +\b_{\veps_{m}}}D(\psi)\cap K_i)\quad \text{and} \quad \#(\d_{\b_{\veps_0}+\cdots +\b_{\veps_{m}}}D(\psi')\cap K_i)
\]
respectively. In the above, we are viewing $D(\psi)$, $D(\psi')$, and $K_i$ as being simplicial chains on $\Sigma$. (These are the contributions from the $\rho$ maps from our local systems).

Note that we may identify
\[
\#(\d_{\b_{\veps_0}+\cdots +\b_{\veps_{m}}}D(\psi)\cap K_i)=-\#(\hat{D}(\psi)\cap \d \hat{S}_i)=-\# (\d  \hat{D}(\psi) \cap \d \hat{S}_i),
\]
and similarly for $\psi'$. This is because $\d \hat{S}_i=-K_i$, pushed slightly into the $U_{\beta}$ handlebody. 

Abusing notation slightly, write $A'_i(\Psi^{\vec{M}}(\xs))$ for the $A'_i$ grading of the $\ys$ term of the output of $\Psi^{\vec{M}}(\xs)$ contributed by $\psi$. Therefore
\[
A'_i(\Psi^{\vec{M}}(\xs))=n_{S_i^{\veps_{m}}}(\ys)-\# \d \hat{D}(\psi)\cap \hat{S}_i.
\]

We therefore have
\begin{equation}
\begin{split}
&\left(
A_i'(\Psi^{\vec{M}}(\xs))-A'_i(\xs)
\right)-\left(
A_i'(\Psi^{\vec{M}}(\xs'))-A'_i(\xs')\right)\\
=&\#(\g_{\ys}-\g_{\ys'})\cap S_i^{\veps_{m}} -\#(\g_{\ys}-\g_{\ys'})\cap \hat{S}_i -\# (\g_{\xs}-\g_{\xs'})\cap S_i^{\veps_0}+\# (\g_{\xs}-\g_{\xs'})\cap \hat{S}_i
\\
&+\sum_{j=1}^m \# (\g_{\theta_j}-\g_{\theta_j'})\cap \hat{S}_i\\
&=\sum_{j=1}^m \# (\g_{\theta_j}-\g_{\theta_j'})\cap \hat{S}_i
\end{split}
\label{eq:Alexander-grading-change}
\end{equation}

If $\vec{M}=\vec{M}'$, then $\theta_j=\theta_j'$ for all $j$, so the above equation vanishes. Hence each $\Psi^{\vec{M}}$ is homogeneously graded.

We observe via the model computation in Figure~\ref{fig:24} that $\g_{\theta_j^\sigma}-\g_{\theta_j^\tau}$ is isotopic to $+K_j$. Furthermore, this isotopy may be supported in the complement of each $\d \hat{S}_i$, which is a copy of $-K_i$, pushed very slightly into $U_{\b}$. For the case of a link, we observe that
\[
\g_{\theta_j}-\g_{\theta_j'}=\begin{cases} 0& \text{ if } \theta_j=\theta_j'\\
K_j& \text{ if } K_j\in \vec{M}\text{ and } K_j\in \vec{M}'\\
-K_j& \text{ if } -K_j\in \vec{M}\text{ and } -K_j\in \vec{M}'. 
\end{cases}
\]
In particular, since $\d \hat{S}_i=-K_i$, Equation~\eqref{eq:Alexander-grading-change} reduces to the linking number description in the statement.
\end{proof}

\begin{figure}[ht]
\begingroup%
  \makeatletter%
  \providecommand\color[2][]{%
    \errmessage{(Inkscape) Color is used for the text in Inkscape, but the package 'color.sty' is not loaded}%
    \renewcommand\color[2][]{}%
  }%
  \providecommand\transparent[1]{%
    \errmessage{(Inkscape) Transparency is used (non-zero) for the text in Inkscape, but the package 'transparent.sty' is not loaded}%
    \renewcommand\transparent[1]{}%
  }%
  \providecommand\rotatebox[2]{#2}%
  \newcommand*\fsize{\dimexpr\f@size pt\relax}%
  \newcommand*\lineheight[1]{\fontsize{\fsize}{#1\fsize}\selectfont}%
  \ifx\svgwidth\undefined%
    \setlength{\unitlength}{120.43265382bp}%
    \ifx\svgscale\undefined%
      \relax%
    \else%
      \setlength{\unitlength}{\unitlength * \real{\svgscale}}%
    \fi%
  \else%
    \setlength{\unitlength}{\svgwidth}%
  \fi%
  \global\let\svgwidth\undefined%
  \global\let\svgscale\undefined%
  \makeatother%
  \begin{picture}(1,0.90846504)%
    \lineheight{1}%
    \setlength\tabcolsep{0pt}%
    \put(0,0){\includegraphics[width=\unitlength,page=1]{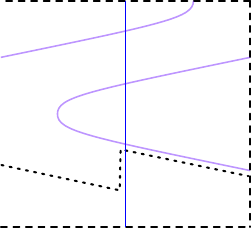}}%
    \put(0.51654816,0.68271946){\color[rgb]{0,0,1}\makebox(0,0)[lt]{\lineheight{1.25}\smash{\begin{tabular}[t]{l}$\b_0$\end{tabular}}}}%
    \put(0.28706091,0.53289035){\color[rgb]{0.37647059,0,1}\makebox(0,0)[rt]{\lineheight{1.25}\smash{\begin{tabular}[t]{r}$\b_1$\end{tabular}}}}%
    \put(0,0){\includegraphics[width=\unitlength,page=2]{fig24.pdf}}%
    \put(0.44040261,0.22045215){\makebox(0,0)[rt]{\lineheight{1.25}\smash{\begin{tabular}[t]{r}$w$\end{tabular}}}}%
    \put(0.5689793,0.22045215){\makebox(0,0)[lt]{\lineheight{1.25}\smash{\begin{tabular}[t]{l}$z$\end{tabular}}}}%
    \put(0,0){\includegraphics[width=\unitlength,page=3]{fig24.pdf}}%
    \put(0.53535102,0.35528717){\makebox(0,0)[lt]{\lineheight{1.25}\smash{\begin{tabular}[t]{l}$\theta_\sigma^+$\end{tabular}}}}%
    \put(0.48285997,0.04634582){\makebox(0,0)[rt]{\lineheight{1.25}\smash{\begin{tabular}[t]{r}$\theta_\tau^+$\end{tabular}}}}%
    \put(0,0){\includegraphics[width=\unitlength,page=4]{fig24.pdf}}%
  \end{picture}%
\endgroup%

\caption{The class $\g_{\theta^\sigma}-\g_{\theta^\tau}$ (dotted line), which is homologous to $K$.}
\label{fig:24}
\end{figure}

There are several additional Alexander gradings that we can define on the link surgery complexes. We may additionally define a $\Q^{|L|-|\veps|}$-valued grading $A^{\veps}$ on $\bY_{\Lambda}^{\veps}(L)$ and $\bX_{\Lambda}^{\veps}(L)$ by pulling back the ordinary $\Q^{|L_{\veps}|}$-valued Alexander grading under the map
\[
\bX_\Lambda^\veps(L)\to \bX_\Lambda^\veps(L)/\bF[\Z^{|\veps|}]\iso \cCFL(L_\veps),
\]
where we view $\bF[\Z^{|\veps|}]$ as being generated in products of $T_i^{j}$, $j\in \Z$, ranging over $i$ such that $\veps_i=1$.  (Recall $L_{\veps}$ denotes the sublink of $L$ consisting of components $K_i$ for which $\veps_i=0$). With respect to this grading, $T_i$ to has $A^\veps$ grading 0 if $\veps_i=1$. 

\begin{lem}\label{lem:relative-grading} For each $i$ such that $\veps_i=0$, the gradings $A'_i$ and $A^{\veps}_{i}$ coincide up to an overall constant.
\end{lem}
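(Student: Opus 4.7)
The plan is to verify two things: that both gradings assign the same values to the polynomial variables $\scU_j,\scV_j,U_j,T_j$, and that both transform identically under any domain $\phi\in\pi_2(\xs,\ys)$ between intersection points of $\bT_\a\cap\bT_{\bs_\veps}$. These two facts together imply that $A'_i-A^\veps_i$ is an integer-valued function on generators which is locally constant on each $\Spin^c$-orbit, and a standard normalization argument will then identify it as a single global constant.

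For the polynomial variables, Equation~\eqref{eq:declare-gradings} gives $A'_i(\scU_j)=-\delta_{ij}$, $A'_i(\scV_j)=\delta_{ij}$, $A'_i(U_j)=0$, and $A'_i(T_j)=\delta_{ij}=0$ (the latter because $T_j$ appears only when $\veps_j=1$ while $\veps_i=0$). The grading $A^\veps_i$ is, by definition, the pullback of the standard link Floer Alexander grading on $\cCFL(L_\veps)$ under the chain isomorphism from Lemma~\ref{lem:complexes-isomorphic}. This standard grading satisfies the same formulas on the $\scU_j,\scV_j$ with $\veps_j=0$, vanishes on each $U_j$, and the pullback is trivial on the $\bF[\Z^{|\veps|}]$ factor, so it vanishes on each $T_j$ with $\veps_j=1$ as well.

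For the behavior under domains, I would note that the internal differential on $\bY^\veps_\Lambda(L)$ weights a disk $\phi\in\pi_2(\xs,\ys)$ by $\scU_i^{n_{w_i}(\phi)}\scV_i^{n_{z_i}(\phi)}$ on the $i$-th strand for $\veps_i=0$, so invariance of $A^\veps_i$ under the differential forces
\[
A^\veps_i(\ys)-A^\veps_i(\xs)=n_{w_i}(\phi)-n_{z_i}(\phi).
\]
For $A'_i$, I would repeat the intersection-theoretic computation from the proof of Lemma~\ref{lem:grading-descent-maps}, specialized to $\vec M=\emptyset$: writing $\partial \hat S_i=-K_i$ (pushed slightly into the beta handlebody) and $\partial\hat D(\phi)=\g_\ys-\g_\xs$, the same bookkeeping yields
\[
n_{S_i^\veps}(\ys)-n_{S_i^\veps}(\xs)=-\#\bigl(\hat D(\phi)\cdot K_i\bigr)=n_{w_i}(\phi)-n_{z_i}(\phi),
\]
and hence $A'_i(\ys)-A'_i(\xs)$ equals the same quantity.

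The hard part is the final step, namely upgrading local constancy on each $\Spin^c$-orbit to a single global constant. Both gradings are absolute $\Z$-valued liftings of the same relative Alexander grading on $\bY^\veps_\Lambda(L)$: the grading $A^\veps_i$ is normalized by the standard Seifert-surface convention for the null-homologous link $L_\veps\subset S^3$, while $A'_i$ is normalized by the explicit 2-chain $S_i^\veps$ whose boundary is $-K_i$ plus alpha/beta pushoffs. Since a $\Spin^c$-invariant lifting of a fixed relative grading is determined up to a single overall additive constant, and both liftings arise from the same topological input (a choice of 2-chain bounding $-K_i$ up to alpha/beta pushoffs, equivalent after capping to a Seifert-like surface for $K_i$), the difference $A'_i-A^\veps_i$ depends only on the choice of $S_i^\veps$ and is a single global constant, completing the proof.
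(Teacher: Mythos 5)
Your proposal is correct and follows essentially the same route as the paper: the heart of the argument is exactly your third paragraph, comparing the relative grading changes via the coned domain $\hat{D}(\phi)$ and the capped $2$-chain $\hat{S}_i$ with $\d\hat{S}_i=-K_i$, which is the paper's entire proof (the check on the variables $\scU_j,\scV_j,U_j,T_j$ is left implicit there). The only comment is that your final paragraph overcomplicates the "global constant" step: in this setting the diagram presents $S^3$, so $\pi_2(\xs,\ys)\neq\emptyset$ for every pair of generators, and equal relative grading shifts together with equal values on the variables immediately force $A_i'-A_i^{\veps}$ to be a single constant; the appeal to a general principle about $\Spin^c$-invariant liftings is unnecessary (and, stated across distinct $\Spin^c$-orbits, would not be a valid principle on its own).
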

\begin{proof} For simplicity, consider $\veps=\vec{0}$.   By definition $A_{i}(\xs)-A_i(\ys)=n_{z_i}(\phi)-n_{w_i}(\phi)$, for any $\phi\in \pi_2(\xs,\ys)$.
Hence
\[
A_i(\ys)-A_i(\xs)=n_{w_i}(\phi)-n_{z_i}(\phi)= - (\hat{D}(\phi)\cap K)=\#(\hat{D}(\phi)\cap \d 
\hat{S}_i),
\]
since $\d \hat{S}_i=-K$, by our convention above, and $K$ is oriented to intersect $\Sigma$ positively at $z_i$ and negatively at $w_i$. On the other hand, $\d \hat{D}(\phi)=\g_{\ys}-\g_{\xs}$, so
\[
\#(\hat{D}(\phi) \cap \d \hat S_i)=\#( \d\hat{D}(\phi)\cap \hat{S}_i)=\# (\g_{\ys}-\g_{\xs})\cap \hat S_i=n_{S_i^\veps}(\ys)-n_{S_i^\veps}(\xs)=A'_i(\ys)-A'_i(\xs),
\]
completing the proof.
\end{proof}

Manolescu and Ozsv\'{a}th define a reduction map $\psi^{\vec{M}}\colon \bH(L_{\veps})\to \bH(L_{\veps'})$  \cite{MOIntegerSurgery}*{Section~3.7}. For our purposes, we only need to consider the map when $\vec{M}$ is oriented consistently with $L$, and we write
 \[
 \psi_{\veps,\veps'}\colon \bH(L_{\veps})\to \bH(L_{\veps'})
 \]
 for this map. 
 
 By definition, the Alexander grading $A$ on $\bX_{\Lambda}^{\veps}(L)$ has the property that
\[
\psi_{\vec{0},\veps}\circ A=A^\veps.
\]
By Lemma~\ref{lem:relative-grading}, we may normalize $A'$ on $\bY_{\Lambda}^\veps(L)$ by declaring
\[
\psi_{\vec{0},\veps}\circ A'=A^{\veps}.
\]
Note that the grading $A'$ still has an indeterminacy on $\bY^{\veps}_{\Lambda}(L)$ by $\Q^{|L|-|L_\veps|}$ on the factors correspond to $L\setminus L_\veps$. We further normalize $A'$ on $\bY^{\veps}_{\Lambda}(L)$ so that the maps $\Psi^{\vec{M}}_{\veps,\veps'}$ preserve $A'$ whenever $\vec{M}$ is a positively oriented sublink of $L$.

\subsection{Completion of the proof of Theorem~\ref{thm:MO=our-version}}

In this section, we complete the proof of Theorem~\ref{thm:MO=our-version}.

\begin{lem}
\label{lem:hypercube-differentials-isomorphic} For each oriented sublink $\vec{M}\subset L$ and each $\veps,\veps'\in \bE_\ell$ such that $\veps<\veps'$, the hypercube differentials $\Phi_{\veps,\veps'}^{\vec{M}}$ and $\Psi_{\veps,\veps'}^{\vec{M}}$ (from $\bX_{\Lambda}(L)$ and $\bY_{\Lambda}(L)$, respectively) are equal up to overall multiplication by some $T_{i_1}^{\a_{i_1}}\cdots T_{i_j}^{\a_{i_j}}$, where each $\a_{i_k}\in \Z$, and $i_1,\dots, i_j$ denote the indexes $i$ such that $\veps'_i=1$. 
\end{lem}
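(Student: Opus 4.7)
The plan is to exploit the fact that both $\Phi^{\vec{M}}_{\veps,\veps'}$ and $\Psi^{\vec{M}}_{\veps,\veps'}$ are built from polygon counts on essentially the same Heegaard tuples, differing only in how outputs are packaged algebraically. Using the grading analysis of Section~\ref{sec:intro-Alexander-gradings}, one can pin the comparison down to an overall $T_i$-power ambiguity matching that of Lemma~\ref{lem:complexes-isomorphic}.

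First I would work at the level of individual vertices. Under the isomorphism $\bX^{\veps}_{\Lambda}(L) \iso \bY^{\veps}_{\Lambda}(L)$ of Lemma~\ref{lem:complexes-isomorphic}, the internal differential (the $\vec{M}=\emptyset$ component) on the two sides literally counts the same holomorphic disks, weighted in the Manolescu--Ozsv\'ath description by $U_i$-powers tracking multiplicities at $w_i$'s and Alexander-grading drops at $z_i$'s, and in our description by $\scU_i^{n_{w_i}}\scV_i^{n_{z_i}}$ for $\veps_i=0$ and $U_i^{n_{w_i}} T_i^{n_{z_i}-n_{w_i}}$ for $\veps_i=1$. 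These weightings are identified directly by the construction of the isomorphism of Lemma~\ref{lem:complexes-isomorphic}.

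Next I would treat the case where $\vec{M}$ is positively oriented (i.e.\ oriented consistently with $L$). In both constructions, the corresponding map counts holomorphic polygons whose inputs are the top $\gr_{\ws}$-graded generators $\theta^+_{\sigma,i}$ (these correspond to $\sigma_i$ in the surgery algebra). In Manolescu--Ozsv\'ath's formulation, the polygon output is identified into the appropriate summand $\frA(L_{\veps'},\psi_{\veps,\veps'}(\ve{s}))$; in our formulation, the same polygon count is interpreted via the local system $E_0$. The identification in the proof of Lemma~\ref{lem:complexes-isomorphic} matches these two translations on the nose, up to the $T_i$-power ambiguity, yielding agreement of $\Phi^{+M}_{\veps,\veps'}$ and $\Psi^{+M}_{\veps,\veps'}$.

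For a general oriented sublink $\vec{M}$, I would compare to $+M$. On the $\bY$-side, replacing $+K_j\subset M$ by $-K_j$ changes the triangle input from $\theta^+_{\sigma,j}$ to $\theta^+_{\tau,j}$, shifting the Alexander grading of the resulting map according to the linking-number formula in Lemma~\ref{lem:grading-descent-maps}. On the $\bX$-side, Manolescu and Ozsv\'ath reduce the case of a general orientation to the positive one by precisely the same linking-number shift, encoded in their $\psi^{\vec{M}}$-map (cf.~\cite{MOIntegerSurgery}*{Section~3.7}). Since both constructions produce the same underlying polygon count, and since the outputs land in matching Alexander multi-gradings via Lemma~\ref{lem:relative-grading}, the two maps agree up to the $T$-power arising from propagating the $T_i^{\a_i}$-ambiguity from Lemma~\ref{lem:complexes-isomorphic} through the edge $\veps<\veps'$. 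The main obstacle is checking that the Manolescu--Ozsv\'ath inclusions $\frA(L_{\veps'},\ve{s})\hookrightarrow \cCFL(L_{\veps'})$ match our local-system weights coherently as $\vec{M}$ varies, so that the $T$-power shift is genuinely consistent with the indeterminacy already recorded in Lemma~\ref{lem:complexes-isomorphic} rather than acquiring an extra orientation-dependent correction.
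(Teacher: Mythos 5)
Your overall strategy---same polygon counts, match the algebra weights through the identification of Lemma~\ref{lem:complexes-isomorphic}, and use the grading results of Lemma~\ref{lem:grading-descent-maps} to control the $T$-ambiguity---is the right one, and the warm-up cases (internal differential, positively oriented $\vec{M}$) are handled in the same spirit as the paper. But the decisive steps are not actually argued. First, for a sublink $\vec{M}$ containing negatively oriented components you try to \emph{reduce} to $+M$ by grading bookkeeping: you note that changing $+K_j$ to $-K_j$ shifts the Alexander grading of $\Psi^{\vec{M}}$ by the linking-number formula and that Manolescu--Ozsv\'ath's $\psi^{\vec{M}}$-maps encode the same shift on the $\bX$-side. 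This does not suffice: $\Phi^{\vec{M}}$ and $\Phi^{+M}$ are genuinely different maps (different $\theta$-inputs, different $\Spin^c$ identifications), and knowing that both sides shift gradings identically does not show that, for a fixed polygon class, the $\scU_i$-, $\scV_i$- and $U_i$-weights assigned by the two constructions coincide. That weight comparison has to be checked directly for each orientation case ($\veps'_i=0$; $+K_i\subset\vec{M}$: intertwine $\scU_i$ with $U_i$, $\scV_i$ with $1$, $U_i$-power $n_{w_i}$; $-K_i\subset\vec{M}$: intertwine $\scV_i$ with $U_i$, $\scU_i$ with $1$, $U_i$-power $n_{z_i}$), exactly as in the $\sigma$ case you did treat; it is immediate from the constructions, but it is the content, not a corollary of the grading shift.

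Second, the conclusion that the discrepancy is a \emph{single overall} monomial $T_{i_1}^{\a_{i_1}}\cdots T_{i_j}^{\a_{i_j}}$ (rather than a $T$-power varying with the Alexander grading of the input) is asserted but never derived. The paper's route makes this clean: compose both maps with the quotient $Q$ killing the $T_i$ for $\veps'_i=1$, verify $Q\circ\Phi^{\vec{M}}_{\veps,\veps'}=Q\circ\Psi^{\vec{M}}_{\veps,\veps'}$ via the weight facts above, and then observe that $\Phi^{\vec{M}}_{\veps,\veps'}$ is homogeneous for $A$ and $\Psi^{\vec{M}}_{\veps,\veps'}$ is homogeneous for $A'$ with the same shift (Lemma~\ref{lem:grading-descent-maps}); homogeneity plus agreement modulo the $T_i$'s forces the ratio to be one fixed monomial. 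You have all these ingredients available but never assemble them. Finally, the ``main obstacle'' you flag---coherence of the $T$-power as $\vec{M}$ varies---is not an obstacle for this lemma at all, since the statement allows the monomial to depend on $(\vec{M},\veps,\veps')$; that consistency only matters for Theorem~\ref{thm:MO=our-version}, where it is again dispatched by comparing grading shifts of $F_{\veps'}\circ\Phi^{\vec{M}}_{\veps,\veps'}$ and $\Psi^{\vec{M}}_{\veps,\veps'}\circ F_{\veps}$ rather than by any further inspection of the inclusions $\frA(L_{\veps'},\ve{s})\hookrightarrow\cCFL(L_{\veps'})$.
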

\begin{proof}We observe firstly that the diagrams $(\Sigma,\as,\bs_{\veps},\ws,\zs)$ satisfy the definition of a complete system of Heegaard diagrams \cite{MOIntegerSurgery}*{Section~8.6}. See \cite{MOIntegerSurgery}*{Section~9} for more details about the construction of the maps $\Phi^{\vec{M}}_{\veps,\veps'}$. 

For the purposes of the lemma statement, we first compose the maps $\Phi_{\veps,\veps'}^{\vec{M}}$ and $\Psi^{\vec{M}}_{\veps,\veps'}$ 
with the quotient map $Q\colon Y^{\veps'}_{\Lambda}(L)\to Y^{\veps'}_{\Lambda}(L)/(T_i^\a: \veps'_i=1)$. 

We need the following facts about their map $Q\circ \Phi^{\vec{M}}$, which are immediate from the construction
\begin{enumerate}
\item If $\veps'_i=0$, then $\Phi^{\vec{M}}$ and $\Psi^{\vec{M}}$ will be $\scU_i$ and $\scV_i$ equivariant. The $\scU_i$ and $\scV_i$ powers of the two maps are given by $n_{w_i}(\psi)$ and $n_{z_i}(\psi)$, if $\psi$ is the class of a holomorphic polygon counted by the two maps.
\item If $+K_i\subset L$, then the maps $Q\circ \Phi^{\vec{M}}_{\veps,\veps'}$ and $Q\circ \Psi^{\vec{M}}_{\veps,\veps'}$ will intertwine $\scU_i$ with $U_i$. These maps intertwine $\scV_i$ with $1$. The $U_i$ power of these maps will be $n_{w_i}(\psi)$. 
\item If $-K_i\subset L$, the maps intertwine $\scV_i$ with $U_i$, and intertwine $\scU_i$ with $1$. The $U_i$ power of the summand of both maps corresponding to a polygon $\psi$ is $n_{z_i}(\psi)$.
\end{enumerate}
The above imply, in particular, that
\[
Q\circ \Phi^{\vec{M}}_{\veps,\veps'}=Q\circ \Psi^{\vec{M}}_{\veps,\veps'}.
\]
To see that the two maps differ by multiplication by a single monomial $T_{i_1}^{\a_1}\cdots T_{i_k}^{\a_k}$, we recall that $\Phi^{\vec{M}}_{\veps,\veps'}$ is homogeneously graded by the $A$-grading, and $\Psi^{\vec{M}}_{\veps,\veps'}$ is homogeneously graded by the $A'$-grading. Furthermore, by Lemma~\ref{lem:grading-descent-maps}, they have the same grading shifts, so differ by a monomial as stated.
\end{proof}

\begin{proof}[Proof of Theorem~\ref{thm:MO=our-version}]
Lemma~\ref{lem:complexes-isomorphic} gives an isomorphism 
\[
F_{\veps} \colon \bX_{\Lambda}^{\veps}(L)\to \bY_{\Lambda}^{\veps}(L),
\]
which is well-defined up to multiplication by overall powers of $T_i^{\a_i}$, $\a_i\in \Z$, ranging over $i$ such that $\veps_i=1$. The maps $F_{\veps}$ may be normalized by requiring that they intertwine the $A$ and $A'$ Alexander gradings. We claim that with respect to this convention, the map $F=\sum_{\veps\in \bE_\ell} F_{\veps}$ is a chain isomorphism from $\bX_{\Lambda}^{\veps}(L)$ to $\bY_{\Lambda}^{\veps}(L)$. 

By Lemma~\ref{lem:hypercube-differentials-isomorphic}, we have that $F_{\veps'}\circ \Phi_{\veps,\veps'}^{\vec{M}}$
and $\Psi_{\veps,\veps'}^{\vec{M}}\circ F_{\veps}$ are equal up to overall powers of $T_i^{\a_i}$ ranging over $i$ such that $\veps'_i=1$. By Lemma~\ref{lem:grading-descent-maps}, we see that the two maps $F_{\veps'}\circ \Phi_{\veps,\veps'}^{\vec{M}}$
and $\Psi_{\veps,\veps'}^{\vec{M}}\circ F_{\veps}$ have the same grading shift with respect to $A$ and $A'$, and hence must be equal on the nose. The proof is complete. 
\end{proof}

\section{Endomorphism algebras}
\label{sec:endomorphisms}

 Auroux \cite{AurouxBordered} gave a description of the bordered strands algebra of Lipshitz, Ozsv\'{a}th and Thurston  as the endomorphism algebra of a twisted complex in the partially wrapped Fukaya category of a punctured surface. In this section we prove an analog for the knot surgery algebra. We will show that our knot surgery algebra $\cK$ is $A_\infty$-homotopy equivalent to a subalgebra of the endomorphism algebra of 
$\b_0^{E_0}\oplus \b_1^{E_1}.$

In Section~\ref{sec:filtered_endomorphisms} we define a subspace of \emph{filtered endomorphisms} 
\[
\End_{\Fil}(\b_0^{E_0}\oplus \b_1^{E_1})\subset\End(\b_0^{E_0}\oplus \b_1^{E_1}),
\] which is an $A_\infty$-algebra. Note that we may define versions of $\End_{\Fil}(\b_0^{E_0}\oplus \b_1^{E_1})$ using either chiral or $U$-adic completions.

Our main theorem is the following:

\begin{thm}
\label{thm:equivalence-algebras} If we equip $\b_0^{E_0}\oplus \b_1^{E_1}$ with the chiral topology, then there is a homotopy equivalence of topological chiral $A_\infty$-algebras
\[
\End_{\Fil}(\b_0^{E_0}\oplus \b_1^{E_1})^{\opp}\simeq \cK.
\] If we equip morphism spaces with the $U$-adic topology, then there is a homotopy equivalence of achiral topological algebras $\End_{\Fil}(\b_0^{E_0}\oplus \b_1^{E_1})^{\opp}\simeq \frK$. 
\end{thm}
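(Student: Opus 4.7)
The plan is to construct an explicit $A_\infty$-homomorphism
\[
F\colon \cK \to \End_{\Fil}(\b_0^{E_0}\oplus \b_1^{E_1})^{\opp}
\]
whose linear component $F_1$ matches algebra generators with distinguished Floer cycles, and to argue that the higher components $F_j$ for $j\ge 2$ may be taken to vanish on this model. On the idempotent-diagonal pieces I would send $\scU^i \scV^j\in \ve{I}_0\cdot\cK\cdot\ve{I}_0$ to $\lb\theta_{\b_0,\b_0'}^+,\scU^i\scV^j\cdot\id\rb$, and $U^i T^j\in \ve{I}_1\cdot\cK\cdot\ve{I}_1$ to $\lb\theta_{\b_1,\b_1'}^+, U^i T^j\cdot \id\rb$, using small Hamiltonian translates of $\b_0$ and $\b_1$ to achieve transverse intersection. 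On the off-diagonal piece, $U^i T^j \sigma$ is sent to $\lb\theta_\sigma^+, U^i T^j \cdot\phi^\sigma\rb$ and $U^i T^j \tau$ to $\lb\theta_\tau^+, U^i T^j\cdot\phi^\tau\rb$, where $\phi^\sigma$ and $\phi^\tau$ are the maps from Section~\ref{sec:knot-surgery-formula}.

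First I would verify that each of these elements is a $\mu_1$-cycle lying in $\End_{\Fil}$. For the diagonal generators this is automatic, since $\theta_{\b_\veps,\b_\veps'}^+$ is the top-graded intersection point of a Lagrangian with its Hamiltonian translate and bigon contributions with equal local-system coefficients cancel; for $\sigma$ and $\tau$ the cycle condition is precisely Lemma~\ref{lem:theta-tau/sigma-cycles}. The filtration defining $\End_{\Fil}$ must be chosen so its image contains these elements and is closed under $\mu_2$; the natural choice is the Alexander multi-grading carried by the local systems, together with the $U$-adic filtration in the second version of the theorem. Next I would compute $\mu_2$ on the image of $F_1$ via holomorphic triangle counts on Heegaard triples involving $\b_0$, $\b_1$ and their small translates. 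These are precisely the diagrams already analyzed in Propositions~\ref{prop:PhiPsi} and~\ref{prop:PsiPhi}; combining those triangle counts with pointwise composition of local-system automorphisms recovers all the defining relations of $\cK$, namely polynomial multiplication on each diagonal summand together with the commutation relations $\sigma\scU=UT^{-1}\sigma$, $\sigma\scV=T\sigma$, $\tau\scU=T^{-1}\tau$, $\tau\scV=UT\tau$. The appearance of $\cK^{\opp}$ is the usual convention switch arising from reading attaching-curve compositions right-to-left, as noted in Section~\ref{sec:mapping-cone}.

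The main obstacle, and the step I expect to consume the most effort, is proving that higher operations $\mu_n$ for $n\ge 3$ vanish on the image of $F_1$ and that the resulting map is continuous in each of the two topologies. For the vanishing I would argue via Maslov gradings: each input $\theta^+$ lies in the top Maslov degree of its torsion $\Spin^c$ component, so any $\mu_n$-output with $n\ge 3$ lies in Maslov degree $2-n<0$ and must vanish after restriction to the Maslov-bounded subspace $\End_{\Fil}$. This is exactly the mechanism already used to establish the hypercube relations for $\scB_{\Lambda}$ in Section~\ref{sec:background-link-surgery}. Continuity is then checked separately in each topology: in the chiral setting, Section~\ref{sec:operator-topologies} already identifies $\cK$ with a subspace of $\End_{\bF[U]}(E_0\oplus E_1)$ whose subspace topology agrees with the chiral topology on $\cK$; in the $U$-adic setting the filtration by $U^i$ is manifestly preserved by $F_1$ and by all triangle counts, since each triangle contributes a factor $U^{n_w(\phi)}$ with $n_w(\phi)\ge 0$. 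Once these facts are in hand, $F_1$ is a quasi-isomorphism by direct inspection on generators, and the vanishing of higher products upgrades $F$ to a homotopy equivalence of $A_\infty$-algebras in each of the two topologies.
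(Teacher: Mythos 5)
Your overall strategy is reasonable but it quietly bypasses the step that carries most of the weight in the paper's argument, and in doing so it leaves a genuine gap. You define a strict map $F_1\colon \cK\to\End_{\Fil}(\b_0^{E_0}\oplus\b_1^{E_1})^{\opp}$ by sending generators to $\theta^+$-type Floer cycles, verify $\mu_2$-compatibility, and kill $\mu_n$ for $n\ge 3$ by Maslov degree. The last step is correct in spirit (it is exactly Lemma~\ref{lem:model-computation-End}(3) in the paper), though your sign is backwards: $\mu_n$ on $n$ inputs that are each top-degree in $\gr_w$ would force the output intersection point to have $\gr_w\ge n-2>0$, which is impossible because the top degree is $0$; the vanishing is unconditional and has nothing to do with restricting to $\End_{\Fil}$ as you suggest. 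Also, the triangle counts you need are not the ones in Propositions~\ref{prop:PhiPsi} and~\ref{prop:PsiPhi} (those involve $\b_\lambda$); the relevant computations are the model triangles on $(\b_0,\b_0',\b_1)$ and $(\b_0,\b_1,\b_1')$ in Lemma~\ref{lem:model-computation-End}.

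The real gap is the sentence ``$F_1$ is a quasi-isomorphism by direct inspection on generators.'' This is not a direct inspection: $\End_{\Fil}$ is an enormous space of filtered $\bF[U]$-linear maps decorated by Floer generators, and computing its homology is the hard part. The paper does this by constructing an explicit strong deformation retraction onto $\cK$ in Lemmas~\ref{lem:homology-I0-I0}--\ref{lem:homology-I1-I1}; the homotopy $h$ there (an infinite series of conjugated projections) is precisely where the filtered condition on morphisms $E_0\to E_0$ is used, both to ensure the series converges in the relevant topology and to stay inside $\End_{\Fil}$. Without this, you cannot conclude that $F_1$ is even a quasi-isomorphism. Moreover, in the topological setting (``homotopy equivalence of topological chiral $A_\infty$-algebras''), a continuous quasi-isomorphism does not automatically admit a continuous homotopy inverse. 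The paper resolves this by running the Kontsevich--Soibelman homological perturbation formula against the explicit continuous retraction and then using Lemma~\ref{lem:model-computation-End} to show the tree sums reduce to $\mu_2$. If you want to carry out your version of the argument, you will need to supply the deformation retraction in any case, at which point you have essentially reproduced the paper's proof with the order of steps reshuffled.
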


\subsection{Filtered endomorphisms}
\label{sec:filtered_endomorphisms}

In this section, we define  $\End_{\Fil}(\b_0^{E_0}\oplus \b_1^{E_1})$.

\begin{define}
\label{def:filtered-morphism} Fix either the chiral or $U$-adic topology on $E_0\oplus E_1$ (see Section~\ref{sec:operator-topologies}). If $\veps,\nu\in \{0,1\}$, we say a morphism $\lb\xs,f\rb\colon \b_{\veps}^{E_{\veps}}\to \b_{\nu}^{E_{\nu}}$ is \emph{filtered} if the following hold:
\begin{enumerate}
\item $\veps\le \nu$.
\item $f$ is a continuous $\bF[U]$-linear map from $E_\veps$ to $E_{\nu}$.
\item If $\veps=\nu=0$, then the map $f$ is non-increasing in the $\gr_w$ and $\gr_z$-gradings. 
\end{enumerate}
\end{define}

If $\veps,\nu\in \{0,1\}$, we write $\ve{\CF}_{\Fil}^-(\b_{\veps}^{E_{\veps}},\b_{\nu}^{E_{\nu}})$ for the Floer complex of filtered morphisms.  Similarly, we write $\End_{\Fil}(\b_0^{E_0}\oplus \b_1^{E_1})$ for the space of all filtered endomorphisms.

We equip each $\Hom_{\bF[U]}(E_\veps,E_{\veps'})$ with the uniform topology (cf. Section~\ref{sec:operator-topologies}), which endows $\End_{\Fil}(\b_0^{E_0}\oplus \b_1^{E_1})$ itself with a linear topology (depending on which topology we choose for $E_0\oplus E_1$).

We now prove an algebraic lemma about filtered morphisms, which appears several times in our paper:

\begin{lem}\label{lem:filtered-morphism-compose-Vs} Suppose that $f\colon E_0\to E_0$ is a filtered $\bF[U]$-module morphism. Then $\scV\circ f\circ \scV^{-1}$ and $\scV^{-1}\circ f\circ \scV$ are both filtered morphisms.
\end{lem}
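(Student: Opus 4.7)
The plan is as follows. The first thing I would do is make precise what $\scV^{-1}$ means as an operator, since literally $\scV^{-1}\notin E_0=\bF[\scU,\scV]$. Using $U=\scU\scV$, one has $\scV^{-1}=\scU\cdot U^{-1}$, so $\scV^{-1}$ is a well-defined operator on $U^{-1}E_0=\bF[\scU,\scV,U^{-1}]$; and any $\bF[U]$-linear map $f\colon E_0\to E_0$ extends canonically by $\bF[U]$-linearity to $\tilde f\colon U^{-1}E_0\to U^{-1}E_0$. The compositions $\scV\circ f\circ \scV^{-1}$ and $\scV^{-1}\circ f\circ \scV$ therefore a priori make sense only as elements of $\End_{\bF[U]}(U^{-1}E_0)$; the content of the lemma is that each of them in fact preserves $E_0$ and is still filtered there.

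Next I would handle the easier direction $h=\scV^{-1}\circ f\circ \scV$. On a monomial $x=\scU^{a}\scV^{b}$ with $a,b\ge 0$, one has $\scV x=\scU^{a}\scV^{b+1}$, and the hypothesis that $f$ is non-increasing in $\gr_w$ and $\gr_z$ means that $f(\scU^{a}\scV^{b+1})$ expands into terms $\scU^{c}\scV^{d}$ with $c\ge a$ and $d\ge b+1$. Multiplying each of these by $\scV^{-1}$ simply drops the $\scV$-exponent to $d-1\ge b\ge 0$, so the result is in $E_0$ and the grading inequalities $c\ge a$, $d-1\ge b$ are exactly the statement that $h$ is grading non-increasing.

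The main obstacle, and the reason to be careful, is the direction $g=\scV\circ f\circ \scV^{-1}$ on monomials of the form $x=\scU^{a}$, where the intermediate term is not in $E_0$. Using $\scV^{-1}\scU^{a}=\scU^{a+1}U^{-1}$ and $\bF[U]$-linearity of $f$, I would compute
\[
g(\scU^{a})=\scV\cdot U^{-1}\cdot f(\scU^{a+1}).
\]
By hypothesis $f(\scU^{a+1})$ expands into terms $\scU^{c}\scV^{d}$ with $c\ge a+1$ and $d\ge 0$, and $U^{-1}\scU^{c}\scV^{d}=\scU^{c-1}\scV^{d-1}$ inside $U^{-1}E_0$. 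Multiplying by $\scV$ raises the $\scV$-exponent back to $d\ge 0$, while the $\scU$-exponent $c-1\ge a\ge 0$ is nonnegative precisely because $f$ is non-increasing in $\gr_w$ (which forces $c\ge a+1$). Thus $g(\scU^{a})\in E_0$, and the inequalities $c-1\ge a$, $d\ge 0$ give the required grading non-increase. For monomials $\scU^{a}\scV^{b}$ with $b\ge 1$ the computation is the same and strictly easier.

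The remaining properties are routine. The $\bF[U]$-linearity of $\scV^{\pm 1}\circ f\circ \scV^{\mp 1}$ is immediate since both $\scV$ and its formal inverse commute with $U$. Continuity in the chiral topology follows because the grading non-increase property forces the basic open subspaces $W_n=\Span_{\bF}(\scU^i\scV^j:\max(i,j)\ge n)$ to be preserved by any filtered morphism, and hence by $g$ and $h$; continuity in the $U$-adic topology is obvious since $g$ and $h$ are $\bF[U]$-linear and so preserve $U^{k}E_0$ for every $k$. The whole argument is therefore a piece of exponent bookkeeping once the meaning of $\scV^{-1}$ has been fixed, and the only point requiring care is the case $b=0$ in the conjugation $\scV\circ f\circ \scV^{-1}$.
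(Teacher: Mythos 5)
Your proof is correct and takes essentially the same route as the paper's: the paper packages the filtered condition as preservation of the principal ideals $(\scU^i\scV^j)$, extended to $i,j\in\Z$ inside $U^{-1}\bF[\scU,\scV]$, and tracks these ideals under $\scV^{-1}$, $f$, $\scV$, which is exactly your monomial exponent bookkeeping in slightly slicker form. Your explicit handling of the $b=0$ case for $\scV\circ f\circ\scV^{-1}$ and of continuity spells out what the paper leaves implicit, but there is no substantive difference in approach.
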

\begin{proof} A morphism $f\colon E_0\to E_0$ is filtered if and only if it preserves the principal ideal $(\scU^{i}\scV^j)$ for all $i,j \ge 0$. Note that if $f$ is a filtered map, then it also preserves the $\bF[\scU,\scV]$-submodule $(\scU^i\scV^j)\subset U^{-1} \bF[\scU,\scV]$ for all $i,j\in \Z$. 

We observe that if $i,j\ge 0$, then $\scV^{-1}$ maps $(\scU^i\scV^j)$ to $(\scU^i\scV^{j-1})$, which $f$ maps to $(\scU^i\scV^{j-1})$. This is mapped by $\scV$ to $(\scU^i \scV^j)$, so $\scV\circ f\circ \scV^{-1}$ is filtered. The same argument works for $\scV^{-1} \circ f\circ \scV$. 
\end{proof}

\subsection{Gradings and \texorpdfstring{$\Spin^c$}{Spin-c} structures}

We now discuss gradings and $\Spin^c$ structures, focusing on the Heegaard diagram $(\bT^2,\b_0,\b_1,w,z)$. This diagram $(\bT^2,\b_0,\b_1,w,z)$ represents the homologically essential knot $S^1\times pt\subset S^1\times S^2$. 

We say that $\xs$ and $\ys$ are \emph{$\Spin^c$-equivalent} if there is a homotopy class of disks from $\xs$ to $\ys$. On the diagram $(\bT^2, \b_0,\b_1,w,z)$, there are two $\Spin^c$ equivalence classes, consisting of $\theta_\sigma^{\pm}$ and $\theta_{\tau}^{\pm}$. We write $\frs_\sigma$ and $\frs_\tau$ for these two classes.

For a generator $\lb\xs, \phi\rb$ in any $\Spin^c$ structure on $(\bT^2, \b_0, \b_1)$, we define the Alexander grading of a generator by the formula
\[
A(\lb\xs,\phi\rb)=A(\phi),
\]
where $A(\phi)$ denotes the Alexander grading shift of $\phi$, viewed as a map from $\bF[\scU,\scV]$ to $\bF[U,T,T^{-1}]$. (Recall that we set $A(\scU)=-1$, $A(\scV)=1$, $A(T)=1$ and $A(U)=0$).

If $\lb \xs, \phi\rb$ 
 represents $\Spin^c$ structure $\frs_\sigma$, we define
\[
\gr(\lb\xs,\phi\rb)=\gr_w(\xs)+\gr_w(\phi),
\]
where $\gr_w(\phi)$ is the grading shift of $\phi$, using the definition that $\gr_w(\scU)=\gr_w(U)=-2$ and $\gr_w(\scV)=\gr_w(T)=0$.
Similarly, for generators $\lb\xs,\phi\rb$ representing $\frs_\tau$, we define
\[
\gr(\lb\xs,\phi\rb)=\gr_z(\xs)+\gr_z(\phi). 
\]

It is straightforward to see that the differential preserves the Alexander grading $A$, and decreases the Maslov grading by $1$. 

\begin{rem} For our purposes, it is not as natural to identify $\Spin^c$ equivalence classes with elements of $\Spin^c(S^1\times S^2)$.   We recall that Ozsv\'{a}th and Szab\'{o} define two natural maps
\[
\frs_{w},\frs_z\colon \bT_{\a}\cap \bT_{\b}\to \Spin^c(Y).
\]
These satisfy
\begin{equation}
\frs_w(\xs)-\frs_z(\xs)=\PD[K],\label{eq:diff-spin-c-structure}
\end{equation}
and hence give an asymmetry for any identification of $\Spin^c$ classes with $\Spin^c(S^1\times S^2)$. 
\end{rem}

\subsection{Admissibility and positivity}

Before proving Theorem~\ref{thm:equivalence-algebras}, we prove that the endomorphism algebras $\End_{\Fil}(\b_0^{E_0}\oplus \b_1^{E_1})$, as defined above, are indeed linear topological $A_\infty$-algebras for either choice of topology. In particular, the composition maps $\mu_i$ are continuous, and do not introduce any negative powers of $U$. 

 We recall the following admissibility condition from \cite{OSDisks}:

\begin{define}  If $(\Sigma,\gs_0,\dots, \gs_n,w)$ is a Heegaard diagram, a \emph{periodic domain} $P$ is an integral 2-chain such that $\d P$ is a linear combination of the curves $\gs_i$ and $n_w(P)=0$. A diagram is called \emph{weakly admissible} with respect to $w$ if every non-zero periodic domain has both positive and negative multiplicities.
\end{define}

Weak admissibility is useful since it implies that for each $N>0$, there are only finitely many polygon classes $\psi\in \pi_2(\xs_1,\dots, \xs_n,\ys)$ with nonnegative domain such that $n_w(\psi)\le N$. Compare \cite{OSDisks}*{Section~4.2.2}. 

Our main result is the following:

\begin{prop}
\label{prop:finite-ness-endomorphisms} Suppose that $(\bT^2,\b_{\veps_0},\dots, \b_{\veps_n},w,z)$ is a Heegaard tuple where each $\b_{\veps_i}$ is a copy of a small translate of $\b_0$ or $\b_1$. Suppose that the diagram is also weakly admissible at $w$ and weakly admissible at $z$.
\begin{enumerate}
\item If $\lb \xs_1,f_1\rb,\dots, \lb \xs_n, f_n\rb$ are are filtered in the sense of Definition~\ref{def:filtered-morphism}, then
\[
\mu_n(\lb \xs_1,f_1\rb,\dots ,\lb \xs_n, f_n\rb)
\]
involves a finite count of holomorphic curves. Furthermore, the output is also a filtered morphism. (In particular, no negative powers of $U$ appear).  
 \item More generally, the map
\[
\mu_n\colon \ve{\CF}^-_{\Fil}\left(\b_{\veps_0}^{E_{\veps_0}},\b_{\veps_1}^{E_{\veps_1}}\right)\otimes \cdots \otimes \ve{\CF}^-_{\Fil}\left(\b_{\veps_{n-1}}^{E_{\veps_{n-1}}},\b_{\veps_n}^{E_{\veps_n}}\right)\to \ve{\CF}^-_{\Fil}\left(\b_{\veps_0}^{E_{\veps_0}}, \b_{\veps_n}^{E_{\veps_n}}\right).
\]
is continuous with respect to both the chiral and $U$-adic topologies. For the chiral topology, we use the chiral tensor product $\cevotimes$ in the statement (note direction of arrow), and for the $U$-adic topology, we use the $\otimes^!$ tensor product.
\end{enumerate}
\end{prop}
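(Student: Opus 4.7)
The plan is to establish finiteness and positivity by a careful accounting of the algebraic weights of polygon classes, and then to derive continuity in both topologies as a direct consequence. First, for a polygon class $\phi$ of index $2-n$ on the tuple $(\bT^2, \b_{\veps_0}, \ldots, \b_{\veps_n}, w, z)$, with inputs $\lb\xs_1, f_1\rb, \ldots, \lb\xs_n, f_n\rb$ and output intersection point $\ys$, the contribution to $\mu_n$ is $\lb \ys, g_\phi\rb$ where
\[
g_\phi = U^{n_w(\phi)} \cdot \rho_{\veps_n}(e^{m_n}) \circ f_n \circ \rho_{\veps_{n-1}}(e^{m_{n-1}}) \circ \cdots \circ f_1 \circ \rho_{\veps_0}(e^{m_0}),
\]
with $m_i = \#(\d_{\b_{\veps_i}}(\phi)\cap K)$. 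The first step is to verify the key identity $\sum_{i\colon \veps_i = 0} m_i = n_z(\phi) - n_w(\phi)$, obtained by applying Stokes' theorem to the short arc of $K$ from $w$ to $z$, combined with the observation that $K$ meets $\b_0$-type curves only on this short arc and meets $\b_1$-type curves only on the complementary arc.

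Second, for positivity in the hardest case $\veps_0 = \veps_n = 0$, I iteratively apply Lemma~\ref{lem:filtered-morphism-compose-Vs} to conjugate the factors $\scV^{m_i}$ associated with the $\b_0$-type edges past the $f_j$'s lying to their left, rewriting the restriction of $g_\phi$ to the $E_0 \to E_0$ segment of the composition as $\scV^{M}\cdot \tilde f_k \circ \cdots \circ \tilde f_1$ where $M = \sum_{\veps_i=0} m_i$ and each $\tilde f_j$ is a filtered conjugate of $f_j$. The $\b_1$-type factors contribute $T^{m_i}$, which are invertible on $E_1$ and pose no obstruction. Using the identity from the first step together with $U = \scU\scV$, the overall prefactor on the $E_0$-portion becomes $\scU^{n_w(\phi)}\scV^{n_z(\phi)}$, which has no negative powers. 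This shows $g_\phi$ is a filtered morphism. For the case $\veps_0 = 0$, $\veps_n = 1$, the $\scV$-conjugation produces a possibly negative power of $\scV$ applied just before the transition morphism $f_{k+1}\colon E_0 \to E_1$, but the $U^{n_w(\phi)}$ prefactor absorbs this via $U\scV^{-1} = \scU$; this can be made rigorous by the same $\sum m_i = n_z - n_w$ identity applied to the $\b_0$-type portion.

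Third, for finiteness I fix a bigraded component of the target $\Hom_{\bF[U]}(E_{\veps_0}, E_{\veps_n})$ (with respect to both the $\gr_w$ and $\gr_z$ Maslov gradings). Since $\gr_w(\rho_{\veps_i}(e^{m_i})) = 0$ and $\gr_w(U) = -2$, the $\gr_w$-grading of $g_\phi$ determines $n_w(\phi)$; symmetrically the $\gr_z$-grading determines $n_z(\phi)$. Weak admissibility at both $w$ and $z$ ensures that for fixed values of $n_w(\phi)$ and $n_z(\phi)$ there are only finitely many polygon classes with nonnegative domain, and hence by Gromov compactness only finitely many holomorphic representatives. Thus each bigraded component of the output is a finite sum.

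Fourth, continuity in the chiral topology follows because the target $\End_{\bF[U]}(E_{\veps_0}, E_{\veps_n})$ carries the product topology over its bigraded pieces, and by the previous step each such piece receives only finitely many contributions; the direction of $\cevotimes$ matches because for any fixed morphism in the final tensor factor the remaining map is a continuous finite combination on each bigraded component. Continuity in the $U$-adic topology follows from the explicit factor $U^{n_w(\phi)}$ in $g_\phi$, together with $\bF[U]$-linearity of the $f_i$: high $U^N$-divisibility in any single input (which is what $\otimes^!$ continuity demands) is preserved by the composition and by the $U^{n_w(\phi)}$ weighting.

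The main obstacle I anticipate is the bookkeeping in the second step when $\veps_i$ transitions from $0$ to $1$: the $\rho$-factors abruptly change from $\scV$-multiplication on $E_0$ to $T$-multiplication on $E_1$, and verifying that the iterated conjugation argument interacts cleanly with this transition (in particular that no negative $U$-power survives on the $E_1$ side after the $\scU^{n_w}\scV^{n_z}$ collapse) will require a careful induction on the position $k$ at which $\veps$ switches.
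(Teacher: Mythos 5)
Your positivity argument (steps one and two) is essentially the paper's own: the paper also proves that no negative $U$-powers appear by regrouping the $\scV$-powers attached to the $\b_0$-type boundary components via Lemma~\ref{lem:filtered-morphism-compose-Vs}, using the identity that their total equals $n_z(\phi)-n_w(\phi)$, so that the prefactor collapses to $\scU^{n_w(\phi)}\scV^{n_z(\phi)}$ (this is carried out in Section~\ref{sec:positivity-endomorphism-algebras}). That part of your proposal is sound.

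The gap is in your finiteness step, and it propagates into your continuity step. The proposition asserts that $\mu_n(\lb \xs_1,f_1\rb,\dots,\lb \xs_n,f_n\rb)$ involves a \emph{finite} count of holomorphic curves, i.e.\ that for the fixed input intersection points $\xs_1,\dots,\xs_n$ there are only finitely many nonnegative polygon classes of the relevant index --- a statement independent of the morphisms $f_i$. Your argument filters by the bidegree of the output morphism $g_\phi$, but the $f_i$ are arbitrary continuous filtered maps and need not be $\gr_w$- or $\gr_z$-homogeneous (think of infinite sums such as $\sum_k U^{n_k}\delta_k$), so ``the $\gr_w$-grading of $g_\phi$'' does not determine $n_w(\phi)$; even after decomposing the inputs into homogeneous pieces, what your scheme yields is at best that each bigraded piece of the output receives finitely many contributions --- a convergence statement, weaker than the asserted finite count. (Moreover, the symmetric claim for $\gr_z$ is false in the $w$-pointed model, since $\gr_z(\scV^{m_i})=-2m_i\neq 0$; one would need the $z$-pointed description of Section~\ref{sec:symmetry} to repair it.) The paper argues instead with the gradings of the \emph{intersection points}: since the inputs are filtered there is at most one transition from $\b_0$-type to $\b_1$-type, so all the $\xs_i$ (hence also $\ys$) lie in the torsion $\Spin^c$ structure with respect to at least one basepoint $p\in\{w,z\}$, determined by whether a $\sigma$- or $\tau$-type generator appears; the relation $\gr_p(\ys)=\gr_p(\xs_1)+\cdots+\gr_p(\xs_n)+(n-2)+2n_p(\psi)$, combined with the fact that there are only finitely many possible output intersection points $\ys$, pins $n_p(\psi)$ to finitely many values, and weak admissibility at $p$ then gives finitely many nonnegative classes. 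Continuity then follows, as in the paper, from this genuine finiteness together with the continuity of composition of Hom-spaces (Remark~\ref{rem:continuity-of-composition}); your continuity argument, which rests on the graded-piecewise finiteness, does not establish the statement as given.
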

The statement about positivity (i.e. no negative $U$-powers) will be proven in Section~\ref{sec:positivity-endomorphism-algebras} after building up some basic related results. We now prove the remaining claims:

\begin{proof}
 The first statement about finiteness follows from the fact that if $\lb\xs_1,f_1\rb,\dots, \lb \xs_n,f_n\rb$ is a composable sequence of filtered intersection morphisms, then at least one $p\in \{w,z\}$ has the property that $\frs_p(\xs_i)$ is the torsion $\Spin^c$ structure  $\frs_0\in \Spin^c(S^1\times S^2)$ for all $i$. 
(The choice of $p$ is determined by whether $\theta_\sigma^{\pm}$ or $\theta_\tau^{\pm}$ is present in the sequence). In particular, if $\psi\in \pi_2(\xs_1,\dots, \xs_n, \ys)$ is a class of polygons, then 
\[
\gr_{p}(\ys)=\gr_p(\xs_1)+\cdots+\gr_p(\xs_n)+2n_p(\psi). 
\]
There are only finitely many intersection points $\ys$ on the diagram. Given $\xs_1,\dots, \xs_n$, there are therefore only finitely values of $n_p(\psi)$ which are possible. Therefore by weak admissibility, there are only finitely many nonnegative classes of polygons which have $\xs_1,\dots, \xs_n$ as inputs. Therefore, $\mu_n(\lb \xs_1,f_1\rb,\dots, \lb \xs_n, f_n\rb)$ involves a finite sum of holomorphic curves. 

The second claim follows from the analogous fact for the continuity of composition of $\Hom$ spaces from Remark~\ref{rem:continuity-of-composition}, as well as the finiteness of holomorphic curve counts.
\end{proof}

\subsection{Proof of Theorem~\ref{thm:equivalence-algebras}}

  We now consider $\End_{\Fil} (\b_0^{E_0}\oplus \b_1^{E_1})$ as a chain complex. We will, in particular, compute the homology of its completion. Subsequently we partially compute the compositions $\mu_j$ for $j\ge 2$.

\begin{lem}\label{lem:homology-I0-I0} Fix either the chiral or $U$-adic topology on $E_0$.
For an appropriate Hamiltonian translation $\b_0'$ of $\b_0$ (intersecting in two points), we have the following:
\begin{enumerate}
\item  The complex $\ve{\CF}_{\Fil}^-(\b_0^{E_0},\b_0'^{E_0})$  is spanned by pairs $\lb \theta^+,f\rb$ and $\lb \theta^-,f\rb$, where $f\colon E_0\to E_0$ is an $\bF[U]$-linear map which is continuous and filtered in the sense of Definition~\ref{def:filtered-morphism}. Furthermore,
\[
\d\lb\theta^+,f\rb=\lb\theta^-,f\rb+\lb\theta^-, \scV\circ f\circ \scV^{-1}\rb. 
\]
\item The space $E_0$ is a strong deformation retract of $\ve{\CF}^-(\b_0^{E_0}, \b_0'^{E_0})$. 
\end{enumerate}
\end{lem}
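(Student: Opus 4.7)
The plan for Part (1) is a direct bigon count on the torus. After choosing a small Hamiltonian translation $\b_0'$ of $\b_0$, the curves meet transversely in exactly two points $\theta^+, \theta^-$, so Floer morphisms are indeed of the claimed form. There are exactly two index-$1$ homotopy classes of bigons from $\theta^+$ to $\theta^-$, each carrying a unique holomorphic representative by the Riemann mapping theorem. By the same local analysis as in the proof of Lemma~\ref{lem:theta-tau/sigma-cycles}, one bigon has boundary disjoint from $K$ and passes through no basepoints, contributing $\lb\theta^-, f\rb$. The other bigon intersects $K$ along its boundary; choosing the direction of the Hamiltonian translation suitably, we have $\#(\d_{\b_0}(\psi)\cap K)=-1$ and $\#(\d_{\b_0'}(\psi)\cap K)=+1$, so the polygon formula of Section~\ref{sec:local-systems} outputs $U^{n_w(\psi)}\rho_0(e^{+1})\circ f \circ \rho_0(e^{-1}) = \scV\circ f\circ \scV^{-1}$ after the standard cancellation of the $U^{n_w}$ factor against $\scV^{-1}$. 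That this target morphism is again filtered is precisely Lemma~\ref{lem:filtered-morphism-compose-Vs}. Finally, there are no index-$1$ bigons from $\theta^-$ to $\theta^+$, so $\d\lb\theta^-,f\rb=0$.

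For Part (2), I will interpret the complex as the mapping cone of $1+\Phi\colon \End^{\Fil}_{\bF[U]}(E_0)\to \End^{\Fil}_{\bF[U]}(E_0)$, where $\Phi(f)=\scV\circ f\circ \scV^{-1}$. Define the inclusion $\iota\colon E_0\to \ve{\CF}^-_{\Fil}$ by $\iota(e)=\lb\theta^+,m_e\rb$, where $m_e$ is multiplication by $e$. Since $\scV$ commutes with $m_e$, we have $\Phi(m_e)=m_e$, so $\d\iota(e)=\lb\theta^-, m_e+m_e\rb=0$ in characteristic $2$, making $\iota$ a chain map. The candidate projection $\pi\colon \ve{\CF}^-_{\Fil}\to E_0$ is $\pi\lb\theta^+,f\rb=f(1)$ and $\pi\lb\theta^-,f\rb=0$; it is continuous in either topology because evaluation at $1$ is continuous, and it is clearly a chain map satisfying $\pi\circ\iota=\id_{E_0}$.

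The remaining content is the construction of a chain homotopy $H$ of degree $+1$ with $\iota\pi+\id=\d H+H\d$. Structurally, $H$ must vanish on the $\theta^+$-summand and send $\lb\theta^-,f\rb\mapsto\lb\theta^+,s(f)\rb$ for some continuous linear $s\colon \End^{\Fil}\to\End^{\Fil}$. The homotopy identity reduces to the two algebraic conditions (i) $s(f)+\Phi(s(f))=f$ for every filtered $f$ (so $s$ is a right inverse of $1+\Phi$) and (ii) $s((1+\Phi)f)=f+m_{f(1)}$. The first key observation is that $\ker(1+\Phi)$ inside $\End^{\Fil}_{\bF[U]}(E_0)$ is exactly the space of multiplication operators $\{m_e:e\in E_0\}$: indeed, $f\in\ker(1+\Phi)$ means $f$ commutes with $\scV$, which combined with $\bF[U]$-linearity (and the invertibility of $\scV$ in $U^{-1}E_0$) forces $f$ to be $\bF[\scU,\scV]$-linear, hence equal to $m_{f(1)}$.

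The main obstacle, and the step I expect to require the most care, is showing surjectivity of $1+\Phi$ on filtered morphisms, i.e.\ writing every filtered $f$ as $g+\scV g\scV^{-1}$ for a filtered $g$. A naive telescoping $g=\sum_{k\ge0}\Phi^k(f)$ fails to converge (all terms share the same Alexander degree as $f$). I plan to construct $g$ instead by splitting $f$ according to the bigrading on $E_0$: decompose any filtered $f$ as $f=f_++f_-$ where $f_+$ lowers the $\scV$-exponent strictly (so that $\Phi^k(f_+)$ pushes entries out to increasing $\scV$-exponent and converges to $0$ in either the chiral or $U$-adic topology) and $f_-$ lowers the $\scU$-exponent strictly (handled by the mirror series). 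The partial sums $\sum_{k\ge 0}\Phi^k(f_+)$ and $\sum_{k\ge 0}\Phi^{-k}(f_-)$ then both converge to filtered morphisms by Lemma~\ref{lem:filtered-morphism-compose-Vs}, their sum $s(f)$ satisfies (i), and an appropriate adjustment by a multiplication operator (allowed since $\ker(1+\Phi)=\{m_e\}$) arranges (ii) on the nose. Continuity of $s$ in both the chiral and $U$-adic uniform topologies follows from the term-by-term continuity of $\Phi$ and the uniform nature of the splitting.
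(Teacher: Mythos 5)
Your Part (1) and the overall architecture of Part (2) are essentially the paper's: the complex is computed by the same bigon count, and your conditions (i) and (ii) for the map $s$ are exactly the two identities the paper verifies for its homotopy (in the notation there, $\scV\circ h(f)\circ \scV^{-1}+h(f)=f$ and $h(\scV\circ f\circ \scV^{-1}+f)+f(1)\cdot \id=f$), with your $\iota,\pi$ matching its $I,\Pi$. One small slip in Part (1): both index-$1$ bigons from $\theta^+$ to $\theta^-$ lie in the thin strip between $\b_0$ and $\b_0'$, so $n_w(\psi)=n_z(\psi)=0$ and no ``cancellation of the $U^{n_w}$ factor'' occurs; indeed none is available, since $U\cdot \scV\circ f\circ \scV^{-1}$ is not equal to $\scV\circ f\circ \scV^{-1}$, so if a bigon did cover $w$ the stated differential would be wrong.

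The genuine gap is the step you yourself flag as the hard one: producing a convergent right inverse $s$ of $1+\Phi$, where $\Phi(f)=\scV\circ f\circ \scV^{-1}$. Your proposed splitting $f=f_++f_-$, with $f_+$ strictly lowering the $\scV$-exponent and $f_-$ strictly lowering the $\scU$-exponent, is impossible for filtered morphisms: by Definition~\ref{def:filtered-morphism}, filtered means non-increasing in $\gr_w$ and $\gr_z$, which is exactly the statement that the $\scU$- and $\scV$-exponents can only increase (equivalently, $f$ preserves every ideal $(\scU^i\scV^j)$, as used in Lemma~\ref{lem:filtered-morphism-compose-Vs}). Hence $f_+=f_-=0$ for every filtered $f$ (already $f=\id$ admits no such decomposition), your two series return $s=0$, and surjectivity of $1+\Phi$ is not established. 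What actually makes the series converge is not how $f$ shifts exponents but a restriction on the support of $f$ in the Alexander grading of the \emph{source}: the paper splits $f=f\circ \delta_{>0}+f\circ \delta_{\le 0}$ and takes $h(f)=\sum_{n\ge 0}\scV^n\circ f\circ \delta_{>0}\circ \scV^{-n}+\sum_{n\ge 1}\scV^{-n}\circ f\circ \delta_{\le 0}\circ \scV^n$; the $n$-th term of the first series vanishes on Alexander gradings below $n$ (and of the second, above $-n$), which gives pointwise finiteness in the $U$-adic topology, while each term has image in $(\scU^n,\scV^n)$, which gives uniform convergence in the chiral topology, and each term is filtered by Lemma~\ref{lem:filtered-morphism-compose-Vs}. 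This explicit $h$ satisfies both (i) and (ii) on the nose, so the final adjustment by multiplication operators in your outline (which would also require checking that the correction is well defined and continuous) becomes unnecessary. Replacing ``lowers the $\scV$-exponent'' by ``is supported in positive Alexander gradings of the source'' (and its mirror) repairs your argument and reproduces the paper's proof.
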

\begin{proof}
 The computation of the differential is straightforward and involves only counting bigons. (We can use the diagrams from Figure~\ref{fig:57} below). 
 We focus now on the claim about the deformation retraction. Firstly, define
\[
\delta_{> 0}\colon \bF[\scU,\scV]\to \bF[\scU,\scV]
\]
to be projection onto positive Alexander gradings, and let $\delta_{\le 0}$ denote projection onto nonpositive Alexander gradings. We define our deformation retraction
\[
\begin{tikzcd}\ar[loop left,looseness=3, "H"]\ve{\CF}^-\left(\b_0^{E_0},\b_0'^{E_0}\right)\ar[r,shift left, "\Pi"] &\bF\llsquare \scU,\scV \rrsquare \ar[l, shift left, "I"] 
\end{tikzcd}
\]
via the equations:
\[
\begin{split} I(\scU^i \scV^j)&=\lbsm \theta^+, \scU^i \scV^j\cdot \id\rbsm\\
\Pi(\lbsm \theta^+, f\rbsm)&=f(1)\\
\Pi(\lbsm \theta^-, f\rbsm)&=0\\
H(\lbsm \theta^+, f\rbsm)&=0\\
H(\lbsm \theta^-, f\rbsm)&=\lbsm \theta^+,h(f)\rbsm
\end{split}
\]
 where 
\[
h(f)=\sum_{n\ge 0} \scV^n \circ f\circ \delta_{> 0}\circ \scV^{-n}+\sum_{n\ge 1} \scV^{-n}\circ   f\circ \delta_{\le 0}\circ \scV^n.
\]

We claim that $h(f)$ is continuous map from $E_0$ to $E_0$, and that $h$ is itself continuous as a map from $\Hom_{\Fil}(E_0,E_0)$ to itself. To establish these claims, we observe the following facts:
\begin{enumerate}[label=($h$-\arabic*), ref=$h$-\arabic*]
\item\label{h-1} The map $\scV^n \circ f\circ \delta_{> 0} \circ \scV^{-n}$ vanishes on Alexander gradings below $n$, and the map $\scV^{-n} \circ f\circ \delta_{\le 0} \circ \scV^n$ vanishes on Alexander gradings above $-n$.
\item\label{h-2} If $f$ is filtered, then $h(f)$ is also filtered.
\item\label{h-3} Let $I_m$ be the ideal $(\scU^m,\scV^m)\subset \bF[\scU,\scV]$. If $f$ is filtered and has image in $ I_m$, then $\scV^n \circ f \circ \delta_{> 0} \circ \scV^{-n}$ and $\scV^{-n}\circ f\circ \delta_{\le 0} \circ \scV^n$ both have image in $I_{\max(n,m)}$. 
\item\label{h-4} $\scV\circ  h(f)\circ \scV^{-1}+h(f)=f$ and $h(\scV \circ f\circ \scV^{-1}+f)+f(1)\cdot \id=f$.
\end{enumerate}
Claim \eqref{h-1} is straightforward. Claim ~\eqref{h-2} follows from Lemma~\ref{lem:filtered-morphism-compose-Vs}. Claim~\eqref{h-3} is proven as follows. By ~\eqref{h-1}, we know that $\scV^n\circ f \circ \delta_{>0} \circ \scV^{-n}$ is non-vanishing only on Alexander gradings $n$ or higher. These are spanned by monomials $\scU^i\scV^j$ with $i\ge 0$ and $j\ge i+n$. On this subspace $\delta_{>0} \circ \scV^{-n}$ has image in the span of $\scU^i\scV^j$ with $i,j\ge 0$, so $f\circ \delta_{>0} \circ \scV^{-n}$ has image in $I_m$. We observe that $\scV^n$ applied to $I_m$ has image in both $I_{m}$ and $I_n$, so the composition has image in $I_{\max(m,n)}$. For the other map, we observe that 
\[
\scV^{-n}\circ f \circ \delta_{\le 0}\circ \scV^n=\scU^{n}\circ f\circ \delta_{\le 0} \circ \scU^{-n},
\]
 so an identical argument as above may be used for this summand.
Claim~\eqref{h-4} is a direct computation, which we leave to the reader.

 Note that \eqref{h-2}  implies that $h(f)$ has image in $\bF[\scU,\scV]$ (no negative powers of $U$ appear). Note that \eqref{h-3} implies that the infinite sum defining $h(f)$ is a convergent series of continuous maps, which implies the limit is a continuous map. (Recall that if $X$ and $Y$ are linear topological vector spaces and $F_i\colon X\to Y$ is a sequence of continuous maps, then $\sum_{i\ge 0}F_i$ converges to a continuous map (in the uniform topology) if and only if $F_i\to 0$ in the uniform topology). Finally, \eqref{h-3} implies that $h$ is continuous as an endomorphism of $\Hom_{\Fil}(E_0,E_0)$ because it preserves $\Hom_{\Fil}(E_0, I_m)$. 

It is straightforward to verify that $\d (\Pi)=0$, $\d (I)=0$, and $\Pi\circ I=\id_{E_0}$. We observe additionally that \eqref{h-4} implies that
\[
I\circ \Pi+\id=\d(H).
\]

 Next, we consider the claim for the $U$-adic topology. In this case, the argument is nearly identical to the above, except \eqref{h-3} is not relevant, and we do not claim that the sum defining $h$ is a convergent sum. Rather, we claim that evaluated on any element $x\in E_0=\bF[\scU,\scV]$, only finitely many of the terms of the sum defining $h$ are non-vanishing. This follows from \eqref{h-1}. Continuity of the map $h$ is automatic in the $U$-adic topology from its $U$-equivariance, and hence the rest of the claims follow easily.
\end{proof}

\begin{rem} The homotopy equivalence in Lemma~\ref{lem:homology-I0-I0} can be visualized somewhat more easily if we consider the subspace $\Hom_{\Fil}(E_0,E_0)$ of maps which have homogeneous Alexander grading $s$, for some fixed $s$. For concreteness, consider $s=0$. In this case, an arbitrary map may be written as $\sum_{i\in \Z} \a_i U^{n_i} \delta_i$, where $\a_i\in \bF$, $n_i\ge 0$ and $\delta_i$ denotes projection onto Alexander grading $i$. Note that $\scV \circ \delta_i\circ \scV^{-1}= \delta_{i+1}$. We may view this subcomplex of $\Hom_{\Fil}(E_0,E_0)$ as the infinite staircase
\[
\begin{tikzcd}\cdots \ar[dr]& \delta_{-2}\ar[d]\ar[dr]& \delta_{-1} \ar[d]\ar[dr]& \delta_0  \ar[d]\ar[dr]& \delta_1 \ar[d]\ar[dr]& \delta_2 \ar[d]\ar[dr]&\cdots \\
\cdots&\delta_{-2}& \delta_{-1}& \delta_0& \delta_1& \delta_2&\cdots
\end{tikzcd}
\]
The top row is the span of $\theta^+$ and the bottom row is the span of $\theta^-$.
Arbitrary elements of $f\in \Hom_{\Fil}(E_0,E_0)$ which preserve Alexander grading may be thought of as infinite $\bF\llsquare U\rrsquare$ linear combinations of the $\delta_i$. The homology of this chain complex is spanned by the generator $\sum_{i\in \Z}\lb\theta^+, \delta_i\rb=\lb \theta^+,\id\rb$.  The maps $\Pi$, $I$ and $H$ restrict to give a deformation retract of the above staircase complex onto the $\bF\llsquare U\rrsquare$-span of $1\in \ve{I}_0\cdot \cK \cdot \ve{I}_0$.  
\end{rem}

Next we consider $\ve{\CF}_{\Fil}^-(\b_0^{E_0},\b_1^{E_1})$:

\begin{lem}\label{lem:homology-I1-I0}
\begin{enumerate}
\item   The complex $\ve{\CF}_{\Fil}^-(\b_0^{E_0},\b_1^{E_1},\frs_\sigma)$ is spanned by $\lb \theta^{\pm}_\sigma,f\rb$ where $f\colon E_0\to E_1$, and has differential
\[
\d\lb\theta^+_\sigma,f\rb=\lb\theta^-_\sigma,f\rb +\lb\theta^-_\sigma, T^{-1}\circ f\circ \scV\rb.
\]
If we equip $E_0$ and $E_1$ with the chiral topology, then there is a strong deformation retraction of $\ve{CF}^-_{\Fil}(\b_0^{E_0},\b_1^{E_1},\frs_\sigma)$ onto the principal ideal $
 (\sigma)\subset  \ve{I}_1\cdot \cK\cdot \ve{I}_0.$
If we equip $E_0$ and $E_1$ with the $U$-adic topology, then there is a strong deformation retraction onto   $(\sigma)\subset \frK$. In both cases, the homology of the completed complex is isomorphic to a completion of $\bF[U,T,T^{-1}]$, and has a dense subspace spanned by $\lb\theta_\sigma^+, U^i T^j  \phi^\sigma \rb$, where $i\ge 0$ and $j\in \Z$.
\item Similarly, the complex $\ve{\CF}_{\Fil}^-(\b_0^{E_0},\b_1^{E_1},\frs_\tau)$ is spanned by generators $\lb \theta^{\pm}_\sigma, f\rb$ and has differential
\[
\d \lb\theta^+_\tau,f\rb =\lb\theta^-_\tau,f\rb +\lb\theta^-_\tau, T\circ f\circ \scU\rb.
\]
The complex deformation retracts onto the $(\tau)\subset \ve{I}_1\cdot \cK\cdot \ve{I}_0$. The homology of the completed complex is isomorphic to a completion of $\bF[U,T,T^{-1}]$, with dense subspace spanned by $\lb \theta_\tau^+, U^i T^j\phi^\tau\rb$, where $i\ge 0$ and $j\in \Z$. 
\end{enumerate}
\end{lem}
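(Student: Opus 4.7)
The proof follows the pattern of Lemma~\ref{lem:homology-I0-I0}; I focus on the $\sigma$ case, the $\tau$ case being analogous (with the roles of $w$ and $z$ interchanged and the identity $T\circ\phi^\tau\circ\scU = \phi^\tau$ replacing $T\circ\phi^\sigma = \phi^\sigma\circ\scV$).

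\textbf{Differential and cycles.} On $(\bT^2,\b_0,\b_1,w,z)$, in the $\Spin^c$ structure $\frs_\sigma$ there are two index-$1$ bigons from $\theta^+_\sigma$ to $\theta^-_\sigma$, exactly as in Lemma~\ref{lem:theta-tau/sigma-cycles}: one disjoint from $K$, and one satisfying $\#(\d_{\b_0}\psi\cap K)=1$, $\#(\d_{\b_1}\psi\cap K)=-1$, and $n_w(\psi)=0$. These contribute the two terms in the stated differential. The identity $T\circ\phi^\sigma=\phi^\sigma\circ\scV$ is a direct check on monomials, so each $\lb\theta^+_\sigma,c\phi^\sigma\rb$ is a cycle for $c\in E_1$. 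Conversely, iterating $Tf=f\scV$ together with $\bF[U]$-linearity and the relation $\scU\scV=U$ shows that any cycle is determined by $c:=f(1)$ via $f(\scU^i\scV^j)=c\cdot U^iT^{j-i}=c\cdot\phi^\sigma(\scU^i\scV^j)$.

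\textbf{Deformation retract.} Set $\tau(g):=T^{-1}\circ g\circ\scV$, so the differential on morphisms is $\id+\tau$. Define $I(c):=\lb\theta^+_\sigma,c\phi^\sigma\rb$, $\Pi\lb\theta^+_\sigma,f\rb:=f(1)$, $\Pi\lb\theta^-_\sigma,f\rb:=0$, $H\lb\theta^+_\sigma,f\rb:=0$, and $H\lb\theta^-_\sigma,f\rb:=\lb\theta^+_\sigma,h(f)\rb$, where $h$ is given by the explicit finite-sum formula
\[
h(f)(\scU^i\scV^j):=\sum_{k=\min(i,j)}^{\max(i,j)-1}T^{j-k}\cdot f(\scU^i\scV^k),\qquad h(f)(\scU^i\scV^i):=0.
\]
A direct verification shows $h(f)$ is $\bF[U]$-linear (both sides agree after applying $U=\scU\scV$), and telescoping in characteristic $2$ yields the operator identities
\[
(\id+\tau)\circ h=\id,\qquad h\circ(\id+\tau)=\id+(f\mapsto f(1)\cdot\phi^\sigma),
\]
from which $\Pi\circ I=\id$ and the chain homotopy identity $\d H+H\d=\id+I\Pi$ follow.

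\textbf{Continuity and identification.} The sum defining $h(f)(\scU^i\scV^j)$ is finite for each input, so $h(f)$ is well-defined without passing to any completion; its output has $T$-degree $j-i+s$ and $U$-valuation at least $\min(i,j)$, where $s$ is the Alexander grading shift of $f$. From this one checks that $h(f)$ is continuous and that $h$ itself is a continuous operator, in both the chiral and $U$-adic topologies on $E_0\oplus E_1$. The image of $I$ is spanned by $\lb\theta^+_\sigma,U^iT^j\phi^\sigma\rb$ for $i\ge 0$, $j\in\Z$; its closure in the induced operator topology coincides with $(\sigma)\subset\ve{I}_1\cdot\cK\cdot\ve{I}_0$ in the chiral case, or with $(\sigma)\subset\frK$ in the $U$-adic case, each of which is a completion of $\bF[U,T,T^{-1}]$. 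This gives the stated description of the homology.

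\textbf{Main obstacle.} The principal technical difficulty is the construction of $h$. Unlike in Lemma~\ref{lem:homology-I0-I0}, where the homotopy was given by an infinite series convergent due to Alexander-grading projectors and the unipotence of $\scV\circ(-)\circ\scV^{-1}$ on cycles, here the naive series analog $\sum_n\tau^n(f_\pm)$ fails to converge in the chiral topology: applied to any fixed input, all of its terms share the same $T$-degree $j-i+s$ and hence do not tend to $0$ in $E_1$. The resolution is the finite-sum formula above, which is available precisely because $\scV^{-1}$ is not globally defined on $E_0$, so only finitely many windows contribute to each input. The asymmetry that $h$ is a genuine right inverse of $\id+\tau$ but only a left inverse up to the correction $f\mapsto f(1)\cdot\phi^\sigma$ mirrors the asymmetric role of the basepoint $w$ in the definition of the local system on $\b_0^{E_0}$.
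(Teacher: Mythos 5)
Your algebraic skeleton is sound: the bigon count giving the stated differential, the identification of the $\theta^+_\sigma$-cycles with $c\cdot\phi^\sigma$, and the operator identities $(\id+\tau)\circ h=\id$ and $h\circ(\id+\tau)=\id+\bigl(f\mapsto f(1)\cdot\phi^\sigma\bigr)$ for your finite-sum $h$ all check out, and in the $U$-adic topology your argument does give the claimed strong deformation retraction, since there every $\bF[U]$-linear map is continuous and $h$ visibly preserves $\Hom_{\bF[U]}(E_0,U^nE_1)$.

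However, there is a genuine gap in the chiral half of the statement: your $h$ does not preserve chiral continuity, so $H$ is not an endomorphism of $\ve{\CF}^-_{\Fil}(\b_0^{E_0},\b_1^{E_1},\frs_\sigma)$ in that case. Concretely, let $f$ be the $\bF[U]$-linear map with $f(\scU^i)=0$ and $f(\scV^j)=T^{-j}$, i.e.\ $f(\scU^i\scV^j)=U^iT^{i-j}$ for $j>i$ and $0$ otherwise. This $f$ is continuous in the chiral topology (inputs with $\max(i,j)\ge 2n$ land in the $n$-th open subspace of $E_1$), hence is a legitimate filtered morphism, but
\[
h(f)(\scV^j)=\sum_{k=1}^{j-1}T^{j-k}\cdot T^{-k}=T^{j-2}+T^{j-4}+\cdots+T^{2-j},
\]
which contains the monomial $T^0$ for every even $j\ge 2$; hence $h(f)(\scV^j)$ never lies in the open subspace spanned by $\{U^aT^b:a\ge 1\text{ or }|b|\ge 1\}$, so $h(f)$ is not continuous. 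Your continuity argument tacitly assumes $f$ is Alexander-homogeneous (``where $s$ is the Alexander grading shift of $f$''); for homogeneous $f$ the claim is true, but a general chiral-continuous morphism is a uniformly convergent infinite sum of homogeneous pieces $f_s$, and $h(f_s)$ does not tend to $0$ uniformly (here $f=\sum_{d\ge 1}f_{-2d}$ with $h(f_{-2d})(\scV^{2d})$ containing $T^0$), so $h$ neither is a continuous operator nor extends to the completion. This is the same convergence phenomenon you flagged for the naive series $\sum_n\tau^n$, and the normalization $h(f)(1)=0$ does not remove it: a continuous primitive of this $f$ does exist, but it is $h(f)+c\,\phi^\sigma$ with $c=\sum_{k\ge 0}T^{-2k}$ (neither summand being continuous separately), so any correct chiral homotopy must incorporate such a regularized ``integration constant'' depending on $f$, or be built, as in the proof of Lemma~\ref{lem:homology-I0-I0}, from Alexander projections together with smallness estimates of the type ($h$-3) that guarantee continuity in the uniform topology. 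As written, the chiral deformation retraction onto $(\sigma)\subset\ve{I}_1\cdot\cK\cdot\ve{I}_0$, and hence the chiral homology identification, is not established.
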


The proof of Lemma~\ref{lem:homology-I1-I0} follows from nearly identical reasoning to that of Lemma~\ref{lem:homology-I1-I0}, so we leave the verification to the reader. A similar argument also yields the following:

\begin{lem} \label{lem:homology-I1-I1}
The complex  $\ve{\CF}_{\Fil}^-(\b_1^{E_1},\b_1'^{E_1})$ is generated by $\lb \theta^{\pm},f\rb$ where $f\colon E_1\to E_1$ is $\bF[U]$-equivariant and continuous. (Note that $E_1$ is given the same topology in the $U$-adic and chiral topologies). The differential is given by
\[
\d\lb\theta^+,f\rb =\lb\theta^-,f\rb +\lb\theta^-, T^{-1}\circ f\circ T\rb.
\]
Furthermore, $\ve{\CF}^-_{\Fil}(\b_1^{E_1}, \b_1'^{E_1})$ deformation retracts onto the $U$-adic completion of $\bF[U,T,T^{-1}]$. The homology is spanned by $\lb \theta^+, \a \cdot \id \rb$ where $\a$ is in the $U$-adic completion of $\bF[U,T,T^{-1}]$. 
\end{lem}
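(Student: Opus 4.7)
My plan is to follow the template of Lemma~\ref{lem:homology-I0-I0} and Lemma~\ref{lem:homology-I1-I0}, working on the genus one Heegaard diagram $(\bT^2, \b_1, \b_1', w, z)$; here $E_1$ carries only one natural topology, so the chiral and $U$-adic cases coincide.

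First I would compute the differential. The small Hamiltonian translate $\b_1'$ intersects $\b_1$ transversely in two points $\theta^\pm$, and they bound exactly two embedded bigons, each with a unique holomorphic representative by the Riemann mapping theorem. Both bigons avoid $w$ and $z$; one is disjoint from $K$ and contributes $\lb\theta^-, f\rb$, while the other has $\#(\d_{\b_1}\psi\cap K) = 1$ and $\#(\d_{\b_1'}\psi\cap K) = -1$. Applying the monodromy $\rho_1(e^n) = T^n\cdot \id$ on each boundary arc yields the contribution $\lb\theta^-, T^{-1}\circ f \circ T\rb$.

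Next I would build the deformation retraction. Set
\[
I(\a) = \lb\theta^+, \a\cdot \id\rb, \quad \Pi(\lb\theta^+, f\rb) = f(1), \quad \Pi(\lb\theta^-, f\rb) = 0, \quad H(\lb\theta^+, f\rb) = 0,
\]
and write $S(f) = T^{-1}\circ f \circ T$. Each filtered morphism $f\colon E_1 \to E_1$ decomposes as $f = \sum_s f_s$ with $f_s(T^n) = c_{s,n}T^{n+s}$ for some $c_{s,n}\in \bF\llsquare U\rrsquare$, and $S$ acts componentwise by the left shift $(c_{s,n}) \mapsto (c_{s,n+1})$. In particular the cycles of $\id + S$ are precisely the constant-sequence maps $\a\cdot \id$ with $\a \in \bF\llsquare U, T, T^{-1}\rrsquare$, which is the image of $I$. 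I would then define $H(\lb\theta^-, g\rb) = \lb\theta^+, h(g)\rb$ where the coefficient sequence $(e_{s,n})$ of $h(g)_s$ is given by $e_{s,0} = 0$, $e_{s,n} = \sum_{j=0}^{n-1} d_{s,j}$ for $n \ge 1$, and $e_{s,n} = \sum_{j=n}^{-1} d_{s,j}$ for $n \le -1$, with $(d_{s,n})$ the coefficient sequence of $g_s$. A telescoping computation in characteristic 2 verifies both $(\id + S)(h(g)) = g$ and $h(f + S(f)) = f + f(1)\cdot \id$, from which the retraction identity $\id + I\Pi = \d H + H\d$ follows by unwinding the action of $\d$. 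Since each $e_{s,n}$ is a finite sum in $\bF\llsquare U\rrsquare$, no negative powers of $U$ appear and $h$ is $\bF[U]$-equivariant; moreover $g(E_1)\subset U^N E_1$ forces each $e_{s,n}\in U^N \bF\llsquare U\rrsquare$, giving continuity of $h$ in the uniform topology.

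The main obstacle, as in Lemma~\ref{lem:homology-I0-I0}, is pinning down the correct form of the homotopy. Unlike that lemma, where the natural $\scV$-conjugation homotopy is an infinite sum convergent by $U$-adic decay, the shift operator $S$ here has no built-in decay and an analogous ``integral'' formula would diverge for a generic filtered $g$. The workaround is to define $h$ via the finite partial sums above; fixing $e_{s,0} = 0$ is the unique choice that guarantees both the identity $(\id + S)h = \id$ and the boundary condition $h(f + Sf) = f + f(1)\cdot\id$ simultaneously hold. Once $h$ is in hand, identifying the image of $I$ with the $U$-adic completion of $\bF[U,T,T^{-1}]$ completes the proof.
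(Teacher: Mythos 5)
Your proof is correct and follows essentially the same route as the paper: compute the two-bigon differential $\d\lb\theta^+,f\rb=\lb\theta^-,f\rb+\lb\theta^-,T^{-1}\circ f\circ T\rb$, then write down an explicit strong deformation retraction onto the multiplication maps $\a\cdot\id$ by inverting $\id+S$ (with $S(f)=T^{-1}\circ f\circ T$) degree by degree, exactly as in the model of Lemma~\ref{lem:homology-I0-I0}. One small remark: the obstacle you describe is not really there — the direct analogue of the paper's homotopy, $h'(g)=\sum_{m\ge 1}T^{m}\circ g\circ \delta_{>0}\circ T^{-m}+\sum_{m\ge 0}T^{-m}\circ g\circ \delta_{\le 0}\circ T^{m}$ with $\delta_{>0},\delta_{\le 0}$ the Alexander-grading projections, has only finitely many nonzero terms on each monomial (the finiteness comes from the projections, not from $U$-adic decay), and in coefficients it coincides with your $h$ up to adding the cycle $g(1)\cdot\id$, so the two homotopies are interchangeable.
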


Summarizing Lemmas~\ref{lem:homology-I0-I0}, ~\ref{lem:homology-I1-I0} and ~\ref{lem:homology-I1-I1}, we have the following:

\begin{cor}\label{cor:isomorphism-homology-algberas}
\begin{enumerate}
\item If we equip $E_0$ and $E_1$ with chiral topology, then there is a deformation retraction of chain complexes from $\End_{\Fil}(\b_0^{E_0}\oplus \b_1^{E_1})$ onto $\cK$.
\item If we equip $E_0$ and $E_1$ with $U$-adic topology, then there is a deformation retraction of chain complexes from $\End_{\Fil}(\b_0^{E_0}\oplus \b_1^{E_1})$ onto $\frK$. 
\end{enumerate}
\end{cor}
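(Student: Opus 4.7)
The plan is simply to assemble Lemmas~\ref{lem:homology-I0-I0}, \ref{lem:homology-I1-I0}, and \ref{lem:homology-I1-I1} componentwise, using the filtered condition $\veps \le \nu$ from Definition~\ref{def:filtered-morphism} to decompose the endomorphism algebra.

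First I would record the direct sum decomposition. Since Definition~\ref{def:filtered-morphism} forbids filtered morphisms from $\b_1^{E_1}$ to $\b_0^{E_0}$, the underlying chain complex splits as
\[
\End_{\Fil}(\b_0^{E_0}\oplus \b_1^{E_1}) = \ve{\CF}^-_{\Fil}(\b_0^{E_0},\b_0'^{E_0}) \oplus \ve{\CF}^-_{\Fil}(\b_0^{E_0},\b_1^{E_1}) \oplus \ve{\CF}^-_{\Fil}(\b_1^{E_1},\b_1'^{E_1}),
\]
where the middle summand decomposes further over the two $\Spin^c$ classes $\frs_\sigma$ and $\frs_\tau$ on $(\bT^2,\b_0,\b_1)$. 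On the algebra side, $\cK$ (resp.\ $\frK$) is likewise a direct sum
\[
\ve{I}_0\cdot \cK\cdot \ve{I}_0 \;\oplus\; (\sigma) \;\oplus\; (\tau) \;\oplus\; \ve{I}_1\cdot \cK\cdot \ve{I}_1,
\]
where $(\sigma),(\tau)\subset \ve{I}_1\cdot \cK \cdot \ve{I}_0$ are the two principal ideals and together span $\ve{I}_1\cdot \cK\cdot \ve{I}_0$.

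Next I would match the summands with the three preceding lemmas. Lemma~\ref{lem:homology-I0-I0} provides a strong deformation retraction of $\ve{\CF}^-_{\Fil}(\b_0^{E_0},\b_0'^{E_0})$ onto (the appropriate completion of) $E_0\cong \ve{I}_0\cdot \cK\cdot \ve{I}_0$, with the explicit retraction $(\Pi,I,H)$ constructed there. Lemma~\ref{lem:homology-I1-I0} gives strong deformation retractions of $\ve{\CF}^-_{\Fil}(\b_0^{E_0},\b_1^{E_1},\frs_\sigma)$ and $\ve{\CF}^-_{\Fil}(\b_0^{E_0},\b_1^{E_1},\frs_\tau)$ onto $(\sigma)$ and $(\tau)$ respectively. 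Lemma~\ref{lem:homology-I1-I1} deals with $\ve{\CF}^-_{\Fil}(\b_1^{E_1},\b_1'^{E_1})$, contracting it onto (the appropriate completion of) $\bF[U,T,T^{-1}] = \ve{I}_1 \cdot \cK\cdot \ve{I}_1$. Taking the direct sum of these four strong deformation retractions produces the required deformation retraction onto $\cK$ (respectively $\frK$).

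The only point requiring attention is to check that the choice of topology on $E_0\oplus E_1$ is respected consistently across the four factors, i.e.\ that in case (1) all four retractions are continuous with respect to the chiral topology, and in case (2) all four are continuous with respect to the $U$-adic topology. This is already handled inside each of the three lemmas, where both topologies are treated in parallel, so no additional work is required. Since the only nontrivial estimates (the convergence of the homotopy $h$ in Lemma~\ref{lem:homology-I0-I0}) already appear there, this step is the main, but entirely local, technical hurdle—no new infinite-sum manipulations are needed in the corollary itself.
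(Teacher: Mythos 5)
Your proposal is correct and is essentially the paper's argument: the corollary is stated there as a direct summary of Lemmas~\ref{lem:homology-I0-I0}, \ref{lem:homology-I1-I0} and \ref{lem:homology-I1-I1}, obtained exactly by the summand-by-summand assembly (with the filtered condition excluding $\b_1^{E_1}\to\b_0^{E_0}$ morphisms) that you describe. Your explicit matching of the four pieces with the algebra summands and the remark about consistency of the topologies only spells out what the paper leaves implicit.
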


We now compute several actions $\mu_j$ for $j\ge 0$:

\begin{figure}[h]
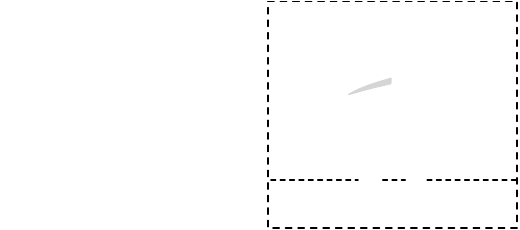
\caption{Diagrams $(\bT^2, \b_0,\b_0', \b_1)$ and $(\bT^2, \b_0, \b_1,\b_1')$ as well as some holomorphic triangle classes contributing to $\mu_2$.}
\label{fig:57}
\end{figure}

\begin{lem}
\label{lem:model-computation-End}
 For suitably chosen small translations of $\b_0$ and $\b_1$ in the torus, we have the following actions:
\begin{enumerate}
\item \label{model-comp-1} $\mu_2(\lb \theta^+, f\rb , \lb \theta_{\sigma}^+, g\rb )=\lb \theta_\sigma^+,g\circ f\rb $ and $\mu_2(\lb \theta^+, f\rb , \lb \theta_{\tau}^+, g\rb )=\lb \theta_\tau^+,g\circ f\rb $.
\item \label{model-comp-2} $\mu_2(\lb \theta_{\sigma}^+,f\rb , \lb \theta^+, g\rb )=\lb \theta_\sigma^+, g\circ f\rb $ and  $\mu_2(\lb \theta_{\tau}^+,f\rb , \lb \theta^+, g\rb )=\lb \theta_\tau^+, g\circ f\rb $.
\item \label{model-comp-3} Suppose $n\ge 3$ and $\lb \xs_1,f_1\rb,\dots, \lb \xs_n,f_n\rb$ are filtered and have the property that for all $i$,  $\frs_w(\xs_i)$ is torsion and $\xs_i$ is the top $\gr_w$-grading intersection point in its respective Floer complex. Then
\[
\mu_n(\lb \xs_1,f_1\rb,\dots, \lb \xs_n,f_n\rb)=0.
\]
The same holds if instead $\frs_z(\xs_i)$ are torsion for all $i$.
\end{enumerate}
\end{lem}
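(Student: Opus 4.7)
Parts (1) and (2) will follow from an explicit holomorphic triangle count on the Heegaard triples $(\bT^2,\b_0,\b_0',\b_1)$ and $(\bT^2,\b_0,\b_1,\b_1')$ and their $\theta_\tau^+$-analogs depicted in Figure~\ref{fig:57}. For sufficiently small Hamiltonian translates $\b_0'$ of $\b_0$ (resp.\ $\b_1'$ of $\b_1$), I expect there to be exactly one index-$0$ triangle class with the required pair of top-graded inputs ($\theta^+$ together with $\theta_\sigma^+$ or $\theta_\tau^+$); this is the shaded small triangle in the figure, which satisfies $n_w(\psi)=n_z(\psi)=0$ and whose boundary has trivial intersection with the knot shadow $K$. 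Consequently $\rho_0(e^0)=\id$, $\rho_1(e^0)=\id$, and the overall $U$-factor is $U^0=1$, so the $\bF[U]$-module morphism output by $\mu_2$ is simply the composition $g\circ f$, producing $\lb\theta_\sigma^+,g\circ f\rb$ (resp.\ $\lb\theta_\tau^+,g\circ f\rb$).

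Part (3) is the main content, and I plan to argue it via a grading obstruction paired with a geometric non-existence step. For a polygon class $\psi$ contributing to $\mu_n$ with inputs $\xs_1,\dots,\xs_n$ and output $\ys$, the Maslov grading formula gives
\[
\gr_w(\ys)-2n_w(\psi)=\gr_w(\xs_1)+\cdots+\gr_w(\xs_n)+(2-n).
\]
The hypothesis that each $\xs_i$ is top-$\gr_w$ in its torsion $\Spin^c$ component, together with the standard fact that top generators compose to the top generator under $\mu_2$, implies that $\sum_i\gr_w(\xs_i)$ equals the top Maslov grading $M$ of the target complex $\ve{\CF}^-(\b_{\veps_0},\b_{\veps_n})$. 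Since $n\ge 3$, the deficit $M-\gr_w(\ys)+2n_w(\psi)=n-2\ge 1$ forces $\ys$ to be either $\theta^+$ or $\theta^-$ in torsion $\Spin^c$ with $n_w(\psi)$ pinned to specific nonnegative values of definite parity. The case in which $\frs_z$ is torsion for all inputs is entirely analogous, with $\gr_w$ replaced by $\gr_z$, $\theta_\sigma^+$ by $\theta_\tau^+$, and $n_w$ by $n_z$.

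The final and most delicate step is to rule out the remaining candidate polygon classes entirely. The plan is to exploit the geometry of the small-translation Heegaard tuple on $\bT^2$: any polygon class with nonnegative domain, the required $n_w(\psi)$, and boundary connecting top-graded generators across small translates must either have a negative local multiplicity somewhere (contradicting nonnegativity) or wrap nontrivially around the torus in a way incompatible with the allowed $n_w(\psi)$, by weak admissibility. Morally this reflects the formality of the self-Floer cohomology of $\b_0$ and $\b_1$ on $\bT^2$, which is an exterior algebra on one generator over $\bF[U]$ (resp.\ $\bF[U,T,T^{-1}]$) with vanishing higher products on top-graded generators. The main obstacle I anticipate is verifying this non-existence rigorously in the presence of a mixed $\b_0/\b_1$ transition --- i.e., when some $\xs_i$ equals $\theta_\sigma^+$ (which by the filtered hypothesis $\veps_0\le\cdots\le\veps_n$ can occur at most once) --- because this case is not covered by a pure small-translation / formality argument for a single curve and requires one to combine the local formality at each $\b_0$-to-$\b_0$ or $\b_1$-to-$\b_1$ block with an explicit identification of the small index-$0$ triangle at the transition.
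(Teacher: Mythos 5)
Parts (1) and (2) of your proposal match the paper: both are established by the model triangle counts in Figure~\ref{fig:57}, and your description of the relevant small triangles (with $n_w=n_z=0$ and trivial intersection with $K$) is exactly what is needed.

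For part (3) there is a genuine gap, and it originates in the sign of the Maslov degree of $\mu_n$. You write $\gr_w(\ys)-2n_w(\psi)=\sum_i\gr_w(\xs_i)+(2-n)$, but this is inconsistent with the $n=1$ case: the differential \emph{lowers} $\gr_w$ by $1$, so in the homological conventions used here $\mu_n$ has degree $n-2$, not $2-n$. The paper's proof uses the correct formula: after normalizing so that the top generator of $\widehat{\HF}(S^1\times S^2)$ has $\gr_w$-grading $0$ (so every input $\xs_i$ has $\gr_w(\xs_i)=0$), any class $\psi$ contributing to $\mu_n$ satisfies $\gr_w(\ys)=\sum_i\gr_w(\xs_i)+(n-2)+2n_w(\psi)\ge n-2\ge 1$, and since no intersection point in these complexes has positive $\gr_w$-grading, the output vanishes outright. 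With your flipped sign the output is pushed \emph{down} in grading, where plenty of generators (e.g.\ $\theta^-$ and $U$-translates) live, so the grading argument does not close; this is precisely why you are forced into the additional ``geometric non-existence'' step. That step is both unnecessary once the sign is corrected and, as you yourself acknowledge, unresolved in the mixed $\b_0/\b_1$ transition case, so the proposal as written does not prove (3). Fixing the degree of $\mu_n$ to $n-2$ collapses your entire final step into the paper's one-line grading obstruction, and the $\frs_z$-torsion case follows symmetrically with $\gr_z$ and $n_z$.
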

\begin{proof} Claims \eqref{model-comp-1} and ~\eqref{model-comp-2} follow from model computations in $\bT^2$, which are shown in Figure~\ref{fig:57}.

Claim~\eqref{model-comp-3} follows from grading considerations. To streamline the formulas,  we normalize gradings so that the top degree generator of $\widehat{\HF}(S^1\times S^2)$ has $\gr_w$-grading $0$. We claim that there are no holomorphic polygons of index $3-n$ with the inputs $\xs_1,\dots, \xs_n$. We observe that if $\lb \ys, g\rb$ is a summand of  $\mu_n(\lb \xs_1,f_1\rb,\dots, \lb\xs_n,f_n\rb)$, contributed by some homology class of $(n+1)$-gons $\psi$, then
\[
\gr_w(\ys)=\gr_w(\xs_1)+\cdots+\gr_w(\xs_n)+(n-2)+2n_w(\psi)\ge n-2.
\] 
The output will therefore vanish if $n\ge 3$ since there are no chains $\ys$ of that grading.
\end{proof}

We now complete the proof of Theorem~\ref{thm:equivalence-algebras}:

\begin{proof}[Proof of Theorem~\ref{thm:equivalence-algebras}] Lemmas~\ref{lem:homology-I0-I0}, ~\ref{lem:homology-I1-I0} and~\ref{lem:homology-I1-I1} give a strong deformation retraction of chain complexes
\[
\begin{tikzcd}\ar[loop left, "H"]
\End_{\Fil}(\b_0^{E_0}\oplus \b_1^{E_1})^{\opp}\ar[r,shift left, "\Pi"] &\cK \ar[l, shift left, "I"] .
\end{tikzcd}
\]
 Homological perturbation theory shows that $\cK$ may be equipped with an $A_\infty$-module structure which is homotopy equivalent to $\End_{\Fil}(\b_0^{E_0}\oplus \b_1^{E_1})^{\opp}$. We use the version of the homological perturbation lemma stated by Kontsevich and Soibelman \cite{KontsevichSoibelman}*{Proposition~6} in terms of trees.    (This is a helpful special case of the homotopy transfer lemma for $A_\infty$-algebras, due to Kadeishvili \cite{Kadeishvili_Ainfinity}).

 We recall from Kontsevich and Soibelman's presentation that there is an induced $A_\infty$-algebra structure on $\cK$, such that $\mu_n(a_1,\dots, a_n)$ is a sum over all planar, rooted, connected trees $T$ with $n$ inputs and no interior vertices of valence 2 or less. We first apply $I$ to each input $a_1,\dots, a_n$.  At an interior vertex $v$ of $T$ with valence $j$ we apply $\mu_{j-1}$. Along any edge connecting two interior vertices, we apply the homotopy $H$. At the root we apply $\Pi$. See Figure~\ref{fig:hom-pert} for an example.  It follows immediately from Lemma~\ref{lem:model-computation-End} that the only non-trivial composition map induced by the homological perturbation is $\mu_2$, and the induced map coincides with ordinary multiplication on $\cK$, completing the proof.  
\end{proof}

\begin{figure}[h]
\begin{center}
\begin{tikzcd}[row sep=.2cm, column sep=.5cm]
a_1\ar[d]&a_2\ar[d]& a_3\ar[d]& a_4\ar[d]\\
I\ar[dr]&I\ar[d]&I\ar[dl]&I\ar[dddl]\\
&\mu_3\ar[d]\\
&H\ar[dr]&\\
&&\mu_2\ar[d]\\
&&\Pi\ar[d]\\
&&\,
\end{tikzcd}
\end{center}
\caption{An example of a tree counted by $\mu_4(a_1,\dots, a_4)$ using the homological perturbation lemma}
\label{fig:hom-pert}
\end{figure}
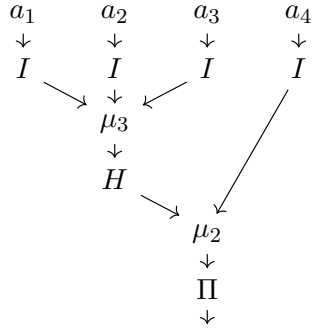

\subsection{Non-filtered morphisms}

 It is natural to investigate the full endomorphism algebra 
\[
\End(\b_0^{E_0}\oplus \b_1^{E_1})
\] 
generated by morphisms $\lb\xs,\phi\rb$ where $\phi$ is only required to be $\bF[U]$-equivariant and map $E_{\veps}$ to $E_{\veps'}$. Firstly, we observe that it is not possible to relax the condition that maps $f\colon E_0\to E_0$ are non-increasing in the $\gr_w$ and $\gr_z$ gradings. Compare Lemma~\ref{lem:filtered-morphism-compose-Vs} and the following example: 

\begin{example}Consider $\d\lb\theta^+, \scV^{-1} \delta_{1}\rb$ in $\ve{\CF}^-(\b_0^{E_0}, \b_0^{E_0})$, where $\delta_1\colon E_0\to E_0$ is projection onto Alexander grading 1. Observe that $\scV^{-1}\delta_{1}$ maps $E_0$ to $E_0$ with no negative $U$ powers. It follows from Lemma~\ref{lem:homology-I0-I0} that  $\d\lb \theta^+, \scV^{-1} \delta_1\rb $ has a summand of $\lb \theta^-, \scV^{-1} \scV^{-1} \delta_{1} \scV\rb $ which is the same as $\lb \theta^-, \scV^{-1} \delta_0\rb $. Note that $\scV^{-1} \delta_0$ maps $1$ to $\scV^{-1}$, and hence involves negative $U$ powers.
\end{example}

Nonetheless, it might be possible to relax the condition that morphisms cannot go from $\b_1^{E_1}$ to $\b_0^{E_0}$. We define the \emph{weakly-filtered} endomorphism space 
\[
\End_{w\Fil}(\b_0^{E_0}\oplus \b_1^{E_1})
\]
 to be the space of endomorphisms which satisfy conditions (2) and (3) from Definition~\ref{def:filtered-morphism} (i.e. allowing morphisms from $\b_1^{E_1}$ to $\b_0^{E_0}$). We make the following conjecture:

\begin{conj} For both the chiral and $U$-adic topologies, the space $\End_{w\Fil}(\b_0^{E_0}\oplus \b_1^{E_1})$ is an $A_\infty$-algebra. Furthermore, it is $A_\infty$-homotopy equivalent to either $\cK$ (in the chiral case) or $\frK$ (in the $U$-adic case).
\end{conj}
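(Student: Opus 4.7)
The plan is to extend the proof of Theorem~\ref{thm:equivalence-algebras} to the larger weakly-filtered algebra. Since the filtered subalgebra is already known to model $\cK$ (resp.\ $\frK$), it will suffice to check that (i) $\End_{w\Fil}$ carries a well-defined $A_\infty$-structure, and (ii) the new morphism space $\ve{\CF}^-_{w\Fil}(\b_1^{E_1},\b_0^{E_0})$ is contractible, so that homological perturbation applied to a deformation retract onto $\cK$ (resp.\ $\frK$) transfers the algebra structure without change.

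First I would verify the $A_\infty$-structure. For $\mu_1$, the bigon count on the torus, analogous to Lemma~\ref{lem:homology-I1-I0}, produces
\[
\d\lb\theta_\sigma^+,g\rb = \lb\theta_\sigma^-,\, g + \scV\circ g\circ T^{-1}\rb, \qquad \d\lb\theta_\tau^+,g\rb = \lb\theta_\tau^-,\, g + \scU\circ g\circ T\rb.
\]
Both correction terms preserve $E_0$ without introducing negative $U$-powers and are continuous in either topology. For higher $\mu_n$ with inputs alternating direction between $\b_0^{E_0}$ and $\b_1^{E_1}$, Proposition~\ref{prop:finite-ness-endomorphisms} still applies: any composable sequence can be arranged so that every intersection point lies in a common $\Spin^c$ class ($\frs_\sigma$ or $\frs_\tau$), so either $w$- or $z$-weak admissibility bounds polygon counts. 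Positivity of the output morphisms follows from the same analysis as in Section~\ref{sec:positivity-endomorphism-algebras}, since all boundary multiplicities on the $\b_0$-curves can be absorbed into positive powers of $\scU$ and $\scV$.

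Next I would compute $H_*\bigl(\ve{\CF}^-_{w\Fil}(\b_1^{E_1},\b_0^{E_0})\bigr)$ and show it vanishes. The complex splits by $\Spin^c$-structure into two two-term complexes of the form $g\mapsto g + L(g)$, where $L$ is either $\scV\circ(-)\circ T^{-1}$ or $\scU\circ(-)\circ T$. Formally $(1+L)^{-1}=\sum_{k\ge 0}L^k$, and in the chiral topology this series converges: for any basic open $W_n\subset E_0$ spanned by $\scU^i\scV^j$ with $\max(i,j)\ge n$, the term $\scV^k\circ g\circ T^{-k}$ has image in $W_n$ as soon as $k\ge n$ (and similarly for the $\tau$-case with $\scU^k\circ g\circ T^k$). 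This gives an explicit contracting homotopy. Combined with the deformation retracts of Lemmas~\ref{lem:homology-I0-I0}, \ref{lem:homology-I1-I0}, and \ref{lem:homology-I1-I1}, one obtains a deformation retract of $\End_{w\Fil}^{\opp}$ onto $\cK$. Homological perturbation, together with the grading/$\Spin^c$ argument from Lemma~\ref{lem:model-computation-End} (which rules out all induced $\mu_n$ for $n\ge 3$ on sequences of top-graded generators), then identifies the transferred $A_\infty$-structure with ordinary multiplication on $\cK$.

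The main obstacle is the $U$-adic case. There, the naive contracting homotopy $\sum_{k\ge 0}\scV^k\circ g\circ T^{-k}$ need not converge: $U$-adic continuity of $g\co E_1\to E_0$ controls only the $U$-powers of $g(U^m E_1)$, not the individual $\scU$-powers appearing in $g(T^j)$ for fixed $j$, so the partial sums are not guaranteed to eventually lie in $U^n E_0$. Resolving this will likely require either a more careful refinement of the weakly filtered morphism space (e.g.\ imposing that $g$ sends $T^j$ into some $\scU$-controlled subspace depending on $j$, matched to the filtration defining $\frK$), or replacing the geometric series by an inductive construction that converges $U$-adically. I expect this convergence issue to be the technical heart of the conjecture, and the precise statement in the $U$-adic setting may need to be adjusted to pin down exactly which subalgebra of $\End(\b_0^{E_0}\oplus\b_1^{E_1})$ is homotopy equivalent to $\frK$.
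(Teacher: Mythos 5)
This statement is a Conjecture, not a Theorem, and the paper does not prove it; it only sketches why it is hard. Your overall strategy---first establish that $\End_{w\Fil}$ is a well-defined $A_\infty$-algebra, then show $\ve{\CF}^-_{w\Fil}(\b_1^{E_1},\b_0^{E_0})$ is acyclic and run homological perturbation onto $\cK$ or $\frK$---is precisely the route the paper anticipates, and your explicit contracting homotopy $\sum_{k\ge 0}\scV^k\circ(-)\circ T^{-k}$ in the chiral case is the right one. But you have misplaced the obstruction.

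The gap is in your step (i). You assert that Proposition~\ref{prop:finite-ness-endomorphisms} "still applies" and that positivity "follows from the same analysis as in Section~\ref{sec:positivity-endomorphism-algebras}." The paper explicitly says the opposite: neither that proposition nor the arguments of Appendices~\ref{sec:positivity} and~\ref{sec:admissibility} are sufficient for the weakly filtered case, and this is exactly why the statement is left as a conjecture. Concretely, the proof of Proposition~\ref{prop:finite-ness-endomorphisms} depends on the fact that a composable sequence of \emph{filtered} morphisms is monotone in idempotent, so it contains at most one generator in the $\sigma/\tau$ direction; this lets one fix a single basepoint $p\in\{w,z\}$ so that $\frs_p(\xs_i)$ is torsion for \emph{all} $i$, after which the $\gr_p$-grading bounds $n_p(\psi)$ and weak admissibility gives finiteness. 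Once you allow morphisms $\b_1\to\b_0$, a composable sequence may alternate $\b_0\to\b_1\to\b_0\to\cdots$ and contain both a $\sigma$-type generator (torsion at $w$, non-torsion at $z$) and a $\tau$-type generator (torsion at $z$, non-torsion at $w$). No common $p$ exists, the grading argument collapses, and the finiteness of the polygon count is genuinely in doubt. Your claim that "any composable sequence can be arranged so that every intersection point lies in a common $\Spin^c$ class" is not true.

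The positivity claim has the same problem. The argument of Section~\ref{sec:positivity-endomorphism-algebras} groups together the $\scV$-monodromy factors arising from all the translates of $\b_0$, commutes them past the intervening $E_0\to E_0$ morphisms using Lemma~\ref{lem:filtered-morphism-compose-Vs} (which requires those morphisms to be filtered), and then absorbs $\scV^{n_z(\psi)-n_w(\psi)}$ together with $U^{n_w(\psi)}$ into $\scU^{n_w(\psi)}\scV^{n_z(\psi)}$. When the sequence passes out to $\b_1$ and back, the morphisms between consecutive translates of $\b_0$ are compositions through $E_1$ that are not controlled by Lemma~\ref{lem:filtered-morphism-compose-Vs}, and the regrouping step fails. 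So well-definedness of the $A_\infty$-structure---both finiteness and positivity---is open in the chiral case just as much as in the $U$-adic case, contrary to the impression your last paragraph gives that only the $U$-adic contracting homotopy needs attention. Your acyclicity computation is a reasonable second step, but without a prior resolution of admissibility and positivity there is no well-defined algebra on which to run it.
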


We note that the challenge in proving the above conjecture is not actually constructing the homotopy equivalence, but rather in showing that $\End_{w\Fil}(\b_0^{E_0}\oplus \b_1^{E_1})$ is a well-defined linear topological $A_\infty$-algebra.  Indeed, the challenge is showing that the composition maps introduce no negative powers of $U$, and induce continuous maps in the two topologies. We observe that neither Proposition~\ref{prop:finite-ness-endomorphisms} nor the arguments of Appendices~\ref{sec:positivity} or~\ref{sec:admissibility} are sufficient for these cases.

 We observe that assuming these admissibility and positivity results, the above conjecture would following since one may compute directly that $\ve{\CF}^-(\b_1^{E_1},\b_0^{E_0})$ is acyclic, so the same homological perturbation argument we used for $\End_{\Fil}$ would apply to show that $\End_{w \Fil}(\b_0^{E_0}\oplus \b_1^{E_1})$ is homotopy equivalent to $\cK$.

\subsection{Additional computations}

It will be helpful to extend a few of the computations from the previous setting to the setting of links. We are interested in a Heegaard diagram  $(\Sigma,\bs_0,\ws,\zs)$ with only one set of attaching curves. Here, $\bs_0$ is a collection of attached curves such that each component of $\Sigma\setminus \bs_0$ contains one point from $\ws$ and one point from $\zs$. We assume also that there are choices of oriented knot shadows (embedded, oriented, simple closed curves) $K_1,\dots, K_\ell$ on $\Sigma$, which are pairwise disjoint. Furthermore, we assume that $\ws$ and $\zs$ are collections of $n$ points such that each $K_i$ contains one point from each of $\ws$ and $\zs$. We assume the subarc of $K_i$ oriented from $z_i$ to $w_i$ is disjoint from $\bs_0$, and that the subarc from $w_i$ to $z_i$ intersects a single curve of $\bs_0$, geometrically once. Furthermore, each $K_i$ only intersects a single curve of $\bs_0$.

The union of $K_i$ with the corresponding meridianal curve of $\bs_0$ determines a punctured torus in $\Sigma$. For $\veps\in \bE_\ell$, we let $\bs_{\veps}^{\can}$ denote the collection of curves obtained by winding each component of $\bs_0$ corresponding to the components $K_i$ with $\veps_i=1$. We assume the corresponding genus 1 regions are as in Figure~\ref{fig:47}. Furthermore, it is helpful to write $D_i$ for a small disk containing $w_i$ and $z_i$, which also contains the intersection point of $\bs_0$ and $K_i$.

 The diagram $(\Sigma,\bs_{\veps}^{\can}, \bs_{\nu}^{\can}, \ws,\zs)$ represents a link $L_{\veps,\nu}$ in a three manifold $Y$. The manifold $Y$ is a connected sum of $g(\Sigma)$ copies of $S^1\times S^2$. The $L_{\veps,\nu}$ consists of $n-|\nu-\veps|_{1}$ unknots (unlinked from other components) and $|\nu-\veps|_1$ fibers of different copies of $S^1\times S^2$.

There are $2^{|\nu-\veps|_1}$ $\Spin^c$ structures of interest on $Y$. 
Given a sequence of symbols $\scO\in \{\sigma,\tau\}^{|\nu-\veps|_1}$, there is a $\Spin^c$ structure $\frs_\scO$ on $Y$. We define $\frs_{\sigma,\dots, \sigma}$ to be the torsion $\Spin^c$ structure on $Y$, and 
\[
\frs_{\scO}=\frs_{\sigma,\dots,\sigma}-\sum_{\{i| \scO_i=\tau\}} \PD[K_i].
\]

\begin{lem}
\label{lem:generalized-top-generator}
 Suppose that $(\Sigma,\bs_0,\ws,\zs)$ is a partial Heegaard diagram with knot traces $K_1,\dots, K_\ell$, as above. Suppose also that $\veps,\nu\in \bE_\ell$, $\veps\le \nu$ and $\bs_{\veps}$, $\bs_{\nu}$ are attaching curves on $(\Sigma,\ws,\zs)$ which are obtained from $\bs_{\veps}^{\can}$ and $\bs_{\nu}^{\can}$ by a sequence of handleslides and isotopies which are supported in the complement of the disks $D_i$. Assuming the diagrams are weakly admissible with respect to each complete collection $\ve{p}\subset \ws\cup \zs$, the subspace of
\[
\HF^-_{\Fil}\left(\bs_{\veps}^{E_{\veps}}, \bs_{\nu}^{E_{\nu}},\frs_\scO\right)
\]
in Alexander grading $(0,\dots, 0)$ is canonically isomorphic to $\bF\llsquare U\rrsquare \otimes_{\bF}  \Lambda_{g(\Sigma)-\ell}$, where $\Lambda_{g(\Sigma)-\ell}$ is the exterior algebra on $g(\Sigma)-\ell$ generators (each viewed as having Maslov grading $-1$). The top degree subspace is spanned by the generator $\lb \theta^+_{\scO}, \phi^{\scO}_{\veps,\nu} \rb$.
 For other $\Spin^c$ structures, 
\[
\HF^-_{\Fil}\left(\bs_{\veps}^{E_{\veps}}, \bs_{\nu}^{E_{\nu}},\frs\right)\iso 0.
\]
\end{lem}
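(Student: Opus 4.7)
The plan is to reduce to the canonical curves and then use a destabilization/K\"unneth argument to assemble the answer from the genus one model computations already established in Lemmas~\ref{lem:homology-I0-I0}, \ref{lem:homology-I1-I0} and~\ref{lem:homology-I1-I1}.

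First I would invoke invariance of $\HF^-_{\Fil}$ under handleslides and isotopies of the beta curves supported in the complement of the disks $D_i$. Since all such handleslides and isotopies take place away from the knot shadows $K_i$, the monodromy data $\rho_0,\rho_1$ is unaffected, and the standard continuation/triangle arguments produce quasi-isomorphisms compatible with the local systems and the filtration defining $\Hom_{\Fil}$. This reduces the problem to the case $\bs_\veps=\bs_\veps^{\can}$ and $\bs_\nu=\bs_\nu^{\can}$.

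Next I would destabilize to a disjoint-union picture. The surface $\Sigma$ decomposes (after sliding curves off the connected-sum regions) into $\ell$ punctured tori $T_i$ — each containing the pair $(\b_{\veps_i,i},\b_{\nu_i,i})$ together with $(w_i,z_i,K_i)$ — connect-summed to a remaining surface $\Sigma'$ of genus $g(\Sigma)-\ell$ carrying only small Hamiltonian translates of the non-meridianal curves of $\bs_0$. Applying Proposition~\ref{prop:1-handle-functor} iteratively (together with the K\"unneth-type compatibility in Lemma~\ref{lem:tensor-product-hypercube}), for sufficiently stretched almost complex structure the filtered Floer complex of the connected-sum diagram is identified with the tensor product of the filtered Floer complexes of the pieces. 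It is at this step that one must check that the filtration condition (Definition~\ref{def:filtered-morphism}) behaves well under stabilization: a morphism on the connect sum is filtered iff its restriction to each torus factor is filtered, since the local system data all lives on the $T_i$ and the curves on $\Sigma'$ carry only the trivial local system $\bF[U]$.

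On each torus factor $T_i$, the filtered Floer homology is computed by one of Lemmas~\ref{lem:homology-I0-I0}, ~\ref{lem:homology-I1-I0}, or~\ref{lem:homology-I1-I1}, depending on whether $(\veps_i,\nu_i)$ equals $(0,0)$, $(0,1)$, or $(1,1)$. In each case the Alexander grading zero part of the homology in the relevant $\Spin^c$ structure is a free rank one $\bF\llsquare U\rrsquare$-module generated in top Maslov degree by $\lb\theta^+,\id\rb$, $\lb\theta^+_{\sigma},\phi^\sigma\rb$, $\lb\theta^+_{\tau},\phi^\tau\rb$, or $\lb\theta^+,\id\rb$ respectively, and vanishes identically in all other $\Spin^c$ structures — this gives the statement $\HF^-_{\Fil}(\dots,\frs)\iso 0$ for $\frs\neq \frs_\scO$. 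On the remaining surface $\Sigma'$, the two sets of curves are small Hamiltonian translates, so $\ve{\CF}^-(\bs_\veps|_{\Sigma'},\bs_\nu|_{\Sigma'})$ computes $\ve{\CF}^-(\#^{g(\Sigma)-\ell}(S^1\times S^2))$, whose homology in the torsion $\Spin^c$ structure is $\bF\llsquare U\rrsquare \otimes \Lambda_{g(\Sigma)-\ell}$ with top degree generator $\lb\theta^+,\id\rb$. Taking the tensor product of these contributions, the Alexander grading zero subspace in $\Spin^c$ structure $\frs_\scO$ is exactly $\bF\llsquare U\rrsquare\otimes\Lambda_{g(\Sigma)-\ell}$, with top graded generator $\lb\theta^+_{\scO}, \phi^{\scO}_{\veps,\nu}\rb$ where $\phi^{\scO}_{\veps,\nu}$ is the tensor product of the factor maps $\phi^{\sigma_i}$ or $\phi^{\tau_i}$.

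The main technical obstacle will be step two: making the destabilization/tensor product decomposition work in the filtered, local-system setting, in particular checking that the neck-stretching argument of \cite{HHSZNaturality} underlying Proposition~\ref{prop:1-handle-functor} is compatible with the monodromy weighting in Equation~\eqref{eq:local-systems}. This is essentially automatic because all curves $K_i$ and basepoints lie on the torus pieces, so neck-stretching does not alter the edge monodromies; nonetheless this compatibility must be written out carefully. Weak admissibility for every complete collection $\ve{p}$ ensures finiteness of the relevant holomorphic polygon counts, which is required for the neck-stretching and Maslov-grading arguments (as in Proposition~\ref{prop:finite-ness-endomorphisms}).
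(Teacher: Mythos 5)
Your overall strategy is the same as the paper's: reduce by Heegaard-move invariance to the canonical curves, feed in the genus-one model computations (Lemmas~\ref{lem:homology-I0-I0}, \ref{lem:homology-I1-I0}, \ref{lem:homology-I1-I1}), and account for the extra genus by an exterior algebra $\Lambda_{g(\Sigma)-\ell}$ coming from $1$-handle stabilizations. The genus-increasing step with the number of link components fixed (tensoring with $\Lambda_1$, via Proposition~\ref{prop:1-handle-functor}) is also exactly what the paper does.

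The genuine gap is in your gluing step. When you join two pieces each of which carries its own basepoints $(w_i,z_i)$ --- e.g.\ two of the punctured tori, or a torus piece and $\Sigma'$ --- the filtered complex of the joined diagram is \emph{not} the plain tensor product of the filtered complexes of the pieces. As in the paper, one adds two beta curves in the connecting tube, and the adaptation of Ozsv\'{a}th--Szab\'{o} stabilization invariance (cf.\ \cite{HMZConnectedSum}*{Lemma~5.7}) identifies the result with the \emph{derived} tensor product over $\bF\llsquare U\rrsquare$, i.e.\ the mapping cone of $U_1+U_2$ on $\ve{\CF}^-_{\Fil}(\cH_1)\otimes^!_{\bF}\ve{\CF}^-_{\Fil}(\cH_2)$, where $U_1,U_2$ are variables belonging to the two pieces. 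A plain $\bF$-tensor product leaves the $U$-actions of the pieces unidentified, so its homology is a module over several independent power-series rings and is strictly larger than the claimed $\bF\llsquare U\rrsquare\otimes_{\bF}\Lambda_{g(\Sigma)-\ell}$; Proposition~\ref{prop:1-handle-functor} and Lemma~\ref{lem:tensor-product-hypercube} do not cover this operation (they concern adding a single meridian in a tube, resp.\ morphisms of hypercubes on disjoint surfaces), so they cannot substitute for it. To close the gap you need the additional homological-algebra step the paper supplies: show inductively that $U_1+U_2$ acts injectively on the homology of the $\bF$-tensor product, and then use the snake lemma to identify the Alexander-grading-zero homology of the cone with the quotient by $U_1+U_2$, which is what produces the single $\bF\llsquare U\rrsquare$ factor and the correct generator $\lb\theta^+_{\scO},\phi^{\scO}_{\veps,\nu}\rb$. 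Your concern about compatibility of the filtration with neck-stretching is reasonable but secondary; the missing cone-of-$(U_1+U_2)$ structure is the essential point.
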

\begin{proof} The ordinary proof of invariance of Heegaard Floer homology \cite{OSDisks} adapts to show that the above complexes are invariant up to graded homotopy equivalence under Heegaard moves, so it suffices to prove the isomorphism for sufficiently simple diagrams. Lemmas~\ref{lem:homology-I0-I0}, ~\ref{lem:homology-I1-I0}, and ~\ref{lem:homology-I1-I1}  provide proofs in the case that $n=1$ and $g(\Sigma)=1$.

To handle the general case, we argue by induction on $n$ and the genus. Observe first that if $\cH_1$ and $\cH_2$ are two Heegaard diagrams, we may join them by attaching a 1-handle connecting two points on the Heegaard surface to form a new Heegaard diagram $\cH_1\# \cH_2$. Along the connected sum 1-handle, we add two beta curves $\b,\b'$, which are isotopic and intersect in two points. See Figure~\ref{fig:50}.  An adaptation of Ozsv\'{a}th and Szab\'{o}'s proof of stabilization invariance \cite{OSLinks}*{Section~6.1} shows that
\[
\ve{\CF}^-_{\Fil}(\cH_1 \# \cH_2)\iso \ve{\CF}^-_{\Fil}(\cH_1)\tildeotimes_{\bF\llsquare U\rrsquare} \ve{\CF}^-_{\Fil}(\cH_2)
\]
\[
:=\left(
\begin{tikzcd}\ve{\CF}^-_{\Fil}(\cH_1)\otimes^!_{\bF} \ve{\CF}^-_{\Fil}(\cH_2)\ar[r, "U_1+U_2"]&\ve{\CF}^-_{\Fil}(\cH_1)\otimes^!_{\bF} \ve{\CF}^-_{\Fil}(\cH_2)\end{tikzcd}
\right), 
\]
where $U_1$ and $U_2$ are $U$-actions from two of the link components from $\cH_1$ and $\cH_2$ (respectively). In the above $\tilde{\otimes}_{\bF\llsquare U\rrsquare}$ denotes derived tensor product over $\bF\llsquare U\rrsquare$, which is defined to be the mapping cone in the second line. See \cite{HMZConnectedSum}*{Lemma~5.7} for a detailed proof. The exact choice of link components corresponding to $U_1$ and $U_2$ is not important but can be determined from the placement of the 1-handle connecting the two diagrams. By inducting on the main statement, we see that the action of $U_1+U_2$ on 
\[
H_*(\ve{\CF}^-_{\Fil}(\cH_1)\otimes^!_{\bF} \ve{\CF}^-_{\Fil}(\cH_2))\iso \HF^-_{\Fil}(\cH_1)\otimes^! \HF^-_{\Fil}(\cH_2).
\] 
is injective. Therefore the snake lemma shows that the homology in Alexander grading 0 coincides with the quotient of the Alexander grading zero part of $\HF^-(\cH_1)\otimes^! \HF^-(\cH_2)$ by $U_1+U_2$. By induction, this is  $\Lambda_{g(\Sigma_1)-\ell_1}\otimes_{\bF} \Lambda_{g(\Sigma_2)-\ell_2}\otimes_{\bF} \bF\llsquare U\rrsquare$, with generators as claimed (where $\ell_i$ denote the number of link components  from $\cH_i$). 

Next, we observe that increasing $g(\Sigma)$ while keeping $n=|L|$ fixed may be achieved by adding a 1-handle to the surface. Using the well-known effect of attaching a 1-handle to a Heegaard surface \cite{OSTriangles}*{Section~4.3} we see that the effect is to replace $\ve{\CF}^-_{\Fil}(\cH)$ by $\ve{\CF}^-_{\Fil}(\cH)\otimes_{\bF} \Lambda_1$.  This completes the proof.
\end{proof}

\begin{figure}[h]
\begingroup%
  \makeatletter%
  \providecommand\color[2][]{%
    \errmessage{(Inkscape) Color is used for the text in Inkscape, but the package 'color.sty' is not loaded}%
    \renewcommand\color[2][]{}%
  }%
  \providecommand\transparent[1]{%
    \errmessage{(Inkscape) Transparency is used (non-zero) for the text in Inkscape, but the package 'transparent.sty' is not loaded}%
    \renewcommand\transparent[1]{}%
  }%
  \providecommand\rotatebox[2]{#2}%
  \newcommand*\fsize{\dimexpr\f@size pt\relax}%
  \newcommand*\lineheight[1]{\fontsize{\fsize}{#1\fsize}\selectfont}%
  \ifx\svgwidth\undefined%
    \setlength{\unitlength}{170.03093176bp}%
    \ifx\svgscale\undefined%
      \relax%
    \else%
      \setlength{\unitlength}{\unitlength * \real{\svgscale}}%
    \fi%
  \else%
    \setlength{\unitlength}{\svgwidth}%
  \fi%
  \global\let\svgwidth\undefined%
  \global\let\svgscale\undefined%
  \makeatother%
  \begin{picture}(1,0.36611742)%
    \lineheight{1}%
    \setlength\tabcolsep{0pt}%
    \put(0,0){\includegraphics[width=\unitlength,page=1]{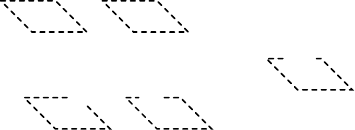}}%
    \put(0.14144242,0.30169347){\makebox(0,0)[t]{\lineheight{1.25}\smash{\begin{tabular}[t]{c}$\cH_1$\end{tabular}}}}%
    \put(0.42294143,0.30205826){\makebox(0,0)[t]{\lineheight{1.25}\smash{\begin{tabular}[t]{c}$\cH_2$\end{tabular}}}}%
    \put(0,0){\includegraphics[width=\unitlength,page=2]{fig50.pdf}}%
  \end{picture}%
\endgroup%

\caption{The stabilizations from Lemma~\ref{lem:generalized-top-generator}. The curves in the 1-handle regions are denotes $\b_1$ and $\b_2$ in the proof.}
\label{fig:50}
\end{figure}

\begin{rem}\label{rem:refined-hypercube}
Lemma~\ref{lem:generalized-top-generator} has a natural extension which is helpful for filling hypercubes later on. There is a subcomplex 
\[
\ve{\CF}_{\phi^\scO}^-(\bs_{\veps}^{E_{\veps}}, \bs_{\nu}^{E_{\nu}}, \frs_{\scO})\subset \ve{\CF}_{\Fil}^-(\bs_{\veps}^{E_{\veps}}, \bs_{\nu}^{E_{\nu}}, \frs_{\scO}),
\]
  generated over $\bF\llsquare U_1,\dots, U_\ell\rrsquare$ by Floer generators $\lb \xs, \phi^{\scO}_{\veps,\nu}\rb$ (i.e. we restrict all morphisms to be in the $\bF\llsquare U_1,\dots, U_\ell\rrsquare$-span of $\phi_{\veps,\nu}^{\scO})$. Then 
\[
\ve{\HF}^-_{\phi^{\scO}}(\bs_{\veps}^{E_{\veps}}, \bs_{\nu}^{E_{\nu}}, \frs_{\scO})\iso \Lambda_{g}\otimes_{\bF} \bF\llsquare U \rrsquare.
\] 
Furthermore, the canonical map
\[
\ve{\HF}^-_{\phi^{\scO}}(\bs_{\veps}^{E_{\veps}}, \bs_{\nu}^{E_{\nu}}, \frs_{\scO})\to \HF^-_{\Fil}(\bs_{\veps}^{E_{\veps}}, \bs_{\nu}^{E_{\nu}}, \frs_{\scO})
\]
is an isomorphism on the top Maslov degree. To prove this statement, we observe first that the genus 1 case holds because by Lemma~\ref{lem:homology-I1-I0}, the differential on $\ve{\CF}^-_{\phi^{\scO}}(\b_0^{E_0}, \b_1^{E_1},\frs_\sigma)$ vanishes. Hence in Alexander grading 0 the complex consists of the $\bF\llsquare U\rrsquare$-span of $\lb \theta^+, \phi^\sigma\rb $ and $\lb \theta^-, \phi^{\sigma}\rb$. The general case is proven by an inductive argument identical to the proof of Lemma~\ref{lem:generalized-top-generator}. 
\end{rem}

\section{Invariance}

In this section, we prove Theorem~\ref{thm:intro-invariance} of the introduction, which states that the bordered link surgery modules give invariants of bordered 3-manifolds. This follows immediately from the following theorem, which we prove in this section:

\begin{thm} \label{thm:invariance}
The bordered link surgery modules are invariants in the following sense:
\begin{enumerate}
\item \label{thm:invariance-1} If $L\subset Y$ is a link with Morse framing $\Lambda$, then the homotopy type of $\cX_{\Lambda}(Y,L)^{\frL}$ is an invariant of the tuple $(Y,L,\Lambda)$. (In particular, independent of the Heegaard diagram). 
\item \label{thm:invariance-2} Suppose that $(Y,L,\Lambda)$ is a link with Morse framing, and $(Y',L',\Lambda')$ is obtained by performing Dehn surgering on one  component of $L$, according to the framing $\Lambda$. Then
\[
\cX_{\Lambda}(Y,L)^{\frL}\boxtimes {}_{\frK} \frD_0\simeq \cX_{\Lambda'}(Y',L')^{\frL'}
\]
\item  \label{thm:invariance-3}The same statements hold if we replace the $\frL$ with $\cL$ (i.e. the statement holds in both the chiral and $U$-adic topologies). Furthermore, the $\cL$ modules are related to the $\frL$ modules via
\[
\cX_{\Lambda}(Y,L)^{\frL}\boxtimes{}_{\frL}[i]^{\cL}\simeq \cX_{\Lambda}(Y,L)^{\cL}.
\]
\end{enumerate}
\end{thm}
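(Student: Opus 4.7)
The plan is to decompose the theorem into its three parts and handle them in order, with the invariance claim \eqref{thm:invariance-1} being by far the longest argument. My overall strategy for \eqref{thm:invariance-1} is to identify each of the ``new'' choices involved in constructing $\cX_\Lambda(Y,L)^{\frL}$ beyond the usual data of a Heegaard link diagram, and then establish invariance one move at a time: Theorem~\ref{thm:iterated-cone} will be the key tool for the new choices, and standard Heegaard Floer triangle/polygon techniques will handle the classical moves.

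To set up \eqref{thm:invariance-1}, the construction of $\cX_\Lambda(Y,L)^{\frL}$ depends on: a meridianal Heegaard link diagram $(\Sigma,\as,\bs_0,\ws,\zs)$ for $(Y,L)$; a choice of knot shadows $K_1,\dots,K_\ell\subset\Sigma$ realizing the Morse framing $\Lambda$; a choice of curves $\b_{1,i}$ in each special toroidal region; and small Hamiltonian translates used to fill out the hypercube $\scB_\Lambda$. The first point is that any change to one of these choices produces a homotopy equivalent hypercube of attaching curves with local systems; then applying the $A_\infty$-functor $\ve{\CF}^-(\as,-)$ and repackaging as a type-$D$ module over $\frL$ gives invariance of $\cX_\Lambda(Y,L)^{\frL}$. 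For the classical moves (isotopies and handleslides of $\as$, isotopies and handleslides of the non-special components of each $\bs_\veps$, and stabilizations of $\Sigma$), I would use holomorphic triangle maps to construct the morphism between the old and new hypercubes; the grading arguments of Section~\ref{sec:background-link-surgery} (forcing length-$\ge 2$ terms to vanish via the $\gr_{\ve{p}}$-grading analysis) show that these triangle maps assemble into morphisms of hypercubes of attaching curves with local systems, and the admissibility material of Proposition~\ref{prop:finite-ness-endomorphisms} ensures finite counts. Surface stabilizations are handled using the 1-handle $A_\infty$-functor of Proposition~\ref{prop:1-handle-functor} combined with Lemma~\ref{lem:tensor-product-hypercube}. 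For changes of shadow or winding data, I would invoke Theorem~\ref{thm:iterated-cone} directly: both the initial and modified hypercubes are homotopy equivalent to the single attaching curve system $\bs_\Lambda$, which represents the surgered manifold $Y_\Lambda(L)$; standard Heegaard Floer invariance for $\bs_\Lambda$ then supplies a zig-zag of homotopy equivalences linking the two hypercubes.

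For \eqref{thm:invariance-2}, I would select a meridianal Heegaard link diagram for $(Y,L)$ that manifestly contains a meridianal subdiagram for the component of $L$ being Dehn filled; the hypercube $\scB_\Lambda$ then splits along the corresponding cube coordinate into cones of morphisms involving $\b_{0,i}^{E_0}$ and $\b_{1,i}^{E_1}$, and by inspection the box tensor product with ${}_{\frK}\frD_0$ along the $i$-th copy of $\frK$ implements exactly the cone construction of Section~\ref{sec:knot-surgery-formula}, producing the surgery complex for $(Y',L',\Lambda')$. For \eqref{thm:invariance-3}, since the modules $\cX_{\Lambda}(Y,L)^{\frL}$ are finitely generated in each idempotent, Remark~\ref{rem:chiralizing-functor} applies and the chiralizing functor of Equation~\eqref{eq:functor-achiral-to-chiral} converts the $U$-adic module into its chiral counterpart; this is precisely box tensoring with the inclusion bimodule ${}_{\frL}[i]^{\cL}$. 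The same Heegaard-move arguments as in \eqref{thm:invariance-1} work verbatim after replacing $\frL$ by $\cL$.

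The main obstacle is ensuring that the homotopy equivalences of hypercubes constructed for the classical Heegaard moves actually respect the local system data and assemble into morphisms in the correct $A_\infty$-category of twisted complexes of attaching curves decorated by local systems. Concretely, verifying that continuation and triangle maps interact correctly with the monodromy maps $\rho_\veps(e^n)$ from Equation~\eqref{eq:local-systems} requires care whenever a Hamiltonian isotopy of an attaching curve crosses a basepoint $w_i$ or $z_i$ or crosses a shadow $K_j$. The cleanest way to handle this, which I expect to pursue, is to do all Hamiltonian isotopies within the complement of $\ws\cup\zs\cup\bigcup_i K_i$; any isotopy can be homotoped into this complement at the cost of further triangle maps, which are themselves filtered morphisms in the sense of Definition~\ref{def:filtered-morphism} and so fit into the framework of Section~\ref{sec:endomorphisms}.
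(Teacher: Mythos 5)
Your handling of the classical Heegaard moves (alpha/beta handleslides and isotopies, stabilizations via Proposition~\ref{prop:1-handle-functor}) is broadly in the spirit of the paper's argument, but there is a genuine gap in the step where you treat changes of shadow or winding data by a zig-zag through $\bs_\Lambda$. Theorem~\ref{thm:iterated-cone} gives an equivalence $\scB_\Lambda\simeq \bs_\Lambda$ only as twisted complexes in the Fukaya category; after pairing with $\as$ this is a homotopy equivalence of chain complexes over $\bF[U]$ (or $\bF[U_1,\dots,U_\ell]$), and it does not respect the cube filtration or the idempotent decomposition that defines the type-$D$ structure over $\frL$ --- the object $\bs_\Lambda$ carries no idempotents at all. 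So the zig-zag $\scB_\Lambda\simeq\bs_\Lambda\simeq\bs_\Lambda'\simeq\scB_\Lambda'$ only shows that the total complexes are homotopy equivalent (which is just the already-known statement that both compute $\ve{\CF}^-(Y_\Lambda(L))$), not that $\cX_\Lambda(Y,L)^{\frL}$ is well defined up to homotopy equivalence of type-$D$ modules. That the module structure cannot be recovered this way is clear from Equation~\eqref{eq:idempotent-0}: idempotent $0$ of the module recovers $\cCFL(Y,L)$, which is certainly not determined by the surgered $3$-manifold.

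The paper instead proves part (1) by (i) establishing, via a sutured Morse-theoretic argument, a complete list of moves relating any two meridianal diagrams equipped with shadows (your write-up also omits this connectivity statement --- listing the choices is not enough, you must show any two sets of choices are linked by your moves); (ii) for the handleslide/isotopy moves, building a morphism of hypercubes $b^+\colon\scB_\Lambda\to\scB_\Lambda'$ by the Manolescu--Ozsv\'{a}th filling procedure, where Lemma~\ref{lem:generalized-top-generator} and Remark~\ref{rem:refined-hypercube} pin down the length-one arrows as top-degree, Alexander-grading-preserving cycles whose algebra weights are $\bF\llsquare U_1,\dots,U_\ell\rrsquare$-multiples of the maps $\phi^{\scO}$, which is exactly what makes $\mu_2(\cdot,b^+)$ a type-$D$ morphism over $\frL$ rather than merely a chain map; and (iii) handling shadow changes directly: isotopies of a shadow decompose into isotopies of the attaching curves plus ambient isotopy, and a handleslide of a shadow over a curve of $\bs'$ leaves the surgery hypercube literally unchanged, because the complex only sees the intersection pattern of the shadows with the beta systems. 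A similar issue recurs, more mildly, in your sketch of part (2): identifying $\cX_\Lambda(Y,L)^{\frL}\boxtimes{}_{\frK}\frD_0$ with $\cX_{\Lambda'}(Y',L')^{\frL'}$ requires checking that the equivalence coming from the iterated-cone argument is itself a type-$D$ morphism over the remaining algebra factors, which the paper does by arranging $b^+$ to have only length-one components of the form $\lb\theta^+,\id\otimes\cdots\otimes\id\otimes\Pi\rb$. Your part (3) matches the paper.
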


Theorem~\ref{thm:invariance} will have a number of consequences, which include the sublink surgery formula (Section~\ref{sec:sublink-surgery-formula}), as well as invariance of the bordered modules with respect to gluing along torus boundary components (Section~\ref{sec:gluing}). We organize this section by first describing the consequences of Theorem~\ref{thm:invariance}, and then subsequently proving it in Section~\ref{sec:proof-invariance}.

\subsection{The sublink surgery formula}

\label{sec:sublink-surgery-formula}
In this section, we discuss a surgery formula for link Floer homology. This will follow as a special case of Theorem~\ref{thm:invariance}. 

We now suppose that $J\cup L$ is link in a 3-manifold $Y$, whose components are partitioned into two collections $J$ and $L$. Let $\Lambda$ be a Morse framing on $J$. Write $n=|L|$ and define
\[
R_n:=\bF[\scU_1,\scV_1,\dots, \scU_n,\scV_n].
\]
If computed using a Heegaard link diagram, the link Floer complex $\cCFL(Y_{\Lambda}(J),L)$, is a finitely generated, free chain complex over $R_n$. In particular, we may think of $\cCFL(Y_{\Lambda}(J),L)$ as a type-$D$ module over $R_n$.
 In this section, we describe
\[
\cCFL(Y_{\Lambda}(J), L)^{R_n}
\]
using the link surgery formula.

Completions play an important role in the theory, and there are two natural ways of completing $R_n$:
\begin{enumerate}
\item $\cR_n$, the $I$-adic completion with respect to the ideal $I=(\scU_1,\scV_1,\dots, \scU_n,\scV_n)$. Note that the completion is $\bF\llsquare \scU_1,\scV_1,\dots, \scU_n,\scV_n\rrsquare$. This is $\ve{E}_0\cdot \cL_n\cdot \ve{E}_0$.
\item $\frR_n$, the $(U_1,\dots, U_n)$-adic completion of $R_n$. This is $\ve{E}_0\cdot \frL_n\cdot \ve{E}_0$. 
\end{enumerate}
For concreteness, we focus phrasing the statements over $\cR_n$. 

  We define a chain complex $\bX_{\Lambda}(Y,J;L)$ to be the subcube of the link surgery complex for $M=J\cup L$ which has $L$-coordinate 0 everywhere. Note that to define the full link surgery complex for $M$, one must pick a framing on all of $M=J\cup L$, however the framing on $L$ has no effect on the complex $\bX_{\Lambda}(Y,J;L)$. The module $\bX_{\Lambda}(Y, J;L)$ has a natural action of $\bF[\scU_1,\scV_1,\dots, \scU_n,\scV_n]$ (the variables for $L$).

Similarly, we may construct a type-$D$ module $\cX_{\Lambda}(Y,J;L)^{\cR_n}$, where $n=|L|$, by tensoring the type-$D$ module for $(Y,J\cup L)$ with a copy of ${}_{\cK} \cD_0$ for each algebra factor corresponding to a component of $J$, and then restricting to cube components which have $\veps_i=0$ for $K_i\in L$. 

It is helpful to rephrase the operation of restricting to idempotent 0 as tensoring with a bimodule ${}_{\cK}[\ve{I}_0]^{\bF[\scU,\scV]}$, which has underlying $\ve{I}$-module equal to $\ve{I}_0$, and has structure map $\delta_2^1(a,i_0)=i_0\otimes a$ when $a\in \ve{I}_0\cdot \cK\cdot \ve{I}_0$.

As a corollary to Theorem~\ref{thm:invariance}, we have the following:
\begin{cor}
\label{cor:Manolescu-Ozsvath-subcube}
 Suppose that $J\cup L$ is a partitioned link in $Y$, and that $\Lambda$ is a Morse framing on $J$. Write $n=|L|$.
 \begin{enumerate}
 \item There is a homotopy equivalence of type-$D$ modules
 \[
\cCFL(Y_{\Lambda}(J),L)^{\cR_n}\simeq \cX_{\Lambda}(Y,J;L)^{\cR_n}.
 \]
 \item There is a homotopy equivalence of type-$A$ modules
 \[
{}_{\cR_n}\ve{\cCFL}(Y_{\Lambda}(J),L)\simeq  {}_{\cR_n}\bX_{\Lambda}(Y,J;L).
 \]
 \end{enumerate}
\end{cor}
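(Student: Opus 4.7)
The plan is to derive the corollary as a formal consequence of Theorem~\ref{thm:invariance}. First, choose an auxiliary Morse framing $\Lambda'$ on $L$, so that $\Lambda\cup\Lambda'$ is a Morse framing on $J\cup L$. By part~\eqref{thm:invariance-1} of Theorem~\ref{thm:invariance}, the bordered module $\cX_{\Lambda\cup\Lambda'}(Y,J\cup L)^{\cL_{|J|+n}}$ is a well-defined homotopy invariant. Iterating part~\eqref{thm:invariance-2} once for each component of $J$ (which is legitimate because the bordered module is invariant at each stage), we obtain a homotopy equivalence of type-$D$ modules between the box tensor product of $\cX_{\Lambda\cup\Lambda'}(Y,J\cup L)^{\cL_{|J|+n}}$ with ${}_{\cK}\cD_0$ on each of the $|J|$ algebra factors for $J$, and the module $\cX_{\Lambda'}(Y_{\Lambda}(J),L)^{\cL_n}$.

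The next step is to tensor both sides on the $L$-factors with the idempotent bimodule ${}_{\cK}[\ve{I}_0]^{\bF[\scU,\scV]}$. On the right-hand side, this restricts to the $\vec{0}$-idempotent of $\cL_n$, whose generators over $\ve{I}_0\cdot \cL_n\cdot \ve{I}_0\iso \cR_n$ are by construction the intersection points $\bT_{\a}\cap \bT_{\b_0}$ of a meridianal Heegaard diagram for $(Y_{\Lambda}(J),L)$, with structure map given by holomorphic disk counts weighted by monomials in $\scU_i,\scV_i$. This is precisely the link Floer type-$D$ module $\cCFL(Y_{\Lambda}(J),L)^{\cR_n}$ (extending the knot case from Section~\ref{sec:knot-surgery-formula} to links as in Section~\ref{sec:background-link-surgery}). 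On the left-hand side, because the bimodules ${}_{\cK}\cD_0$ (attached to $J$-factors) and ${}_{\cK}[\ve{I}_0]^{\bF[\scU,\scV]}$ (attached to $L$-factors) act on disjoint tensor factors of $\cL_{|J|+n}$, their pairings commute. Performing the restriction for the $L$-components first reduces $\cX_{\Lambda\cup\Lambda'}$ to the codimension-$n$ subcube where all $L$-coordinates vanish, which is by definition $\cX_{\Lambda}(Y,J;L)$; pairing then with ${}_{\cK}\cD_0$ on the $J$-factors recovers $\cX_{\Lambda}(Y,J;L)^{\cR_n}$. This proves part~(1).

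For part~(2), a finitely generated type-$D$ module over a commutative algebra such as $\cR_n$ is equivalently an honest chain complex of free $\cR_n$-modules, i.e.\ a type-$A$ module with vanishing $m_j$ for $j\ge 3$, and a homotopy equivalence in one category induces one in the other, yielding the type-$A$ statement at once. The main technical obstacle in writing this up rigorously is the bookkeeping surrounding the interchange of the box tensor products on different algebra factors and the continuity of the various pairings in the chiral topology; this amounts to associativity and symmetry of $\boxtimes$ for modules over tensor product algebras, and is automatic here because every ${}_{\cK}\cD_0$ and ${}_{\cK}[\ve{I}_0]^{\bF[\scU,\scV]}$ in sight has $m_j=0$ for $j\neq 2$, so the box tensor product degenerates to a classical module tensor product on each relevant factor.
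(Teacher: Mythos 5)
Your proposal is correct and follows essentially the same route as the paper: choose an extension of the framing, apply Theorem~\ref{thm:invariance} (parts 1 and 2) to absorb the $J$-components via ${}_{\cK}\cD_0$, restrict to idempotent $\ve{I}_0$ on the $L$-factors to recover $\cCFL(Y_\Lambda(J),L)^{\cR_n}$ on one side and $\cX_\Lambda(Y,J;L)^{\cR_n}$ on the other, and then pass from type-$D$ to type-$A$ statements. For part~(2), where you appeal to the general equivalence between finitely generated type-$D$ modules over $\cR_n$ and chain complexes of free $\cR_n$-modules, the paper implements the same idea concretely by box-tensoring with the bimodule ${}_{\cR_n|\cR_n}\cR_n$ obtained from the merge module in idempotent~$0$.
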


\begin{proof}

The second claim follows from the first by tensoring with the $AA$-bimodule ${}_{\cR_n|\cR_n} \cR_n $, gotten by restricting the merge module ${}_{\cK|\cK} M$ to idempotent 0. Recall that this has a single structure map $\delta_2^1\colon (\cR_n|\cR_n)\otimes \cR_n\to \cR_n$ given by 
\[
\delta_2^1( (a_1|a_2)\otimes b)=a_1\cdot a_2\cdot b.
\] 

 For the first claim, we first pick a Morse framing $\Lambda'$ on $J\cup L$ which extends $\Lambda$. Write $\Lambda''$ for the restriction to $L$. By Theorem~\ref{thm:invariance}, we have that $\cX_{\Lambda''}(Y_{\Lambda}(J),L)^{\cL}$  is obtained by tensoring $\cX_{\Lambda'}(Y,L\cup J)^{\cL}$ with a copy of ${}_{\cK} \cD_0$ for each component of $J$.  Note that tensoring $\cX_{\Lambda'}(Y, L\cup J)^{\cL}$ with copies of ${}_{\cK} \cD_0$ for each component of $J$ and a copy of ${}_{\cK} [\ve{I}_0]^{\bF[\scU,\scV]}$ for each copy of $L$ gives exactly $\cX_{\Lambda}(Y,J;L)^{R_n}$. On the other hand tensoring $\cX_{\Lambda''}(Y_{\Lambda}(J),L)^{\cL}$ with a copy of ${}_{\cK} [\ve{I}_0]^{\bF[\scU,\scV]}$ for each component of $L$ gives $\cCFL(Y_{\Lambda}(J),L)^{\cR_n}$ since by our construction from Section~\ref{sec:background-link-surgery}, if $Z$ is a closed 3-manifold and $N\subset Z$ is a link of $n$-components, then
\begin{equation}
\cCFL(Z,N)^{\cR_n}\simeq \cX_{\Lambda}(Z,N)^{\cL}\cdot \ve{E}_0,
\label{eq:idempotent-0}
\end{equation}
where $\ve{E}_0$ denotes the idempotent corresponding to $(0,\dots, 0)\in \bE_\ell$. This completes the proof.
\end{proof}

 We record a few simplifying remarks:

\begin{rem} 
\begin{enumerate}
\item The above module $\cCFL(Y_{\Lambda}(J),L)^{\cR_n}$ is a chiral type-$D$ module over $\cR_n$, i.e. an object of $\Mod_{\ch}^{\cR_n}$. We recall that if $X$ and $Y$ are linearly topological spaces and $X$ is linearly compact, then $X\vecotimes Y\iso X\otimes^! Y$. We observe that both $\cX_{\Lambda}(Y,J;L)^{\cR_n}$ and the algebra $\cR_n$ itself are linearly compact. Hence the category of linearly compact, chiral type-$D$ modules over $\cR_n$ is equivalent to the category of linearly compact achiral type-$D$ modules, and furthermore, the category of chiral type-$A$ modules over $\cR_n$ is equivalent to the category of achiral type-$A$ modules over $\cR_n$.
\item Corollary~\ref{cor:Manolescu-Ozsvath-subcube} also holds if we work over $\frR_n$ as either a chiral or achiral algebra. Note however that the previous comment does immediately apply if we work with chiral modules over $\frR_n$, since $\frR_n$ is not linearly compact. 
\end{enumerate}
\end{rem}

\subsection{Invariance under gluing}
\label{sec:gluing}

In this section we describe the proof of Theorem~\ref{thm:naturality-gluing-intro}, which states that the bordered modules are functorial with respect to gluing. We restate the theorem below:

\begin{thm}
\label{thm:naturality-gluing}
Suppose $Y_1$ and $Y_2$ be two bordered manifolds with torus boundary components, each with a distinguished boundary component $Z_1$ and $Z_2$, respectively. We let $\phi\colon Z_1\to Z_2$ be the orientation reversing diffeomorphism sending $\mu_1$ to $\mu_2$ and $\lambda_1$ to $-\lambda_2$. Let $\cX(Y_1)^{\cL_{n_1-1}\otimes \cK}$ be the type-$D$ module for $Y_1$ and let ${}_{\cK} \cX(Y_2)^{\cL_{n_2-1}}$ be the type-$DA$ bimodule for $Y_2$. Here the copies of $\cK$ correspond to the distinguished boundary components. Then
\begin{equation}
\cX(Y_1)^{\cL_{n_1-1}\otimes \cK}\boxtimes {}_{\cK}\cX(Y_2)^{\cL_{n_2-1}}\simeq \cX(Y_1\cup_\phi Y_2)^{\cL_{n_1+n_2-1}}. \label{eq:naturality-gluing}
\end{equation}
The same holds over the $U$-adic topologies using the algebras $\frK$ and $\frL$.
\end{thm}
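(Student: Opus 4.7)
The plan is to reduce Theorem~\ref{thm:naturality-gluing} to the connected sum formula of \cite{ZemBordered}*{Section~1.3}, using the invariance statement (Theorem~\ref{thm:invariance}) to remove dependence on the choice of Dehn surgery presentation. First I would pick compatible Dehn surgery presentations: write $Y_i=(S^3\setminus \nu(U_i))_{L_i}$, where $U_i=U_i'\cup\{K_i^*\}$ is an unlink in $S^3$, the components of $U_i$ correspond to the torus boundary components of $Y_i$, the distinguished boundary $Z_i$ corresponds to $\partial\nu(K_i^*)$, and $L_i$ is a Morse framed link in $S^3\setminus \nu(U_i)$. By Theorem~\ref{thm:invariance}, the type-$D$ modules $\cX(Y_i)^{\cL_{n_i-1}\otimes \cK}$ are computed from these data, and the type-$DA$ bimodule ${}_\cK\cX(Y_2)^{\cL_{n_2-1}}$ is obtained by applying the conversion operation of \cite{ZemBordered}*{Section~8.3} to the $\cK$-factor corresponding to $Z_2$.

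Next I would apply Equation~\eqref{eq:Dehn-surgery-connected-sum} to combine the two presentations. The diffeomorphism $\phi$ in the hypothesis, sending $\mu_1\mapsto \mu_2$ and $\lambda_1\mapsto -\lambda_2$, is precisely the diffeomorphism that identifies the boundaries of $S^3\setminus \nu(K_1^*)$ and $S^3\setminus \nu(K_2^*)$ so that the result is the complement of the connected sum $K_1^*\# K_2^*$. Extending this to our bordered setting, the manifold $Y_1\cup_\phi Y_2$ admits a combined Dehn surgery presentation given by $L_1\cup L_2$ together with the unlink $U_1'\cup U_2'\cup\{K_1^*\# K_2^*\}$, where the connected-sum component $K_1^*\# K_2^*$ is the one whose region becomes interior after gluing.

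Finally, I would invoke the connected sum formula of \cite{ZemBordered}*{Section~1.3}, applied to the distinguished components $K_1^*$ and $K_2^*$. That formula expresses the link surgery type-$D$ module of a connect-sum link as a box tensor product of the modules of the two summands, with the type-$D$ action on the $\cK$-factor for one summand converted to a type-$A$ action via \cite{ZemBordered}*{Section~8.3}. By construction this agrees with the left-hand side of Equation~\eqref{eq:naturality-gluing}, and Theorem~\ref{thm:invariance} identifies the right-hand side with the module of the combined Dehn surgery presentation. The same argument carries over from $\cL$ to $\frL$ using part (3) of Theorem~\ref{thm:invariance}.

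The main obstacle will be checking the compatibility between the topological gluing convention of $\phi$ (in particular, the sign $\lambda_1\mapsto -\lambda_2$) and the framing convention built into the connected sum formula of \cite{ZemBordered}: the asymmetry between $\sigma$ and $\tau$ in $\cK$ must line up with the reversed framing so that on the type-$D$ side $K_1^*$ contributes its standard $\cK$-module structure while on the type-$A$ side $K_2^*$ contributes the mirror structure produced by the conversion functor of \cite{ZemBordered}*{Section~8.3}. Additional care is needed with the bookkeeping of idempotents when matching $\cL_{n_1-1}\otimes \cK$ against $\cL_{n_2-1}$ across the box product to produce $\cL_{n_1+n_2-1}$, and with verifying that the connected sum formula of \cite{ZemBordered} applies in the generality required here, where the summed components $K_i^*$ represent boundary (unfilled) knots rather than surgered ones.
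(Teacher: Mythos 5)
Your proposal follows essentially the same route as the paper's proof: present each $Y_i$ as Morse surgery on a link in an unlink complement, use Theorem~\ref{thm:invariance} to identify the bordered modules with those of the presentations, translate the gluing into surgery on the connected-sum component via Equation~\eqref{eq:Dehn-surgery-connected-sum}, and conclude with the connected sum formula of \cite{ZemBordered} together with the type-$A$ conversion by ${}_{\cK|\cK}[\bI^{\Supset}]$. The compatibility concerns you flag at the end are exactly what the cited results of \cite{ZemBordered} (Theorem~15.2, Proposition~15.4) supply, since the connected sum formula there is stated for the unfilled link surgery modules with distinguished boundary components, so no further argument is required.
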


After recalling the various definitions, the theorem will follow quickly from the connected sum formulas from \cite{ZemBordered}, as well as the functoriality of the modules under Dehn surgery proved in Theorem~\ref{thm:invariance}. We now describe the argument.

Before proving Theorem~\ref{thm:naturality-gluing}, we recall the construction of the type-$DA$ module ${}_{\cK} \cX(Y_2)^{\cL_{m-1}}$ in terms of $\cX(Y_2)^{\cL_{m}}$.  The type-$DA$ bimodule ${}_{\cK}\cX(Y_2)^{\cL_{m-1}}$ is defined as a tensor product of $\cX(Y_2)^{\cL_2}$ together with the \emph{type-$A$ identity} module ${}_{\cK|\cK}[\bI^{\Supset}]$ from \cite{ZemBordered}*{Section~8.4}. Ignoring completions, this is the same as a type-$A$ module over $\cL_2=\cK\otimes \cK$. The notation $\cK|\cK$ denotes a specific behavior with respect to completions called the \emph{split Alexander condition} \cite{ZemBordered}*{Section~6.4}.

For completeness, we recall the definition of ${}_{\cK|\cK}[\bI^{\Supset}]$. As a vector space, it is the direct sum of $\bF[\scU,\scV]$ and $\bF[U,T,T^{-1}]$, with summands concentrated in idempotents $i_0| i_0$ and  $i_1|i_1$, respectively. If $a|a'$ are concentrated in idempotent $i_0|i_0$ and $x\in \bF[\scU,\scV]$, then
\[
m_2(a|a', x)=aa'x\in \bF[\scU,\scV].
\]
We use the same formula for $a|a'$ concentrated in idempotent $i_1|i_1$. We declare $m_2$ to vanish on tensors involving $\sigma$ or $\tau$. Next, we declare 
\[
m_3(\sigma|1, 1|\sigma, x)=\phi^\sigma(x)\quad \text{and} \quad m_3(\tau|1,1|\tau, x)=\phi^\tau(x),
\]
(extended linearly over algebra elements which are concentrated in $i_0|i_0$ or $i_1|i_1$). We set $m_3(1|\sigma, \sigma|1, x)=m_3(1|\tau, \tau|1,x)=0$ and $m_j=0$ for $j>3$.

\begin{rem} Over the $U$-adic topology, the type-$A$ identity module $[\bI^{\Supset}]$ is an ordinary type-$A$ module over $\frL_2=\frK\otimes^! \frK$. The underlying space of the $[\bI^{\Supset}]$ is topologized using the $U$-adic topology. 
\end{rem}

Finally, the type-$DA$ bimodule ${}_{\cK} \cX(Y_2)^{\cL_{m-1}}$ is defined by tensoring $\cX(Y_2)^{\cL_m}$ with ${}_{\cK|\cK} [\bI^{\Supset}]$ along one copy of $\cK$. More precisely, we extend ${}_{\cK|\cK} [\bI^{\Supset}]$ to a bimodule ${}_{\cL_{m-1}|\cK} [\bI^{\Supset}]^{\cL_{m-1}}$ by taking the external tensor product with the identity bimodule ${}_{\cL_{m-1}}[\bI]^{\cL_{m-1}}$. Similarly we take the external $\cX(Y_2)^{\cL_m}$ with the identity module ${}_{\cK} [\bI]^{\cK}$. The external tensor product operation is standard, though the reader may consult \cite{ZemBordered}*{Section~3.5} for background in our present notation. Finally, we form ${}_{\cK} \cX(Y_2)^{\cL_{m-1}}$ by taking the ordinary box tensor product of the above two bimodules.

  In particular, the bimodule on the left hand side of Equation~\eqref{eq:naturality-gluing} is the same as
\[
\left(\cX(Y_1)^{\cL_{n}}\otimes_{\bF} \cX(Y_2)^{\cL_m}\right)\boxtimes {}_{\cK|\cK} [\bI^{\Supset}].
\]
In the above, $\otimes_{\bF}$ denotes external tensor product, i.e. the differential is defined using the Leibniz rule.

\begin{proof}[Proof of Theorem~\ref{thm:naturality-gluing}]
Let $Y_1$ and $Y_2$ be two bordered manifolds with torus boundaries, as in the statement. Write both $Y_i$ as Morse surgery on a link $J_i$ in the complement of an unlink $O_{i}\subset S^3$ of $n_i$ components. Write $j_i=|J_i|$.

Theorem~\ref{thm:invariance} implies that the modules $\cX(Y_i)^{\cL_{n_i}}$ may be obtained from the module $\cX(S^3,J_i\cup O_{i})^{\cL_{j_i+n_i}}$ by tensoring the type-$A$ solid torus modules ${}_{\cK} \cD_0$ to the components corresponding to $J_i$. On the other hand, gluing as in Theorem~\ref{thm:naturality-gluing} is achieved by taking the connected sum $O_{1}\# O_{2}$, and then performing Dehn surgery on the component where the connected sum is formed (cf. Equation~\eqref{eq:Dehn-surgery-connected-sum}).  Finally, by using the connected sum formulas of \cite{ZemBordered}*{Theorem~15.2, Proposition 15.4} (cf. also \cite{ZemBordered}*{Theorem~12.1}) we obtain that
\[
\begin{split}
&\cX\left((J_1\cup O_1)\#(J_2\cup O_2)\right)^{\cL_{n_1+j_1+n_2+j_2-1}}\\
\simeq& \left(\cX(J_1\cup O_1)^{\cL_{j_1+n_1}}\otimes_{\bF} \cX(J_2, O_2)^{\cL_{j_2+n_2}}\right)\boxtimes {}_{\cK|\cK}[\bI^{\Supset}],
\end{split}
\]
from which Theorem~\ref{thm:naturality-gluing} follows immediately.
The same argument works over the $U$-adic topology. 
\end{proof}

\subsection{Proof of Theorem~\ref{thm:invariance}}
\label{sec:proof-invariance}

In this section, we prove Theorem~\ref{thm:invariance}. We prove the three claims separately, since they have distinct flavors.

We begin by observing that part~\eqref{thm:invariance-3} follows from the fact that the type-$D$ modules over $\frL$ and $\cL$ are defined using identical Heegaard diagrams and holomorphic curve counts.  The main point is the existence of the functor ${}_{\frL}[i]^{\cL}\colon \Mod^{\frL}_{\lc}\to \Mod^{\cL}_{\lc,\ch}$ from Equation~\eqref{eq:functor-achiral-to-chiral}, which transforms linearly compact, achiral
type-$D$ modules over $\frL$ into chiral type-$D$ modules over $\cL$. Since the modules $\cX_{\Lambda}(Y,L)^{\frL}$ are finitely generated (i.e. finitely generated over $\ve{I}$), they are linearly compact. 

We now show that the type-$D$ module $\cX_{\Lambda}(Y,L)^{\frL}$ is independent from the choice of Heegaard diagram:

\begin{proof}[Proof of Theorem~\ref{thm:invariance} part~\eqref{thm:invariance-1}]
 The main ideas of the proof are due to Manolescu and Ozsv\'{a}th \cite{MOIntegerSurgery}*{Proposition~8.35}. Since our construction has some differences, we sketch the argument for the benefit of the reader. Let us denote a meridianal Heegaard diagram with knot shadows by a tuple $(\Sigma,\as,\bs'\cup\bs_{\mu}, \ws, \zs, \cS)$, where $\bs'$ are the non-meridianal beta curves, and $\bs_{\mu}$ are the meridianal beta curves. Here $\cS$ denotes the knot shadows. Furthermore, by definition of a meridianal  Heegaard diagram, there are distinguished disks $D_1,\dots, D_\ell\subset \Sigma$ such that $D_i\subset \Sigma$ and $w_i,z_i\in D_i$. Furthermore,  $(\as\cup \bs')\cap D_i=\emptyset$, $\b\in \bs_\mu$ intersects only one $D_i$, and each shadow in $\cS$ intersects a single $D_i$. 

 We claim that any two meridianal diagrams for $(Y,L,\Lambda)$ may be related by the following moves:
 \begin{enumerate}
 \item \label{moves-1} Ambient isotopy in $(Y,L)$.
 \item \label{moves-2}  Handleslides and isotopies of curves in $\as$, not crossing any $D_i$.
 \item \label{moves-3} Isotopies and handleslides of curves in $\bs'$ amongst each other, not crossing any $D_i$.
 \item \label{moves-4} Isotopies and handleslides, fixed in $D_i$, of curves in $\bs_{\mu}$ over curves of $\bs'$.
 \item \label{moves-5} Index (1,2)-stabilizations, changing $\Sigma$ to $\Sigma\# \bT^2$ and adding a pair of transverse curves  $\a,\b\subset \bT^2$ such that $|\a\cap \b|=1$.
  \item \label{moves-6} Isotopies of the shadows $\cS$ on $\Sigma$ (fixed in $D_i$), as well as handleslides of components of $\cS$ over beta curves $\bs'$.
 \end{enumerate}
Compare \cite{MOIntegerSurgery}*{Section~8.9}. One can prove the above using a standard Morse theoretic argument. It can be phrased naturally in the language of Heegaard diagrams of sutured manifolds as follows. We refer the reader to \cite{JDisks} on background on sutured Heegaard diagrams. We turn the link complement $Y\setminus N(L)$ into a sutured manifold by placing a single null-homotopic suture on each component of $\d N(L)$. This suture bounds a 2-disk. We let $R^-$ denote the union of the 2-disks, and we let $R^+$ denote these 2-disks in $\d N(L)$ (i.e. a disjoint union of $g$ punctured tori). Write $M_L$ for this sutured manifold.

 We pick a meridian and longitude on each component of $\d N(L)$ which have one point of intersection. We assume this point of intersection is contained in the 2-disks $R^-$.  

Note that the resulting sutured Heegaard manifold is not \emph{balanced} in the sense of \cite{JDisks}*{Definition~2.11}, though we can still consider sutured Heegaard diagrams for this manifold. Picking a self-indexing sutured Morse function and gradient like vector field on $M_L$ gives a sutured Heegaard diagram $(\Sigma_0,\as,\bs')$ for $M_L$. The Heegaard surface $\Sigma_0$ has $\ell=|L|$ boundary components. We can flow the meridian and longitude on each component of $\d N(L)$ onto $\Sigma_0$ using the downward flow of the gradient like vector field. These give $2\ell$ arcs on the Heegaard surface, for which we write $\mu_1,\dots, \mu_\ell$ and $\lambda_1,\dots, \lambda_\ell$. They have boundary on $\d \Sigma_0$ and  are disjoint from the beta curves $\bs'$.

To get a meridianal Heegaard diagram for $(Y,L)$, we first fill in the boundary components of $\Sigma_0$ with $\ell$ disks to get a Heegaard surface $\Sigma$ of $Y$. In each disk, we place two basepoints, $w_i$ and $z_i$. Furthermore, we add a beta curve $\b_{0,i}$ which follows the image of $\mu_i$ on $\Sigma$. Finally, the knot shadow is the image of the longitude $\lambda_i$. See Figure~\ref{fig:56}. Note that any meridianal Heegaard link diagram $(\Sigma,\as,\bs'\cup \bs_\mu,\ws,\zs,\cS)$ may be obtained by the above procedure.

\begin{figure}[h]
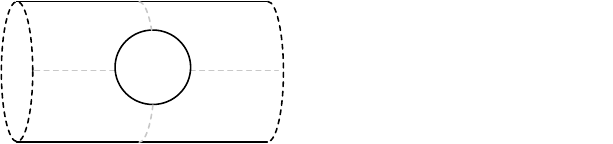
\caption{Left: A Heegaard diagram for the sutured manifold $M_L$. Right: A meridianal Heegaard link diagram for $(Y,L)$. }
\label{fig:56}
\end{figure}

We now consider the effect on the surgery complexes. For all but move~\eqref{moves-6}, the argument is essentially the same as the argument of Manolescu-Ozsv\'{a}th. Invariance under move~\eqref{moves-1} is obvious.  If $(\as,\scB_{\Lambda},\ws,\zs)$ and $(\as', \scB_{\Lambda}', \ws,\zs)$ related by moves \eqref{moves-2}--\eqref{moves-4}, and the total diagram is weakly admissible for each complete collection $\ve{p}\subset \ws\cup \zs$, then we 
construct a homotopy equivalence $b^+\colon \scB_{\Lambda}\to \scB_{\Lambda}'$ by using the hypercube filling procedure of Manolescu and Ozsv\'{a}th \cite{MOIntegerSurgery}*{Lemma~8.6} to iteratively construct chains of increasing length so that the hypercube relations are satisfied. By Lemma~\ref{lem:generalized-top-generator} we can fill each length 1 arrow of the morphism $\scB_{\Lambda}\to \scB_{\Lambda'}$ uniquely up to chain homotopy by requiring them to be top degree cycles which preserve the Alexander grading. We also pick a top degree generator $a^+\in \ve{\CF}^-(\as',\as)$. Using these maps, we construct a homotopy equivalence
\begin{equation}
\begin{tikzcd}
\bX_{\Lambda}(\as, \scB_{\Lambda})\ar[r,equals]&[-.7cm]\ve{\CF}^-(\as,\scB_{\Lambda})\ar[r, "{\mu_2(a^+,\cdot)}"]&\ve{\CF}^-(\as',\scB_{\Lambda})\ar[r, "{\mu_2(\cdot,b^+)}"]&  \ve{\CF}^-(\as',\scB_{\Lambda}')\ar[r,equals]&[-.7cm]\bX_{\Lambda}(\as', \scB_{\Lambda}').
\end{tikzcd}\label{eq:homotopy-equivalence-surgery-complex}
\end{equation}
In the above, $\mu_2$ denotes the composition map $\mu_2^{\Tw}$ for twisted complexes.
  Furthermore, using Remark~\ref{lem:generalized-top-generator}, we may assume the component morphism of $b^+$ corresponding to a sequence $\scO$ of the symbols $\tau,\sigma$ has algebra element which is weighted by an $\bF\llsquare U_1,\dots, U_\ell\rrsquare$ multiple of the map $\phi^{\scO}$. This ensures that the map in Equation~\eqref{eq:homotopy-equivalence-surgery-complex} induces a type-$D$ morphism
\[
\cX_{\Lambda}(\as,\scB_{\Lambda})^{\frL}\to \cX_{\Lambda}(\as',\scB_{\Lambda}')^{\frL}. 
\]
Invariance under stabilizations, move~\eqref{moves-5} follows from a standard stabilization argument for holomorphic polygons, as described in \cite{MOIntegerSurgery}*{Section~8.4}.

We finally consider move~\eqref{moves-6},  isotopies of the shadow $\cS$, and handleslides over the curves in $\bs'$. Here our argument departs slightly from that of Manolescu and Ozsv\'{a}th. Compare \cite{MOIntegerSurgery}*{Section~12.6}. Note that isotopies of a knot shadow $K_i$ may be decomposed as a composition of isotopies of the alpha and beta curves (moves~\eqref{moves-2}--\eqref{moves-4}) together with ambient isotopies (move~\eqref{moves-1}).

 Hence, it suffices to consider handleslides of $\cS$ over the curves in $\bs'$.  In this case, we can construct a choice of $\scB_{\Lambda}$ so that each $\b\in \bs'$ appears (very slightly translated) in each beta curve of $\scB_{\Lambda}$. For example, the hypercubes constructed in Section~\ref{sec:background-link-surgery} satisfy this property. If $\cS'$ denotes the handleslid shadows, then we observe the intersection of $\cS$ and $\cS'$ with the collection of beta curves are identical, so the surgery complex constructed with $\cS$ is identical to the surgery complex constructed with $\cS'$. This completes the proof. 
\end{proof}

Finally, we verify  Part~\eqref{thm:invariance-2} of Theorem~\ref{thm:invariance}, which states that if $(Y,L)$ is a link with Morse framing $\Lambda$, and $(Y',L')$ is the link with Morse framing $\Lambda'$, obtained by performing Dehn surgery on a component $K\subset L$ using the framing from $\Lambda$, then
\[
\cX_{\Lambda}(Y,L)^{\frL}\boxtimes {}_{\frL} \frD_0\simeq \cX_{\Lambda'}(Y',L')^{\frL'}.
\]

\begin{proof}[Proof of Theorem~\ref{thm:invariance} Part~\eqref{thm:invariance-2}] We pick a meridianal Heegaard link diagram $(\Sigma,\as,\bs_0,\ws,\zs)$ for $(Y,L)$, and consider the associated hypercube of attaching curves $\scB_{\Lambda}$ of dimension $|L|$, constructed in Section~\ref{sec:background-link-surgery}. We may construct a Heegaard link diagram for $(Y',L')$ be replacing the meridianal beta curve $\b_0$ for $K$ with a beta curve which is the longitude $\b_\lambda$. We can construct a corresponding hypercube of beta attaching curves $\scB'_{\Lambda'}$, which is of dimension $|L|-1$. Using the techniques of Theorem~\ref{thm:iterated-cone}, we obtain a homotopy equivalence $b^+\colon \scB_{\Lambda}\to \scB'_{\Lambda'}$. Applying the $A_\infty$-functor $\ve{\CF}^-(\as,-)$, we obtain a homotopy equivalence
\[
\ve{\CF}^-(\as, \scB_{\Lambda})\simeq \ve{\CF}^-(\as, \scB'_{\Lambda'}).
\]
This establishes a homotopy equivalence $\bX_{\Lambda}(Y,L)\simeq \bX_{\Lambda'}(Y',L')$. To see that this map is induced by a homotopy equivalence of type-$D$ modules 
\[
\cX_{\Lambda}(Y,L)^{\frL}\boxtimes {}_{\frK} \frD_0\to \cX_{\Lambda'}(Y',L')^{\frL'},
\]
we focus on the morphism $b^+$. We may assume the attaching curves are chosen so that if $\bs_\veps$ and $\bs'_{\veps'}$ are sets of attaching curves in $\scB_{\Lambda}$ and $\scB'_{\Lambda'}$, respectively, and $\ve{p}\subset \ws\cup \zs$ is a complete collection, then there are no intersection points $\xs\in \bT_{\b_{\veps}}\cap \bT_{\b_{\veps'}'}$ which have $\frs_{\ps}(\xs)$ torsion and have $\gr_{\ps}$-grading higher than the top degree generator of $\HF^-(\bs_{\veps}, \bs_{\veps'}')$. Consequently, $b^+$ has only length 1 components, each of which is of the form
\[
\lb \theta^+_{\b_{(\veps_1,\dots, \veps_{\ell-1},1)}, \b'_{(\veps_1,\dots, \veps_{\ell-1})}}, \id_{E_{\veps_1}}\otimes \cdots \otimes \id_{E_{\veps_{\ell-1}}}\otimes \Pi\rb.
\]
The map $\mu_2^{\Tw}(-, b^+)$ clearly induces a type-$D$ module map. The maps in the opposite direction and the chain homotopies appearing in the proof of the homotopy equivalence are analyzed by essentially the same argument. 
\end{proof}

\section{Absolute gradings}
\label{sec:gradings}

We now describe absolute grading formulas on the link and sublink surgery formulas. For knots, Ozsv\'{a}th and Szab\'{o} proved an absolute grading formula \cite{OSIntegerSurgeries}*{Theorem~4.1}. A relative Maslov grading was described by Manolescu and Ozsv\'{a}th \cite{MOIntegerSurgery}*{Section~9.3} on the link surgery formula.   We focus on the case of a link $L\subset S^3$ with integral framing $\Lambda$. We also derive formulas for the Alexander and Maslov multigradings on the sublink surgery formula from Corollary~\ref{cor:Manolescu-Ozsvath-subcube}. 

For links in other integer homology 3-spheres, the same formulas may be used; for links in more general 3-manifolds, a similar analysis may be performed, but we do not investigate it here.

\begin{rem}
 It is likely possible to perform extra bookkeeping in Manolescu and Ozsv\'{a}th's proof of the link surgery formula to also derive these formulas. Due to the technical nature of truncations in their proof, implementation turns out to be more intricate than one might expect. A special case has been implemented in \cite{HHSZDuals} (cf. \cite{HeddenLevineSurgery} \cite{ZhouSurgery} which are similar in spirit). Our proof is simpler since it completely avoids truncations and uses standard grading formulas for cobordisms.
\end{rem}

\subsection{Closed 3-manifolds}

Write $W_{\Lambda}(L)$ for the 2-handle cobordism from $S^3$ to $S^3_{\Lambda}(L)$. Let $\ve{s}=(s_1,\dots, s_\ell)\in \bH(L)$. We write $\frz_{\ve{s}}\in \Spin^c(W_{\Lambda}(L))$ for the $\Spin^c$ structure which satisfies
\begin{equation}
\frac{\langle c_1(\frz_{\ve{s}}), \Sigma_i\rangle -\Sigma\cdot \Sigma_i}{2}=-s_i.
\label{eq:definition-z-s}
\end{equation}
Here, $\Sigma_1,\dots, \Sigma_\ell\in H_2(W_{\Lambda})$ are the classes of the cores of the 2-handles used to construct $W_{\Lambda}(L)$, capped with Seifert surfaces of $L\subset S^3$. We orient the cores of the 2-handles by viewing them as a link cobordism from $L$ to the empty knot. Also $\Sigma:=\Sigma_1+\cdots+\Sigma_\ell$. 

We recall that $\bH(L)/\im \Lambda\iso \Spin^c(S^3_{\Lambda}(L))$. If $\frs\in \Spin^c(S^3_{\Lambda}(L))$, we write
\[
\bX_{\Lambda}(L,\frs)=\bigoplus_{\substack{
\ve{s}\in \bH(L)\\
[\ve{s}]=\frs}
} \bX_{\Lambda}(L,\ve{s}). 
\]
We write $\bX_{\Lambda}^\veps(L,\ve{s})\subset \bX_{\Lambda}(L,\ve{s})$ for the subspace in cube grading $\veps\in \bE_\ell$.

\begin{thm}\label{thm:grading} Suppose $L\subset S^3$ is a link with integer framing $\Lambda$ and $\frs\in \Spin^c(S^3_{\Lambda}(L))$ is torsion.   Then the isomorphism 
$\ve{\CF}^-(S^3_{\Lambda}(L),\frs)\simeq \bX_{\Lambda}(L,\frs)$ 
is absolutely graded if on each $\bX_{\Lambda}^\veps(L,\ve{s})\subset \bX_{\Lambda}(L)$  we use the Maslov grading
\[
\tilde{\gr}:=\gr_{\ws}+\frac{c_1(\frz_{\ve{s}})^2-2\chi(W_{\Lambda}(L))-3\sigma (W_{\Lambda}(L))}{4}+|L|-|\veps|
\]
where $\gr_{\ws}$ is the internal Maslov grading from $\cCFL(L)$. 
\end{thm}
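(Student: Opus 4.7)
The plan is to derive the absolute grading formula by combining the iterated Fukaya category equivalence of Theorem~\ref{thm:iterated-cone} with Ozsv\'{a}th and Szab\'{o}'s cobordism grading formula, using the one-knot case as the base of an induction on $|L|$. The three summands in the asserted grading expression have transparent sources: $\gr_{\ws}$ is the internal grading of the link Floer complex, the $\Spin^c$-dependent term $(c_1(\frz_{\ve{s}})^2-2\chi(W_\Lambda(L))-3\sigma(W_\Lambda(L)))/4$ is the grading shift of the $2$-handle cobordism $W_\Lambda(L)$, and the combinatorial shift $|L|-|\veps|$ records how many components of $L$ remain to be ``mapping-coned'' at cube point $\veps$.

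First I would verify the formula for a single knot $K \subset S^3$ with integer framing $\lambda$, where $|\veps|\in\{0,1\}$. This is essentially Ozsv\'{a}th--Szab\'{o}'s original grading formula for the mapping cone $\bX_\lambda(K,\frs)$. In our framework the homotopy equivalence $\b_\lambda\simeq\Cone(\lb\theta_\sigma^+,\phi^\sigma\rb+\lb\theta_\tau^+,\phi^\tau\rb)$ of Proposition~\ref{prop:general-HE} is homogeneously graded once one fixes absolute Maslov shifts for $\lb\theta_\sigma^+,\phi^\sigma\rb$ and $\lb\theta_\tau^+,\phi^\tau\rb$. These shifts can be read off from the identity $\mu_2(\Phi,\Psi)=\lbmed\theta^+_{\b_\lambda,\b_\lambda'},\id\rbmed$ of Proposition~\ref{prop:PhiPsi} together with the chain homotopy in Proposition~\ref{prop:PsiPhi}, and matched against the standard grading shift of the $2$-handle cobordism $W_\lambda(K)$.

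For the inductive step, I would invoke the iteration procedure of Section~\ref{sec:iterating-background}: the link surgery hypercube is built by applying the one-knot exact triangle one component at a time, each step replacing a meridianal beta curve with a longitudinal one and introducing one new cube direction. At each stage the internal grading $\gr_{\ws}$ is preserved, the cube correction $|L|-|\veps|$ picks up exactly $+1$ in the unsurgered direction, and the cobordism grading shift is additive under composition of $2$-handle cobordisms. Summing over all components assembles the full cobordism $W_\Lambda(L)$ and recovers the quantity $(c_1(\frz)^2-2\chi(W_\Lambda(L))-3\sigma(W_\Lambda(L)))/4$ for the appropriate $\Spin^c$ structure $\frz$.

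The final and most delicate task is to verify that the $\Spin^c$ structure which appears in the cobordism shift at the summand $\bX_\Lambda^\veps(L,\ve{s})$ is precisely $\frz_{\ve{s}}$ as defined in Equation~\eqref{eq:definition-z-s}. This reduces to matching $\langle c_1(\frz_{\ve{s}}),\Sigma_i\rangle$ with the Alexander multi-grading $s_i$, using Lemma~\ref{lem:grading-descent-maps} together with the translation between the grading $A'$ and the original $A$ established in Section~\ref{sec:intro-Alexander-gradings}; the self-intersection correction $\Sigma\cdot\Sigma_i$ is exactly the contribution recorded by the framing matrix $\Lambda$ as one iterates the exact triangle across different components. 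The main obstacle is precisely this bookkeeping: tracking how local Alexander-grading shifts introduced by the $\phi^\sigma$ and $\phi^\tau$ summands at each mapping cone step accumulate into the global Chern-class pairings on $W_\Lambda(L)$. Calibration via a simple model case (for instance the $n$-framed unknot, computed in Section~\ref{sec:examples}) pins down all constants, after which the general formula follows by induction.
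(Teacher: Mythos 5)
Your plan diverges from the paper's argument in an essential way, and the divergence is where the gap lies. You propose an induction on $|L|$, surgering one component at a time and invoking ``additivity of the cobordism grading shift'' to assemble $\tfrac{c_1(\frz_{\ve{s}})^2-2\chi-3\sigma}{4}$ for the full cobordism $W_{\Lambda}(L)$. But the intermediate stages of that induction are surgeries $S^3_{\Lambda'}(L')$ on proper sublinks, and these 3-manifolds can have $b_1>0$ with the relevant restriction of the $\Spin^c$ structure non-torsion; the knot component being handled at the next stage can moreover be homologically essential there. In that situation the absolute $\Q$-grading on the intermediate Floer complexes is not even defined, Ozsv\'{a}th--Szab\'{o}'s knot-surgery grading formula (your intended base-case engine) does not apply, and $c_1(\frt)^2$ is not additive under composition of cobordisms unless the restrictions to the intermediate boundaries are torsion. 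So the inductive step as described does not go through without substantial additional hypotheses or a reworking of the intermediate bookkeeping. A secondary omission: for the components fed $\tau$-labeled (negatively oriented) inputs, the natural grading of the relevant polygon maps is taken with respect to the $z$-basepoints, and converting back to $\gr_{\ws}$ requires the identity that $\gr_{\ve{p}}+c_1(\frz^{\ve{p}}_{\delta^{\ve{p}}(\ve{s})})^2/4$ is independent of the complete collection $\ve{p}$; your proposal never confronts this basepoint-dependence.

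The paper avoids both problems by never passing through intermediate 3-manifolds. It decomposes the single homotopy equivalence $\Gamma\colon \bX_{\Lambda}(L)\to \ve{\CF}^-(S^3_{\Lambda}(L))$ into summands $\Gamma^{\vec{M}}$ indexed by oriented sublinks, further splits each over $\Spin^c(W_{\Lambda}(L))$ using the 4-manifold associated to the full Heegaard multi-diagram, computes the Alexander shift of $\Gamma^{\vec{M}}_{\frt}$ by splicing the polygon class into a triangle class (handled by the link cobordism grading formula) and a pure-beta class (which shifts nothing), and concludes that only $\frt=\frz^{\ve{p}}_{\ve{s}}$ contributes in each Alexander grading. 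The Maslov shift is then read off for $\Gamma^{\vec{L}}$ from the standard cobordism grading formula for $W_{\Lambda}(L)$ applied all at once, converted from $\gr_{\ve{p}}$ to $\gr_{\ws}$ by the independence lemma above, and extended to arbitrary cube points $\veps$ not by re-surgering but by precomposing with the hypercube maps $\Phi^{\vec{J}}$ inside the same complex, whose $\gr_{\ws}$-shift is $|\veps|-1$. If you want to salvage your approach, you would need a version of the knot-surgery grading statement valid for Morse-framed, possibly homologically essential knots in manifolds with $b_1>0$, which is precisely the kind of statement the paper's direct argument is designed to avoid needing.
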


Before proving Theorem~\ref{thm:grading}, we prove several intermediate results.

 We recall that if $(\Sigma,\as,\bs,\ws,\zs)$ is a Heegaard link diagram, we call a subset $\ve{p}\subset \ws\cup \zs$ a \emph{complete collection of basepoints} if it contains exactly one basepoint from each link component.  We can view a complete collection $\ve{p}$ as determining an orientation on $L$ by declaring $L$ to intersect $\Sigma$ negatively at $\ve{p}$. Equivalently, we can consider the orientation induced by the Heegaard link diagram $(\Sigma,\as,\bs,\ps,\qs)$ where $\qs=(\ws\cup \zs)\setminus \ps$. Note that changing which basepoints are labeled as $\ws$ versus $\zs$ does not change the Floer complex $\cCFL(Y,L)$ (up to relabeling some variables $\scU_i$ versus $\scV_i$). In particular, given a complete collection $\ps$, there is an Alexander multi-grading $A^{\ve{p}}$ on $\cCFL(Y,L)$. The standard multi-grading $A$ coincides with $A^{\ws}$. 

\begin{lem}\label{lem:Alexander-grading-change-signs}
 Suppose that $(\Sigma,\as,\bs,\ws,\zs)$ is a Heegaard link diagram for a rationally null-homologous link in a 3-manifold $Y$.
 \begin{enumerate}
 \item Suppose $w_i,z_i$ are basepoints on a link component $K_i$ and assume that both $\ve{p}=\ve{p}_0\cup \{w_i\}$ and $\ve{p}'=\ve{p}_0\cup \{z_i\}$ are complete collections of basepoints. Then
\[
A_j^{\ve{p}}=\begin{cases} A_j^{\ve{p}'}& \text{ if } i\neq j\\
-A_j^{\ve{p}'}& \text{ if } i=j.
\end{cases}
\]
\item Suppose that $\ve{p},\ve{p}'\subset \ws\cup \zs$ are two complete collections of basepoints. Let $M$ denote the oriented sublink of $L$ consisting of components where $\ve{p}$ and $\ve{p}'$ differ. Orient $L$ and $M$ to intersect $\Sigma$ negatively at $\ve{p}$.  Then
\[
\gr_{\ve{p}'}-\gr_{\ve{p}}=\lk(L\setminus M,M)-2\sum_{K_i\subset M} A_i^{\ve{p}}.
\]
\end{enumerate}
\end{lem}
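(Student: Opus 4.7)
Part (1) is immediate from the defining property of the multi-Alexander grading. For any complete collection of basepoints $\ve{q} \subset \ws \cup \zs$ and each component $K_j$, writing $q_j \in \ve{q}$ for the basepoint on $K_j$ and $\bar{q}_j$ for the other basepoint on $K_j$, the grading $A_j^{\ve{q}}$ is characterized by
\[
A_j^{\ve{q}}(\xs) - A_j^{\ve{q}}(\ys) = n_{\bar{q}_j}(\phi) - n_{q_j}(\phi), \qquad \phi \in \pi_2(\xs,\ys),
\]
together with a normalization constant fixed by the orientation-reversal symmetry of link Floer homology on $K_j$. Under the swap $w_i \leftrightarrow z_i$, the right-hand side is unchanged when $j \neq i$ (since $q_j,\bar q_j$ are unaltered) and is negated when $j = i$, and the normalization changes consistently. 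The stated sign behavior follows.

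For part (2), the plan is to reduce to a single swap and then iterate. If $M$ consists of a single component $K_i$, a direct Maslov index computation using only the definitions of $\gr_{\ve{q}}$ and $A_i^{\ve{q}}$ in terms of disk multiplicities gives
\[
\bigl(\gr_{\ve{p}'}(\xs) - \gr_{\ve{p}'}(\ys)\bigr) - \bigl(\gr_{\ve{p}}(\xs) - \gr_{\ve{p}}(\ys)\bigr) = -2\bigl(A_i^{\ve{p}}(\xs) - A_i^{\ve{p}}(\ys)\bigr),
\]
so $\gr_{\ve{p}'} - \gr_{\ve{p}} + 2A_i^{\ve{p}}$ is constant on each torsion $\Spin^c$ component. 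I would pin down the constant using Ozsv\'{a}th--Szab\'{o}'s absolute grading formula for cobordism maps applied to the product cobordism $S^3 \times [0,1]$ decorated by $L$: swapping the basepoint on $K_i$ corresponds to the link-cobordism element associated to orientation reversal along $K_i$, and the resulting grading shift evaluates to $\lk(K_i, L \setminus K_i)$.

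For a general sublink $M$, I would order its components and iterate the one-component case along a swap sequence $\ve{p} = \ve{p}_0 \rightsquigarrow \ve{p}_1 \rightsquigarrow \cdots \rightsquigarrow \ve{p}_{|M|} = \ve{p}'$. At each step the Maslov change is $\lk(K_i, L\setminus K_i) - 2A_i^{\ve{p}_{s-1}}$. Summing, the within-$M$ linking contributions $\lk(K_i,K_j)$ (for $K_i, K_j$ both in $M$) cancel in pairs against the shifts $A_j^{\ve{p}_{s-1}} - A_j^{\ve{p}}$ produced by the orientation reversals on the previously swapped components, leaving the clean expression $\lk(L \setminus M, M) - 2\sum_{K_i \subset M} A_i^{\ve{p}}$. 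The main obstacle will be the careful sign bookkeeping --- verifying that the cross-cancellations happen exactly as claimed and that the final answer is independent of the order of swaps --- which is most transparently organized through the link TQFT formalism for basepoint changes.
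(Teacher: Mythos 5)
The paper does not actually reprove this lemma; it cites \cite{ZemBordered}*{Lemma~7.4} for part (1) and for part (2) in the single-swap case, and states the general case follows, so your proposal is necessarily a genuinely different (self-contained) argument rather than a paraphrase. Your relative Maslov computation for a single swap is correct: both $\gr_{\ve{p}'} - \gr_{\ve{p}}$ and $-2A_i^{\ve{p}}$ have relative change $2(n_{w_i}(\phi)-n_{z_i}(\phi))$ across a disk, so their difference is constant on each torsion $\Spin^c$ component, and pinning down that constant via the absolute grading / TQFT machinery is a reasonable route. Iterating over $M$ component-by-component is also the right shape of argument.

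There is, however, a concrete flaw in your description of the cancellation for the general $M$. You claim that the within-$M$ linking contributions ``cancel in pairs against the shifts $A_j^{\ve{p}_{s-1}} - A_j^{\ve{p}}$.'' But by part (1) of the lemma those shifts are all zero: at step $s$ the grading $A_{i_s}^{\ve{p}_{s-1}}$ involves a component whose basepoint has not yet been swapped, and swapping basepoints on \emph{other} components leaves $A_{i_s}$ unchanged, so $A_{i_s}^{\ve{p}_{s-1}} = A_{i_s}^{\ve{p}}$. The cancellation therefore cannot be supplied by Alexander-grading shifts. What actually happens is purely in the linking-number terms: at step $s$ the single-swap formula uses $\lk\bigl(K_{i_s},\, L^{\ve{p}_{s-1}}\setminus K_{i_s}\bigr)$, where the previously swapped components of $L$ now carry the opposite orientation. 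Rewriting this in the $\ve{p}$-orientation gives $\lk\bigl(K_{i_s}, L^{\ve{p}}\setminus K_{i_s}\bigr) - 2\sum_{t<s}\lk(K_{i_s},K_{i_t})$, and summing over $s$ the correction terms $-2\sum_{t<s}\lk(K_{i_s},K_{i_t})$ exactly cancel the within-$M$ contributions hidden in $\sum_s \lk(K_{i_s}, L^{\ve{p}}\setminus K_{i_s}) = \lk(M, L\setminus M) + 2\sum_{t<s}\lk(K_{i_s},K_{i_t})$. This yields $\lk(L\setminus M, M) - 2\sum_{K_i\subset M} A_i^{\ve{p}}$ and is manifestly order-independent, which is the cleaner way to finish your argument.
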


See \cite{ZemBordered}*{Lemma~7.4} for a proof of the above lemma. Note that therein, the second statement is proven when $M$ consists of a single component. The general case is a straightforward consequence.

By pairing the morphism $\scB_{\Lambda}\to \bs_{\Lambda}$ from Theorem~\ref{thm:iterated-cone} (which was a homotopy equivalence in the Fukaya category) with a  Lagrangian $\as$ (viewed as having trivial local system), we obtain our homotopy equivalence
\[
\Gamma\colon\bX_{\Lambda}(L)=\ve{\CF}^-(\as, \scB_{\Lambda})\to \ve{\CF}^-(\as, \bs_{\Lambda})=\ve{\CF}^-(S^3_{\Lambda}(L)).
\]

 Write $\bs_\veps$ for the attaching curves of $\scB_{\Lambda}$, ranging over $\veps\in \bE_\ell$. The map $\Gamma$ decomposes as a sum 
\[
\Gamma=\sum_{\vec{M}\subset L}\Gamma^{\vec{M}}
\] ranging over oriented sublinks $\vec{M}\subset L$ (the empty link is allowed). Here, $\Gamma^{\vec{M}}$ is only non-trivial on the cube point $\veps$ determined by
\begin{equation}
\veps_i=0\quad \iff \quad \pm K_i\subset \vec{M}.\label{eq:unique-non-trivial-epsilon}
\end{equation}
  The map $\Gamma^{\vec{M}}$ is counts holomorphic $(|\vec{M}|+3)$-gons which have a special input for each component of $\vec{M}$, and a final input of $\lbmed\theta_{\b_{(1,\dots, 1)}, \b_{\Lambda}}^+,\Pi\rbmed$, where $\Pi$ is $U_i$ equivariant and satisfies 
  \[
  \Pi(T_1^{i_1}\dots T_\ell^{i_\ell})=\begin{cases} 1 & \text{ if }i_1=\cdots =i_\ell=0\\
  0& \text{ otherwise}.\end{cases}
  \]
   The special 
  inputs corresponding to components of $\vec{M}$ are length 1 chains in $\scB_{\Lambda}$. If $+K\subset \vec{M}$, then the corresponding generator is $\sigma$-labeled, and if $-K\subset \vec{M}$, then the corresponding generator is $\tau$-labeled.

Write 
\[
X_{M}=X_{\a, \b_{\veps_1},\dots, \b_{\veps_n}, \b_{\Lambda}}
\]
for the 4-manifold constructed in \cite{OSDisks}*{Section~8}. This manifold has boundary components $Y_{\b_{\veps_1},\b_{\veps_2}},\cdots, Y_{\b_{\veps_n}, \b_{\Lambda}}$, which are connected sums of $S^1\times S^2$. Filling in these boundary components with 4-dimensional 1-handlebodies gives $W_{\Lambda}(L)$. 

If $\ve{p}\subset \ws\cup \zs$ is a collection complete collection of basepoints, we have a map
\[
\frs_{\ve{p}}\colon \pi_2(\xs,\theta_{\veps_1,\veps_2},\dots, \theta_{\veps_{n-1},\veps_n},\theta_{\veps_n, \Lambda})\to \Spin^c(X_{M}),
\]
as defined in \cite{OSDisks}*{Proposition~8.4}.

\begin{define}
 If $\vec{M}\subset L$, we say that $\ve{p}$ is \emph{compatible} with $\vec{M}$ if $z_i\in \ve{p}$  whenever $-K_i\subset \vec{M}$ and $w_i\in \ve{p}$ whenever $+K_i\subset \vec{M}$. 
\end{define}

When $\ve{p}$ is compatible with $\vec{M}$ and $\psi$ is a class of $(n+2)$-gons with inputs compatible with $\vec{M}$, then the restriction of $\frs_{\ve{p}}(\psi)$ to the pure beta boundary components of $X_M$ is torsion, so we may view $\frs_{\ve{p}}(\psi)$ as being an element of $\Spin^c(W_{\Lambda}(L))$. In particular, we obtain a decomposition of the map $\Gamma^{\vec{M}}$ over $\Spin^c$-structures
\[
\Gamma^{\vec{M}}=\sum_{\frt\in \Spin^c(W_{\Lambda}(L))} \Gamma^{\vec{M}}_{\frt}.
\]
Where $\Gamma^{\vec{M}}_{\frt}$ counts holomorphic polygons with $\frs_{\ve{p}}(\psi)=\frt$. Note that this decomposition is independent of the choice of $\ve{p}$ which is compatible with $\vec{M}$. This is because the map $\Gamma^{\vec{M}}$ is only non-trivial on the $\veps$ satisfying Equation~\eqref{eq:unique-non-trivial-epsilon}, and on the Heegaard multi-diagrams used to define $\Gamma^{\vec{M}}$, the points $w_i$ and $z_i$ are immediately adjacent if $\veps_i=1$. 

As we have considered earlier, for each complete collection $\ve{p}\subset \ws \cup \zs$, there is an Alexander multi-grading $A^{\ps}$ on the complex $\cC^{\vec{0}}_{\Lambda}(L)\iso \cCFL(L)$, taking values in $\bH(L)$.  We can extend each $A^{\ve{p}}$ to a multi-grading on each $\cC^{\veps}_{\Lambda}(L)$ by declaring that $\Phi^{\vec{M}}$ to preserve $A^{\ve{p}}$ whenever $\vec{M}$ and $\ve{p}$ are compatible. (Note that we do not need $\Phi^{\vec{M}}$ to be non-vanishing to make this definition; we can use homology classes of polygons instead of holomorphic polygons for the purposes of gradings, cf. Ozsv\'{a}th and Szab\'{o}'s original definition of Absolute gradings \cite{OSIntersectionForms}*{Section~7}).

We will write $\Sigma_i\in H_2(W_{\Lambda}(L))$ for the class obtained by capping the core of the 2-handle attached along $K_i$. We define
\[
\Sigma_i^{\ve{p}}=\begin{cases} \Sigma_i& \text{ if } w_i\in \ve{p}\\
-\Sigma_i & \text{ if } z_i\in \ve{p}.
\end{cases}
\]
We define $\Sigma^{\ve{p}}=\Sigma_1^{\ve{p}}+\cdots+\Sigma_\ell^{\ve{p}}$.

\begin{lem}
\label{lem:grading-shift-Gamma-M} If $\ve{p}$ and $\vec{M}$ are compatible, and we view $\ve{\CF}^-(S^3_{\Lambda}(L))$ as being supported in Alexander multi-grading $(0,\dots, 0)$, then the map $\Gamma^{\vec{M}}_{\frt}$ has Alexander grading
\[
A^{\ve{p}}_i(\Gamma^{\vec{M}}_\frt)=\frac{\left \langle c_1(\frt), \Sigma_i^{\ve{p}}\right\rangle-\Sigma^{\ve{p}}\cdot \Sigma_i^{\ve{p}} }{2}.
\]
\end{lem}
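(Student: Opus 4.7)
The plan is to adapt Ozsv\'{a}th--Szab\'{o}'s first Chern class calculation for cobordism maps (see \cite{OSIntersectionForms}*{Section~7}, \cite{OSDisks}*{Section~8}) to compute the Alexander grading shift of $\Gamma^{\vec{M}}_{\frt}$. The first step is a sign reduction: by Lemma~\ref{lem:Alexander-grading-change-signs}, swapping the basepoint of $\ve{p}$ on $K_i$ from $w_i$ to $z_i$ negates $A_i^{\ve{p}}$, and this matches the convention $\Sigma_i^{\ve{p}}=-\Sigma_i$ used in the statement. Since $\vec{M}$ is compatible with $\ve{p}$, the orientation of $K_i\subset \vec{M}$ tracks this same choice, so it suffices to establish the formula when $\ve{p}=\ws$ and every component of $\vec{M}$ is positively oriented.

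Next, for a single polygon class $\psi$ counted by $\Gamma^{\vec{M}}_{\frt}$ with input $\xs$ and output $\ys$, I will compute the Alexander grading shift $A_i^{\ve{p}}(\ys)-A_i^{\ve{p}}(\xs)$ from $\psi$ alone. The algebra weight of $\ys$ is $\prod_{j} U_j^{n_{w_j}(\psi)}$ times the ordered composition of local-system monodromies around $\d\psi$, as in Section~\ref{sec:local-systems}. The final $\Pi$ morphism forces the overall $T_j$-exponent in this composition to vanish for each $j$ with $\veps_j=1$ on the input cube point; for $j$ with $K_j\subset \vec{M}$, the $\sigma$- or $\tau$-labeled vertex together with $\Pi$ produces an analogous cancellation. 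Using the Alexander grading conventions on $\bF[\scU_i,\scV_i]$ and $\bF[U_i,T_i,T_i^{-1}]$ declared in Section~\ref{sec:intro-Alexander-gradings}, together with the normalization that $\ve{\CF}^-(S^3_{\Lambda}(L))$ sits in multi-grading zero, these constraints reduce the grading shift along $\psi$ to
\[
A_i^{\ve{p}}(\ys)-A_i^{\ve{p}}(\xs)=n_{z_i}(\psi)-n_{w_i}(\psi),
\]
so the map-grading $A_i^{\ve{p}}(\Gamma^{\vec{M}}_{\frt})$ equals $n_{z_i}(\psi)-n_{w_i}(\psi)$ for any $\psi$ it counts.

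Finally, I will invoke the first Chern class formula of Ozsv\'{a}th and Szab\'{o} \cite{OSDisks}*{Section~7} (see also \cite{OSLinks}*{Section~8}) on the 4-manifold $X_M$ attached to the Heegaard multi-diagram. Because $\ve{p}$ is compatible with $\vec{M}$, the restriction of $\frs_{\ve{p}}(\psi)$ to each pure-beta boundary component of $X_M$ is torsion, so $\frs_{\ve{p}}(\psi)$ extends canonically across the 1-handle fillings used to form $W_{\Lambda}(L)$ and equals $\frt$ by the definition of $\Gamma^{\vec{M}}_{\frt}$. Their Chern class formula then identifies the quantity $n_{z_i}(\psi)-n_{w_i}(\psi)$ with
\[
\tfrac{1}{2}\bigl(\langle c_1(\frt),\Sigma_i\rangle -\Sigma\cdot \Sigma_i\bigr),
\]
which proves the formula for $\ve{p}=\ws$; the general case follows from the first step. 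The main obstacle lies in the middle step: because $\Pi$ is not $T_j$-equivariant, one must carefully track how the monodromy factors along the boundary arcs of $\psi$ combine with the $U$-power produced by the local system on $\b_0^{E_0}$, and verify that the residual Alexander shift depends only on basepoint multiplicities, exactly as in the classical knot-Floer first Chern class calculation.
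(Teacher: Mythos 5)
Your high-level strategy --- compute the grading shift of the full polygon directly and compare against the Ozsv\'{a}th--Szab\'{o} Chern class formula on the associated 4-manifold --- is a genuinely different route from the paper's, and is closer in spirit to Ozsv\'{a}th--Szab\'{o}'s original grading computation for the knot surgery formula. However, the middle step as written has a real gap, which you yourself flag as ``the main obstacle'' without resolving it, and it is the crux of the lemma.

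The issue is the unsupported identity $A_i^{\ve{p}}(\ys)-A_i^{\ve{p}}(\xs)=n_{z_i}(\psi)-n_{w_i}(\psi)$ for a $(|\vec{M}|+3)$-gon $\psi$. The grading $A^{\ve{p}}$ on the surgery complex away from the cube point $\vec{0}$ is \emph{defined by extension}: one declares the maps $\Phi^{\vec{M}}$ to preserve it when $\vec{M}$ and $\ve{p}$ are compatible. It is not a priori the ordinary ``$n_z-n_w$'' shift of the multi-pointed diagram. The monodromy factors along the beta edges (which contribute $\scV_j$- and $T_j$-powers) and the non-equivariance of $\Pi$ interact with the $U_j^{n_{w_j}}$ normalization in a way that has to be untangled explicitly; you have not done this, and the cancellation you gesture at is exactly what needs proof. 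Similarly, the claimed identification of $n_{z_i}(\psi)-n_{w_i}(\psi)$ with $\tfrac12(\langle c_1(\frt),\Sigma_i\rangle-\Sigma\cdot\Sigma_i)$ is not a clean citation --- Ozsv\'{a}th--Szab\'{o}'s Chern class formula is stated for triangles, and even there the Alexander shift formula has a genus/Euler characteristic correction that has to be checked to vanish in this setting; you would need the polygon version, which is more work to set up correctly. A smaller point: your opening reduction ``it suffices to establish the formula when $\ve{p}=\ws$ and every component of $\vec{M}$ is positively oriented'' only handles sign changes within a fixed underlying sublink $M$; you still need a separate argument (and another unstated step) to handle sublinks of varying size.

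The paper avoids both difficulties by a splice. For $|\vec{M}|=|L|$, it writes any polygon class $\psi$ as a splice of a triangle class $\psi_1\in\pi_2(\as,\bs_{\vec 0},\bs_{\vec 1})$ and a pure-beta polygon class $\psi_2$. The grading shift of $\psi_1$ is exactly the Alexander grading shift of a link cobordism, for which the author already has a clean formula (\cite{ZemAbsoluteGradings}*{Theorem~2.14(2)}), and the shift of $\psi_2$ is zero for the elementary homological reason that the net intersection of $\d D(\psi_2)$ with the closed shadow $K_i$ vanishes (it is a boundary against a cycle). General $\vec{M}$ is then reduced to $|\vec{M}|=|L|$ by precomposing with $\Phi^{\vec J}$, which preserves $A^{\ve{p}}$ by the very definition of the extended grading. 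If you want to salvage your direct approach you would need to prove the polygon-level Alexander shift and Chern class identities rather than cite them; at that point you would essentially be reproving \cite{ZemAbsoluteGradings}*{Theorem~2.14(2)} together with a degeneration argument, so the splice is the more economical route.
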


\begin{proof} 
We consider first the case that $|\vec{M}|=|L|$. The map $\Gamma^{\vec{M}}$ will count holomorphic polygons on diagrams of the form 
\[
(\Sigma, \as,  \bs_{\vec{0}}, \bs_{\veps_1},\dots, \bs_{\veps_{\ell-1}}, \bs_{\vec{1}})
\]
where $\vec{0}<\veps_1<\dots<\veps_{\ell-1}<\vec{1}$ are points in $\bE_\ell$. A polygon class
\[
\psi\in \pi_2(\as, \bs_{\vec{0}}, \bs_{\veps_1},\dots, \bs_{\veps_{\ell-1}}, \bs_{\vec{1}}, \bs_{\Lambda})
\]
may always be decomposed into the splice of two classes
\[
\psi_1\in \pi_2(\as, \bs_{\vec{0}}, \bs_{\vec{1}}) \qquad \text{and} \qquad  \psi_2\in \pi_2(\bs_{\vec{0}}, \bs_{\veps_1},\dots, \bs_{\veps_n}, \bs_{\vec{1}}, \bs_{\Lambda}).
\]
 The Alexander grading shift of $\psi_1$ can be computed as the Alexander grading shift of a link cobordism \cite{ZemAbsoluteGradings}*{Theorem~2.14 (2)}. Using the aforementioned grading shift formula, we see that the Alexander grading shift of $\psi_1$ coincides with the formula in the statement. The Alexander grading shift of $\psi_2$ is zero, since all the input morphisms (such as $\lb\theta^\sigma_i, \phi^\sigma_i\rb$, and so forth) preserve Alexander grading, and the net $A^{\ve{p}}_i$ Alexander grading shift is the intersection of the beta boundary components of this class with the shadow of $\pm K_i$. This intersection number is always zero for $\psi_2$ since there are only beta curves on the Heegaard diagram $(\Sigma, \bs_{\vec{0}}, \bs_{\veps_1},\dots, \bs_{\veps_{\ell-1}},\bs_{\Lambda})$. This proves the claim when $|\vec{M}|=|L|$. 

We now consider the claim for a general $\vec{M}$. This may be derived from the above as follows. We let $\vec{L}$ be the orientation of $L$ induced by a complete collection $\ve{p}$ compatible with $\vec{M}$. Write $\vec{J}=\vec{L}\setminus \vec{M}$. Clearly $\Gamma^{\vec{M}}\circ \Phi^{\vec{J}}$ has the same $A^{\ve{p}}$ grading as $\Gamma^{\vec{L}}$. On the other hand, $\Phi^{\vec{J}}$ preserves $A^{\ve{p}}$ by definition, so $\Gamma^{\vec{M}}$ and $\Gamma^{\vec{L}}$ have the same $A^{\ve{p}}$ grading, completing the proof. 
\end{proof}

 Since the codomain of $\Gamma^{\vec{M}}$ is supported only in Alexander grading $(0,\dots, 0)$, we obtain the following corollary from Lemma~\ref{lem:grading-shift-Gamma-M}:

\begin{cor}\label{cor:non-vanishing}
 If $\vec{M}$ and $\ve{p}$ are compatible, then the only $\Spin^c$ structure $\frt$ such that the map $\Gamma^{\vec{M}}_{\frt}$ is non-vanishing on $A^{\ve{p}}$ grading $\ve{s}=(s_1,\dots, s_\ell)$ is $\frt=\frz_{\ve{s}}^{\ve{p}}$ defined by
\begin{equation}
-s_i=\frac{\left \langle c_1(\frz_{\ve{s}}^{\ve{p}}), \Sigma_i^{\ve{p}}\right\rangle-\Sigma^{\ve{p}}\cdot \Sigma_i^{\ve{p}} }{2},
\label{eq:defining-equation-frz-s}
\end{equation}
for all $i$. 
\end{cor}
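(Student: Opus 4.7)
The plan is to deduce Corollary~\ref{cor:non-vanishing} directly from Lemma~\ref{lem:grading-shift-Gamma-M} by exploiting the triviality of the Alexander grading on the target. The map $\Gamma^{\vec{M}}_\frt$ goes into $\ve{\CF}^-(S^3_\Lambda(L))$, which, by our convention in the statement of Lemma~\ref{lem:grading-shift-Gamma-M}, sits in a single $A^{\ve{p}}$-multigrading, namely $(0,\dots,0)$. So if $\Gamma^{\vec{M}}_\frt$ does not vanish on an element of $A^{\ve{p}}$-grading $\ve{s}=(s_1,\dots,s_\ell)$, then its $A^{\ve{p}}_i$-grading shift must equal $-s_i$ for every $i$. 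First I would simply substitute this into the formula
\[
A^{\ve{p}}_i(\Gamma^{\vec{M}}_\frt)=\frac{\langle c_1(\frt), \Sigma_i^{\ve{p}}\rangle -\Sigma^{\ve{p}}\cdot \Sigma_i^{\ve{p}}}{2}
\]
provided by Lemma~\ref{lem:grading-shift-Gamma-M} to obtain, for each $i$, exactly the defining equation~\eqref{eq:defining-equation-frz-s} of $\frz^{\ve{p}}_{\ve{s}}$.

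The remaining point to verify is that this system of equations uniquely determines $\frt\in \Spin^c(W_\Lambda(L))$. Since $L\subset S^3$ and $W_\Lambda(L)$ is obtained by attaching $\ell$ two-handles to $B^4$ along $L$, we have $H_2(W_\Lambda(L);\Z)\iso \Z^\ell$, with basis $\Sigma_1,\dots,\Sigma_\ell$, and $H_1(W_\Lambda(L);\Z)=0$. Consequently, $\Spin^c(W_\Lambda(L))$ is an affine space over $H^2(W_\Lambda(L);\Z)\iso H_2(W_\Lambda(L);\Z)^*$, and a $\Spin^c$ structure is determined by the collection of integers $\langle c_1(\frt),\Sigma_i\rangle$ for $i=1,\dots,\ell$ (these satisfying the usual characteristic parity constraint). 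Since $\Sigma_i^{\ve{p}}=\pm \Sigma_i$, specifying each $\langle c_1(\frt),\Sigma_i^{\ve{p}}\rangle$ is equivalent to specifying each $\langle c_1(\frt),\Sigma_i\rangle$, and the defining equation~\eqref{eq:defining-equation-frz-s} does exactly this. Hence $\frt=\frz^{\ve{p}}_{\ve{s}}$, and this is the unique $\Spin^c$ structure for which $\Gamma^{\vec{M}}_\frt$ can be nonzero on $A^{\ve{p}}$-grading $\ve{s}$.

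This argument is essentially a one-line consequence of Lemma~\ref{lem:grading-shift-Gamma-M} combined with elementary $\Spin^c$-bookkeeping on $W_\Lambda(L)$, so I do not expect any genuine obstacle; the only subtlety worth spelling out is the sign bookkeeping relating $\Sigma_i^{\ve{p}}$ to $\Sigma_i$ and confirming consistency of the parity conditions across the $2^\ell$ possible choices of $\ve{p}$ compatible with $\vec{M}$. I would include a short remark verifying that replacing $\ve{p}$ by another compatible collection changes both sides of~\eqref{eq:defining-equation-frz-s} in a consistent way, so that $\frz_{\ve{s}}^{\ve{p}}$ is genuinely well-defined by the stated formula.
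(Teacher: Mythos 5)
Your proposal is correct and follows essentially the same route as the paper: the paper derives the corollary immediately from Lemma~\ref{lem:grading-shift-Gamma-M} by observing that the codomain is supported in $A^{\ve{p}}$-grading $(0,\dots,0)$, exactly as you do. Your extra verification that the evaluations $\langle c_1(\frt),\Sigma_i^{\ve{p}}\rangle$ pin down $\frt$ uniquely (using $H_1(W_\Lambda(L))=0$ and torsion-freeness of $H^2$), and the parity check, are routine bookkeeping the paper leaves implicit, and they are carried out correctly.
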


We observe that on $\cC^{\vec{0}}_{\Lambda}(L)$, Lemma~\ref{lem:Alexander-grading-change-signs} implies that 
\begin{equation}
A^{\ve{p}}=\delta^{\ve{p}}\circ A^{\ws} \label{eq:Alexander-grading-shift}
\end{equation}
where $\delta^{\ve{p}}\colon \Q^n\to \Q^n$ is 
\[
(x_1,\dots, x_\ell)\mapsto (\delta_1\cdot x_1,\dots, \delta_\ell \cdot x_\ell)
\]
where $\delta_i$ is $1$ is $w_i\in \ve{p}$ and $-1$ if $z_i\in \ve{p}$.

\begin{lem}\label{lem:gr_p-shift}
 If $\xs$ is homogeneously graded element of $\bX_{\Lambda}^{\vec{0}}(L,\ve{s})=\cCFL(L,\ve{s})$, then the quantity
\begin{equation}
\gr_{\ve{p}}(\xs)+\frac{c_1(\frz^{\ve{p}}_{\delta^{\ve{p}}(\ve{s})})^2}{4} \label{eq:grw-c1^2}
\end{equation}
is independent of the choice of $\ve{p}$. 
\end{lem}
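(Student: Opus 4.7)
The plan is to reduce to the case where $\ve{p}$ and $\ve{p}'$ are two complete collections that differ in exactly one coordinate, say by swapping $w_i\in\ve{p}$ for $z_i\in\ve{p}'$ (all other basepoints agreeing). Any two complete collections can be linked by a sequence of such elementary swaps, so invariance under one swap implies the general statement.

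Under such a swap, the sign function changes as $\delta_i^{\ve{p}'}=-\delta_i^{\ve{p}}$ and $\delta_j^{\ve{p}'}=\delta_j^{\ve{p}}$ for $j\neq i$. In particular, $\Sigma_i^{\ve{p}'}=-\Sigma_i^{\ve{p}}$, so $\Sigma^{\ve{p}'}=\Sigma^{\ve{p}}-2\Sigma_i$. Using the defining relation~\eqref{eq:defining-equation-frz-s} for $\frz^{\ve{p}}_{\delta^{\ve{p}}(\ve{s})}$ and canceling the sign $\delta_j^{\ve{p}}$ on both sides, one gets the equivalent formula $\langle c_1(\frz^{\ve{p}}_{\delta^{\ve{p}}(\ve{s})}),\Sigma_j\rangle = -2s_j + \Sigma^{\ve{p}}\cdot\Sigma_j$, valid for every $j$. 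Subtracting the corresponding formula for $\ve{p}'$ gives $c_1(\frz^{\ve{p}'}) - c_1(\frz^{\ve{p}}) = -2\,\mathrm{PD}[\Sigma_i]$ (I suppress $\delta^{\ve{p}}(\ve{s})$), so by the difference-of-squares identity and a short computation,
\[
\frac{c_1(\frz^{\ve{p}'})^2 - c_1(\frz^{\ve{p}})^2}{4} \;=\; 2s_i - \Sigma^{\ve{p}}\cdot\Sigma_i + \Sigma_i\cdot\Sigma_i \;=\; 2s_i - \sum_{j\neq i}\delta_j^{\ve{p}}\Lambda_{ij},
\]
where $\Lambda_{ij}=\Sigma_i\cdot\Sigma_j$ is the framing matrix (linking numbers off-diagonally).

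On the other hand, by part~(2) of Lemma~\ref{lem:Alexander-grading-change-signs} applied with $M=K_i$, oriented so as to intersect $\Sigma$ negatively at $w_i$,
\[
\gr_{\ve{p}'}(\xs) - \gr_{\ve{p}}(\xs) \;=\; \lk(L\setminus K_i,\,K_i) - 2A_i^{\ve{p}}(\xs).
\]
Since $w_i\in\ve{p}$, the Alexander grading conversion gives $A_i^{\ve{p}}(\xs)=\delta_i^{\ve{p}}s_i=s_i$, and with the $\ve{p}$-induced orientations, $\lk(L\setminus K_i, K_i)=\sum_{j\neq i}\delta_j^{\ve{p}}\Lambda_{ij}$. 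Adding this to the Chern square difference above, the two terms cancel exactly, proving invariance under a single basepoint swap and hence the lemma.

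The only genuine piece of work is bookkeeping the signs $\delta_j^{\ve{p}}$: both the Spin$^{c}$-structure computation and the $\gr_{\ve{p}}$ computation produce the same linking-plus-Alexander combination, and the main obstacle is simply matching conventions (orientations of the $K_j$'s relative to $\ve{p}$, and signs of the capped surfaces $\Sigma_j^{\ve{p}}$) so that they appear with compatible signs on both sides.
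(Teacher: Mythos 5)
Your proof is correct and takes essentially the same route as the paper: reduce to a single swap $w_i\leftrightarrow z_i$, apply part (2) of Lemma~\ref{lem:Alexander-grading-change-signs}, deduce $\frz^{\ve{p}'}_{\delta^{\ve{p}'}(\ve{s})}=\frz^{\ve{p}}_{\delta^{\ve{p}}(\ve{s})}-\Sigma_i$ from the defining equation~\eqref{eq:defining-equation-frz-s}, and check that the resulting changes in $\gr_{\ve{p}}$ and $c_1^2/4$ cancel. Your sign bookkeeping is internally consistent with the lemma as stated (the paper's displayed formula for $\gr_{\ve{p}'}-\gr_{\ve{p}}$ carries the opposite sign on the linking term, an apparent typo), so nothing further is needed.
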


\begin{proof}
We consider the effect on Equation~\eqref{eq:grw-c1^2} of changing one $w_i\in \ve{p}$ to a $z_i$. Write $\ve{p}'$ for the $\{z_i\}\cup \ve{p}\setminus \{w_i\}$. By Lemma~\ref{lem:Alexander-grading-change-signs}, we have
\begin{equation}
\gr_{\ve{p}'}(\xs)=\gr_{\ve{p}}(\xs)-2A_i^{\ve{p}}(\xs)-\lk(L^{\ve{p}}\setminus K_i, K_i). \label{eq:grW'-v-grW}
\end{equation}
(Here $L^{\ve{p}}$ denotes $L$ oriented to intersect the Heegaard surface negatively at $\ve{p}$). 
By direct computation applied to Equation~\eqref{eq:defining-equation-frz-s}, we see that
\[
\frz_{\delta^{\ve{p}'}(\ve{s})}^{\ve{p}'}=\frz_{\delta^{\ve{p}}(\ve{s})}^{\ve{p}}-\Sigma_i. 
\]
Hence
\begin{equation}
\begin{split}
\frac{c_1(\frz_{\delta^{\ve{p}'}(\ve{s})}^{\ve{p}'})^2}{4}=&\frac{c_1(\frz_{\delta^{\ve{p}}(\ve{s})}^{\ve{p}})^2}{4}-\langle c_1(\frz_{\delta^{\ve{p}}(\ve{s})}^{\ve{p}}), \Sigma_i \rangle +\Sigma_i\cdot \Sigma_i\\
&=\frac{c_1(\frz_{\delta^{\ve{p}}(\ve{s})}^{\ve{p}})^2}{4}-2\frac{\langle c_1(\frz_{\delta^{\ve{p}}(\ve{s})}^{\ve{p}}), \Sigma_i^{\ve{p}} \rangle-\Sigma^{\ve{p}}\cdot \Sigma_i^{\ve{p}}}{2} -(\Sigma^{\ve{p}}-\Sigma_i^{\ve{p}})\cdot \Sigma_i^{\ve{p}}\\
&=\frac{c_1(\frz_{\delta^{\ve{p}}(\ve{s})}^{\ve{p}})^2}{4}+2s_i-\lk(L^{\ve{p}}\setminus K_i, K_i).
\end{split}
\label{eq:c1W'-v-c1W}
\end{equation}
Adding Equations~\eqref{eq:grW'-v-grW} and \eqref{eq:c1W'-v-c1W} yields the main result.
\end{proof}

We are now in position to prove Theorem~\ref{thm:grading}:

\begin{proof}[Proof of Theorem~\ref{thm:grading}] The remainder of the proof is similar to the proof of Lemma~\ref{lem:grading-shift-Gamma-M}. We first establish the formula for cube point $\veps=\vec{0}$, and then subsequently demonstrate for more general $\veps$. Let $\vec{L}$ be an orientation on $L$ and $\ve{p}$ a compatible complete collection of basepoints. Corollary~\ref{cor:non-vanishing} and Equation~\eqref{eq:Alexander-grading-shift} imply that on 
$A^{\ws}$-grading $\ve{s}$, only $\Spin^c$ structure $\frz_{\delta^{\ve{p}}_{\ve{s}}}^{\ve{p}}$ contributes to $\Gamma^{\vec{L}}$. 

Therefore, the standard absolute grading formula of Ozsv\'{a}th and Szab\'{o} implies that on $A^{\ws}$-grading $\ve{s}$, we have
\[
\gr_{\ve{p}}(\Gamma^{\vec{L}},\ve{s})=\frac{c_1(\frz_{\delta^{\ve{p}}_{\ve{s}}}^{\ve{p}})^2-2 \chi(W_{\Lambda}(L))-3 \sigma(W_{\Lambda}(L))}{4}+|L|.
\]
(Here, we are using the notation that if $F$ is a map and $\ve{s}\in \bH(L)$ then $\gr_{\ws}(F,\ve{s})$ is the grading shift of $F$ when restricted to Alexander grading $\ve{s}$).
Noting that $\gr_{\ws}=\gr_{\ps}$ on $\ve{\CF}^-(S^3_\Lambda(L))$, the above equation combined with Lemma~\ref{lem:gr_p-shift} gives
\[
\gr_{\ws}(\Gamma^{\vec{L}},\ve{s})=\frac{c_1(\frz_{\ve{s}})^2-2 \chi(W_{\Lambda}(L))-3 \sigma(W_{\Lambda}(L))}{4}+|L|.
\]

Next, let $\veps\in \bE_{\ell}$ be arbitrary and let $\vec{M}$ be an orientation on the components $K_i$ of $L$ such that $\veps_i=0$.  Let $\vec{J}$ be the components of $L$ which are not in $M$ (i.e. $K_i$ for which $\veps_i=1$), oriented the same as $L$. Note that $\Phi^{\vec{J}}$ preserves the Alexander grading $A^{\ws}$ and increases the $\gr_{\ws}$-grading by $|\veps|-1$. Hence, 
\[
\gr_{\ws}(\Gamma^{\vec{M}}, \ve{s})=\gr_{\ws}(\Gamma^{\vec{M}}\circ \Phi^{\vec{J}},\ve{s})+1-|\veps|=\gr_{\ws}(\Gamma^{\vec{L}},\ve{s})-|\veps|. 
\]
Combined with the above equation for $\gr_{\ve{p}}(\Gamma^{\vec{L}},\ve{s})$, the proof is complete.
\end{proof}

\subsection{Gradings on the sublink surgery formula}

We now discuss the case of the sublink surgery formula. We suppose that $J\cup L\subset S^3$ is a partitioned link and $J$ is equipped with Morse framing $\Lambda$ such that $S^3_{\Lambda}(J)$ is a rational homology 3-sphere. If $K_i\in J$, write $\Sigma_i\subset W_{\Lambda}(L)$ for the core of the 2-handle attached along $K_i$. If $K_i\subset L$, write $\hat{\Sigma}_i$ for the surface obtained by capping the shadow of $K_i$ in $W_{\Lambda}(J)$ with a rational Seifert surface of the image of $K_i$ in $S_{\Lambda}^3(J)$. We write $\Sigma$ for the sum of all $\Sigma_i$ and $\hat{\Sigma}_i$. If $\ve{s}\in \bH(L)$, we write $\frz_{\ve{s}}^J\in \Spin^c(W_\Lambda(L))$ for the $\Spin^c$ structure which satisfies Equation~\eqref{eq:definition-z-s} for all components of $J$.  

\begin{thm}
Suppose that $J\cup L\subset S^3$ and $\Lambda$ is an integral framing on $J$ such that $S^3_{\Lambda}(L)$ is a rational homology 3-sphere. Then the isomorphism $\ve{\cCFL}(S^3_{\Lambda}(J),L)\simeq \bX_{\Lambda}(J,L)$ from Corollary~\ref{cor:Manolescu-Ozsvath-subcube} is absolutely Maslov and Alexander graded if we equip $\bX_{\Lambda}^{\veps}(J,L,\ve{s})$ with the Maslov and Alexander gradings
\[
\begin{split}
\tilde{\gr}_{\ws}&=\gr_{\ws}+\frac{c_1\left(\frz_{\ve{s}}^{J}\right)^2-2\chi(W_{\Lambda}(J))-3\sigma(W_{\Lambda}(J))}{4}+|J|-|\veps|\\
\tilde{A}_i&=s_i+\frac{\langle c_1\left(\frz_{\ve{s}}^{J}\right),\hat{\Sigma}_i\rangle -\Sigma \cdot \hat \Sigma_i}{2}.
\end{split}
\]
where $i\in \{1,\dots, |L|\}$ and $\gr_{\ws}$ denotes the internal $\gr_{\ws}$-grading on $\cCFL(L\cup J)$.
\end{thm}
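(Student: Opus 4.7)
The plan is to mirror the proof of Theorem~\ref{thm:grading}, with the additional ingredient being the link cobordism Alexander grading shift formula from \cite{ZemAbsoluteGradings} applied to the $L$ components, which remain as a link in the target manifold. First, I would exhibit the homotopy equivalence explicitly: applying Theorem~\ref{thm:iterated-cone} to the sublink $J$ alone, there is an equivalence $\scB_\Lambda\simeq \bs_\Lambda$ of twisted complexes of attaching curves, where $\bs_\Lambda$ is obtained from a meridianal Heegaard diagram for $(S^3, J\cup L)$ by replacing the meridians of $J$ with longitudes. The resulting diagram $(\Sigma,\as,\bs_\Lambda,\ws,\zs)$ is a Heegaard link diagram for $(S^3_\Lambda(J), L)$, so pairing with $\as$ yields
\[
\Gamma\colon \bX_\Lambda(J,L)\to \ve{\cCFL}(S^3_\Lambda(J), L),
\]
which decomposes as $\Gamma=\sum_{\vec M}\sum_\frt \Gamma^{\vec M}_\frt$ over oriented sublinks $\vec M\subset J$ and $\Spin^c$-structures $\frt\in \Spin^c(W_\Lambda(J))$.

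For the Alexander grading on a component $K_i\subset L$, a class of polygons contributing to $\Gamma^{\vec M}_\frt$ can be spliced exactly as in the proof of Lemma~\ref{lem:grading-shift-Gamma-M} into a triangle on $(\as,\bs_{\vec 0},\bs_{\vec 1})$ followed by a pure-beta multi-polygon. The triangle portion represents a 2-handle cobordism link map with $L$ present as a link at both ends, so \cite{ZemAbsoluteGradings}*{Theorem~2.14~(2)} gives an $A_i^{\ve p}$-grading shift of $(\langle c_1(\frt),\hat\Sigma_i^{\ve p}\rangle - \Sigma^{\ve p}\cdot \hat\Sigma_i^{\ve p})/2$ for any complete collection $\ve p$ compatible with $\vec M$, where $\hat\Sigma_i^{\ve p}$ is the sign-corrected cap. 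The multi-polygon portion preserves every $A_i$ since all its inputs are $\phi^\scO$-weighted chains, by the Alexander-invariance argument in Section~\ref{sec:background-link-surgery}. Converting $A^{\ve p}$ back to $A^{\ws}$ via Lemma~\ref{lem:Alexander-grading-change-signs} yields the stated formula for $\tilde A_i$. Simultaneously, the Alexander grading shifts on the $J$ components (where the codomain has no $J$-Alexander gradings, these having been collapsed by restriction to idempotent $0$) force $\frt=\frz_{\ve s}^J$, in direct analogy with Corollary~\ref{cor:non-vanishing}.

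The Maslov grading is then computed essentially as in Theorem~\ref{thm:grading}. For $\vec M=\vec L$ (a chosen orientation of all of $J$), the standard Ozsv\'{a}th--Szab\'{o} absolute grading formula for 2-handle cobordism maps (now in its rational-homology-sphere version, valid since $S^3_\Lambda(J)$ is a rational homology sphere) gives the grading shift of $\Gamma^{\vec L}$ in terms of $c_1(\frt)^2-2\chi-3\sigma$ plus a term $|J|$, with $\frt$ the sign-adjusted version of $\frz_{\ve s}^J$ appropriate to $\ve p$. The analog of Lemma~\ref{lem:gr_p-shift} removes the dependence on $\ve p$, with the extra observation that swapping $w_i\leftrightarrow z_i$ for $K_i\subset L$ has no effect on $\frz_{\ve s}^J$ (since $K_i\not\subset J$) and shifts $\gr_{\ve p}$ by a quantity already absorbed into the $\tilde A_i$ correction. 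For general $\veps\in \bE_{|J|}$, the argument from the proof of Theorem~\ref{thm:grading} using the hypercube differentials $\Psi^{\vec J}$ for positively oriented sublinks $\vec J\subset J$ (which preserve $A^{\ws}$ and shift $\gr_{\ws}$ by $|\vec J|-1$) accounts for the $-|\veps|$ correction and gives the claimed formula on every $\bX_\Lambda^\veps(J,L,\ve s)$.

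The main obstacle will be cleanly identifying the triangle-class portion of $\Gamma^{\vec M}_\frt$ with a genuine 2-handle cobordism link map having $L$ as a link at both ends, in order to apply the link cobordism Alexander grading formula. Once this identification is made, the surface $\hat\Sigma_i$ appearing in the shift formula is precisely the cap of the shadow of $K_i\subset L$ on $\Sigma$ by a rational Seifert surface in $S^3_\Lambda(J)$, which matches the surface appearing in the statement of the theorem.
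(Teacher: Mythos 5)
Your proposal is correct and follows exactly the route the paper intends: the paper's "proof" of this statement is just the remark that it is similar to Theorem~\ref{thm:grading}, and your argument is a faithful expansion of that sketch — pairing the cube $\scB_\Lambda$ for the $J$-components with $\as$, splicing polygon classes into a triangle plus a pure-beta multi-polygon, applying the link cobordism Alexander grading shift formula of \cite{ZemAbsoluteGradings} (with $L$ as a link at both ends of $W_\Lambda(J)$), pinning down $\frz_{\ve{s}}^J$ as in Corollary~\ref{cor:non-vanishing}, and handling the $\ve{p}$-dependence and general $\veps$ exactly as in Lemma~\ref{lem:gr_p-shift} and the end of the proof of Theorem~\ref{thm:grading}. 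No gap.
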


The proof is similar to the proof of Theorem~\ref{thm:grading}, and we leave the details to the reader.

\section{The \texorpdfstring{$H_1(Y)/\Tors$}{H1/Tors action}}
\label{sec:H1-action}

Suppose $L\subset S^3$ is a link with framing $\Lambda$. We recall that Ozsv\'{a}th and Szab\'{o} defined an action of $\Lambda^* (H_1(Y)/\Tors)$ on $\HF^-(Y,\frs)$ \cite{OSDisks}*{Section~4.2.5}. In this section, we describe how to compute this action using the link surgery formula.

 We first describe an action of $\Lambda^* (H_1(S_\Lambda^3(L))/\Tors)$ on $\ve{\CF}^-(S^3_{\Lambda}(L))$. We recall that $H_1(S_\Lambda^3(L))\iso \Z^n/\im \Lambda$. In particular, the meridians $\mu_1,\dots, \mu_\ell$ of the link components $K_1,\dots, K_\ell$ span  $H_1(S_{\Lambda}^3(L))$. We define endomorphisms of $\bX_{\Lambda}(L)$ via the formulas
\[
\frA_{\mu_i}=\sum_{\substack{\vec{M}\subset L\\ +K_i\subset \vec{M}}}\Phi^{\vec{M}}.
\]
 This action on the link surgery complex appeared in \cite{ZemHFLattice}, though it was not proven to coincide with the action of $H_1(S^3_\Lambda(L))/\Tors$. 

In the case of 0-surgery on a knot in $S^3$, the computation of the homology action is equivalent to the computation of the twisted Floer complex, a proof of which can be found in \cite{NiPropertyG}*{Theorem~3.4}.
 
 \begin{rem} It is straightforward to see that as endomorphisms of $\bX_\Lambda(L)$, we have
 \[
 \sum_{\substack{\vec{M}\subset L\\ +K_i\subset \vec{M}}}\Phi^{\vec{M}}\simeq  \sum_{\substack{\vec{M}\subset L\\ -K_i\subset \vec{M}}}\Phi^{\vec{M}}.
 \]
It is also possible to see that the action $\frA_{\mu_i}$ naturally descends to an action of $\Lambda^* (H_1(S_\Lambda^3(L))/\Tors)$.  See \cite{ZemHFLattice}*{Section~4.1}.
 \end{rem}
 
 \begin{thm}Let $[\mu_1],\dots, [\mu_\ell]$ denote the classes of the meridians of $K_1,\dots,K_\ell$, viewed as elements of $H_1(S^3_\Lambda(L))$. Under the canonical homotopy equivalence
 \[
 \ve{\CF}^-(S^3_\Lambda(L))\simeq \bX_{\Lambda}(L),
 \]
 the map $A_{[\mu_i]}$ of $\Lambda^*\left( H_1(S^3_{\Lambda}(L))/\Tors\right)$ on $\ve{\CF}^-$ is intertwined with the map $\frA_{\mu_i}$ on $\bX_{\Lambda}(L)$. 
 \end{thm}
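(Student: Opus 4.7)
The plan is to reduce the statement to the case of a single knot $K$ with Morse framing $\lambda$ in a closed 3-manifold $Y$, and then to identify the transferred $A_{[\mu]}$-action with the $v$-component of the mapping cone differential. Using the iterated construction of $\bX_\Lambda(L)$ from Section~\ref{sec:iterating-background}, the endomorphism $\frA_{\mu_i}$ is the natural extension of the single-component $v$-map acting only on the $i$-th coordinate of the hypercube, and the $H_1$-action on $\ve{\CF}^-(S^3_\Lambda(L))$ by the meridian of a single knot component depends only on a local neighborhood of that component in the Heegaard diagram. It therefore suffices to verify that the transferred action $A_{[\mu]}$ agrees, up to chain homotopy, with $v = \Phi^{+K}$ on $\bX_\lambda(K)$.

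The key step is to represent $[\mu] \in H_1(Y_\lambda(K))$ by a concrete 1-cycle on the Heegaard surface. In the meridianal Heegaard diagram $(\Sigma, \as, \bs_0, \ws, \zs)$ for $(Y,K)$, the distinguished beta curve $\b_0$ bounds a disk in the beta handlebody which meets $K$ transversely in a single point, so after surgery the curve $\b_0$ (pushed slightly off $\Sigma$ into the beta handlebody) is homologous to $\mu$ in $Y_\lambda(K)$. Using the standard formula for the $H_1(Y)/\Tors$ action from \cite{OSDisks}*{Section~4.2.5}, we then have
\[
A_{[\mu]}(\xs) = \sum_{\substack{\phi \in \pi_2(\xs,\ys) \\ \mu(\phi) = 1}} \#(\partial_\alpha \phi \cdot \b_0) \cdot \#\widehat{\cM}(\phi) \cdot U^{n_w(\phi)} \cdot \ys
\]
on $\ve{\CF}^-(\as, \bs_\lambda)$. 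I would then transfer this action through the Fukaya-categorical equivalence $\bs_\lambda \simeq \scB_\lambda = \Cone(\b_0^{E_0} \to \b_1^{E_1})$ of Theorem~\ref{thm:equivalence-intro}. The critical observation is that the cycle representative $\b_0$ of $[\mu]$ is precisely the Lagrangian appearing as the source of the cone differential, so the transferred action is expected to be encoded by a morphism between the Lagrangians $\b_0^{E_0}$ and $\b_1^{E_1}$ in the Fukaya category.

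Concretely, I would verify by a local computation in the torus model of Figure~\ref{fig:17} that $\#(\partial_\alpha \phi \cdot \b_0)$ is reproduced, after the transfer, by the signed count of triangles whose input at the mapping cone corner is the Floer cycle $\lb \theta_\sigma^+, \phi^\sigma \rb$ rather than $\lb \theta_\tau^+, \phi^\tau \rb$. This would identify the transferred map with the $v$-component of the cone differential, and the higher-length terms $\Phi^{\vec{M}}$ with $+K \in \vec{M}$ and $|\vec{M}| \geq 2$ would appear naturally as correction terms in the $A_\infty$-transfer of a chain map through a homotopy equivalence of twisted complexes. The main obstacle I anticipate is making this chain-level identification precise: the intersection $\#(\partial_\alpha \phi \cdot \b_0)$ is a purely surface-theoretic quantity whose relationship to the hypercube structure is indirect, since the beta boundary of $\phi$ lies in $\bs_\lambda$ rather than $\bs_0$. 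A promising strategy is to first verify the identification in the genus one local picture via an explicit homotopy, constructed in the spirit of the maps $\Phi, \Psi, H$ appearing in the proof of Proposition~\ref{prop:general-HE}, and then extend to the general case via the 1-handle functor of Proposition~\ref{prop:1-handle-functor} together with an argument analogous to Theorem~\ref{thm:iterated-cone}. As a consistency check, one may alternatively verify the identification on the level of homology by reducing to known computations of $H_1$-actions on twisted Heegaard Floer complexes (e.g., \cite{NiPropertyG}*{Theorem~3.4} in the case of $0$-surgery on a null-homologous knot), thereby at least establishing the result up to terms supported strictly below the top Maslov degree.
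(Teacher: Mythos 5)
Your plan follows essentially the same route as the paper's proof: represent $[\mu_i]$ by a parallel translate of $\b_0$ supported in the special genus-one region, check in the local torus model that the $\sigma$-labeled triangle meets this curve once while the $\tau$-labeled one does not (so the transferred action is the $\sigma$-weighted part, i.e. $\frA_{\mu_i}$), and then globalize using the 1-handle functor and the tensoring of hypercubes as in Theorem~\ref{thm:iterated-cone}. The chain-level identification you flag as the main obstacle is exactly what the paper packages into a small generalized hypercube of attaching curves on $\bT^2$ (treating $\mu$ as a self-morphism of $\b_\lambda$ in the relative-homology-class formalism of \cite{HHSZNaturality}), whose single nontrivial relation $\mu_3([\mu],\theta_{\lambda,0},\theta_\tau^++\theta_\sigma^+)=\mu_2(\theta_{\lambda,0},\theta_\sigma^+)$ is precisely that short triangle count, so the step you anticipate as delicate is in fact routine.
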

 \begin{proof} We begin with a model computation in the genus one case, from which the main result will follow in general. Let $[\mu]$ first denote a translate of $\b_0$ on $\bT^2$.  We build the following generalized diagram of attaching curves on $\bT^2$
 \begin{equation}
 \begin{tikzcd}[column sep=2.5cm, row sep=1cm, labels=description]
  \b_\lambda \ar[d, "\theta_{\lambda,0}"] \ar[r, "\mu"]& \b_\lambda \ar[d, "\theta_{\lambda,0}"] \\
 \b_0 \ar[drr, "\theta^+_\sigma"] \ar[dr, "\theta_\tau^++\theta_\sigma^+",swap] & \b_0 \ar[dr, "\theta_\tau^++\theta_\sigma^+"]\\
 &\b_1 & \b_1
 \end{tikzcd}
 \label{eq:mu-hypercube}
 \end{equation} 
 The diagram is generalized in the sense that we allow the morphism $\mu$, which we interpret as generalized morphism from $\b_{\lambda}$ to $\b_{\lambda}$. Here, $\mu$ denotes a simple closed curve on $\bT^2$ obtained by translating $\b_0$. See Figure~\ref{fig:29}.

 \begin{figure}[ht]
 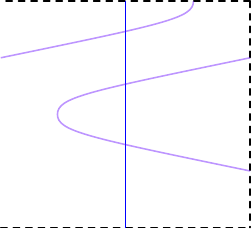
 \caption{The translate $[\mu]$ (dashed) of the meridian on $\bT^2$.}
 \label{fig:29}
 \end{figure}

  For our purposes, we interpret associativity as described in \cite{HHSZNaturality}*{Section~7.2}, as we recall briefly. Given a sequence $\theta_1,\dots, \theta_{j-1},\mu,\theta_{j+1},\dots,\theta_n$ of composable morphisms, we define the holomorphic polygon map on the above sequence to be obtained by counting ordinary holomorphic polygons with inputs $\theta_1,\dots, \theta_{j-1}, \theta_{j+1},\dots, \theta_n$, weighted by the number $\#(\d_{\g_j}(\psi)\cap \mu)$, where $\psi$ is the class of the polygon and $\gs_j$ is the Lagrangian which contains both $\theta_{j-1}$ and $\theta_{j+1}$. Standard $A_\infty$ associativity relations hold. See \cite{HHSZNaturality}*{Lemma~7.3}.

   We now claim that diagram in Equation~\eqref{eq:mu-hypercube} satisfies the hypercube relations.    There is one non-trivial composition to be checked, namely that
    \[
    \mu_{3}([\mu], \theta_{\lambda,0}, \theta_\tau^++\theta_\sigma^+)=\mu_2(\theta_{\lambda,0},\theta_\sigma^+). 
    \]
    This amounts to the fact that holomorphic triangle $u_\tau$ in Figure~\ref{fig:18} with $\theta_\tau^+$ input has $\d_{\b_{\lambda}}(u_\tau)\cap \mu\equiv 0$, while the triangle $u_\sigma$ with $\theta^+_\sigma$ input has  $\#\d_{\b_{\lambda}}(u_\sigma)\cap [\mu]\equiv 1$.

    We remark that the hypercube relations depend on which direction we translate $\mu$ off of $\b_0$. See Figure~\ref{fig:29}.  If we translated in the opposite direction, we would need to put $\theta_\tau^+$ along the diagonal of the bottom face instead of $\theta_\sigma^+$.

 From here, the main theorem follows quickly. We take the hypercube in Equation~\eqref{eq:mu-hypercube}, and tensor with the genus one hypercubes for the other axis directions.
 The result is the hypercube
 \[
  \begin{tikzcd}[column sep =3cm,labels=description, row sep=1.2cm]
   \b_{\lambda_i}\times\bs_{\Lambda'} \ar[d, "\theta_{\lambda,0}\otimes \theta_{\Lambda',0}"] \ar[r, "\mu"]& \b_{\lambda_i}\times\bs_{\Lambda'} \ar[d, "\theta_{\lambda,0}\otimes\theta_{\Lambda',0}"] \\
  \b_0\times\scB_{\Lambda'} \ar[drr, "\theta^+_\sigma\otimes\theta^+"] \ar[dr, "\theta_\tau^+\otimes\theta^+ + \theta_\sigma^+\otimes\theta^+",swap] & \b_0\times\scB_{\Lambda'} \ar[dr, "\theta_\tau^+|\theta^++\theta_\sigma^+|\theta^+"]\\
  &\b_1\times\scB_{\Lambda'} & \b_1\times\scB_{\Lambda'}
  \end{tikzcd}
 \]
 This hypercube takes place on the disjoint union of $n$-copies of $\bT^2$. Here, $\lambda$ denotes the longitude of the component corresponding to $\mu$ and $\Lambda'$ denotes the other longitudes. By Proposition~\ref{prop:1-handle-functor}, attaching 1-handles to connect different components and to increase the genus preserves the hypercubes relations. Pairing with attaching curves $\as$ gives the statement.
 \end{proof}

 \begin{rem} Instead of viewing $\mu$ as a morphism from $\b_{\lambda}$ to itself, we could replace one of the $\b_{\lambda}$ Lagrangians above with a small translate $\b_{\lambda}'$ of itself, and replace $\mu$ with the homology action $A_{\mu}(\theta^+)$, where $\theta^+\in \b_\lambda\cap \b_{\lambda}'$ is the top graded intersection point.
 \end{rem}

\section{The  surgery exact triangle in the \texorpdfstring{$U$}{U}-adic topology}

\label{sec:U-adic-surgery}

In \cite{ZemBordered}*{Section~18.3}, we proved that the type-$D$ modules for solid tori over $\cK$ (with chiral topology) satisfied the surgery exact triangle, in the sense that for any $n$, there is a type-$D$ morphism $f^1$ so that
\[
\cD_\infty^\cK\simeq \Cone(f^1\colon \cD_n^{\cK}\to \cD_{n+1}^{\cK}).
\]
 In this section prove the analogous statement for $U$-adic topology $\frK$. Note that the argument from \cite{ZemBordered} made some properties which hold for $\cK$ but not $\frK$. For example, we used the fact that $1+\scU$ and $1+\scV$ are units in $\ve{I}_0\cdot \cK\cdot \ve{I}_0$, which is not true for $\frK$.

\begin{thm}
\label{thm:exact-triangle} There is a homotopy equivalence
\[
\frD_\infty^\frK\simeq \Cone(f^1\colon \frD_n^{\frK}\to \frD_{n+1}^{\frK}),
\]
for some type-$D$ morphism $f^1$. 
\end{thm}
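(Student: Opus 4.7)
The strategy is to adapt the chiral surgery exact triangle of \cite{ZemBordered}*{Section~18.3} to the $U$-adic setting. The proof will proceed in three stages: construct the morphism $f^1$ via a triangle count, compute the mapping cone, and construct an explicit homotopy equivalence between the cone and $\frD_\infty^\frK$. The principal difference from the chiral argument is that the chiral proof exploits the fact that $1+\scU$ and $1+\scV$ are units in $\ve{I}_0 \cdot \cK \cdot \ve{I}_0$ to reduce $\cD_\infty^\cK$ to a two-generator model; this simplification is unavailable over $\frK$ (since, for instance, $(1+\scU)^{-1} = \sum_{i\ge 0} \scU^i$ converges chirally but not $U$-adically), so the equivalence must be established directly at the level of the six-generator complex displayed in~\eqref{eq:infinity-framed-solid-torus}.

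For Step 1, I would define $f^1 \colon \frD_n^\frK \to \frD_{n+1}^\frK$ by counting holomorphic triangles on a standard genus-one Heegaard triple analogous to the one underlying the Ozsv\'{a}th--Szab\'{o} surgery exact triangle, with top-degree inputs paired against the $\phi^\sigma$ and $\phi^\tau$ algebra elements as in Section~\ref{sec:knot-surgery-formula}. Using Remark~\ref{rem:refined-hypercube}, the Floer chains representing these triangle inputs are uniquely determined up to chain homotopy by the requirement that they be top-Maslov cycles with the specified algebra components. The resulting $f^1$ will contain an idempotent-preserving summand (from the $\sigma$ and $\tau$ inputs on each idempotent) together with an off-diagonal summand, with the precise $T$-powers tracking the shift of framing from $n$ to $n+1$.

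For Steps 2 and 3, I would compute $\Cone(f^1)$ directly as a four-generator type-$D$ module and then build an explicit homotopy equivalence $F\colon \Cone(f^1) \to \frD_\infty^\frK$ and inverse $G$ by counting higher polygons on a Heegaard multi-diagram that simultaneously displays the three framings $n$, $n+1$, and $\infty$. The chain homotopies $H_1, H_2$ witnessing $F \circ G \simeq \id$ and $G \circ F \simeq \id$ would be produced by counting quadrilaterals on the same diagram. That all holomorphic counts converge $U$-adically is guaranteed by Proposition~\ref{prop:finite-ness-endomorphisms} together with the admissibility results in Appendix~\ref{sec:admissibility}.

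The main obstacle will be assembling the chain homotopies $H_1, H_2$ using only $U$-adically convergent expressions. In the chiral argument, one freely manipulates infinite sums like $\sum_{i\ge 0}\scU^i$ because $\scU^i \to 0$ in the Alexander-grading-based topology; these sums diverge in the $U$-adic topology. I expect that by working directly with the six-generator model of $\frD_\infty^\frK$ (rather than first passing to a simplified two-generator model), the required homotopies can be written with only finitely many nonzero terms in each $U$-filtration level, using the fact that each summand in the internal differentials of $\frD_\infty^\frK$ (such as $1+\scU$, $1+\scV$, $1+T$) has bounded Alexander and $U$-degree. Verifying filtration-by-filtration that the constructed $F, G, H_1, H_2$ intertwine all the relevant structure maps, and matching the output to the structure~\eqref{eq:infinity-framed-solid-torus}, will be the bulk of the technical work.
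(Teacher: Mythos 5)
Your high-level framing (define $f^1$, then compare $\Cone(f^1)$ directly with the six-generator model of $\frD_\infty^{\frK}$, precisely because $1+\scU$ and $1+\scV$ are no longer units) matches the paper's starting point, and the $f^1$ you would get from the genus-one triangle count is plausibly the paper's map $f^1(\ve{X}_0)=\ve{Y}_0\otimes(1+\a)$ with $\a=\sum_{s\ge 1}(\scU^s+\scV^s)U^{s(s-1)/2}$. But the heart of the theorem is exactly the part you defer: producing the equivalences and the homotopies $U$-adically, and here your plan has a genuine gap. The paper's proof is purely algebraic: it writes down explicit morphisms $F^1,G^1$ and homotopies $h^1,j^1$ whose coefficients are infinite series ($\b_1$, $\b_{-1}$, $\epsilon$, $\delta_{\pm 1}$, $\b'$, $\delta'$, $\kappa$, $\kappa'$), and their existence rests on nontrivial factorization lemmas --- e.g.\ $1+\a=(1+\scU)(1+\scV)\epsilon$ with $\epsilon$ a unit, and the divisibility statement $\b_1+\b_{-1}+\sum_{s\ge 1}U^{s(s-1)/2}=\kappa(1+\a)$, proven by a checkerboard/tiling argument. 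Your assertion that the homotopies ``can be written with only finitely many nonzero terms in each $U$-filtration level'' sidesteps rather than addresses this: the issue is not convergence of a given series but whether the required nullhomotopies exist at all, which is precisely the content of those divisibility identities.

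A second missing idea is that the paper does \emph{not} show $F^1\circ G^1\simeq \id$ and $G^1\circ F^1\simeq \id$; it shows both compositions are homotopic to $\bigl(\sum_{s\ge 1}U^{s(s-1)/2}\bigr)\cdot\id$, which is a \emph{unit} multiple of the identity in the $U$-adic completion, and concludes the homotopy equivalence from that. Your plan of extracting $F$, $G$, $H_1$, $H_2$ with $F\circ G\simeq\id$ from counts of quadrilaterals on a multi-diagram displaying the framings $n$, $n+1$, $\infty$ is not supported by anything in the paper: the admissibility results of Appendix~\ref{sec:admissibility} and Proposition~\ref{prop:finite-ness-endomorphisms} are set up for the diagrams $\cH$, $\cH'$ appearing in the surgery formula and for the endomorphism algebra, not for such a mixed triple/quadruple diagram, and there is no argument that the natural polygon counts assemble into homotopies to the identity itself (indeed, the explicit computation suggests they produce the unit multiple instead). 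So as written, the proposal identifies the right obstacle but does not supply the two ideas that actually overcome it: the factorization/divisibility lemmas in the $U$-adic completion, and the reduction to realizing a unit multiple of the identity.
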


We now describe $f^1$. Write $\ve{X}_0$, $\ve{X}_1$ for the two generators of $\frD_n^{\frK}$, and write $\ve{Y}_0$, $\ve{Y}_1$ for the two generators of $\frD_{n+1}^{\frK}$.

In idempotent 0, our map $f^1$ satisfies
\[
f^1(\ve{X}_0)=\ve{Y}_0\otimes (1+\a),
\]
where
\[
\a=\sum_{s\ge 1} (\scU^s+\scV^s)U^{s(s-1)/2}.
\]
In idempotent 1, the map $f^1$ satisfies
\[
f^1(\xs_1)=\ys_1\otimes (1+\a'),
\]
where $\a'=\phi^\sigma(\a)$. 
Therefore
\[
\Cone(f^1)=\begin{tikzcd}[row sep=2cm, column sep=1.5cm]
\ve{X}_0
	\ar[d, "\sigma+T^n \tau"]
	\ar[r, "1+\a"]
 & \ve{Y}_0
	\ar[d, "\sigma+T^{n+1} \tau"]
 \\
\ve{X}_1
	\ar[r, "1+\a'"]
& \ve{Y}_1
\end{tikzcd}
\]

We recall from Section~\ref{sec:examples} that
\[
\frD_\infty^{\frK}=\begin{tikzcd}
\xs_0^+
	\ar[r, "1+\scU"]
	\ar[dr, "T^{-1}\tau"]
&
\xs_0^-
	\ar[dr, "\tau", pos=.2,swap]
&
\ys_0^+
	\ar[r, "1+\scV"]
	\ar[dl, "\sigma", pos=.2, crossing over]
	&
\ys_0^-
	\ar[dl, "\sigma"]
\\[1cm]
& \zs_1^+
	\ar[r, "1+T"]
&\zs_1^-
\end{tikzcd}
\]

We prove Theorem~\ref{thm:exact-triangle} in the subsequent sections.

\subsection{Factorizations} 
\label{sec:factorizations}

The proof of Theorem~\ref{thm:exact-triangle} reduces to several algebraic identities which we state and prove in this section. 

We begin by defining the elements of interest:
\begin{equation}
\begin{split}
\b_1&=
\sum_{s\ge 1} (\scV^{-s+1}+\cdots+\scV^{-1}+1+\scV+\cdots +\scV^{s-1}) U^{s(s-1)/2}
\\
\b_{-1}&=\sum_{s\ge 1} (\scU^{-s+1}+\cdots+\scU^{-1}+1+\scU+\cdots +\scU^{s-1}) U^{s(s-1)/2}\\
\delta_1&=\sum_{s\ge 2} \frac{\scV^{-s+1}+\cdots+\scV^{-1}+\scV+\cdots +\scV^{s-1}}{1+\scV} U^{s(s-1)/2}\\
\delta_{-1}&=\sum_{s\ge 2} \frac{\scU^{-s+1}+\cdots+\scU^{-1}+\scU+\cdots +\scU^{s-1}}{1+\scU} U^{s(s-1)/2}\\
\epsilon&=\sum_{s\ge 1} (1+\scU+\cdots +\scU^{s-1})(1+\scV+\dots +\scV^{s-1})U^{s(s-1)/2}\\
\b'&=\sum_{s\ge 1} (T^{-s+1}+\cdots+T^{-1}+1+T+\cdots +T^{s-1}) U^{s(s-1)/2}\\
\delta'&=\sum_{s\ge 2} \frac{T^{-s+1}+\cdots+T^{-1}+T+\cdots +T^{s-1}}{1+T} U^{s(s-1)/2}.
\end{split}
\label{eq:definitions-series}
\end{equation}

\begin{lem}
\label{lem:factorizations} The formulas in Equation~\eqref{eq:definitions-series} determine convergent power series in the $U$-adic topology. Furthermore, the following relations hold:
\begin{enumerate}
\item $1+\a=(1+\scV) \b_1$;
\item $1+\a=(1+\scU) \b_{-1}$;
\item $\epsilon$ is a unit.
\item $1+\a=(1+\scU)(1+\scV) \epsilon$;
\item $\b_{-1}=(1+\scV) \epsilon$;
\item $\b_{1}=(1+\scU) \epsilon$;
\item $\b_1+\sum_{s\ge 1} U^{s(s-1)/2}=(1+\scV) \delta_1$;
\item $\b_{-1}+\sum_{s\ge 1} U^{s(s-1)/2}=(1+\scU) \delta_{-1}$.
\end{enumerate}
\end{lem}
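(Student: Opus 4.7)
The plan is to verify each identity by direct manipulation, using two tools: the substitution $U = \scU\scV$, which converts seemingly negative powers of $\scV$ or $\scU$ into honest polynomial expressions once paired with a sufficiently large $U$-power, and the characteristic $2$ telescoping identity $(1+\scV)(1+\scV+\cdots+\scV^{s-1}) = 1+\scV^s$ together with its $\scU$- and $T$-analogues. Before verifying the factorizations I would dispose of the convergence claim: the $s$th summand of each series has $U$-adic valuation at least $s(s-1)/2 - (s-1) = (s-1)(s-2)/2$, which tends to infinity, so each series defines an element of the appropriate $U$-adic completion. Moreover, the identity $\scV^{-s+1}U^{s(s-1)/2} = \scU^{s(s-1)/2}\scV^{(s-1)(s-2)/2}$ shows that the negative-powered prefactors are cosmetic, so $\b_{\pm 1}, \delta_{\pm 1}, \epsilon \in \bF\llsquare \scU,\scV\rrsquare$ and $\b', \delta' \in \bF\llsquare U, T, T^{-1}\rrsquare$.

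Next I would establish (1) directly. Telescoping in characteristic $2$ gives $(1+\scV)(\scV^{-s+1}+\cdots+\scV^{s-1}) = \scV^{-s+1}+\scV^s$, so $(1+\scV)\b_1 = \sum_{s\ge 1}(\scV^{-s+1}+\scV^s)U^{s(s-1)/2}$. Using $U = \scU\scV$ to rewrite $\scV^{-s+1}U^{s(s-1)/2}$ as $\scU^{s(s-1)/2}\scV^{(s-1)(s-2)/2}$ and then reindexing $s \mapsto s+1$ in this half of the sum yields $1 + \sum_{s\ge 1}\scU^s U^{s(s-1)/2}$, whence $(1+\scV)\b_1 = 1 + \a$. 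Identity (2) follows symmetrically. Identity (6), $\b_1 = (1+\scU)\epsilon$, is verified by an almost identical calculation: expand $(1+\scU)\epsilon$ using $(1+\scU)(1+\scU+\cdots+\scU^{s-1}) = 1+\scU^s$, split into the $1$ and $\scU^s$ contributions, and match against the nonnegative and negative $\scV$-powered halves of $\b_1$ via $U = \scU\scV$ and the same index shift. Identity (5) is symmetric, (4) is then immediate from (1) and (6), and the unit property (3) is automatic because $\epsilon = 1 + U\cdot(\text{something})$ in the $U$-adic topology, so $\epsilon^{-1} = \sum_{n\ge 0}(\epsilon-1)^n$ converges.

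For (7) and (8), I would observe that adding $\sum_{s\ge 1}U^{s(s-1)/2}$ to $\b_1$ removes the central ``$+1+$'' summand inside each inner term, so $\b_1 + \sum_{s\ge 1}U^{s(s-1)/2} = \sum_{s\ge 2}(\scV^{-s+1}+\cdots+\scV^{-1}+\scV+\cdots+\scV^{s-1})U^{s(s-1)/2}$. The key algebraic check is the factorization $\scV^{-s+1}+\cdots+\scV^{-1}+\scV+\cdots+\scV^{s-1} = \scV^{-s+1}(1+\scV)(1+\scV+\cdots+\scV^{s-2})(1+\scV+\cdots+\scV^{s-1})$, which one obtains by pulling out $\scV^{-s+1}$, recognizing the remaining $2(s-1)$ terms as $(1+\scV^s)(1+\scV+\cdots+\scV^{s-2})$, and then applying the telescoping identity to convert $1+\scV^s$ into $(1+\scV)(1+\scV+\cdots+\scV^{s-1})$. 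Dividing by $(1+\scV)$ yields $(1+\scV)\delta_1$ on the nose, so (7) holds; (8) is symmetric. The main obstacle throughout is bookkeeping --- tracking the $U = \scU\scV$ substitution, the reindexings $s\mapsto s+1$, and the characteristic-$2$ cancellations --- rather than any deeper conceptual point.
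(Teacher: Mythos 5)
Your proposal is correct and follows essentially the same route as the paper, which simply asserts that these identities are straightforward direct computations (offering the "tile" picture of Remark 10.4 as intuition) and proves that $\epsilon$ is a unit exactly as you do, by writing $\epsilon = 1 + Uf$ and inverting with a geometric series. Your telescoping/reindexing verifications of (1)–(2) and (5)–(8) are the intended straightforward checks and are carried out correctly (the only slip is cosmetic: dividing the factored numerator by $1+\scV$ yields $\delta_1$, not $(1+\scV)\delta_1$, which is clearly what you meant).
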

\begin{proof} All of the above computations are straightforward to verify (cf. Remark~\ref{rem:polynomials-and-tiles}, below). We remark that $\epsilon$ is a unit in $\frK$ since we may write it as $1+Uf$, for some infinite series $f$. The inverse is given by $\sum_{i=0}^\infty (Uf)^i$. 
\end{proof} 

There is an additional factorization result which is more challenging to prove:

\begin{lem} There is a unique $\kappa$ in the $U$-adic completion of $\bF[\scU,\scV]$ satisfying
\begin{equation}
\b_{1}+\b_{-1}+\sum_{s\ge 1} U^{s(s-1)/2}=\kappa(1+\a).
\label{eq:kappa-factorization}
\end{equation}
\end{lem}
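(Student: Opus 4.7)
My plan is to derive existence and uniqueness of $\kappa$ from the factorization $1+\a=(1+\scU)(1+\scV)\epsilon$ of Lemma~\ref{lem:factorizations}(4), together with the fact that $\epsilon$ is a unit in the $U$-adic completion $R$ of $\bF[\scU,\scV]$. Since $\epsilon$ is invertible, the problem reduces to showing that $(1+\scU)(1+\scV)$ is a non-zero-divisor on $R$ (for uniqueness), and that $\b_1+\b_{-1}+\sigma$ lies in $(1+\scU)(1+\scV)R$, where $\sigma:=\sum_{s\ge 1}U^{s(s-1)/2}$ (for existence).

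For uniqueness, reduction modulo $U$ identifies $(1+\scU)(1+\scV)$ with $1+\scU+\scV$ acting on $R/U=\bF[\scU,\scV]/(\scU\scV)$, since $\scU\scV=U$. I would verify directly that $1+\scU+\scV$ is a non-zero-divisor on this quotient by writing a general element as $c+\sum_{i\ge 1}a_i\scU^i+\sum_{j\ge 1}b_j\scV^j$ and expanding using $\scU\scV=0$; this yields a triangular linear system on $c$, the $a_i$, and the $b_j$ whose only solution is zero. If $(1+\a)\rho=0$ in $R$, we then deduce $\rho\in UR$, and since $U$ is a non-zero-divisor the step iterates to place $\rho\in\bigcap_n U^nR=0$.

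For existence, items (5)--(8) of Lemma~\ref{lem:factorizations} produce the pair of identities
\[
\b_1+\b_{-1}+\sigma=(1+\scU)(\epsilon+\delta_{-1})=(1+\scV)(\epsilon+\delta_1),
\]
the first from combining $\b_1=(1+\scU)\epsilon$ with $\b_{-1}+\sigma=(1+\scU)\delta_{-1}$, and the second symmetrically. Hence $\b_1+\b_{-1}+\sigma\in(1+\scU)R\cap(1+\scV)R$. I would then verify the comaximality $(1+\scU)R+(1+\scV)R=R$: the quotient $R/(1+\scU,1+\scV)$ is obtained by imposing $\scU=\scV=1$, whence $U=\scU\scV=1$, and $\bF\llsquare U\rrsquare/(1+U)=0$ since $1+U$ is a unit (with inverse $\sum_{n\ge 0}U^n$). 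The standard fact that coprime principal ideals in a commutative ring intersect in their product then yields $(1+\scU)R\cap(1+\scV)R=(1+\scU)(1+\scV)R$, and invertibility of $\epsilon$ converts this to membership in $(1+\a)R$.

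The main obstacle is the comaximality check, which is what upgrades the two separate divisibilities by $1+\scU$ and by $1+\scV$ into a single divisibility by their product, and ultimately by $1+\a$. Fortunately this reduces to the elementary fact that $1+U$ is a unit in $\bF\llsquare U\rrsquare$; the remainder of the argument is formal manipulation of the explicit factorizations already established in Lemma~\ref{lem:factorizations}.
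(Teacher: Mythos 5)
Your proof is correct, but it takes a genuinely different route from the paper's. For existence, the paper argues by direct computation: it rewrites $\b_1+\b_{-1}+\sum_{s\ge 1}U^{s(s-1)/2}$ as $\sum_{s\,\mathrm{odd}}B_s^+U^{s(s-1)/2}+\sum_{s\,\mathrm{even}}B_s^-U^{s(s-1)/2}$ for explicit polynomials $B_s^{\pm}$ and exhibits, term by term, explicit factorizations $B_s^{\pm}=(1+\scU)(1+\scV)\cdot(\text{a visible sum of monomials})$, which also yields the graphical ``tile'' interpretation recorded afterwards. You instead reuse the identities (5)--(8) of Lemma~\ref{lem:factorizations} to write the left-hand side both as $(1+\scU)(\epsilon+\delta_{-1})$ and as $(1+\scV)(\epsilon+\delta_1)$, and then upgrade the two single-factor divisibilities to divisibility by the product via comaximality of $(1+\scU)$ and $(1+\scV)$ in the completion; the key point, which you essentially identify, is that $1+U=(1+\scU)+\scU(1+\scV)$ lies in that ideal and is a unit $U$-adically, so $(1+\scU,1+\scV)$ is the unit ideal and $(1+\scU)R\cap(1+\scV)R=(1+\scU)(1+\scV)R$. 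This is a clean, formal argument that makes transparent exactly where completion is used (comaximality fails in $\bF[\scU,\scV]$ itself), at the cost of not producing the explicit quotient that the paper's computation gives. For uniqueness, the paper simply observes that the $U$-adic completion is an integral domain, whereas you avoid that claim by a non-zero-divisor argument: reduction mod $U$ to $\bF[\scU,\scV]/(\scU\scV)$, the triangular-system check that $1+\scU+\scV$ is injective there, and then $U$-adic separatedness; this is valid, though it quietly invokes standard completion facts ($R/UR\cong\bF[\scU,\scV]/(U)$, $U$ a non-zero-divisor on $R$, $\bigcap_n U^nR=0$) that you assert rather than verify — all routine for a Noetherian ring, and in the same spirit as the paper's own unproved ``domain'' assertion.
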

\begin{proof}Uniqueness is clear: the $U$-adic completion of $\bF[\scU,\scV]$ is an integral domain.

Existence is proven as follows. We write 
\[
1+\a=(1+\scU)(1+\scV) \epsilon.
\]
Since $\epsilon$ is a unit, it suffices to show the left-hand side of Equation~\eqref{eq:kappa-factorization} is divisible by $(1+\scU)(1+\scV)$ in the $U$-adic topology, since then we can take
\[
\kappa=\epsilon^{-1}\frac{\b_1+\b_{-1}+\sum_{s\ge 1}U^{s(s-1)/2}}{(1+\scU)(1+\scV)}.
\]

We now show that left-hand side of Equation~\eqref{eq:kappa-factorization} is divisible by $(1+\scU)(1+\scV)$. We first compute that
\begin{equation}
\begin{split}
&\b_{1}+\b_{-1}+\sum_{s\ge 1} U^{s(s-1)/2}
\\
=
&\sum_{s \ge 1} \left(1+(1+\scU^s)(1+\cdots +\scV^{s-1})+(1+\cdots+\scU^{s-1})(1+\scV^s)\right) U^{s(s-1)/2}.
\end{split}
\label{eq:b1b1sum}
\end{equation}
We define two helpful polynomials. If $s\ge 1$, we define
\[
\begin{split}
B_s^+&=(1+\cdots +\scU^s)+(\scV+\cdots+\scV^s)+\scU^s(1+\dots +\scV^{s-1})+\scV^s(1+\cdots +\scU^{s})\\
B_s^-&=(\scU+\cdots +\scU^s)+(\scV+\cdots+\scV^s)+\scU^s(1+\dots +\scV^{s-1})+\scV^s(1+\cdots +\scU^{s-1}).
\end{split}
\]
We observe that Equation~\eqref{eq:b1b1sum} can be rewritten as follows:
\begin{equation}
\b_{1}+\b_{-1}+\sum_{s\ge 1} U^{s(s-1)/2}=\sum_{\substack{s\ge 1\\ s\text{ odd}}} B_s^+\cdot U^{s(s-1)/2}+\sum_{\substack{s\ge 2\\ s\text{ even}}} B_s^-\cdot U^{s(s-1)/2}.
\label{eq:b1b1-sum}
\end{equation}
We make the following claim, which is straightforward to verify: if $s$ is odd, then $B_s^+$ is a multiple of $(1+\scU)(1+\scV)$; and if $s$ is even, then $B_s^-$ is a multiple of $(1+\scU)(1+\scV)$.  To see this, we observe that if $s$ is odd, then
\begin{equation}
B_s^+=(1+\scU)(1+\scV)\sum_{\substack{0\le i,j\le s-1\\  i+j\in 2\Z}} \scU^{i} \scV^{j}
\label{eq:Bs-factor-1}
\end{equation}
and if $s$ is even, then
\begin{equation}
B_s^-=(1+\scU)(1+\scV)\sum_{\substack{0\le i,j\le s-1\\  i+j\in 2\Z+1}} \scU^{i} \scV^{j}.
\label{eq:Bs-factor-2}
\end{equation}
In light of Equation~\eqref{eq:b1b1-sum}, we conclude that $\b_1+\b_{-1}+\sum_{s\ge 1} U^{s(s-1)/2}$ is divisible by $(1+\scU)(1+\scV)$, so the proof is complete.
\end{proof}

\begin{rem}
\label{rem:polynomials-and-tiles} The relations from Lemma~\ref{lem:factorizations} and
 Equations~\eqref{eq:b1b1-sum}, ~\eqref{eq:Bs-factor-1} and~\eqref{eq:Bs-factor-2} have simple graphical proofs and interpretations, as we now describe. We think of a polynomial in $\bF[\scU,\scV]$ as a union of unit squares $[a,a+1]\times [b,b+1]$ contained in $[0,\infty)\times [0,\infty)$, so that $a,b\in \N$. Here, $[i,i+1]\times [j,j+1]$ corresponds to $\scU^i\scV^j$. (Call such a region a \emph{tile}). Then $B_s^+$ consists of the boundary tiles of the region $[0,s]\times [0,s]$, and $B_s^-$ consists of the boundary tiles of $[0,s]\times [0,s]$ with the corners $[0,1]\times [0,1]$ and $[s-1,s]\times [s-1,s]$ also removed. Note that $(1+\scU)(1+\scV)$ corresponds to the region $[0,2]\times [0,2]$. These regions are shown in Figure~\ref{fig:49}.  Equation~\eqref{eq:b1b1-sum} corresponds to the fact that the left region shown therein can be written as a union of the $B_s^-$ and $B_s^+$ regions as shown.
\end{rem}

\begin{figure}[h]
\begingroup%
  \makeatletter%
  \providecommand\color[2][]{%
    \errmessage{(Inkscape) Color is used for the text in Inkscape, but the package 'color.sty' is not loaded}%
    \renewcommand\color[2][]{}%
  }%
  \providecommand\transparent[1]{%
    \errmessage{(Inkscape) Transparency is used (non-zero) for the text in Inkscape, but the package 'transparent.sty' is not loaded}%
    \renewcommand\transparent[1]{}%
  }%
  \providecommand\rotatebox[2]{#2}%
  \newcommand*\fsize{\dimexpr\f@size pt\relax}%
  \newcommand*\lineheight[1]{\fontsize{\fsize}{#1\fsize}\selectfont}%
  \ifx\svgwidth\undefined%
    \setlength{\unitlength}{249.56907519bp}%
    \ifx\svgscale\undefined%
      \relax%
    \else%
      \setlength{\unitlength}{\unitlength * \real{\svgscale}}%
    \fi%
  \else%
    \setlength{\unitlength}{\svgwidth}%
  \fi%
  \global\let\svgwidth\undefined%
  \global\let\svgscale\undefined%
  \makeatother%
  \begin{picture}(1,0.49162764)%
    \lineheight{1}%
    \setlength\tabcolsep{0pt}%
    \put(0,0){\includegraphics[width=\unitlength,page=1]{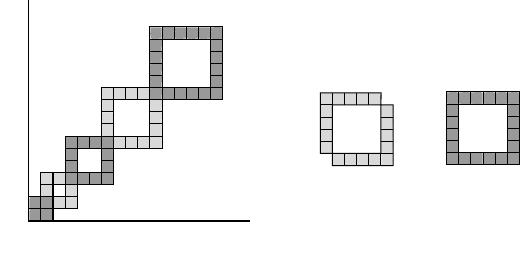}}%
    \put(0.26478821,0.00452996){\makebox(0,0)[t]{\lineheight{1.25}\smash{\begin{tabular}[t]{c}$\b_1+\b_{-1}+\sum_{s\ge 1} U^{s(s-1)/2}$\end{tabular}}}}%
    \put(0.68730422,0.11274895){\makebox(0,0)[t]{\lineheight{1.25}\smash{\begin{tabular}[t]{c}$B_s^-$\end{tabular}}}}%
    \put(0.92708375,0.11274895){\makebox(0,0)[t]{\lineheight{1.25}\smash{\begin{tabular}[t]{c}$B_s^+$\end{tabular}}}}%
    \put(0.43447301,0.44622984){\makebox(0,0)[lt]{\lineheight{1.25}\smash{\begin{tabular}[t]{l}\reflectbox{$\ddots$}\end{tabular}}}}%
  \end{picture}%
\endgroup%

\caption{Regions from Remark~\ref{rem:polynomials-and-tiles}.}
\label{fig:49}
\end{figure}

\begin{lem}
We have the following algebraic relations involving $\phi^\sigma$ and $\phi^\tau$:
\begin{enumerate}
\item $\phi^\tau(1+\a)=T^{-1}(1+\a');$
\item $\phi^\sigma(\b_1)=\phi^\tau(\b_{-1})=\b'$;
\item  $1+\a'=(1+T)\b'$;
\item $\b'$ is a unit;
\item There is a unique $\kappa'$ in the $U$-adic completion of $\bF[U,T,T^{-1}]$ such that
\[
\kappa'(1+\a')=\b'+\sum_{s\ge 1} U^{s(s-1)/2}.
\]
\item There is a unique $\delta'$ in the $U$-adic completion of $\bF[U,T,T^{-1}]$ such that
\[
\delta'(1+T)=\b'+\sum_{s\ge 1} U^{s(s-1)/2}
\]
\end{enumerate}
\end{lem}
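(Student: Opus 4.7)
The plan is to deduce nearly every item from the previous lemma (call it the factorization lemma) by applying the ring homomorphisms $\phi^\sigma$ and $\phi^\tau$ and exploiting their multiplicativity, leaving only one characteristic-2 divisibility argument for the final two items. First I would verify (2) by direct termwise computation: since $\phi^\sigma(\scV^j)=T^j$ and $\phi^\tau(\scU^j)=T^{-j}$ (with the understanding that the formal negative powers of $\scV$ and $\scU$ appearing in $\b_{\pm 1}$ are absorbed into $U^{s(s-1)/2}$ via $\scU\scV=U$, as explained in the factorization lemma), applying $\phi^\sigma$ termwise to $\b_1$ sends each inner sum $\scV^{-s+1}+\cdots+\scV^{s-1}$ to $T^{-s+1}+\cdots+T^{s-1}$, yielding $\b'$, and symmetrically $\phi^\tau(\b_{-1})=\b'$. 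Then (3) follows by applying $\phi^\sigma$ to $1+\a=(1+\scV)\b_1$, using $\phi^\sigma(1+\scV)=1+T$ and $\phi^\sigma(\b_1)=\b'$. Similarly, (1) follows by applying $\phi^\tau$ to $1+\a=(1+\scU)\b_{-1}$: since $\phi^\tau(1+\scU)=1+T^{-1}=T^{-1}(1+T)$ and $\phi^\tau(\b_{-1})=\b'$, one obtains $\phi^\tau(1+\a)=T^{-1}(1+T)\b'=T^{-1}(1+\a')$ by (3). For (4), apply $\phi^\sigma$ to $\b_1=(1+\scU)\epsilon$ to obtain $\b'=(1+UT^{-1})\phi^\sigma(\epsilon)$; both factors are units in the $U$-adic completion $\bF[T,T^{-1}]\llsquare U\rrsquare$, because $1+UT^{-1}$ has inverse $\sum_{i\ge 0}U^iT^{-i}$ converging in the $U$-adic topology, and $\phi^\sigma(\epsilon)=1+U\phi^\sigma(f)$ (writing $\epsilon=1+Uf$) is invertible by geometric series.

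Items (5) and (6) are equivalent given (3) and (4): since $(1+\a')=(1+T)\b'$ with $\b'$ a unit, any $\delta'$ satisfying (6) produces $\kappa':=\delta'\b'^{-1}$ satisfying (5), and conversely. Uniqueness of both is immediate, as $\bF[T,T^{-1}]\llsquare U\rrsquare$ is an integral domain and neither $1+T$ nor $1+\a'$ is zero. The main (and essentially only) obstacle is existence, for which it suffices to show that $(1+T)$ divides $\b'+\sum_{s\ge 1}U^{s(s-1)/2}$. I would handle this by reducing modulo $(1+T)$: setting $T=1$ collapses each truncated sum $T^{-s+1}+\cdots+T^{s-1}$ to the scalar $2s-1$, which equals $1$ in characteristic $2$, so $\b'\equiv\sum_{s\ge 1}U^{s(s-1)/2}\pmod{(1+T)}$ and hence $\b'+\sum_{s\ge 1}U^{s(s-1)/2}\equiv 0\pmod{(1+T)}$. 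The quotient defines $\delta'$ (and matches the explicit formula stated at the beginning of the section, as one can verify termwise using the factorization $T^{-s+1}+\cdots+T^{-1}+T+\cdots+T^{s-1}=(1+T)\cdot(\text{Laurent polynomial})$ in characteristic 2); dividing further by the unit $\b'$ gives $\kappa'$.
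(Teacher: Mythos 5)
Your proposal is correct and follows essentially the paper's route: for (5)--(6) you use $1+\a'=(1+T)\b'$ together with the fact that $\b'$ is a unit to reduce existence to divisibility of $\b'+\sum_{s\ge 1}U^{s(s-1)/2}$ by $1+T$, verified by the same characteristic-2 cancellation the paper uses, with uniqueness from the integral-domain observation. Items (1)--(4), which the paper dismisses as straightforward, you obtain by pushing the factorizations of the previous lemma through the ring homomorphisms $\phi^\sigma,\phi^\tau$ --- a tidy way to organize the same direct computations.
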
 
\begin{proof} The first three claims are straightforward. For the third claim, we observe that $(1+\a')=\b'(1+T)$, and $\b'$ is a unit, so it suffices to show that $\b'+\sum_{s\ge 1} U^{s(s-1)/2}$ is divisible by $(1+T)$. To this end, we compute that
\[
\b'+\sum_{s\ge 1} U^{s(s-1)/2}=\sum_{s\ge 2} (T^{s-1}+\cdots +T+T^{-1}+\cdots +T^{1-s}) U^{s(s-1)/2},
\]
which is clearly divisible by $1+T$. The existence of $\delta'$ follows from the same reasoning.
\end{proof}

\subsection{Proof of Theorem~\ref{thm:exact-triangle}}

We will describe type-$D$ morphisms
\[
F^1\colon \Cone(f^1\colon \frD_0^{\frK}\to \frD_1^{\frK})\to \frD_\infty^{\frK}\quad \text{and} \quad G^1\colon \frD_\infty^{\frK}\to \Cone(f^1\colon \frD_0^{\frK}\to \frD_1^{\frK})
\]
and show that 
\[
\d(F^1)=0, \quad \d (G^1)=0,\quad  F^1\circ G^1\simeq  \sum_{s \ge 1} U^{s(s-1)/2}\id, \quad \text{and} \quad G^1\circ F^1\simeq  \sum_{s \ge 1} U^{s(s-1)/2}\id.
\] 
Since $\sum_{s \ge 1} U^{s(s-1)/2}$ is a unit in the $U$-adic topology, this will complete the proof.
The above equations will turn out to be a consequence of the factorizations from Section~\ref{sec:factorizations}.

\begin{figure}
\[
\begin{tikzcd}[column sep=1.3cm, row sep=1.8cm, labels=description]
\Xs_0
	\ar[r,"1+\a"]
	\ar[d, "\sigma+T^n\tau"]
	\ar[rrr,dashed, bend left, pos=.9, "\b_{-1}"]
	\ar[rrrrr,dashed, bend left=35, "\b_1", pos=.93]
&
\Ys_0 
	\ar[d, "\sigma+T^{n+1} \tau"]
	\ar[r,bend left, crossing over, dashed, "1", pos=.7]
	\ar[rrr, bend left, crossing over, dashed, "1", pos=.9]
	\ar[drrr,dashed, " \tfrac{1+T^{n+1}}{1+T} \tau",sloped, pos=.3]
&[2cm]
\xs_0^-
	\ar[from=r, "1+\scU"]
	\ar[dr,crossing over, "\tau"]
&
\xs_0^+
	\ar[dr, "T^{-1}\tau",pos=.3]
&
\ys_0^-
	\ar[from=r, "1+\scV"] 
	\ar[dl, "\sigma",crossing over,pos=.3]
	&
\ys_0^+
	\ar[dl, "\sigma"]
\\
\Xs_1
	\ar[r, "1+\a'"]
	\ar[rrrr, bend right=35,dashed, "\b'"]
&
\Ys_1
	\ar[rr, bend right, dashed, "1"]
&&
\zs_1^-
	\ar[from=r, "1+T"]
&
\zs_1^+
\end{tikzcd}
\]
\[
\begin{tikzcd}[column sep=1.3cm, row sep=1.8cm, labels=description]
\xs_0^-
	\ar[from=r, "1+\scU"]
	\ar[dr, "\tau"]
	\ar[rrrrr,dashed, bend left=35, "\b_{-1}", pos=.08]
	\ar[drrrr,dashed, bend right=10, "(1+UT^{-1})^{-1} \sigma+\tfrac{1+T^{n+1}}{1+T}\tau",sloped, pos=.7]
&
\xs_0^+
	\ar[dr, "T^{-1}\tau",pos=.3]
	\ar[rrr,bend left,dashed, "1", pos=.1]
&
\ys_0^-
	\ar[from=r, "1+\scV"] 
	\ar[dl, "\sigma",crossing over,pos=.3]
	\ar[rrr,bend left,crossing over, dashed, "\b_1", pos=.2]
	\ar[drr,dashed, "T^n (1+UT)^{-1} \tau",sloped]
&
\ys_0^+
	\ar[dl,crossing over, "\sigma"]
	\ar[r,bend left,dashed, "1", pos=.3]
&[2.2cm]
\Xs_0
	\ar[r,"1+\a"] 
	\ar[d, "\sigma+T^n\tau"]
&
\Ys_0 
	\ar[d, "\sigma+T^{n+1} \tau"]
\\
&
\zs_1^-
	\ar[from=r, "1+T"]
	\ar[rrrr,bend right=35,dashed, "\b'"]
&
\zs_1^+
	\ar[rr,bend right, dashed, "1"]
&&
\Xs_1
	\ar[r, "1+\a'"]
&
\Ys_1
\end{tikzcd}
\]
\caption{The maps $F^1$ (top) and $G^1$ (bottom).}
\label{fig:F1G1}
\end{figure}
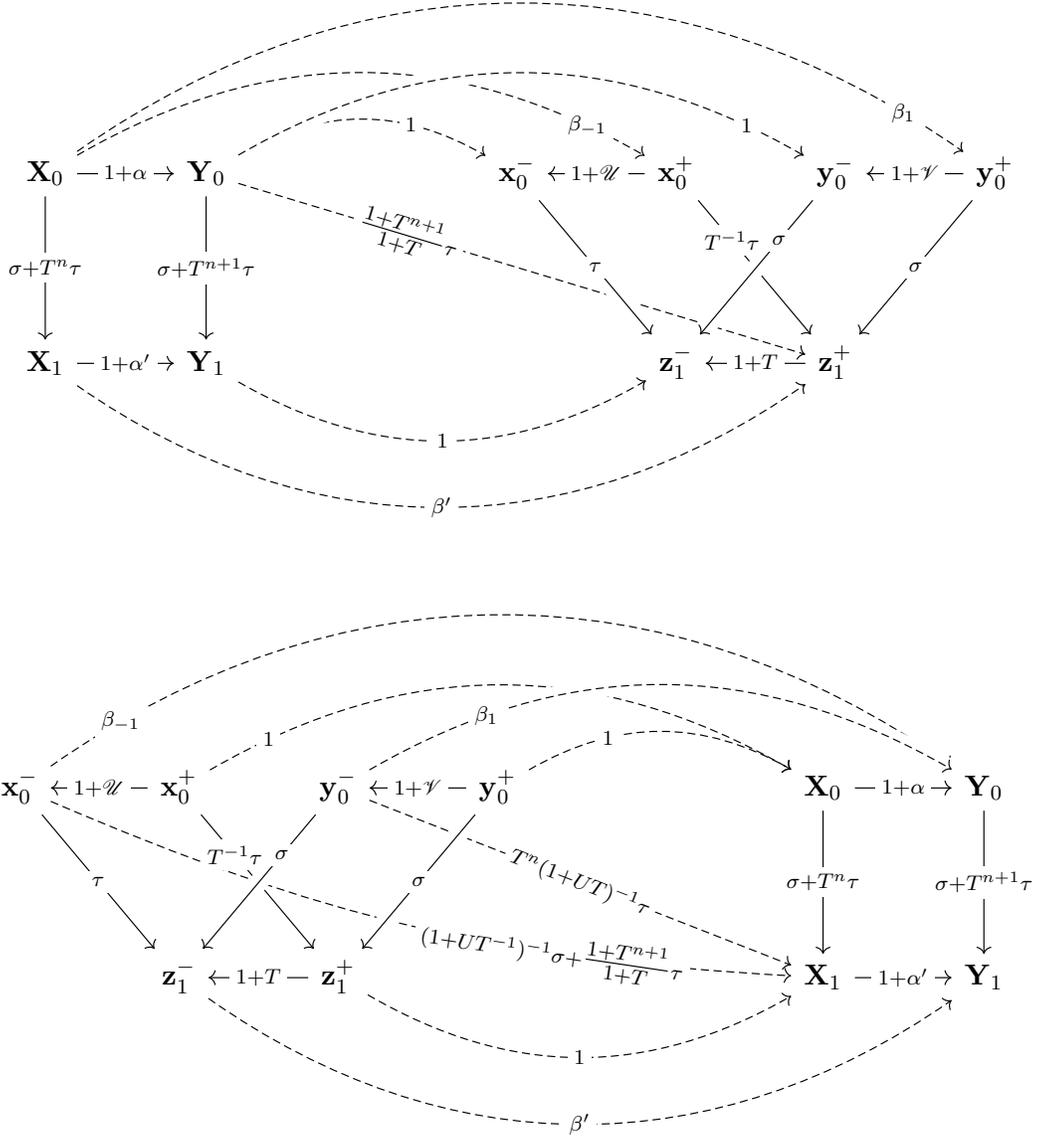

The maps $F^1$ and $G^1$ are shown in Figure~\ref{fig:F1G1}. Their compositions $G^1\circ F^1$ and $F^1\circ G^1$ are shown in Figure~\ref{fig:comps}. One may check that
\[
G^1\circ F^1+\cdot \sum_{s \ge 1} U^{s(s-1)/2}\id=\d(h^1),
\]
 there $h^1$ sends $\ve{Y}_0$ to $\ve{X}_0\otimes \kappa$ and $\ve{Y}_1$ to $\ve{X}_1\otimes \kappa'$.
Additionally, one may check that
\[
F^1\circ G^1+ \sum_{s\ge 1} U^{s(s-1)/2}\id=\d(j^1)
\]
where 
\[
\begin{split} j^1(\xs_0^-)&=\xs^+_0\otimes \delta_{-1}+\ys_0^+\otimes \epsilon\\
j^1(\ys_0^-)&=\ys_0^+\otimes \delta_{1}+\xs_0^+\otimes \epsilon\\
j^1(\zs_1^-)&=\zs_1^+\otimes \delta'.
\end{split}
\]
This completes the proof.

\begin{figure}[h]
\[
G^1\circ F^1=\hspace{-.5cm}\begin{tikzcd}[row sep=3cm, column sep=3cm,labels=description]
\Xs_0
	\ar[r, "1+\a"]
	\ar[d, "\sigma+T^n \tau"]
	\ar[loop left, looseness=30, "\b_1+\b_{-1}",dashed]
& 
\Ys_0
	\ar[d, "\sigma+T^{n+1} \tau"]
	\ar[loop right, looseness=30,crossing over, "\b_1+\b_{-1}",dashed]
	\ar[dl,dashed, "{ \begin{array}{c}
	\frac{T^n}{1+UT} \tau\\
	+\frac{1}{1+UT^{-1}} \sigma \end{array}}"]
\\
\Xs_1
	\ar[r, "1+\a'"]
	\ar[loop left, looseness=30,dashed, "\b'"]
& 
\Ys_1
	\ar[loop right, looseness=30,dashed, crossing over, "\b'"]
\end{tikzcd}
\]

\[
F^1\circ G^1=\hspace{-1cm}\begin{tikzcd}[column sep=2.6cm, row sep=4cm, labels=description]
\xs_0^+
	\ar[r, "1+\scU"]
	\ar[dr, "T^{-1}\tau", pos=.7,crossing over]
	\ar[loop above, looseness=30,dashed, crossing over, "\b_{-1}"]
	\ar[rr,bend left,dashed, "\b_1", pos=.3]
&
\xs_0^-
	\ar[dr, "\tau",pos=.7,crossing over]
	\ar[loop above, looseness=30,dashed, crossing over, "\b_{-1}"]
	\ar[rr,bend left,dashed,crossing over,"\b_1", pos=.7]
	\ar[d, dashed, "\frac{\b'}{(1+UT^{-1})} \sigma", pos=.45]
&
\ys_0^+
	\ar[r, "1+\scV"] 
	\ar[dl, "\sigma",crossing over,pos=.7]
		\ar[loop above, looseness=30,dashed, crossing over, "\b_{1}"]
		\ar[ll,bend left,dashed,crossing over, "\b_{-1}", pos=.7]
&
\ys_0^-
	\ar[dl, "\sigma", pos=.7]
		\ar[loop above, looseness=30,dashed, crossing over, "\b_{1}"]
		\ar[ll,bend left,dashed,crossing over,"\b_{-1}", pos=.3]
		\ar[dll,bend left=10, crossing over,dashed, pos=.35,"\frac{\b'}{(1+UT)} \tau"]
\\
& 
\zs_1^+
	\ar[r, "1+T"]
	\ar[loop below,dashed, looseness=30, "\b'"]
&
\zs_1^-
\ar[loop below,dashed, looseness=30, "\b'"]
\end{tikzcd}
\]
\caption{The compositions $G^1\circ F^1$ and $F^1\circ G^1$. Here, $\delta^1$ are solid arrows, while the $G^1\circ F^1$ and $F^1\circ G^1$ are  dashed arrows. }
\label{fig:comps}
\end{figure}

\subsection{More exact triangles}

There are several additional exact triangles satisfied by the bimodules reflecting the other exact triangles proven by Ozsv\'{a}th and Szab\'{o}. The simplest is an analog of the exact sequence
\[
\cdots \to\bB\to \Cone(v+h\colon\bA\to \bB)\to \bA\to\cdots.
\]
The natural analog for our type-$D$ modules is to define type-$D$ modules $\ve{i}_0^{\cK}$ and $\ve{i}_1^{\cK}$ which each have a single generator over $\bF$, and vanishing $\delta^1$ map. We declare $\ve{i}_0^{\cK}$ to be concentrated in idempotent 0 and $\ve{i}_1^{\cK}$ to be concentrated in idempotent 1. There is a morphism
\[
f^1\colon \ve{i}_0^{\cK}\to \ve{i}_1^{\cK}
\]
given by
\[
f^1(i_0)=i_1\otimes (\sigma+T^n \tau)
\]
and clearly
\[
\cD_n^{\cK}=\Cone(f^1\colon \ve{i}_0^{\cK}\to \ve{i}_1^{\cK}). 
\]

There is another exact triangle reflected by our type-$D$ modules. Ozsv\'{a}th and Szab\'{o} prove an additional exact triangle  \cite{OSIntegerSurgeries}*{Section~3} which takes the form
\[
\cdots \to\underline{\ve{\CF}}^-(Y)\to \ve{\CF}^-(Y_n(K))\to \ve{\CF}^-(Y_{n+m}(K))\to \cdots.
\] 
Here, $\underline{\ve{\CF}}^-(Y)$ is a version of Heegaard Floer homology with twisted coefficients. 

There is an analog for our type-$D$ modules. We define $\underline{\cD}_{\infty,\Z/m}^{\cK}$ 
to have two generators $\zs^+_1$ and $\zs_1^-$, both in idempotent 1, and to have differential
\[
\begin{tikzcd}
\zs_1^+\ar[r, "1+T^m"] & \zs_1^-.
\end{tikzcd}
\]

\begin{prop} If $n\in \Z$ and $m$ is a positive integer, then there is a type-$D$ morphism $f^1$ and a homotopy equivalence
\[
\underline{\cD}_{\infty,\Z/m}^\cK\simeq \Cone(f^1_m\colon \cD_n^{\cK} \to \cD_{n+m}^{\cK}).
\]
\end{prop}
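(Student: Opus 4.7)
The plan is to mirror the proof of Theorem~\ref{thm:exact-triangle}, replacing the power series of Section~\ref{sec:factorizations} by $m$-fold analogs. First I would seek $\a_m\in \ve{I}_0\cdot\cK\cdot\ve{I}_0$ satisfying the relation $\phi^\tau(1+\a_m)=T^{-m}(1+\a'_m)$, where $\a'_m:=\phi^\sigma(\a_m)$. A short calculation using $\sigma a=\phi^\sigma(a)\sigma$ and $\tau a=\phi^\tau(a)\tau$ shows that this identity is precisely the cycle condition for
\[
f^1_m(\ve{X}_0)=\ve{Y}_0\otimes(1+\a_m),\qquad f^1_m(\ve{X}_1)=\ve{Y}_1\otimes(1+\a'_m).
\]
A natural candidate is $\a_m=\sum_{s\ge 1}(\scU^{sm}+\scV^{sm})\,U^{s(s-1)m^2/2}$. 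If a direct expansion does not verify the relation exactly, corrections in higher filtration $J_n^0$ of the chiral topology can be added iteratively via the hypercube-filling procedure of \cite{MOIntegerSurgery}*{Lemma~8.6}, the convergence being automatic from the chiral topology.

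Second, I would establish analogs of Lemma~\ref{lem:factorizations}. The key one is $1+\a'_m=(1+T^m)\b'_m$ with $\b'_m$ a unit in the chiral completion of $\bF[U,T,T^{-1}]$: grouping terms by $s$ as in Remark~\ref{rem:polynomials-and-tiles}, now with the unit tile having side $m$, and using the characteristic-2 identity $1+T^{sm}=(1+T^m)(1+T^m+\cdots+T^{(s-1)m})$, the quotient $\b'_m$ comes out as $1+U\cdot(\text{series})$, hence a unit. The companion factorization $1+\a_m=(1+\scU^m)(1+\scV^m)\epsilon_m$ with $\epsilon_m$ a unit reduces, under the substitution $\scU\leadsto \scU^m$, $\scV\leadsto \scV^m$, to the $m=1$ case already treated in Section~\ref{sec:factorizations}. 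Since $1+\scU^m$ and $1+\scV^m$ are themselves units in the chiral completion of $\ve{I}_0\cdot\cK\cdot\ve{I}_0$ (they can be inverted by geometric series convergent in the chiral topology), this yields that $1+\a_m$ is a unit, which is essential for collapsing the idempotent-$0$ part of $\Cone(f^1_m)$.

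Third, with these factorizations in hand, I would construct explicit type-$D$ morphisms $F^1_m\colon \Cone(f^1_m)\to \underline{\cD}_{\infty,\Z/m}^\cK$ and $G^1_m$ in the reverse direction by directly adapting Figure~\ref{fig:F1G1}. Since $\underline{\cD}_{\infty,\Z/m}^\cK$ lives entirely in idempotent $1$ (unlike $\cD_\infty^\cK$ in the $m=1$ case), the maps $F^1_m$ and $G^1_m$ are simpler on the idempotent-$0$ side: $F^1_m$ must annihilate the idempotent-$0$ subquotient $\ve{X}_0\xrightarrow{1+\a_m}\ve{Y}_0$ of the cone, which is accomplished by an internal homotopy built from $\epsilon_m^{-1}$. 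On the idempotent-$1$ side the factorization $1+\a'_m=(1+T^m)\b'_m$ gives a verbatim adaptation of the $m=1$ construction. Finally one verifies $F^1_m\circ G^1_m\simeq c\cdot\id$ and $G^1_m\circ F^1_m\simeq c\cdot\id$ for a unit $c\in\cK$, via chain homotopies built from auxiliary series $\kappa_m$, $\delta_{\pm1,m}$, $\delta'_m$ analogous to those of Section~\ref{sec:factorizations}. The main obstacle will be Step~1: producing $\a_m$ satisfying the cycle condition in closed form. Existence is abstractly guaranteed by the invariance theorem of Section~\ref{sec:proof-invariance} applied to the two-handle cobordism between the $n$- and $(n+m)$-framed solid tori, but the explicit $\a_m$ may require an inductive hypercube-filling with more terms than the naive guess.
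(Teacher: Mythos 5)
Your route is viable but it is genuinely different from (and much heavier than) the paper's. You plan to re-run the entire $m=1$ argument of Theorem~\ref{thm:exact-triangle} with $m$-fold series: find $\a_m$ satisfying the cycle condition $\phi^\tau(1+\a_m)=T^{-m}(1+\a'_m)$, re-prove the factorization lemmas of Section~\ref{sec:factorizations}, and rebuild $F^1_m$, $G^1_m$ and the homotopies. The paper instead observes that the substitution you only use incidentally (for $\epsilon_m$) is an algebra endomorphism: $\phi_{n,m}\colon \cK\to\cK$ sending $\scU^i\scV^j\mapsto \scU^{im}\scV^{jm}$, $U^iT^j\mapsto U^{mi}T^{mj}$, $\sigma\mapsto\sigma$, $\tau\mapsto T^n\tau$. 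This induces a $DA$-bimodule ${}_{\cK}[\phi_{n,m}]^{\cK}$ with $\cD_p^{\cK}\boxtimes{}_{\cK}[\phi_{n,m}]^{\cK}=\cD_{pm+n}^{\cK}$ and $\cD_\infty^{\cK}\boxtimes{}_{\cK}[\phi_{n,m}]^{\cK}\simeq\underline{\cD}_{\infty,\Z/m}^{\cK}$, so boxing the known $m=1$ equivalence with this bimodule yields the proposition at once, with no new series, factorizations, or homotopies to check. What your approach buys is an explicit $f^1_m$ and explicit homotopy data; what the paper's buys is a three-line proof and automatic compatibility with the topology.

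Two concrete corrections to your plan. First, your candidate series is wrong: the exponent should be $U^{ms(s-1)/2}$, not $U^{s(s-1)m^2/2}$; with your exponent the cycle condition fails already at the $s\ge 2$ terms (matching the $T^{-ms}$ coefficients forces $m=m^2$). The correct series is exactly $\phi_{n,m}(\a)=\sum_{s\ge1}(\scU^{ms}+\scV^{ms})U^{ms(s-1)/2}$, since $U=\scU\scV$ maps to $U^m$ under the substitution — so your own substitution remark, pushed a little further, repairs the slip and in fact gives the whole theorem. Second, your appeal to the invariance theorem for abstract existence of $\a_m$ is not really on point (it concerns invariance of the modules, not the existence of a morphism with prescribed cone); but this is moot once the correct $\a_m$ is in hand. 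Granting the corrected $\a_m$, your Step 3 does simplify as you anticipate: over $\cK$ the element $1+\a_m$ is a unit, so one can cancel the idempotent-$0$ arrow of the cone and then use $1+\a'_m=(1+T^m)\b'_m$ with $\b'_m$ a unit to identify what remains with $\underline{\cD}_{\infty,\Z/m}^{\cK}$; you do not actually need full analogs of $\kappa_m$, $\delta_{\pm1,m}$, $\delta'_m$.
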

\begin{proof}
The map $f^1_m$ is given by the following diagram:
\[
\begin{tikzcd}[labels=description, column sep=2cm, row sep =2cm]
\ve{x}_0
	\ar[r,dashed, "1+\a_m"]
	\ar[d, "\sigma+T^n \tau"]
& \ys_0
	\ar[d, "\sigma+T^{n+m} \tau"]
\\
\xs_1 \ar[r,dashed, "1+\phi^\sigma(\a_m)"] & \ys_1,
\end{tikzcd}
\]
where 
\[
\a_m=\sum_{s\ge 1} (\scU^{ms}+\scV^{ms}) U^{ms(s-1)/2}. 
\]
(Here, solid arrows denote $\delta^1$ and dashed arrows denote $f^1_m$). 

The homotopy equivalence may be derived from the $m=1$ case as follows. We define an algebra morphism $\phi_{n,m}\colon \cK\to \cK$, as follows. On algebra elements which are concentrated in a single idempotent, we set
\[
\phi_{n,m}(\scU^i \scV^j)=\scU^{im} \scV^{jm} \quad \text{and} \quad \phi_{n,m}(U^i T^j)=U^{mi} T^{mj}.
\]
Additionally, we set
\[
\phi_{n,m}(\sigma)=\sigma \quad \text{and} \quad \phi_{n,m}(\tau)=T^n \tau. 
\]
It is straightforward to check that $\phi_{n,m}(a\cdot b)=\phi_{n,m}(a)\cdot \phi_{n,m}(b)$ so there is an induced bimodule ${}_{\cK} [\phi_{n,m}]^{\cK}$.  We observe
\[
\cD_{p}^{\cK}\boxtimes {}_{\cK} [\phi_{n,m}]^{\cK}=\cD_{pm+n}^{\cK}\quad \text{and} \quad \cD_{\infty}^{\cK}\boxtimes {}_{\cK}[\phi_{n,m}]^{\cK}\simeq \underline{\cD}_{\infty,\Z/m}^{\cK}.
\]
Hence boxing the equivalence 
\[
\cD_\infty^{\cK}\simeq \Cone(f^1\colon \cD_{0}^{\cK}\to \cD_1^{\cK})
\]
with ${}_{\cK}[\phi_{n,m}]^{\cK}$ yields the equivalence in the statement. 
\end{proof}

\appendix

\section{Positivity}
\label{sec:positivity}

We recall that the maps appearing in our homotopy equivalence
\[
\ve{\CF}^-(Y_{\Lambda}(L))\simeq \bX_{\Lambda}(L)
\]
potentially involve negative powers of $\scV$ because of our description of the complexes and maps in terms of local systems (cf. Remark~\ref{rem:negative-powers}). In this section, we show that the negative powers of $\scV$ are an artifact of our presentation, and in fact the maps that we construct involve only nonnegative powers of the variables $\scU$, $\scV$ and $U$.

 Since we can write $\scV^{-1}=U^{-1} \scU$, we can view negative powers of $\scV$ instead as negative powers of $U$. Our main result is the following:

 \begin{thm}\label{thm:positivity} For appropriately chosen Heegaard diagrams, the maps appearing in the homotopy equivalence
 \[
 \ve{\CF}^-(Y^3_{\Lambda}(L))\simeq \bX_{\Lambda}(Y,L) 
 \]
involve no negative powers of $U$. More generally, if $Y$ is a manifold with torus boundaries, and $Y'$ is obtained by gluing a solid torus to $Y$ so that the meridian of the torus is glued to the chosen longitude of $Y$, then the maps appearing in the homotopy equivalence
 \[
 \cX(Y)^{\frL}\boxtimes {}_{\frK} \frD_0\simeq \cX(Y')^{\frL'}
 \] 
involve no negative powers of $U$.
 \end{thm}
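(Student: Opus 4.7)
The plan is to reduce to a local, genus-one computation, verify positivity there, and then propagate via the inductive/functorial constructions that assemble the general equivalence. First, the homotopy equivalence $\cX(Y)^{\frL}\boxtimes {}_{\frK}\frD_0\simeq \cX(Y')^{\frL'}$ is obtained by pairing the morphism $\scB_\lambda\simeq \Cone(\lb\theta_\sigma^+,\phi^\sigma\rb+\lb\theta_\tau^+,\phi^\tau\rb\colon \bs_0^{E_0}\to \bs_1^{E_1})$ from Theorem~\ref{thm:iterated-cone} with the $\alpha$-curves. By the construction in Section~\ref{sec:iterating-background}, this global morphism is assembled from the genus-one morphisms $\Phi,\Psi,H$ of Section~\ref{sec:mapping-cone} through (i) the $1$-handle $A_\infty$-functor of Proposition~\ref{prop:1-handle-functor}, which simply multiplies a polygon count by a fixed $\theta^+$ and preserves all $\bF[U]$-weights, and (ii) the disconnected-surface product $\scB\times \scB'$ of Lemma~\ref{lem:tensor-product-hypercube}, under which algebra weights on polygons multiply. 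Both operations manifestly preserve the property of having no negative $U$-powers, so positivity for the global equivalence is reduced to positivity for the data $\Phi$, $\Psi$, $H$, their compositions $\Psi\circ\Phi$, $\Phi\circ\Psi$, and the homotopy witnessing $\Phi\circ\Psi+\lbmed\theta_{\b_0,\b_0'}^+,\id\rbmed+\lbmed\theta_{\b_1,\b_1'}^+,\id\rbmed=\mu_1(H)$.

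Next, I would inspect each morphism directly. The morphism $\Phi=\lb\theta_{\b_\lambda,\b_0},\Delta\rb$ uses $\Delta(U^i)=(\scU\scV)^i$, and $\Psi=\lbmed\theta_{\b_1,\b_\lambda'},\Pi\rbmed$ uses $\Pi(U^iT^j)=U^i\delta_{j,0}$; neither involves negative powers. The homotopy $H$ is built from $Z=Z_\sigma^++Z_\tau^+$ with
\[
\eta_\sigma=\sum_{i\ge 0}\scV^i\circ\Delta\circ\Pi\circ T^{-i},\qquad \eta_\tau=\sum_{i\ge 1}\scU^i\circ\Delta\circ\Pi\circ T^i,
\]
both series in nonnegative powers of $\scU,\scV$. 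Hence the ``input'' maps of the equivalence have no negative $U$-weights.

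The main obstacle lies with the compositions such as $\mu_2(\Phi,\Psi)$, $\mu_2(\Psi,\Phi)$, and $\mu_1(H)$, where each holomorphic $n$-gon contributes a composition of the form $U^{n_w(\phi)}\,\rho_n(e^{m_n})\circ f_{n-1,n}\circ\rho_{n-1}(e^{m_{n-1}})\circ\cdots\circ f_{1,2}\circ\rho_1(e^{m_1})$, where the $\rho_i$ on $E_0$-factors contribute $\scV^{m_i}$ with $m_i\in\Z$ possibly negative. To handle this, I would establish the following local cancellation: for each polygon class $\phi$ appearing in the relevant holomorphic counts, if $\delta_1,\dots,\delta_j$ are the signed intersections $\#(\d_{\b_{0,k}}(\phi)\cap K_k)$ with each shadow crossing an $E_0$-curve, then $\delta_1+\cdots+\delta_j=n_z(\phi)-n_w(\phi)$ since the short arcs from $w_k$ to $z_k$ are the only places where the shadow can meet the $\b_0$-curves (as arranged in Section~\ref{sec:knot-surgery-formula}). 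Using $\scV^{\delta_k}=U^{\delta_k}\scU^{-\delta_k}$, one can factor the polygon weight as
\[
U^{n_w(\phi)}\cdot \scV^{\delta_1+\cdots+\delta_j}\cdot(\text{other terms})=\scU^{\sum_k \max(0,-\delta_k)+n_w(\phi)-\sum_k\max(0,-\delta_k)}\scV^{\sum_k\max(0,\delta_k)}\cdot(\text{rest}),
\]
and then invoke the fact, analogous to Lemma~\ref{lem:positivity-simple}, that $n_w(\phi)+\sum_k\min(\delta_k,0)\ge 0$ when the polygon has nonnegative domain. This last nonnegativity follows because the multiplicity of $\phi$ at each region is nonnegative, so the multiplicities on the two sides of each $E_0$-arc from $w_k$ to $z_k$ force $n_w(\phi)$ to dominate the negative contribution.

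Finally, one must verify that the same cancellation applies to the morphisms $\eta_\sigma,\eta_\tau$ entering $H$, to the pentagon and rectangle counts used in Proposition~\ref{prop:PsiPhi}, and (after applying the $1$-handle and disconnected-surface operations) to every composite holomorphic polygon count appearing in the iterated construction. The symmetry of Section~\ref{sec:symmetry} between the $w$-pointed and $z$-pointed maps provides a useful consistency check: on pieces of the diagram where only one type of $E_0$-crossing occurs, the two descriptions coincide, and in the $z$-pointed formulation the same output is manifestly a product of nonnegative powers of $\scU,\scV,U,T$. The hard part is making this bookkeeping uniform across all polygon multiplicities that may arise, but the argument is essentially local in the boundary of each polygon and reduces in every case to the identity $U^{n_w(\phi)}\scV^{n_z(\phi)-n_w(\phi)}=\scU^{n_w(\phi)}\scV^{n_z(\phi)}$ already used in Lemma~\ref{lem:positivity-simple}, now applied component-by-component along each $E_0$-decorated Lagrangian in the polygon boundary.
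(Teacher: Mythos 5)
Your overall strategy---reduce to the local genus-one computation and then propagate through the 1-handle functor and disconnected-surface products---is reasonable and matches the paper's outlook, but the crucial local bookkeeping step does not hold as stated, and this is precisely where the paper introduces an ingredient you omit.

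The problem is your claimed inequality $n_w(\phi)+\sum_k\min(\delta_k,0)\ge 0$ for nonnegative polygon domains, together with the implicit assumption that the factors $\scV^{\delta_k}$ can be commuted across the intermediate $\bF[U]$-module morphisms $\phi_{i,i+1}$ so as to combine into $\scV^{\sum_k\delta_k}$. Neither is true. In the paper's diagrams $\cH'=(\Sigma,\as,\bs_0,\bs_1,\bs_\lambda,\bs_0',\bs_1')$ the homotopy $H$ contributes morphisms $\lb\theta^+,f\rb\in\ve{\CF}^-(\bs_0^{E_0},\bs_0'^{E_0})$ where $f$ is built from Alexander-grading projections $\delta_i$; these do \emph{not} commute with $\scV$, so the factorization $U^{n_w}\scV^{\delta_1+\cdots+\delta_j}\cdot(\text{other})$ is not legitimate. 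As for the inequality itself, write $n_w,n_z,N,M$ for the local multiplicities of $\phi$ around the point $\theta^+\in\b_0\cap\b_0'$ (as in Figure~\ref{fig:39}); then $\delta_1=N-n_w$, $\delta_2=n_z-N$, and the vertex relation gives $n_w+n_z=N+M+\delta$ with $\delta=-1$ exactly when $\theta^+$ is an input. In the case $N\ge n_w$, $n_z<N$ your expression is $n_w+n_z-N=M+\delta$, which is $-1$ when $\theta^+$ is an input and $M=0$---a configuration that really does occur among the classes counted in $\mu_2(\Psi,\Phi)+\mu_1(H)$.

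The paper closes exactly this gap by restricting attention to \emph{filtered} morphisms (Definition~\ref{def:filtered-morphism}) and using the algebraic Lemma~\ref{lem:filtered-morphism-compose-Vs}, which says that if $f\colon E_0\to E_0$ is filtered then so are $\scV\circ f\circ\scV^{-1}$ and $\scV^{-1}\circ f\circ\scV$. In the bad case $\delta=-1$, the offending factor $\scV^{n_z-N}\circ f_i\circ\scV^{N-n_w}$ is rewritten as $\scV^{n_z-n_w}\circ f_i'$ for another filtered $f_i'$, after which the bound $w_U\ge\min(n_w,n_z)\ge 0$ goes through. This is why the paper's Lemma~\ref{lem:positivity-knot} carries the hypothesis that all input morphisms in $\ve{\CF}^-(\bs_0^{E_0},\bs_0'^{E_0})$ are filtered, and why a companion lemma checks that outputs on pure-$\beta$ diagrams are again filtered, closing the induction. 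Your argument would need to introduce this filtered-morphism structure (or an equivalent device) to be correct; as written, the reduction to $U^{n_w}\scV^{n_z-n_w}=\scU^{n_w}\scV^{n_z}$ ``component-by-component'' does not succeed when there is an input intersection point between two $E_0$-decorated curves.
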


\subsection{Knot surgery}
\label{sec:knot-surgery-positivity}

In this section, we prove Theorem~\ref{thm:positivity} in the setting of the knot surgery formula for a knot $K$ in a closed 3-manifold $Y$. The case of links (or equivalently bordered manifolds with multiple boundary components) turns out to be not substantially different, and is discussed briefly in Section~\ref{sec:link-surgery-positivity}.

We assume that we have a meridianal Heegaard diagram $(\Sigma,\as,\bs_0,w,z)$ for a knot $K\subset Y$, which has a distinguished knot shadow, which we denote also by $K$. On this diagram, there is a distinguished punctured torus containing the union of $K$ and the meridian $\b_0\in \bs_0$. We form curves $\bs_1$ and $\bs_\lambda$ as in Section~\ref{sec:knot-surgery-formula}.

We will write $\cH$ and $\cH'$ for the Heegaard multi-diagrams
\[
\cH=(\Sigma, \as, \bs_{\lambda}, \bs_0, \bs_1, \bs_{\lambda}')\quad \text{and} \quad \cH'=(\Sigma, \as, \bs_0,\bs_1, \bs_{\lambda}, \bs_0',\bs_1').
\]
To simplify our arguments, we assume that the meridians $\b_0\in \bs_0$ and $\b_0'\in \bs_0'$ intersect near $w$ and $z$ as in Figure~\ref{fig:39}.

\begin{define} If $M$ and $N$ are free $\bF[U]$-modules and $f\colon M\to N$ is an $\bF[U]$-linear map, we set $w_U(f)\in [0,\infty]$ to be the infimum of $n$ such that there is an $\bF[U]$-linear $g\colon M\to N$ so that
\[
f=U^n g.
\]
If $f$ maps into $U^{-1} N$, we define $w_U(f)$ similarly, except that we allow values in $[-\infty,\infty]$. 
\end{define}

\begin{lem}
\label{lem:positivity-knot}
Consider a subdiagram of $\cH$ or $\cH'$ and a collection of input morphisms $\lb \xs_i, f_i\rb$, $i=1,\dots, n$ for this subdiagram. Assume that if $\lb \xs_i, f_i\rb\in \ve{\CF}^-(\bs_0^{E_0}, \bs_0'^{E_0})$, then $\lb \xs_i, f_i\rb$ is filtered. If $\lb \ys, f\rb$ is a summand of 
\[
\mu_n(\lb \xs_1,f_1\rb,\dots, \lb\xs_n, f_n\rb),
\]
then $w_U(f)\ge 0$ (i.e. no negative $U$-powers appear).
\end{lem}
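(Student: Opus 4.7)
The plan is to unpack the polygon formula from Section~\ref{sec:local-systems},
$$U^{n_w(\phi)} \cdot \Bigl(\rho_n(e^{m_n}) \circ f_n \circ \cdots \circ f_1 \circ \rho_0(e^{m_0})\Bigr),$$
and show the total $U$-valuation of the output is nonnegative. The only potential source of negative $U$-powers is the monodromy $\scV^{m}$ with $m<0$ on $\b_0$-type edges (since $\scV^{-1} = U^{-1}\scU$); the other monodromies $T^m$ on $\b_1$-type curves, together with the trivial monodromies on $\as$, $\b_\lambda$, and $\b_\lambda'$, all leave $U$-powers unchanged. The filtered hypothesis on the $\b_0^{E_0} \to \b_0'^{E_0}$ inputs is exactly what lets us absorb these apparent negative $\scV$-powers into honest nonnegative expressions in $\scU, \scV, U$.

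Two algebraic tools drive the argument. First, by Lemma~\ref{lem:filtered-morphism-compose-Vs}, conjugation by $\scV^{\pm 1}$ preserves the class of filtered $\bF[U]$-linear endomorphisms of $E_0$. Hence each $\scV^{m}$ monodromy factor can be pushed past an adjacent filtered input, at the cost of replacing that input by its $\scV$-conjugate (which is again filtered). This transports all $\scV^{\pm}$ factors through the ``filtered portion'' of the composition and collects them next to the structure corners. Second, at each structure corner we use the explicit identities
$$\phi^\sigma \circ \scV^m = T^m \phi^\sigma, \qquad \phi^\tau \circ \scV^m = (UT)^m \phi^\tau,$$
and their obvious analogs involving $\Delta$ (at a $\b_\lambda\to\b_0$ corner) and $\Pi$ (at a $\b_1\to\b_\lambda$ corner), to convert the $\scV$-contributions into $T$-contributions, picking up at most an extra $U^m$ at each $\tau$-corner. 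The remaining topological input, already used implicitly in Lemma~\ref{lem:positivity-simple}, is that the total algebraic intersection $\sum_i m_i$ of the $\b_0$-type boundary of $\phi$ with $K$ equals $n_z(\phi)-n_w(\phi)$ up to contributions from $\b_1$-type edges that are balanced by the $T$-powers. Combining these identities, the prefactor in front of the filtered composition becomes a monomial in $\scU, \scV, U, T$ with nonnegative $\scU$- and $\scV$-exponents and nonnegative $U$-power; since filtered maps preserve $(\scU^a \scV^b)$ for $a,b\ge 0$, the output lies in $\bF[\scU,\scV]$ (in idempotent $0$) or in $\bF[U,T,T^{-1}]$ (in idempotent $1$), giving $w_U \ge 0$.

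The main obstacle is the combinatorics for polygons with a mixture of $\sigma$- and $\tau$-corners. For a pure $\sigma$-polygon the $\scV$-to-$T$ conversion is $U$-neutral, and combining the overall $U^{n_w(\phi)}$ with $\scV^{n_z-n_w}$ collapses to $\scU^{n_w}\scV^{n_z}$, which manifestly lies in $\bF[\scU,\scV]$. For a pure $\tau$-polygon the symmetry of Section~\ref{sec:symmetry} lets us recast the whole computation in the $z$-pointed convention, where the overall prefactor is $U^{n_z(\phi)}$ and the analogous identities give $\scV^{n_w}\scU^{n_z}$. The genuinely new case is when $\sigma$- and $\tau$-corners coexist: one has to apply the symmetry \emph{locally} at each corner and rebalance the $U^m$ shifts picked up from $\phi^\tau \circ \scV^m$ against a partition of $n_z(\phi)-n_w(\phi)$ among the corners. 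I expect this to be the hardest step, and the cleanest formulation is probably to prove a slightly stronger inductive claim tracking not only $w_U(f)\ge 0$ but the full ``monomial type'' of the output (a filtered map post-composed with $\scU^a\scV^b U^c T^d$, with $a,b,c\ge 0$), so that positivity is preserved under each corner-insertion step.
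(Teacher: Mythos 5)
Your proposal correctly identifies Lemma~\ref{lem:filtered-morphism-compose-Vs} as the right algebraic tool, and your use of it --- commuting $\scV$-powers past the filtered input associated to $\theta^+\in\b_0\cap\b_0'$ --- is exactly what the paper does. But you misidentify the genuinely delicate case. The filtered hypothesis, and hence your transport strategy, is available only when $\theta^+$ is an \emph{input} of the polygon. When $\theta^+$ is the \emph{output} (the polygon maps into $\ve{\CF}^-(\bs_0^{E_0},\bs_0'^{E_0})$) or is not a vertex of the polygon at all, there is no filtered map sitting between the two $\scV$-monodromy factors, and your argument stalls: the intermediate maps such as $\phi^\sigma$, $\phi^\tau$, $\Pi$, $\Delta$ are only $\bF[U]$-equivariant (not $\bF[\scU,\scV]$-equivariant), so you cannot push $\scV^{m}$ past them.

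The paper resolves these remaining cases with a purely local observation that is absent from your proposal: near $\theta^+$, the four local multiplicities $n_w(\phi)$, $n_z(\phi)$, $N(\phi)$, $M(\phi)$ satisfy the relation $n_w(\phi)+n_z(\phi)=N(\phi)+M(\phi)+\delta$ with $\delta\in\{-1,0,1\}$ determined by whether $\theta^+$ is an output, neither, or an input. One then \emph{does not move anything}; one just splits the overall prefactor $U^{n_w(\phi)}$ appropriately between the two $\scV$-factors $\scV^{N-n_w}$ and $\scV^{n_z-N}$. If $n_w\ge N$ this is immediate. If $N>n_w$ and $\delta\ge 0$, the vertex relation converts $U^{n_w}\scV^{n_z-N}$ into $\scU^{n_w}\scV^{M+\delta}$ with $M+\delta\ge 0$. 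Only when $\delta=-1$ is the filtered hypothesis needed, and that is exactly your case. Finally, the $\sigma$-vs-$\tau$ combinatorics you flag as ``the hardest step'' is a red herring: all the intermediate corner maps already send $E_\veps$ into $E_{\veps'}$ with nonnegative $U$-power, so the argument is entirely agnostic to which labels appear; the $w$-pointed vs.\ $z$-pointed symmetry of Section~\ref{sec:symmetry} is not needed here. I would encourage you to draw the four local multiplicities near $\theta^+$ as in Figure~\ref{fig:39} and see where the vertex relation comes from; with that in hand the case analysis becomes mechanical.
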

\begin{proof}
When neither $\bs_0$ nor $\bs_0'$ is present, the claim is clear since each composite factor of $f$ has $w_U\ge 0$.

 We now consider the claim for subdiagrams of $\cH$ and $\cH'$ which contain exactly one of $\bs_0$ and $\bs_0'$. The argument in this case is essentially the same as Lemma~\ref{lem:positivity-simple}, above. If we write $\psi$ for the homology class of polygons contributing $\lb \ys, f\rb$, then $f$ will have a composite factor of $\scV^{\# \d_{\b_0}(\phi)\cap K}=\scV^{n_z(\phi)-n_w(\phi)}$, as well as an overall factor of $U^{n_w(\phi)}$. We can commute $U^{n_w(\phi)}$ past all maps in the composition, since they are $\bF[U]$-equivariant, to replace $\scV^{n_z(\phi)-n_w(\phi)}$ with \[
U^{n_w(\phi)}  \scV^{n_z(\phi)-n_w(\phi)}=\scU^{n_w(\phi)} \scV^{n_z(\phi)}.
\]
Since no other composite factors involve potentially negative powers of $U$, we conclude that the $\bF[U]$-module map $f$ associated with the composition has 
\[
w_U(f)\ge \min(n_w(\phi),n_z(\phi))\ge 0. 
\] 

We now prove the claim for subdiagrams of $\cH'$ which contain both $\bs_0$ and $\bs_0'$. Consider a polygon class $\phi$ on such a diagram. There are two intersection points of $\b_0\cap \b_0'$, and we let $\theta^+$ denote the higher graded intersection point. Write $n_w(\phi)$, $n_z(\phi)$, $N(\phi)$ and $M(\phi)$ for the multiplicities centered there, and assume that the subarc of $K$ goes through the region labeled $N(\phi)$. See Figure~\ref{fig:39}. The $\bF[U]$-module map in the output is a composition of maps, two of which are 
\begin{equation}
\scV^{N(\phi)-n_w(\phi)}\quad \text{and} \quad \scV^{n_z(\phi)-N(\phi)}.
\label{eq:b_0-twice-monodromy}
\end{equation}
 The two maps in Equation~\eqref{eq:b_0-twice-monodromy} might have $w_U<0$, though the other maps in the definition of $f$ have $w_U\ge 0$. Note that there is also an overall factor of $U^{n_w(\phi)}$ in the definition of $f$.

\begin{figure}[h]
\begingroup%
  \makeatletter%
  \providecommand\color[2][]{%
    \errmessage{(Inkscape) Color is used for the text in Inkscape, but the package 'color.sty' is not loaded}%
    \renewcommand\color[2][]{}%
  }%
  \providecommand\transparent[1]{%
    \errmessage{(Inkscape) Transparency is used (non-zero) for the text in Inkscape, but the package 'transparent.sty' is not loaded}%
    \renewcommand\transparent[1]{}%
  }%
  \providecommand\rotatebox[2]{#2}%
  \newcommand*\fsize{\dimexpr\f@size pt\relax}%
  \newcommand*\lineheight[1]{\fontsize{\fsize}{#1\fsize}\selectfont}%
  \ifx\svgwidth\undefined%
    \setlength{\unitlength}{135.58346362bp}%
    \ifx\svgscale\undefined%
      \relax%
    \else%
      \setlength{\unitlength}{\unitlength * \real{\svgscale}}%
    \fi%
  \else%
    \setlength{\unitlength}{\svgwidth}%
  \fi%
  \global\let\svgwidth\undefined%
  \global\let\svgscale\undefined%
  \makeatother%
  \begin{picture}(1,0.66317566)%
    \lineheight{1}%
    \setlength\tabcolsep{0pt}%
    \put(0,0){\includegraphics[width=\unitlength,page=1]{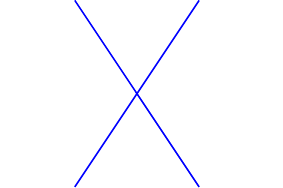}}%
    \put(0.25697682,0.40078329){\makebox(0,0)[lt]{\lineheight{1.25}\smash{\begin{tabular}[t]{l}$n_w$\end{tabular}}}}%
    \put(0.64069774,0.40078329){\makebox(0,0)[lt]{\lineheight{1.25}\smash{\begin{tabular}[t]{l}$n_z$\end{tabular}}}}%
    \put(0.45023433,0.53501762){\makebox(0,0)[lt]{\lineheight{1.25}\smash{\begin{tabular}[t]{l}$N$\end{tabular}}}}%
    \put(0.44883704,0.1094361){\makebox(0,0)[lt]{\lineheight{1.25}\smash{\begin{tabular}[t]{l}$M$\end{tabular}}}}%
    \put(0,0){\includegraphics[width=\unitlength,page=2]{fig39.pdf}}%
    \put(0.65930242,0.53093103){\color[rgb]{0,0,1}\makebox(0,0)[lt]{\lineheight{1.25}\smash{\begin{tabular}[t]{l}$\b_0'$\end{tabular}}}}%
    \put(0.31531363,0.53093103){\color[rgb]{0,0,1}\makebox(0,0)[rt]{\lineheight{1.25}\smash{\begin{tabular}[t]{r}$\b_0$\end{tabular}}}}%
    \put(0.5127906,0.31176195){\makebox(0,0)[lt]{\lineheight{1.25}\smash{\begin{tabular}[t]{l}$\theta^+$\end{tabular}}}}%
  \end{picture}%
\endgroup%

\caption{A neighborhood of the point $\theta^+\in \b_0\cap \b_0'$. Local multiplicities are labeled. The dashed line is the shadow $K$. }
\label{fig:39}
\end{figure}

Near $\theta^+$, we have
\begin{equation}
n_w(\phi)+n_z(\phi)=N(\phi)+M(\phi)+\delta, \label{eq:vertex-multiplicities}
\end{equation}
where $\delta$ is $1$ if $\theta^+$ is an output (but not an input) of $\phi$, $-1$ if $\theta^+$ is an input (but not an output), and $0$ otherwise.

We claim that if $ n_w(\phi)\ge N(\phi)$, then  $w_U(f)\ge 0$. To see this, we factor $U^{n_w(\phi)}$ as
\[
U^{n_w(\phi)}=U^{N(\phi)}\cdot U^{n_w(\phi)-N(\phi)},
\]
and then distribute the two factors $U^{N(\phi)}$ and $U^{n_w(\phi)-N(\phi)}$ into different parts of the composition as follows:
\[
U^{N(\phi)}\cdot \scV^{n_z(\phi)-N(\phi)}=\scU^{N(\phi)} \scV^{n_z(\phi)}\quad \text{and} \quad  U^{n_w(\phi)-N(\phi)}\scV^{N(\phi)-n_w(\phi)}=\scU^{n_w(\phi)-N(\phi)}.
\] 
The above expressions only involve nonnegative powers of $U$, so $w_U(f)\ge 0$. In fact, we have
\[
w_U(f)\ge \min(N(\phi), n_z(\phi)).
\] 

 Hence, it remains to consider the case that $N(\phi)>n_w(\phi)$. In this case, $\scV^{N(\phi)-n_w(\phi)}$ has $w_U\ge 0$. We use Equation~\eqref{eq:vertex-multiplicities} to rewrite
 \[
 U^{n_w(\phi)}\scV^{n_z(\phi)-N(\phi)}=\scU^{n_w(\phi)}\scV^{n_w(\phi)+n_z(\phi)-N(\phi)}=\scU^{n_w(\phi)}\scV^{M(\phi)+\delta}.
 \]
 
 We break the argument into further cases depending on the value of $\delta$. The case that $\delta\ge 0$ is clear, since then we have
 \[
 w_U(f)\ge \min(n_w(\phi), M(\phi)).
 \]
  It remains to consider the case that $\delta=-1$. Recall that $\delta=-1$ corresponds to the case that $\theta^+$ is an input of $\phi$, but not an output. Furthermore, we assumed as a hypothesis that the corresponding morphism $f_i$ associated to $\theta^+$ was filtered.   
 
 The expression for the output map contains the expression
 \[
 U^{n_w(\phi)} \circ \scV^{n_z(\phi)-N(\phi)} \circ f_i \circ \scV^{N(\phi)-n_w(\phi)}
 \]
 By Lemma~\ref{lem:filtered-morphism-compose-Vs}, we may rewrite this as
 \[
 U^{n_w(\phi)} \circ \scV^{n_z(\phi)-N(\phi)} \circ \scV^{N(\phi)-n_w(\phi)}\circ f_i'=\scU^{n_w(\phi)}\scV^{n_z(\phi)}\circ f_i'
 \]
 for some other filtered map $f_i'$. In particular, we have
 \[
 w_U(f)\ge \min(n_w(\phi), n_z(\phi))\ge 0,
 \]
 completing the proof. 
 \end{proof}
 
\begin{rem}
\label{rem:slightly-stronger-positivity} Note that the above proof showed something slightly stronger. It showed that if $\psi\in \pi_2(\xs_1,\dots, \xs_n, \ys)$ is a polygon class and $f_1,\dots, f_n$ is a sequence of input morphisms satisfying the assumptions in the lemma, and $f$ is the output morphism, then
\[
w_U(f)\ge \min(n_w(\phi), n_z(\phi), N(\phi), M(\phi) ),
\]
where $n_w(\phi)$, $n_z(\phi)$, $N(\phi)$ and $M(\phi)$ are the multiplicities labeled in Figure~\ref{fig:39}. 
\end{rem}

 We need one additional result about positivity, which concerns pure beta diagrams. We note that in our proof of Theorem~\ref{thm:exact-triangle}, we used the $A_\infty$-associativity relations for holomorphic polygons. For these to give  meaningful algebraic relations on the level of chain complexes, we need to know that any output in a pure beta diagram which lies in $\ve{\CF}^-(\bs_0^{E_0}, \bs_0'^{E_0})$ is also filtered in the sense of Definition~\ref{def:filtered-morphism}, since this is a hypothesis of the above lemma. We prove the following lemma:

 \begin{lem} Suppose that $(\Sigma,\bs_{\veps_0},\dots, \bs_{\veps_n})$ is a pure beta diagram where each $\bs_{\veps_i}$ is obtained from a small translation of one of $\bs_0$, $\bs_1$ or $\bs_{\lambda}$. We assume further that at most two of the $\bs_{\veps_i}$ are small translates of $\bs_0$.  Let $\lb \xs_1,f_1\rb,\dots, \lb \xs_n, f_n \rb$ be a collection of inputs which are Alexander grading preserving. Furthermore, we assume that if $\lb \xs_i, f_i\rb\in \ve{\CF}^-(\bs_0^{E_0}, \bs_0'^{E_0})$, then $\lb \xs_i,f_i\rb$ are filtered. If $\lb \ve{y}, f\rb$ is a summand of
 \[
 \mu_n(\lb \xs_1,f_1\rb,\dots, \lb \xs_n, f_n \rb ),
 \]
 then  $f$ is Alexander grading preserving. If $\lb \ys, f\rb\in \ve{\CF}^-(\bs_0^{E_0}, \bs_0'^{E_0})$, then it is also filtered.
 \end{lem}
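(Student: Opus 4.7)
The plan is to verify the two conclusions in turn. Let $\psi$ denote the homotopy class of polygons contributing the summand $\lb \ys, f\rb$ and set $m_i = \#(\partial_{\b_{\veps_i}} D(\psi) \cap K)$, so that by Equation~\eqref{eq:local-systems} the output $\bF[U]$-module map takes the form $f = U^{n_w(\psi)}\,\rho_{\veps_n}(e^{m_n})\circ f_n \circ \cdots \circ f_1 \circ \rho_{\veps_0}(e^{m_0})$. For Alexander grading preservation: both $\rho_0(e^m) = \scV^{m}\cdot \id$ and $\rho_1(e^m) = T^{m}\cdot \id$ shift Alexander grading by $m$, while $U$ contributes nothing, so since each input morphism is Alexander-preserving by hypothesis we obtain
\[
A(f) \;=\; \sum_{i=0}^{n} m_i \;=\; \#\bigl(\partial D(\psi)\cap K\bigr).
\]
On a pure beta diagram $D(\psi)$ is a genuine 2-chain on the closed surface $\Sigma$, so $\partial D(\psi) = \sum_i \partial_{\b_{\veps_i}} D(\psi)$ is a 1-cycle that is null-homologous in $H_1(\Sigma)$; the algebraic intersection with the closed curve $K$ depends only on its homology class and hence vanishes, yielding $A(f) = 0$.

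For the filtered conclusion, assume $\lb \ys, f\rb \in \ve{\CF}^-(\bs_0^{E_0}, \bs_0'^{E_0})$, so that both $\bs_{\veps_0}$ and $\bs_{\veps_n}$ are $\b_0$-translates. The at-most-two hypothesis forces these to be the only $\b_0$-translates among the $\bs_{\veps_i}$, so no adjacent pair $(\bs_{\veps_{i-1}},\bs_{\veps_i})$ in the interior consists of two $\b_0$-translates; equivalently, no input $\lb \xs_i, f_i\rb$ belongs to $\ve{\CF}^-(\bs_0^{E_0}, \bs_0'^{E_0})$, and the distinguished intersection $\theta^+\in \bs_0\cap \bs_0'$ appears only at the output corner of $\psi$, never at an input corner. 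Hence the local parameter $\delta$ at $\theta^+$ in the notation of the proof of Lemma~\ref{lem:positivity-knot} lies in $\{0,1\}$, and the case analysis of that proof transplants essentially verbatim to give $w_U(f) \ge 0$; extra $\b_1$- and $\b_\lambda$-type edges contribute only factors $T^{m_i}\in \bF[U,T,T^{-1}]$ and $1$, which are invisible to $w_U$. Combined with the Alexander grading preservation just established, this yields the filtered property: decomposing $E_0 = \bigoplus_{s\in \Z} \bF[U]\cdot v_s$ with $v_s = \scV^s$ for $s\ge 0$ and $v_s = \scU^{-s}$ for $s<0$, an Alexander-preserving $\bF[U]$-linear endomorphism acts diagonally by some $a_s\in \bF[U,U^{-1}]$; positivity forces $a_s\in \bF[U]$, and re-expanding $U^k = \scU^k\scV^k$ shows that $f$ involves only non-negative powers of $\scU$ and $\scV$.

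The main obstacle, such as it is, lies in this second step: one must recognize that the at-most-two hypothesis renders the filtered assumption on the inputs vacuous, and that the local positivity analysis of Lemma~\ref{lem:positivity-knot} at points of $\bs_0\cap \bs_0'$ is genuinely local and unaffected by the presence of additional $\b_1$- or $\b_\lambda$-type curves elsewhere in the diagram. The global combinatorics only contribute invertible $T$-monomials, which cancel out for the purposes of tracking $U$-positivity.
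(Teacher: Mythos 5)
Your argument is correct and is essentially the paper's: Alexander grading preservation via $\#(\d D(\psi)\cap K)=0$ because $\d D(\psi)$ bounds on a pure beta diagram, positivity of $U$-powers via the local analysis of Lemma~\ref{lem:positivity-knot} at $\b_0\cap\b_0'$, and the observation that for an Alexander-grading-preserving map $E_0\to E_0$ the condition $w_U(f)\ge 0$ is equivalent to being filtered. One small caveat: your claim that no input lies in $\ve{\CF}^-(\bs_0^{E_0},\bs_0'^{E_0})$ (so that the filtered hypothesis is vacuous) fails when $n=1$, i.e.\ for the differential; this does not affect the conclusion, since that is exactly the $\delta=-1$ case of Lemma~\ref{lem:positivity-knot}, which uses the filtered assumption on the input.
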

Note that Lemma~\ref{lem:positivity-knot} shows that under the above assumptions, $w_U(f)\ge 0$. 
 \begin{proof} We observe that the Alexander grading of $f$ is
\[
\sum_{i=1}^{n} A(f_i)+\sum_{i=0}^n \# \d_{\b_{\veps_i}}(\psi)\cap K.
\]
We claim that the above is zero. To see this, observe that $A(f_i)=0$ by hypothesis, and that since diagram is a pure beta diagram, we have 
\[
\sum_{i=0}^n \# \d_{\b_{\veps_i}}(\psi)\cap K=\#\d(\psi)\cap K.
\] This is 0 since the $\d(\psi)$ is a boundary (as an integral 1-chain on $\Sigma$) and $K$ is a closed 1-chain.
To see that $f$ is filtered, we observe that for an Alexander grading preserving $\bF[U]$-module map $f\colon \bF[\scU,\scV]\to \bF[\scU,\scV]$, the inequality $w_U(f)\ge 0$ is equivalent to being filtered. The proof is complete.
 \end{proof}

\subsection{Link surgery}
\label{sec:link-surgery-positivity}

We now describe how to extend the work from Section~\ref{sec:knot-surgery-positivity} to prove Theorem~\ref{thm:positivity} in the setting of the link surgery formula (in particular, when there are multiple components). It is sufficient to consider the second claim of the theorem, which concerns tensoring a module $\cX_{\Lambda}(Y,L)^{\frL}$ with a single copy of ${}_{\cK} \cD_0$ (i.e. doing surgery on just one component of $L$). 

 In this case, we are interested in Heegaard diagrams with many basepoints. Furthermore, these diagrams have a special genus 1 region for each link component of $L$. The beta attaching curves are indexed by points in either $\bE_{\ell-1}\times \{\lambda,0,1,\lambda'\}$ or $\bE_{\ell-1} \times \{0,1,\lambda,0',1'\}$. Here, $\ell=|L|$  and we view $\{\lambda,0,1,\lambda'\}$ and $\{0,1,\lambda,0',1'\}$ as sets of symbols. Write $\bs_{\veps}$ for the beta curves. We write $\cH$ and $\cH'$ for the full Heegaard multi-diagrams, indexed by these sets.

We focus on the case of subdiagrams of $\cH'$, since the case of $\cH$ is simpler and follows from the same logic. Write $\bs_{\veps_0},\dots, \bs_{\veps_k}$ for the beta curves in such a subdiagram. Here, $\veps_i$ is an increasing sequence of points in $\bE_{\ell-1}\times \{0,1,\lambda,0',1'\}$ (where we order $0<1<\lambda<0'<1'$). We focus on a single link component at a time. Let $K_j$ be a link component for $j\in \{1,\dots, \ell-1\}$. By our construction, the curves in the special genus 1 region for $K_j$ form a sequence of the form $\b_0,\dots, \b_0,\b_1,\dots, \b_1$ (the case where all are $\b_0$ or all are $\b_1$ is allowed).

 Write $E_{\veps}$ for the tensor product over $\bF$ of the corresponding $E_{0}$, $E_1$ and $E_{\lambda}$ spaces. We view the maps $f_i\colon E_{\veps_i}\to E_{\veps_{i+1}}$ as sums of tensor products of maps between the corresponding $E_{0}$, $E_1$ and $E_{\lambda}$ spaces.

We assume that each morphism connecting two consecutive translates of $\b_0$ is a map of $\bF[U_j]$-modules from $\bF[\scU_j,\scV_j]$ to itself which is filtered in the sense of Definition~\ref{def:filtered-morphism}. Let $\b_{0,0},\dots, \b_{0,m}$ denote the copies of $\b_0$ and let $g_1,\dots, g_m$ denote the corresponding endomorphisms of $\bF[\scU_j,\scV_j]$ associated to the intersection points of copies of $\b_0$. We observe that the output map has tensor factor which contains the following map as a composite factor
\begin{equation}
\scV_j^{\# \d_{\b_{0,m}}(\psi) \cap K_m}\circ g_m\circ \scV_j^{\# \d_{\b_{0,m-1}}(\psi)\cap K}\circ g_{m-1} \cdots \circ g_1\circ \scV_j^{\# \d_{\b_{0,0}}(\psi)\cap K}.
\label{eq:regroup-1}
\end{equation}
(In our proof of the surgery formula, we in fact take $g_1=\cdots=g_m=\id_{E_0}$). Using Lemma~\ref{lem:filtered-morphism-compose-Vs}, we may reorder terms to rewrite the above map as
\begin{equation}
\scV_j^{\#\d_{\b_{0,m}+\cdots+ \b_{0,1}}(\psi) \cap K_m} \circ g'=\scV_j^{n_{z_j}(\psi)-n_{w_j}(\psi)}\circ g',
\label{eq:regroup-2}
\end{equation}
for some filtered map $g'$.  The additional overall factor of $U_j^{n_{w_j}(\phi)}$ transforms this to $\scU_j^{n_{w_j}(\phi)} \scV_j^{n_{z_j}(\phi)}\circ g'$ so we see the composition only involves nonnegative $U_j$-powers. 

Some extra care must be taken in the above argument for the knot component $K_\ell$ in the diagram $\cH'$, since we have $\b_0$, $\b_0'$, $\b_1$, and $\b_1'$, as well as small translations of these curves. To adapt the argument from the case of knots, we assume that copies of $\b_0$, $\b_0'$, $\b_1$ and $\b_1'$ are first fixed. To form the entire diagram $\cH'$, we need further small translations of these curves. We form these additional translations very small relative to the fixed initial copies of $\b_0$ and $\b_0'$. See Figure~\ref{fig:41}. With these choices, the argument for positivity is essentially identical to the case of the knot surgery formula from Lemma~\ref{lem:positivity-knot}, using the  strategy from Equations~\eqref{eq:regroup-1} and~\eqref{eq:regroup-2} to rearrange the maps and powers of $\scV_j$ for the $\b_0$ curves, and similarly for the $\b_0'$ curves.  With these comments we conclude the proof of Theorem~\ref{thm:positivity}.

\begin{figure}[ht]
\begingroup%
  \makeatletter%
  \providecommand\color[2][]{%
    \errmessage{(Inkscape) Color is used for the text in Inkscape, but the package 'color.sty' is not loaded}%
    \renewcommand\color[2][]{}%
  }%
  \providecommand\transparent[1]{%
    \errmessage{(Inkscape) Transparency is used (non-zero) for the text in Inkscape, but the package 'transparent.sty' is not loaded}%
    \renewcommand\transparent[1]{}%
  }%
  \providecommand\rotatebox[2]{#2}%
  \newcommand*\fsize{\dimexpr\f@size pt\relax}%
  \newcommand*\lineheight[1]{\fontsize{\fsize}{#1\fsize}\selectfont}%
  \ifx\svgwidth\undefined%
    \setlength{\unitlength}{118.30371591bp}%
    \ifx\svgscale\undefined%
      \relax%
    \else%
      \setlength{\unitlength}{\unitlength * \real{\svgscale}}%
    \fi%
  \else%
    \setlength{\unitlength}{\svgwidth}%
  \fi%
  \global\let\svgwidth\undefined%
  \global\let\svgscale\undefined%
  \makeatother%
  \begin{picture}(1,0.90634263)%
    \lineheight{1}%
    \setlength\tabcolsep{0pt}%
    \put(0,0){\includegraphics[width=\unitlength,page=1]{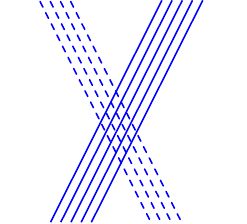}}%
    \put(0.70362643,0.19935733){\color[rgb]{0,0,1}\makebox(0,0)[lt]{\lineheight{1.25}\smash{\begin{tabular}[t]{l}$\b_0'$\end{tabular}}}}%
    \put(0.25311784,0.20319041){\color[rgb]{0,0,1}\makebox(0,0)[rt]{\lineheight{1.25}\smash{\begin{tabular}[t]{r}$\b_0$\end{tabular}}}}%
    \put(0.14512921,0.61313416){\makebox(0,0)[lt]{\lineheight{1.25}\smash{\begin{tabular}[t]{l}$w$\end{tabular}}}}%
    \put(0.78389655,0.61846479){\makebox(0,0)[lt]{\lineheight{1.25}\smash{\begin{tabular}[t]{l}$z$\end{tabular}}}}%
    \put(0,0){\includegraphics[width=\unitlength,page=2]{fig41.pdf}}%
  \end{picture}%
\endgroup%

\caption{Configuration of the curves $\b_0$, $\b_0'$ and their small translations used in the construction of $\cH'$.}
\label{fig:41}
\end{figure}

\subsection{Endomorphism algebras}
\label{sec:positivity-endomorphism-algebras}

The ideas of the past two sections generalize easily to the setting of the endomorphism algebra $\End_{\Fil}(\b_0^{E_0}\oplus \b_1^{E_1})$ from Section~\ref{sec:endomorphisms}. The arguments of the last section adapt easily to give the following:

\begin{lem} Suppose that $\b_{\veps_0},\dots, \b_{\veps_n}$ is a sequence of copies of small translates of the curves $\b_0$ and $\b_1$ such that the copies of $\b_0$ and $\b_1$ are all suitable small translates of fixed copies of $\b_0$ and $\b_1$. Let $\lb \ve{x}_1,f_1\rb,\dots, \lb \ve{x}_n,f_n\rb$ be a sequence of Floer inputs which are filtered in the sense of Definition~\ref{def:filtered-morphism}. If $\lb \ve{y}, f\rb$ is a summand of $\mu_n(\lb \ve{x}_1,f_1\rb,\dots, \lb \ve{x}_n,f_n\rb)$ then $f$ is also filtered.
\end{lem}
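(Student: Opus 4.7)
The plan is to verify the three conditions of Definition~\ref{def:filtered-morphism} on the output morphism directly, by adapting the positivity argument of Lemma~\ref{lem:positivity-knot} together with the regrouping strategy of Equations~\eqref{eq:regroup-1}--\eqref{eq:regroup-2}. Since there are no alpha curves in this setting, the argument is in fact simpler than the full link-surgery case.

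First, the idempotent-monotonicity $\veps_0 \le \veps_n$ follows from the fact that each filtered $f_i$ forces $\veps_{i-1} \le \veps_i$, so the sequence $\veps_0, \dots, \veps_n$ is non-decreasing. Continuity and $\bF[U]$-linearity of the output are immediate from Proposition~\ref{prop:finite-ness-endomorphisms} and from the $\bF[U]$-linearity of each monodromy factor and each input $f_j$. The main content is condition (3): when $\veps_0 = \veps_n = 0$, the output $f\colon E_0 \to E_0$ must preserve each principal ideal $(\scU^i\scV^j) \subset \bF[\scU,\scV]$, as noted in the proof of Lemma~\ref{lem:filtered-morphism-compose-Vs}.

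For a homology class $\psi$ of polygons contributing to $\mu_n$, the output takes the form
\[
f_\psi = U^{n_w(\psi)}\bigl(\rho_n(e^{m_n}) \circ f_n \circ \cdots \circ f_1 \circ \rho_0(e^{m_0})\bigr),
\]
with $m_j = \#\d_{\b_{\veps_j}}(\psi) \cap K$. On the $\b_1$-portion of the sequence the monodromies $T^{m_j}\cdot\id$ are units that commute with every $\bF[U]$-linear map, so they can be collected at one end. On the $\b_0$-portion the factors $\scV^{m_j}\cdot\id$ may have negative exponents; Lemma~\ref{lem:filtered-morphism-compose-Vs} lets us commute each $\scV^{\pm 1}$ past any intermediate filtered map $f_j\colon E_0 \to E_0$, replacing $f_j$ by another filtered map $\tilde f_j$. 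After collecting, the composite takes the schematic form $T^{M_1}\circ(\text{filtered on }E_1)\circ f_k \circ U^{n_w(\psi)}\scV^{M_0}\circ (\text{filtered on }E_0)$, where $k$ indexes the transition from $\b_0$- to $\b_1$-curves (set $k=n+1$ if no $\b_1$ appears) and $M_0$ is the sum of the $\scV$-exponents from the $\b_0$-side. Using $U = \scU\scV$, the prefactor $U^{n_w(\psi)}\scV^{M_0}$ rewrites as $\scU^{n_w(\psi)}\scV^{n_w(\psi)+M_0}$, which has nonnegative exponents provided $n_w(\psi) + M_0 \ge 0$. This is exactly the analog of the $n_w(\phi) \ge N(\phi)$ case in Lemma~\ref{lem:positivity-knot}, applied at each intersection of successive $\b_0$-translates (which play here the role of $\b_0 \cap \b_0'$). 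Once granted, $f_\psi$ becomes a composition of filtered maps and is therefore itself filtered.

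The main obstacle I anticipate is precisely the verification of $n_w(\psi) + M_0 \ge 0$. In the pure-endomorphism setting this should follow from the local multiplicity identities at the crossings of successive $\b_0$-translates, by the same case analysis used in Lemma~\ref{lem:positivity-knot}, together with the assumption that all translates are chosen suitably small. A small amount of care is needed to handle both the chiral and $U$-adic topologies uniformly (since both are admitted by Definition~\ref{def:filtered-morphism}), but the positivity argument itself is insensitive to the topology.
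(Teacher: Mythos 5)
Your overall strategy (reduce conditions (1)--(2) to easy observations, then handle condition (3) by regrouping the $\scV$-monodromies with Lemma~\ref{lem:filtered-morphism-compose-Vs}) is the paper's strategy, but you stop exactly where the lemma's content lies: you never establish the inequality $n_w(\psi)+M_0\ge 0$, and you flag it as "the main obstacle I anticipate," to be handled "by the same case analysis used in Lemma~\ref{lem:positivity-knot}." That case analysis is not the right mechanism here, and invoking it does not close the argument. The point of the hypothesis that all curves are suitably small translates of a \emph{single} fixed $\b_0$ and a single fixed $\b_1$ is that the only curves in the tuple crossing the short subarc of $K$ between $w$ and $z$ are the $\b_0$-translates, and each of them meets $K$ exactly once, on that subarc; hence the collected exponent is not merely bounded below but is computed exactly: $M_0=\sum_j\#\bigl(\d_{\b_{0,j}}(\psi)\cap K\bigr)=n_z(\psi)-n_w(\psi)$, which is precisely the identity used to pass from Equation~\eqref{eq:regroup-1} to Equation~\eqref{eq:regroup-2}. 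Then $U^{n_w(\psi)}\scV^{M_0}=\scU^{n_w(\psi)}\scV^{n_z(\psi)}$, so $w_U(f_\psi)\ge\min(n_w(\psi),n_z(\psi))\ge 0$, and after the conjugations of Lemma~\ref{lem:filtered-morphism-compose-Vs} the output is a composition of filtered maps, hence filtered. The case analysis of Lemma~\ref{lem:positivity-knot} (the multiplicities $N,M$ and the parameter $\delta$ of Figure~\ref{fig:39}) was forced by having two genuinely distinct meridians $\b_0,\b_0'$ intersecting between $w$ and $z$; that configuration does not occur in the present lemma, so appealing to it both misses the simpler exact identity and would not obviously apply. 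Without identifying $M_0$, your argument does not prove the statement.

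Two smaller points. First, your claim that on the $\b_1$-portion the monodromies $T^{m_j}\cdot\id$ "commute with every $\bF[U]$-linear map" is false: a general $\bF[U]$-linear endomorphism of $E_1$ (e.g.\ projection onto an Alexander grading, which appears throughout Section~\ref{sec:endomorphisms}) does not commute with $T$. This happens to be harmless, because Definition~\ref{def:filtered-morphism} imposes nothing beyond continuity and $\bF[U]$-linearity on maps into or out of $E_1$, so no regrouping is needed on that side at all; but as written the step is incorrect and should simply be dropped. Second, when citing Proposition~\ref{prop:finite-ness-endomorphisms} for continuity of the output, make sure you use only its finiteness/continuity assertions, since its positivity claim is exactly what the present lemma is meant to supply.
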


The proof is straightforward using the regrouping technique of Section~\ref{sec:link-surgery-positivity}. Namely, we reorder the powers of $\scV$ associated to changes across the curves which are small translates of copies of $\b_0$ using Lemma~\ref{lem:filtered-morphism-compose-Vs}, which was the tool we used to transform Equation~\eqref{eq:regroup-1} into Equation~\eqref{eq:regroup-2}. Once we do this, if $f$ is the output of a holomorphic polygon $\psi$ with filtered inputs, then we see that 
\[
w_U(f)\ge \min(n_w(\phi),n_z(\phi))\ge 0. 
\]

\section{Admissibility and completions}
\label{sec:admissibility}

In this section, we prove that the maps appearing in the proof of the surgery formulas are continuous in the topologies we considered in this paper. Due to the slightly technical nature of these arguments we have decided to make them an appendix.

\begin{thm}\label{thm:admissibility} Suppose that $Y$ is a closed 3-manifold and $L\subset Y$ is a link with Morse framing $\Lambda$. The homotopy equivalence
\[
\ve{\CF}^-(Y_{\Lambda}(L))\simeq \bX_{\Lambda}(Y,L) 
\]
holds if we equip both complexes with the $U$-adic topologies. It also holds if we equip $\ve{\CF}^-$ with the $U$-adic topology, while equipping $\bX_{\Lambda}(Y,L)$ with the chiral topology. If instead $Y$ is a bordered manifold with torus boundary components, and $Y'$ is obtained by performing Dehn surgery on one component (compatibly with the boundary parametrization) then the homotopy equivalence
\[
\cX(Y)^{\frL}\boxtimes {}_{\frK} \frD_0\simeq \cX(Y')^{\frL'}
\] 
holds in the $U$-adic topology. The analogous statement  also hold with the chiral topologies.
\end{thm}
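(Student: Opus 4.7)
The plan is to verify, at each step in the proofs of the surgery formula (Sections~\ref{sec:mapping-cone}--\ref{sec:iterating-background}) and of bordered invariance (Section~\ref{sec:proof-invariance}), that every structure map and chain homotopy is continuous in the claimed topology and that every formally infinite sum converges. The entire argument is driven by weak admissibility: I choose Heegaard multi-diagrams which are weakly admissible with respect to every complete collection of basepoints $\ve{p}\subset \ws\cup\zs$, using the standard isotopy procedure adapted to meridianal diagrams. This is enough to guarantee that for any fixed bound on $n_p(\psi)$, only finitely many nonnegative polygon classes with prescribed corners contribute to any structure map.

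For the $U$-adic case, combine weak admissibility with the refined positivity estimate from Theorem~\ref{thm:positivity} and Remark~\ref{rem:slightly-stronger-positivity}: each polygon $\psi$ contributes with a $U$-power at least $\min(n_w(\psi), n_z(\psi))$. Hence modulo any ideal $(U^N)$, every map $\Phi$, $\Psi$, $Z$, $H$ from Section~\ref{sec:mapping-cone}, every structure map of $\bX_\Lambda(Y,L)$, and every map used in the iteration of Section~\ref{sec:iterating-background} is a finite sum, so each extends continuously to the $U$-adic completion. The resulting homotopy equivalence with $\ve{\CF}^-(Y_\Lambda(L))$ (itself defined as a $U$-adic completion) is then automatic, built term-by-term from these continuous maps. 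For the chiral case, I use the fact that the chiral topology on $\bX_\Lambda(Y,L)$ is the product of its Alexander-multigraded pieces (cf.\ Section~\ref{sec:intro-Alexander-gradings}) and that each structure map shifts the Alexander grading by a fixed amount (Lemma~\ref{lem:grading-descent-maps}). Within each Alexander-graded subspace, $n_p(\psi)$ is pinned down up to an additive constant by the corners, so weak admissibility collapses the sum to finitely many polygons; continuity in the chiral topology follows. Since the chiral topology is finer than the $U$-adic topology, the same homotopy equivalence descends to give the second statement.

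The bordered gluing statement $\cX(Y)^{\frL}\boxtimes {}_{\frK}\frD_0 \simeq \cX(Y')^{\frL'}$ is obtained by running the above arguments on the morphism of attaching curves $\scB_\Lambda\to \scB'_{\Lambda'}$ from Section~\ref{sec:iterating-background} and then pairing with $\as$; the 1-handle functor of Proposition~\ref{prop:1-handle-functor} and the standard tensor-product construction for disconnected Heegaard surfaces preserve the admissibility and Alexander-grading bounds on which continuity rests. The main obstacle is controlling the homotopy $Z$ from Proposition~\ref{prop:PsiPhi}, whose ingredients $\eta_\sigma$ and $\eta_\tau$ are honestly infinite series in the algebra involving unbounded negative $T$-powers; I expect this to be the delicate point because after 1-handle stabilization and pairing with a large alpha handlebody, these series produce output coefficients that are only guaranteed to converge once one invokes the interaction between Alexander grading decay (for chiral convergence) and the $U$-power bound from Theorem~\ref{thm:positivity} together with weak admissibility (for $U$-adic convergence). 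Verifying this interplay carefully on the level of Heegaard multi-diagrams is the technical heart of the proof.
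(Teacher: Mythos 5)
Your proposal rests on the claim that weak admissibility with respect to every complete collection $\ve{p}\subset\ws\cup\zs$ is enough to control convergence of all the maps that appear, and this is where the argument breaks down. The paper explicitly flags the obstruction in Section~\ref{sec:admissibility} under the heading ``Weak admissibility'': weak admissibility \emph{does} suffice for the differential, the two chain maps, and the chain homotopy of $\ve{\CF}^-$, but it is \emph{not} sufficient for the chain homotopy $\bX_{\Lambda}(Y,L)\to\bX_{\Lambda}(Y,L)$ built from the morphism $Z$ of Proposition~\ref{prop:PsiPhi}. The reason is visible in the positivity estimate you cite. You assert that every polygon contributes with $U$-power at least $\min(n_w(\psi),n_z(\psi))$, but Remark~\ref{rem:slightly-stronger-positivity} gives the weaker bound $\min(n_w(\phi),n_z(\phi),N(\phi),M(\phi))$, where $N$ and $M$ are multiplicities at the \emph{non-basepoint} intersection $\theta^+\in\b_0\cap\b_0'$ shown in Figure~\ref{fig:39}. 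Weak admissibility at basepoint collections gives no control whatsoever over $N(\phi)$ and $M(\phi)$, so this estimate does not collapse the sum to a finite one modulo $(U^N)$, and your $U$-adic argument does not close.

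The chiral half has the same gap in a different guise. You argue that each structure map shifts the Alexander multigrading by a fixed amount, so on each Alexander-graded piece the relevant $n_p(\psi)$ is pinned down by the corners and weak admissibility finishes the job. That is true for the hypercube maps $\Psi^{\vec{M}}$ (Lemma~\ref{lem:grading-descent-maps}) and for the chain maps $\Phi$, $\Psi$, but the homotopy $H$ is built from $\eta_\sigma=\sum_{i\ge 0}\scV^i\circ\Delta\circ\Pi\circ T^{-i}$ and $\eta_\tau=\sum_{i\ge 1}\scU^i\circ\Delta\circ\Pi\circ T^i$, which are not homogeneous in Alexander grading; their terms have unbounded Alexander grading shift, and in fact the chiral topology on $\cK$ is controlled by the $\gr_w,\gr_z$ filtration rather than the Alexander grading. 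So the argument ``fixed Alexander shift $\Rightarrow$ finitely many polygons per graded piece'' never applies to the term you identified as the delicate one. You correctly sensed that $Z$ and its iterates are the hard part, but the resolution you gesture at (``interaction between Alexander grading decay and the $U$-power bound together with weak admissibility'') is precisely the combination the paper shows to be insufficient; what the paper actually does is introduce the strictly stronger notion of \emph{surgery admissibility} (Definition~\ref{def:surgery-admissible}), the framework of extended morphisms and $\ve{f}$-admissibility with a rescaling limit argument (Lemma~\ref{lem:basic-facts-w_U}, Lemma~\ref{lem:f-admissible}), and a hands-on analysis of nonnegative weakly periodic domains on the torus diagram $(\bT^2,\b_0,\b_1,\b_0',\b_1')$ (Lemma~\ref{lem:moving-inequality}, Lemma~\ref{lem:surgery-admissible}) that directly controls $N$ and $M$. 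Without something playing the role of surgery admissibility, your proof has a genuine hole at the chain homotopy.
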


\subsection{\texorpdfstring{$\ve{f}$}{f}-admissibility}

In this section, we describe a helpful notion of admissibility. For various limiting arguments, it is helpful to allow real-valued powers of $\scU$, $\scV$, $U$ and $T$, instead of only integral powers.   We define 
\[
\scE_0^\infty, \scE_1^\infty, \scE_\lambda^\infty:=\bF[\R^2].
\]
We think of $\scE_0^\infty$ and $\scE_{\lambda}^\infty$ as being generated by monomials $e^{(x,y)}=\scU^{x}\scV^y$ where $x,y\in \R$. We think of $\scE_1^\infty$ and as being generated by monomials $e^{(x,y)}=U^x T^y$ for $x,y\in \R$.

 We define subspaces $\scE_{\veps}\subset \scE_\veps^\infty$ as follows. We define $\scE_0$ and $\scE_\lambda$ to be the span of monomials $\scU^x \scV^y$ with $x,y\ge 0$. We define $\scE_1$ to be generated by monomials $U^x T^y$ where $x\ge 0$. It is also natural to view $\bF[U]$ as lying inside of the ring $\cA_{\R}:=\bF[0,\infty)$, spanned by monomials $U^x$ where $x\ge 0$. 
We also consider $\cA_{\R}^\infty$, which is spanned by $U^x$ for $x\in \R$.

\begin{rem}
Note that it would seem more natural to define  $\scE_\lambda^\infty$ to be $\cA_{\R}^\infty$, but it will make various limiting arguments simpler to use the definition above.
\end{rem}

We define Alexander gradings on $\scE_0^\infty$, $\scE_1^\infty$ and $\scE_{\lambda}^\infty$ via the formula
\[
A(\scU^x\scV^y)=y-x,\quad \text{and} \quad A(U^x T^y)=y.
\]
On $\scE_0$, we also define gradings
\[
\gr_w(\scU^x\scV^y)=-2x\quad \text{and} \quad \gr_z(\scU^x\scV^y)=-2y. 
\]

If $\a>0$, let $r_{\a}\colon \bF[\R^2]\to \bF[\R^2]$ denote rescaling by a factor of $\a$, i.e. 
\[
r_{\a}\left(e^{(x,y)}\right)=e^{(\a x,\a y)}.
\]
\begin{rem}
Note that $r_{\a}$ is not equivariant with respect to $U$.  Rather, 
\[
r_{\a}^{-1}\circ  U^y \circ r_\a=U^{r/\a}.
\] 
\end{rem}

\begin{define}
\label{def:extended-morphism}
An \emph{extended morphism} from  $\scE_{\veps}^\infty$ to $\scE_{\veps'}^\infty$ consists of a finite sum of maps $f$ which each have the property that there is an  $A\colon \R^2\to \R^2$ such that 
\begin{equation}
f(e^{x})=e^{A(x)},\label{eq:continuous-morphism}
\end{equation}
and furthermore:
\begin{enumerate}
\item $f$ is $\cA_{\R}^\infty$-equivariant;
\item \label{condition:fundamental-equation-ext-morphism} $A$ is of the form $A(x)=B(x)+C$, where $C\in \scE_{\veps'}^{\infty}$ and $B$ is a continuous map satisfying $B(\a x)= \a B(x)$ for all $\a\ge 0$. 
\end{enumerate}
\end{define}

We make two additional definitions related to extended morphisms:
\begin{define}
\label{def:extended-morphisms-pos-fil}
 We say that an extended morphism $e^A$ from $\scE_{\veps}^\infty$ to $\scE_{\veps'}^{\infty}$ is \emph{positive} if $e^B$ maps $\scE_{\veps}$ to $\scE_{\veps'}$ and $e^{C}\in \scE_{\veps'}$. We say an extended morphism $f\colon \scE_0^\infty\to \scE_0^\infty$ is \emph{filtered} if $e^C\in \scE_{\veps'}$ and the map $e^B$ is non-increasing in the $\gr_w$ and $\gr_z$ gradings.
\end{define}

\begin{rem} Note that if $f= e^{A}=e^{B+C}$ is as in Definition~\ref{def:extended-morphism}, then 
\[
r_{\a}^{-1} \circ f \circ r_{\a}=e^{B+C/\a}.
\]
\end{rem}

\begin{example} 
\label{example:Pi}
The maps $\phi^\sigma$ and $\phi^\tau$ naturally induce extended morphisms from $\scE^\infty_0$ to $\scE^\infty_1$ since they may be written as $e^{L_\sigma}$ and $e^{L_\tau}$ for linear maps $L_\sigma$ and $L_\tau$. Note that the map $\Pi\colon E_1\to E_{\lambda}$ does not, since it fails the continuity condition. Instead, for admissibility it suffices to merely consider the map $\Pi'\colon \scE^\infty_{1}\to \scE^\infty_\lambda$, given by
\[
\Pi'(U^s T^t)=
\begin{cases}
 \scU^s \scV^{t+s}& \text{ if } t\ge 0\\
 \scU^{t-s} \scV^s &\text{ if } t\le 0
\end{cases}.
\]
The map coincides with $\Pi$ on Alexander grading zero, but has larger support.
It is straightforward to see that $\Pi'$ satisfies Definition~\ref{def:extended-morphism}. 
\end{example}

\begin{define}
\label{def:U-weight}
Given an extended morphism $f\colon \scE_\veps^\infty\to \scE_{\veps'}^\infty$, we define $w_U(f)\in [-\infty,\infty]$ as the supremum over $r\in \R$ such that $U^{-r}\cdot f$ maps $\scE_\veps$ into $\scE_{\veps'}$. 
\end{define}

Given a Heegaard tuple $(\Sigma,\gs_0,\dots, \gs_n,w)$, we define a \emph{weakly periodic domain} $P$ to be an integral 2-chain which has boundary equal to a linear combination of the curves in $\gs_0,\dots, \gs_n$. (We remark that unlike an ordinary periodic domain, we do not assume that $P$ has zero multiplicity at any basepoints).

Suppose now that each $\gs_j$ is equipped with a local system $\gs^{\scE_{\veps_j}}_j$, and we are given a sequence of extended morphisms $\ve{f}=(f_1,\dots, f_n)$, where $f_j\colon \scE_{\veps_{j-1}}\to \scE_{\veps_j}$. (We are only interested in the case that the local systems are of the form $\scE_0$, $\scE_1$ or $\scE_\lambda$, with the monodromies induced by intersection numbers with a closed  curve $K\subset \Sigma$). We may define a map
\[
\Map(P,\ve{f})\colon \scE_{\veps_0}^\infty\to \scE_{\veps_n}^\infty,
\]
using the same formula as for holomorphic polygons in Equation~\eqref{eq:local-systems}.

\begin{define} Suppose that 
\[
\cH=(\Sigma,\gs_0^{\scE_{\veps_0}},\dots, \gs_n^{\scE_{\veps_n}},w)
\]
 is a Heegaard diagram whose attaching curves are equipped with local systems, whose monodromies are determined by intersection numbers with a shadow $K\subset \Sigma$. Furthermore, assume each $\scE_{\veps_0}$ is either the trivial local system or one of $\scE_{0}$, $\scE_1$ or $\scE_\lambda$. Let $\ve{f}=(f_{1},\dots, f_{n})$ be a sequence of extended morphisms where $f_j\colon \scE_{\veps_{j-1}}^\infty\to \scE_{\veps_{j}}^{\infty}$. We say that $\cH$ is \emph{$\ve{f}$-admissible} if for all non-zero weakly periodic domains $P\ge 0$, we have
\[
w_U(\Map(P,\ve{f}))>0.
\]
\end{define}

We now prove several basic results about the functions $w_U$ and $\scF$:

\begin{lem}\label{lem:basic-facts-w_U}
\begin{enumerate}
\item Suppose $\a>0$, $\psi$ is a domain on a Heegaard diagram and $\rho_{\veps}$ is the monodromy map for the attaching curve $\bs_{\veps}$ labeled with $\scE_\veps^\infty$, where $\veps\in \{\lambda,0,1\}$. Then 
\[
r_\a^{-1}\circ  \rho_\veps(\psi) \circ   r_\a=\rho_\veps(\psi/\a).
\]
\item\label{item:continuity-D} Each of the monodromy maps considered in this paper takes the form $\rho(D)=e^{A(D)}$, where $D$ denotes a domain on the Heegaard diagram and $A(D)\colon \R^2\to \R^2$ is an affine map, depending on $D$. Furthermore, we can view $A$ as a continuous map from the set of domains on the Heegaard diagram (viewed as $\R^N$ where $N$ is the number of regions on the diagram) to the set of affine transformations of $\R^2$, equipped with the uniform metric.
\item \label{item:continuity-w_U} If $\{e^{A_i}\colon \scE_{\veps}^\infty\to \scE_{\veps'}^\infty\}_{i\in \N}$ is a sequence of $\bF$-linear maps, where  $A_i\colon \R^2\to \R^2$  is a sequence of functions which converges to some function $A\colon \R^2\to \R^2$ uniformly, then 
\[
w_U(e^{A_i})\to w_U(e^{A}).
\]
\item \label{item:w_U-rescale} Given $\a>0$, and a sequence of extended morphisms $\ve{f}=(f_1,\dots, f_n)$,  where $f_i=e^{B_i+C_i}$, let $\ve{f}_\a$ denote the morphism sequence $f_{i,\a}=e^{B_i+C_i/\a}$. Then
\[
r_{\a}^{-1}\circ \Map(\psi,\ve{f})\circ r_\a=\Map(\psi/\a,\ve{f}_\a).
\]
Additionally
\[
\frac{w_U(\Map(\psi,\ve{f}))}{\a}=w_U(\Map(\psi/\a,\ve{f}_\a)). 
\]
\end{enumerate}
\end{lem}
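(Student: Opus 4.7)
My approach is to dispatch the four parts in order, with parts (1), (2) and (4) being essentially bookkeeping and part (3) carrying whatever technical content there is. For part (1), I would observe that each monodromy map $\rho_\veps(\psi)$ is multiplication by a single monomial: it acts on $\scE_\veps^\infty=\bF[\R^2]$ by translating the exponent by a vector $v_\veps(\psi)\in\R^2$ which is linear in the intersection number $\#(\d_{\b_\veps}(\psi)\cap K)$. Since $r_\a$ scales exponents by $\a$, the conjugate $r_\a^{-1}\circ\rho_\veps(\psi)\circ r_\a$ translates by $v_\veps(\psi)/\a=v_\veps(\psi/\a)$, which proves (1). Part (2) is the same observation packaged differently: each $\rho(D)$ is of the form $e^{A(D)}$ with $A(D)(x)=x+v(D)$ a translation, and $D\mapsto v(D)$ is $\Z$-linear (hence continuous) in the natural $\R^N$ coordinates on the space of domains, so $D\mapsto A(D)$ is continuous into the uniform-metric space of affine transformations.

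For part (3) I plan to give an explicit formula for $w_U$. Let $S_\veps\subset\R^2$ denote the support monoid of $\scE_\veps$ (so $S_0=S_\lambda=[0,\infty)^2$ and $S_1=[0,\infty)\times\R$), and let $u_{\veps'}$ denote the $U$-action direction on $\scE_{\veps'}^\infty$ (so $u_0=u_\lambda=(1,1)$ and $u_1=(1,0)$). Writing $S_{\veps'}$ as an intersection of closed half-spaces $\{\ell_i\ge 0\}$ normalized so that $\ell_i(u_{\veps'})=1$, the condition that $U^{-r}\cdot e^A$ maps $\scE_\veps$ into $\scE_{\veps'}$ becomes $\ell_i(A(x))\ge r$ for every $x\in S_\veps$ and every $i$. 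This gives the explicit formula
\[
w_U(e^A)\;=\;\inf_{x\in S_\veps}\,\min_i\,\ell_i(A(x))\,\in\,[-\infty,\infty].
\]
If $A_i\to A$ uniformly on $\R^2$, then each composite $\ell_i\circ A_j$ converges uniformly to $\ell_i\circ A$, and since infimum is $1$-Lipschitz with respect to the uniform norm, $w_U(e^{A_i})\to w_U(e^A)$ (with the evident convention when the limit is $\pm\infty$). The only subtlety is that $S_\veps$ is unbounded, but uniform convergence on all of $\R^2$ is exactly what is needed for the infimum estimate.

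Part (4) then follows by combining (1) with a direct calculation for the extended morphisms themselves: if $f=e^{B+C}$, then using positive homogeneity $B(\a x)=\a B(x)$ one gets
\[
r_\a^{-1}\circ e^{B+C}\circ r_\a(e^x)\;=\;r_\a^{-1}\bigl(e^{\a B(x)+C}\bigr)\;=\;e^{B(x)+C/\a},
\]
which is exactly $f_\a$. Interleaving this with the conjugation identity for $\rho_\veps(\psi)$ from (1) in the alternating composition that defines $\Map$ gives the first identity in (4). For the $w_U$ claim, I would use that $r_\a$ restricts to a bijection $\scE_\veps\to\scE_\veps$ and $\scE_{\veps'}\to\scE_{\veps'}$ (since $\a>0$), while the same argument as (1) applied to the $U$-action shows $r_\a^{-1}\circ U^{-r}\circ r_\a = U^{-r/\a}$. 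Consequently $r$ is an admissible shift for $\Map(\psi,\ve f)$ if and only if $r/\a$ is admissible for $\Map(\psi/\a,\ve f_\a)$, and taking suprema gives the stated scaling of $w_U$. I do not anticipate a serious obstacle; the only place requiring minor care is unpacking ``maps $\scE_\veps$ into $\scE_{\veps'}$'' case-by-case in (3) and keeping track of $\pm\infty$ values.
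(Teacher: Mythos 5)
Your proof is correct and follows essentially the same route as the paper's: translation/conjugation bookkeeping for (1), (2) and (4), and for (3) an explicit infimum formula for $w_U$ (your half-space description $\inf_x\min_i\ell_i(A(x))$ is just a uniform packaging of the paper's coordinate-wise $\min(a(x),b(x))$ computation, done case-by-case in $\veps'$) combined with the fact that uniform convergence of the $A_i$ passes to the infimum; your scaling argument $r_\a^{-1}\circ U^{-r}\circ r_\a=U^{-r/\a}$ for the $w_U$ identity in (4) is exactly the "immediate consequence" the paper leaves implicit.
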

\begin{proof} For the first claim, $\rho_{\veps}(\psi)$ is either the identity (if $\veps=\lambda$) or is the map $e^x\mapsto e^{x+(0,s)}$ where $s$ is $\# (\d_{\b}(\psi)\cap K)$. Hence $r_{\a}^{-1}\circ  \rho_\veps(\psi)\circ  r_{\a}$ maps $e^x$ to $e^{x+(0,s/\a)}$, which agrees with the evaluation of $\rho_{\veps}(\psi/\a)$ on $e^x$. 

The second claim follows similarly and is completely straightforward.

We consider the third claim. Suppose for concreteness that $\veps'$ is $0$ or $\lambda$. The case that $\veps'=1$ is similar. We write $A_i(x)=(a_i(x),b_i(x))\in \R^2$ and $A(x)=(a(x),b(x))$. We observe
\[
w_U(f_i)=\sup \{\min(a_i(x), b_i(x)):e^x \in \scE_{\veps}\}.
\]
We observe that $\min(a_i,b_i)\to \min(a,b)$ uniformly, since $\min\colon \R^2\to \R$ is continuous. If $g_i$ are functions and $g_i\to g$ uniformly then $\sup_x g_i(x)\to \sup_x g(x)$. The main claim follows easily.

The fourth claim is proven as follows. First observe
\[
r_\a^{-1}\circ f_i\circ r_{\a}=f_{i,\a}.
\]
Therefore, by moving $r_{\a}$ from right to left we compute
\[
\begin{split}
&r_{\a}^{-1}\circ \Map(\psi,\ve{f})\circ  r_\a\\
=&r_{\a}^{-1}\circ U^{n_w(\psi)}\rho_n(\psi)\circ f_{n}\circ \cdots \circ f_{1} \circ\rho_0(\psi)\circ r_{\a} \\
=&U^{n_w(\psi)/\a} \rho_{n}(\psi/\a) \circ f_{n,\a}\circ\cdots \circ  f_{1,\a} \circ\rho_0(\psi/\a)\\
=&\Map(\psi/\a,\ve{f}_\a).
 \end{split}
\]
The equality involving $w_U$ is an immediate consequence of the above.
\end{proof}

\begin{lem}\label{lem:f-admissible}
 Let $\cH=(\Sigma,\gs_0^{\scE_{\veps_0}},\dots, \gs_n^{\scE_{\veps_n}},w)$ be a Heegaard diagram such that the attaching curves are equipped with local systems. Let $\ve{f}=(f_1,\dots, f_n)$ be a sequence of extended morphisms, where $f_i=e^{B_i+C_i}$, as in Definition~\ref{def:extended-morphism}. Let $\ve{f}'=(f_1',\dots, f_n')$ where $f_i'=e^{B_i}$. If $\cH$ is $\ve{f}'$-admissible, then for each $N$ there are only finitely many classes of polygons $\psi\ge 0$ with $w_U(\Map(\psi,\ve{f}))<N$.
\end{lem}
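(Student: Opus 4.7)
The plan is a rescaling-and-compactness argument, contradicting $\ve{f}'$-admissibility. Suppose for contradiction that $S_N := \{\psi \ge 0 : w_U(\Map(\psi,\ve{f})) < N\}$ is infinite. Since polygon classes have their vertices in a finite set, I first pass to a subsequence $\{\psi_i\}\subset S_N$ all sharing a common vertex tuple; then each $\psi_i - \psi_1$ is an integer weakly periodic domain. Because only finitely many polygon classes with given vertices have bounded $L^1$-norm, the norms $\a_i := |\psi_i|$ must tend to $\infty$. Normalize $\bar\psi_i := \psi_i/\a_i$; these lie in the compact set of non-negative norm-one 2-chains, so after a further subsequence $\bar\psi_i \to \bar\psi_\infty$, and since $\psi_1/\a_i \to 0$ while $(\psi_i - \psi_1)/\a_i \in V_\R$ (the real vector space of 2-chains whose boundary is a linear combination of the attaching curves), the limit $\bar\psi_\infty$ is a non-zero, non-negative element of $V_\R$.

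Next I apply the scaling relation from Lemma~\ref{lem:basic-facts-w_U}(4): $w_U(\Map(\bar\psi_i,\ve{f}_{\a_i})) = w_U(\Map(\psi_i,\ve{f}))/\a_i < N/\a_i$, where $\ve{f}_{\a_i}$ has entries $e^{B_j + C_j/\a_i}$. As $\a_i\to\infty$ these entries converge uniformly on $\R^2$ to $e^{B_j}$, so $\ve{f}_{\a_i}\to \ve{f}'$. Combined with $\bar\psi_i\to\bar\psi_\infty$, this yields uniform convergence of the full exponent map of $\Map(\bar\psi_i,\ve{f}_{\a_i})$ to that of $\Map(\bar\psi_\infty,\ve{f}')$ — the needed uniformity follows from the fact that the $B_j$ arising from $\phi^\sigma$, $\phi^\tau$, $\Pi'$, and $\Delta$ are piecewise linear, hence Lipschitz, so shifts propagate uniformly through compositions. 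Lemma~\ref{lem:basic-facts-w_U}(3) now delivers $w_U(\Map(\bar\psi_\infty,\ve{f}')) \le 0$.

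It remains to contradict $\ve{f}'$-admissibility. The function $F(P) := w_U(\Map(P,\ve{f}'))$ is continuous on $V_\R$, and Lemma~\ref{lem:basic-facts-w_U}(4) applied with $C_j = 0$ (so $\ve{f}'_\a = \ve{f}'$) shows that $F$ is positively homogeneous of degree $1$. The cone $V_\R\cap \{P\ge 0\}$ is a rational polyhedral cone, and when the $B_j$ are piecewise linear $F$ is itself piecewise linear on it, so $\{P \in V_\R : P\ge 0,\ F(P)\le 0\}$ is a rational polyhedral cone. If this cone contains the non-zero $\bar\psi_\infty$, then by Minkowski--Weyl it contains a rational, hence (after clearing denominators) integral, point $P_0$ — a non-zero integral non-negative weakly periodic domain with $F(P_0)\le 0$, directly contradicting $\ve{f}'$-admissibility. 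The hard part is precisely this final step: strict positivity of $F$ on non-zero integer points must be promoted to strict positivity at the possibly irrational limit $\bar\psi_\infty$. Piecewise linearity of the $B_j$ — automatic in all of the paper's applications — makes the polyhedral-cone argument go through cleanly, but in the full generality of Definition~\ref{def:extended-morphism} it would require separate verification.
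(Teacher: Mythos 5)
Your argument is, at its core, the paper's own: assume there are infinitely many classes, note $|\psi_i|_\infty\to\infty$ by integrality of domains, rescale, extract a convergent subsequence with a nonzero nonnegative limit, and combine the scaling identity in part (4) of Lemma~\ref{lem:basic-facts-w_U} with the continuity statement in part (3) to force $w_U(\Map(\psi_i,\ve{f}))\to\infty$, a contradiction. The genuine difference is your final Minkowski--Weyl step, and it is not needed under the paper's conventions: although the definition of a weakly periodic domain reads ``integral 2-chain,'' the admissibility condition is applied to --- and, in the paper's verifications, established for --- real-coefficient nonnegative weakly periodic domains. Indeed, the proof of Proposition~\ref{prop:HH'-surgery-admissible} normalizes to $|P|_\infty=1$ and passes to limits of domains whose $\as$-coefficients lie in $[-1/m,1/m]$, and the computation in Lemma~\ref{lem:surgery-admissible} nowhere uses integrality. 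With that reading, the limit $P=\bar\psi_\infty$ is itself a nonzero nonnegative weakly periodic domain, so $w_U(\Map(P,\ve{f}'))>0$ applies directly and no promotion from integral points to irrational rays (hence no piecewise-linearity hypothesis) is required. Your concern is legitimate if one insists on the integral-only reading --- positivity of a continuous homogeneous function at integral points of a cone does not imply positivity along irrational rays --- but your repair imports an assumption (rational piecewise linearity of the $B_j$) that the lemma does not impose, so as written you prove a weaker statement; the cleaner fix is to interpret (or restate) admissibility over real coefficients, which is how the paper actually uses it. One point where you are more careful than the paper: the uniform convergence of the composed exponent maps, needed to invoke part (3), genuinely requires more than continuity of the $B_j$, since a uniformly small shift pushed through a merely continuous positively homogeneous map need not remain uniformly small; your Lipschitz/piecewise-linear observation supplies this in all of the paper's applications, whereas the paper only cites parts (2) and (3).
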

\begin{proof}
Suppose to the contrary that there is a sequence of distinct nonnegative classes $\psi_i$ and some $N$ so that 
\[
w_U(\Map(\psi_i, \ve{f}))<N.
\]
We must have $|\psi_i|_{\infty}\to \infty$, since $\psi_i$ have domains which are integral. By passing to a subsequence, we may assume that $\psi_i/|\psi_i|_\infty$ converges to a non-zero weakly periodic domain $P\ge 0$.

 We apply part~\eqref{item:w_U-rescale} of Lemma~\ref{lem:basic-facts-w_U} (with $\a=|\psi_i|_{\infty}$) and obtain
\begin{equation}
\frac{w_U(\Map(\psi_i,\ve{f}))}{|\psi_i|_\infty}= w_U \left(\Map(\psi_i/|\psi_i|_\infty,\ve{f}_{|\psi_i|_\infty})\right).
\label{eq:effect-rescale-sequence-psi}
\end{equation}
Parts ~\eqref{item:continuity-D} and~\eqref{item:continuity-w_U} of Lemma~\ref{lem:basic-facts-w_U} imply that
\[
w_U \left( \Map(\psi_i/|\psi_i|_\infty,\ve{f}_{|\psi_i|_\infty}) \right)\to w_U \left( \Map(P,\ve{f}')\right). 
\]
Since $ w_U(\Map(P,\ve{f}'))>0$ by $\ve{f}'$-admissibility, Equation~\eqref{eq:effect-rescale-sequence-psi} implies that $w_U(\Map(\psi_i,\ve{f}))\to \infty$, contradicting our earlier assumption.
\end{proof}

\subsection{Surgery admissibility}

In this section we describe a notion of admissibility for Heegaard diagrams which we will use to prove Theorem~\ref{thm:admissibility}. Our notion of admissibility is sufficient for both the $U$-adic and chiral topologies, so we call it just \emph{surgery admissibility}.

To understand the homotopy equivalence in Theorem~\ref{thm:admissibility}, we focus first on subdiagrams of the following two diagrams
\[
\cH=(\Sigma, \as, \bs_{\lambda}, \bs_0, \bs_1, \bs_{\lambda}')\quad \text{and} \quad \cH'=(\Sigma, \as, \bs_0,\bs_1, \bs_{\lambda}, \bs_0',\bs_1').
\]
In particular, this restricts us to the setting of the knot surgery formula. The extension of these arguments to links is considered in Section~\ref{sec:links}.

\begin{define}
\label{def:surgery-admissible} Suppose that $\cD=(\Sigma,\gs_0^{\scE_{i_0}},\dots, \gs_n^{\scE_{i_n}},w)$ is a Heegaard diagram whose attaching curves are small translates of $\as$, $\bs_0^{\scE_0}$, $\bs_1^{\scE_1}$ or $\bs_{\lambda}^{\scE_\lambda}$.
 We say that $\cD$ is \emph{surgery admissible} if all subdiagrams are $\ve{f}$-admissible for every sequence $\ve{f}$ of Alexander grading preserving extended morphisms which are positive and which have the property that each morphism $f_i$ from $\scE_0$ to $\scE_0$ is filtered  (Definition~\ref{def:extended-morphisms-pos-fil}).
\end{define}

In this section, we will prove the following:

\begin{prop}\label{prop:HH'-surgery-admissible} After performing a suitable isotopy to $\as$,  the diagrams $\cH$ and $\cH'$ are surgery admissible.
\end{prop}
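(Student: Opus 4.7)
The plan is to combine the standard winding technique of Ozsv\'ath--Szab\'o for ordinary weak admissibility with the local positivity analysis developed in Section~\ref{sec:positivity}. The key insight is that Remark~\ref{rem:slightly-stronger-positivity} provides a universal lower bound
\[
w_U(\Map(P,\ve{f})) \;\geq\; \min\bigl(n_w(P),\,n_z(P),\,N(P),\,M(P)\bigr)
\]
for any nonnegative (weakly) periodic domain $P$ and any $\ve{f}$ satisfying the hypotheses of Definition~\ref{def:surgery-admissible}; the argument of Lemma~\ref{lem:positivity-knot} carries over verbatim to extended morphisms since Lemma~\ref{lem:filtered-morphism-compose-Vs} extends to real-valued filtered maps in the sense of Definition~\ref{def:extended-morphisms-pos-fil}. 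So the task reduces to arranging that every nonzero $P\geq 0$ on every subdiagram has at least one of these four local multiplicities strictly positive.

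First I would isotope $\as$ by winding transversely along a complete collection of pairwise disjoint dual curves for $\bs$, supported entirely in the complement of the special genus-$1$ region and disjoint from the knot shadow $K$ and from $\{w,z\}$. This is the standard winding argument as in \cite{OSDisks}*{Lemma~4.10}, applied simultaneously to all sets of beta curves appearing in $\cH$ or $\cH'$ (which differ only in the torus region and by small Hamiltonian translation). The effect is that on any subdiagram, any nonzero $P\geq 0$ whose boundary lies in the span of $\as$ together with \emph{any} collection of translates of the non-meridional curves has at least one of $n_w(P),n_z(P)$ strictly positive, provided some alpha coefficient of $\partial P$ is nonzero.

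Second I would decompose an arbitrary weakly periodic $P\geq 0$ on a subdiagram as $P = P_{\mathrm{out}} + P_{\mathrm{in}}$, where $P_{\mathrm{in}}$ is the restriction of $P$ to the closed special torus region and $P_{\mathrm{out}}$ is the restriction to its complement. Constancy of $P$ along the common boundary circle forces each summand to itself be a weakly periodic domain on the respective subsurface. On the outer part, $P_{\mathrm{out}}$ is a weakly periodic domain on a diagram whose beta curves are small Hamiltonian translates of one another; by the winding step, either $P_{\mathrm{out}}=0$ or $n_w(P_{\mathrm{out}}) + n_z(P_{\mathrm{out}})>0$.

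Third, I would enumerate weakly periodic domains on the special torus region by direct inspection of Figures~\ref{fig:17} and~\ref{fig:39}. Up to adding multiples of the torus itself (which contributes equally to all of $n_w$, $n_z$, $N$, $M$), the generators are the finite collection of bigons/triangles/quadrilaterals realizing the homological relations $[\b_\lambda] = [\b_0]$ and $[\b_\lambda] = [\b_1]$ inside $H_1(T^2)$, together with the standard small bigon between $\b_0$ and $\b_0'$ (and likewise for $\b_1,\b_1'$). A case check, split according to which of the curves $\b_0,\b_1,\b_\lambda,\b_0',\b_1'$ are present in the given subdiagram, shows that every nonzero nonnegative integer combination of these generators has at least one of $n_w, n_z, N, M$ strictly positive. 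Combined with the outer analysis this yields the required positivity of $\min(n_w(P),n_z(P),N(P),M(P))$ for $P\neq 0$, and hence $\ve{f}$-admissibility via the universal lower bound above.

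The main obstacle is the third step: one must carefully enumerate the lattice of weakly periodic domains in the special torus region and verify, for \emph{every} subcollection of $\{\b_0,\b_1,\b_\lambda,\b_0',\b_1'\}$ appearing in a subdiagram of $\cH$ or $\cH'$, that the mild degeneracies (e.g.\ a triangle with $n_w=0$) are always compensated by positivity of $n_z$, $N$, or $M$. The remaining verifications are essentially a repackaging of the winding argument together with the local positivity bound, both of which are already in hand.
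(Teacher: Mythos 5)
Your starting bound is essentially correct: for weakly periodic domains (which have no corner at $\theta^+\in\b_0\cap\b_0'$) the argument of Lemma~\ref{lem:positivity-knot} does give $w_U(\Map(P,\ve{f}))\ge \min\bigl(n_w(P),n_z(P),N(P),M(P)\bigr)$ for positive, Alexander-grading-preserving inputs. The problem is what you do with it. First, a logical slip: since the bound is a \emph{minimum}, you need all four local multiplicities to be strictly positive, not ``at least one'' as you state. Second, and more seriously, the corrected reduction is unachievable: no isotopy of $\as$ has any effect on weakly periodic domains whose boundary lies entirely on the beta curves, and for these nonnegativity of $P$ does not force all four local multiplicities to be positive. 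This is exactly the degenerate situation the paper must confront in Lemma~\ref{lem:surgery-admissible}: when $\delta_0$ and $\delta_0'$ have opposite signs and the between-curve multiplicity $a+\delta_0+2\delta_0'$ vanishes, positivity of the output is obtained not from the local multiplicities at $\theta^+$ but from Lemma~\ref{lem:moving-inequality} (the inequality $w_U(\scU^a\circ f\circ\scV^b)\ge\min(a,b)$ for $f$ of nonnegative Alexander degree), with the sign of the Alexander degree $\delta_1$ extracted from nonnegativity of a \emph{different} region of the domain --- global information invisible to the four-multiplicity bound. Your step 3 (``direct inspection of the torus region'') is precisely where this content lives, and as described it proves the wrong statement and omits the mechanism that makes the critical cases work.

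There is also a gap in your step 2: the alpha curves in general enter the special genus-one region (they meet $\b_\lambda$ and the shadow $K$), so the restriction of $P$ to that region is not a weakly periodic domain on the subsurface and $P$ need not be constant along the region's boundary circle; the splitting $P=P_{\mathrm{out}}+P_{\mathrm{in}}$ does not exist in the form you use it. The paper avoids both issues at once with a rescaling/compactness argument rather than any decomposition or quantitative winding estimate: normalize $|P|_\infty=1$ (allowed by the rescaling statement in Lemma~\ref{lem:basic-facts-w_U}), wind $\as$ a total of $m$ times so that every alpha coefficient of $\d P$ lies in $[-1/m,1/m]$, extract a limit $P_\infty\neq 0$ supported on the beta curves, apply surgery admissibility of the pure-beta diagrams (Proposition~\ref{prop:D-surgery admissible}, whose proof via Lemmas~\ref{lem:surgery-admissible-weak-adm}, \ref{lem:moving-inequality} and~\ref{lem:surgery-admissible} carries the real torus-region content), and transfer positivity back to large finite $m$ by continuity of $w_U$. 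If you want a direct argument along your lines, you would at minimum have to reprove the content of Lemmas~\ref{lem:moving-inequality} and~\ref{lem:surgery-admissible} for the beta-only domains and then explain how the winding of $\as$ controls the domains that genuinely involve the alpha curves without any clean splitting.
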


We will break the proof into several lemmas. After establishing Proposition~\ref{prop:HH'-surgery-admissible}, we will how to prove the special case of Theorem~\ref{thm:admissibility} when $L$ has a single component.

\begin{lem}
\label{lem:surgery-admissible-weak-adm} Let $\cH_0$ be a subdiagram of $\cH$ or $\cH'$ which contains exactly one of $\b_0$ or $\b_0'$. Then $\cH_0$ is surgery admissible.
\end{lem}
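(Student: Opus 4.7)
The plan is to apply Lemma~\ref{lem:f-admissible}, which reduces surgery admissibility of $\cH_0$ to showing that $w_U(\Map(P,\ve{f}'))>0$ for every nonzero weakly periodic domain $P\ge 0$ on any subdiagram $\cD\subset \cH_0$ and every admissible input sequence $\ve{f}=(e^{B_1+C_1},\dots,e^{B_n+C_n})$, where $\ve{f}'=(e^{B_1},\dots,e^{B_n})$ drops the translational parts. The argument then has two ingredients: a winding argument establishing simultaneous weak admissibility at the two basepoints $w$ and $z$, and a direct computation of the $U$-weight of the resulting extended morphism composition.

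First I would perform a standard winding argument on $\as$ to arrange that the underlying Heegaard diagram of $\cH_0$ is simultaneously weakly admissible at both $w$ and $z$. This ensures that every nonzero weakly periodic $P\ge 0$ on any subdiagram has both $n_w(P)>0$ and $n_z(P)>0$. Simultaneous weak admissibility at two basepoints is classical in link Floer theory and follows from iterated windings in regions adjacent to each of $w$ and $z$.

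Next I would bound $w_U(\Map(P,\ve{f}'))$ from below. If the subdiagram $\cD$ contains neither $\b_0$ nor $\b_0'$, then no $\rho_i$ in $\Map(P,\ve{f}')$ is of $\scV$-type; combined with the overall $U^{n_w(P)}$ factor and the positivity of each $e^{B_i}$, this directly gives $w_U(\Map(P,\ve{f}'))\ge n_w(P)>0$. If $\cD$ contains exactly one $\scE_0$-decorated curve, then exactly one monodromy factor is of $\scV$-type: multiplication by $\scV^s$ with $s=\#(\d_{\b_0}P\cap K)=n_z(P)-n_w(P)$. Since every $e^{B_i}$ is $\cA_\R^\infty$-equivariant (Definition~\ref{def:extended-morphism}) and the other $\rho_i$ (trivial or $T$-type) commute with $U$, the overall $U^{n_w(P)}$ factor can be commuted adjacent to the unique $\scV^s$ factor, where
\[
U^{n_w(P)}\scV^{s}=\begin{cases}\scU^{n_w(P)}\scV^{n_z(P)}& \text{if }s\ge 0,\\ U^{n_z(P)}\cdot \scU^{n_w(P)-n_z(P)}&\text{if }s< 0.\end{cases}
\]
In either case this composite maps $\scE_0$ positively into $\scE_0$ with $w_U\ge\min(n_w(P),n_z(P))$. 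Combining this with the positivity of the $e^{B_i}$'s (so no further negative $U$-powers are introduced) yields $w_U(\Map(P,\ve{f}'))\ge\min(n_w(P),n_z(P))>0$.

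The main obstacle I expect to encounter is the simultaneous weak admissibility step: winding $\as$ near $w$ typically creates new periodic domains with $n_z=0$, and conversely. I would resolve this by winding inside the distinguished toroidal region containing both basepoints together with additional winding elsewhere, arranged so that both bounds hold at once. A secondary subtlety appears when $s<0$: the rearrangement of $\scV^s$ past neighboring $e^{B_i}$'s invokes Lemma~\ref{lem:filtered-morphism-compose-Vs} and relies on the hypothesis in Definition~\ref{def:surgery-admissible} that all $\scE_0\to \scE_0$ inputs appearing in $\ve{f}$ are filtered, so that the rearranged maps remain positive and the $U$-weight computation remains valid.
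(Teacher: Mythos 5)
Your proposal is correct and is essentially the paper's own argument: commute the overall $U^{n_w(P)}$ factor (using $\cA_{\R}$-equivariance of all other factors) into the unique $\scV$-type monodromy to rewrite it as $\scU^{n_w(P)}\scV^{n_z(P)}$, deduce $w_U(\Map(P,\ve{f}))\ge \min(n_w(P),n_z(P))$ since all remaining factors are positive, and conclude by choosing $\as$ so the diagram is weakly admissible at both $w$ and $z$. The detours through Lemma~\ref{lem:f-admissible} and Lemma~\ref{lem:filtered-morphism-compose-Vs} are unnecessary here (with only one of $\b_0,\b_0'$ present there are no $\scE_0\to\scE_0$ inputs to rearrange past, and surgery admissibility asks directly for $w_U(\Map(P,\ve{f}))>0$), but they do not affect the validity of the argument.
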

\begin{proof}In the formula defining $\Map(P,\ve{f})$, we have both $U^{n_w(P)}$ and $\scV^{n_z(P)-n_w(P)}$, as well as several other factors. All other factors are $\bF[U]$-equivariant and $w_U\ge 0$. We rearrange the above to be $\scU^{n_w(P)} \scV^{n_z(P)}$, composed with some other other $\bF[U]$-equivariant maps which have $w_U\ge 0$. Hence
\[
w_U(\Map(P,\ve{f}))\ge \min(n_w(P), n_z(P)).
\]
By picking $\as$ so that the diagrams are weakly admissible at $w$ and $z$, it follows that if $P\ge 0$ and $P\neq 0$, then $\min(n_w(P), n_z(P))>0$, completing the proof. 
\end{proof}

We will write $\cD$ and $\cD'$ for the diagrams $\cH$ and $\cH'$ with $\as$ removed. We now prove the following:

\begin{prop}
\label{prop:D-surgery admissible}
 The diagrams $\cD$ and $\cD'$ are surgery admissible.
\end{prop}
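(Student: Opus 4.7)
The plan is to adapt the rearrangement trick used in Lemma~\ref{lem:positivity-knot} to the setting of extended morphisms, and then combine this with a choice of small Hamiltonian translations of $\bs_0$ and $\bs_0'$ that makes the diagram combinatorially admissible in a sufficiently strong sense. The key observation is that Lemma~\ref{lem:surgery-admissible-weak-adm} already handles subdiagrams containing at most one of $\bs_0$ or $\bs_0'$, so the only genuinely new case is a subdiagram of $\cD'$ containing both.

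For such a subdiagram, I would fix a nonzero weakly periodic $P\ge 0$ and an input sequence $\ve f=(f_1,\dots,f_n)$ of positive, Alexander-preserving extended morphisms, with every $f_i\colon \scE_0^\infty\to \scE_0^\infty$ filtered. Following the argument of Lemma~\ref{lem:positivity-knot} (and its restatement in Remark~\ref{rem:slightly-stronger-positivity}), the filteredness hypothesis allows all $\scV^{\pm}$-monodromies occurring between consecutive $\bs_0$- or $\bs_0'$-translates to be commuted through the $f_i$'s via Lemma~\ref{lem:filtered-morphism-compose-Vs}. Combined with the identity $\scU\scV=U$, the overall factor $U^{n_w(P)}$, and the vertex relation
\[
n_w(P)+n_z(P)=N(P)+M(P)+\delta
\]
at the fixed transverse intersection points of $\bs_0\cap \bs_0'$ (Figure~\ref{fig:39}), this produces the estimate
\[
w_U\bigl(\Map(P,\ve f)\bigr)\ge \min\bigl(n_w(P),n_z(P),N(P),M(P)\bigr).
\]
Because the rearrangement is a purely algebraic identity that only uses the filtered condition of Definition~\ref{def:extended-morphisms-pos-fil} and not integrality of multiplicities, the same estimate holds for extended morphisms; the ``constant'' parts $C_i$ of $f_i=e^{B_i+C_i}$ only add nonnegative contributions to $w_U$ by positivity.

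Having established this estimate, admissibility is reduced to a purely combinatorial statement: for every nonzero $P\ge 0$ and every subdiagram, at least one of $n_w(P), n_z(P), N(P), M(P)$ is strictly positive. I would arrange this by choosing the small Hamiltonian translations as in Figure~\ref{fig:41}, so that the two cells adjacent to $\theta^+\in\b_0\cap\b_0'$ lie in global regions of the diagram each containing one of $w$ or $z$. Under such a choice, positivity of $\min(N(P),M(P))$ is forced by positivity of $\min(n_w(P),n_z(P))$, which in turn follows from weak admissibility at $w$ and $z$ exactly as in Lemma~\ref{lem:surgery-admissible-weak-adm}. To handle the $\R$-valued limits that arise if one wants to invoke the compactness argument of Lemma~\ref{lem:f-admissible} directly, I note that the estimate and the vertex relation are all linear in $P$, so they pass to real-valued limits of nonnegative weakly periodic domains without change.

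The main obstacle I anticipate is not any single step but rather bookkeeping: one must check that a single choice of translations simultaneously achieves weak admissibility at both $w$ and $z$, positions the $\bs_0$-$\bs_0'$ intersection points in the desired cells for every subdiagram of $\cD$ and $\cD'$, and is compatible with the standard isotopy procedures used to achieve weak admissibility on multi-pointed diagrams. Once this uniform geometric setup is fixed, the algebraic estimate above applies mechanically to every subdiagram and every admissible input sequence $\ve f$, completing the proof.
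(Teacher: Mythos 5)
Your reduction has a genuine gap at the step where you pass from the estimate $w_U(\Map(P,\ve f))\ge \min\bigl(n_w(P),n_z(P),N(P),M(P)\bigr)$ to admissibility. First, that estimate requires \emph{all four} multiplicities to be strictly positive (the minimum must be positive), not ``at least one of them,'' as you state. Second, no choice of Hamiltonian translates makes positivity of $N(P)$ and $M(P)$ follow from weak admissibility: the regions carrying the multiplicities $N$ and $M$ in Figure~\ref{fig:39} are the two thin strip regions between $\b_0$ and $\b_0'$, and they contain no basepoints. Concretely, the weakly periodic domain with multiplicity $1$ everywhere outside the strips between $\b_0$ and $\b_0'$ and multiplicity $0$ inside them is nonzero, nonnegative, has $n_w=n_z=1$, but $N=M=0$; your bound gives $w_U\ge 0$ and says nothing, yet surgery admissibility demands $w_U>0$ for exactly this domain. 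This is precisely the case $\delta_0=\#\d_{\b_0}(P)\cap K$ and $\delta_0'=\#\d_{\b_0'}(P)\cap K$ having opposite signs, which is the hard case in the paper's argument (Lemma~\ref{lem:surgery-admissible}, Figure~\ref{fig:58}).

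The missing idea is the paper's Lemma~\ref{lem:moving-inequality}: if $f$ is $\cA_{\R}$-equivariant of homogeneous Alexander grading $\delta\ge 0$, then $w_U(\scU^a\circ f\circ\scV^b)\ge\min(a,b)$ (and symmetrically for $\delta\le 0$). In the opposite-sign case one rewrites $\Map(P,\ve f)=U^{a+\delta_0+2\delta_0'}\,g\circ\scU^{-\delta_0'}\circ f\circ\scV^{\delta_0}$, observes that $a+\delta_0+2\delta_0'$ and $a+\delta_0+2\delta_0'+\delta_1$ are region multiplicities of $P$ (hence nonnegative), so either the overall $U$-power is already positive, or $\delta_1\ge 0$ and the lemma gives $w_U\ge\min(-\delta_0',\delta_0)$; if that vanishes one deduces $P=0$ by chasing the remaining multiplicities. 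Note that this mechanism uses the sign of the Alexander grading of the intermediate morphism, forced by nonnegativity of the domain, rather than the filtered condition; your plan of commuting $\scV$-powers through filtered maps (the Lemma~\ref{lem:positivity-knot} trick) only ever produces the corner-multiplicity bound, which, as the example shows, cannot give strict positivity for all nonnegative weakly periodic domains on $\cD'$. The remaining parts of your outline (Lemma~\ref{lem:surgery-admissible-weak-adm} for subdiagrams with at most one of $\b_0,\b_0'$, the absence of weakly periodic domains involving $\b_\lambda$, and the rescaling/limiting remarks) do match the paper, but the core two-meridian case needs the sign analysis above.
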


We note first that it is straightforward to reduce the proof of Proposition~\ref{prop:D-surgery admissible} to the case when $g(\Sigma)=1$. We break the remainder of the proof into several lemmas.

\begin{lem}
\label{lem:moving-inequality}
 Suppose that $a,b\ge 0$ and that $f\colon \scE_0\to \scE_0$ is an $\cA_{\R}$-equivariant map of homogeneous Alexander grading $\delta\ge 0$. Then
\[
w_U(\scU^a \circ f\circ \scV^b)\ge \min(a,b).
\]
Symmetrically, if $\delta\le 0$, then
\[
w_U(\scV^a \circ f \circ \scU^b)\ge \min(a,b).
\]
\end{lem}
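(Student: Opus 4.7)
The plan is to reduce the first inequality to a lower bound on the $\scV$-coordinate of the output alone. Since $\scU^a$ uniformly shifts the $\scU$-coordinate up by $a$, every output monomial $e^{(x,y)}$ of $\scU^a\circ f\circ \scV^b$ satisfies $x\ge a$, so $\min(x,y)\ge \min(a,y)$, and it suffices to prove $y\ge b$ for every such monomial.

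To obtain $y\ge b$, fix an input $e^{(p,q)}\in \scE_0$ (so $p,q\ge 0$). Setting $m:=\min(p,q+b)\ge 0$, we have $\scV^b(e^{(p,q)})=e^{(p,q+b)}=U^m\cdot e^{(p-m,\,q+b-m)}$, where one of the two new exponents is zero. By $\cA_{\R}$-equivariance of $f$,
\[
f(e^{(p,q+b)})=U^m\cdot f\bigl(e^{(p-m,\,q+b-m)}\bigr),
\]
and the second factor lies in $A_{(q+b-p)+\delta}\cap \scE_0$ because $f$ preserves $\scE_0$ and has Alexander grading $\delta$. Any monomial of $A_s\cap \scE_0$ has $\scV$-coordinate at least $\max(0,s)$, so the $\scV$-coordinate of each term in the output is at least $\max(0,(q+b-p)+\delta)+m$. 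Multiplying by $\scU^a$ at the end leaves this bound unchanged.

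A brief case analysis using $\delta\ge 0$ finishes the argument. If $q+b\ge p$, then $m=p$ and the bound becomes $(q+b-p)+\delta+p=q+b+\delta\ge b$. If $q+b<p$, then $m=q+b$, and the bound is either $2(q+b)-p+\delta$ (when $(q+b-p)+\delta\ge 0$, where $(q+b-p)+\delta+q\ge q\ge 0$ gives $\ge b$) or simply $q+b$ (when $(q+b-p)+\delta<0$); either way the bound is $\ge b$. Hence $y\ge b$ and $w_U(\scU^a\circ f\circ \scV^b)\ge \min(a,b)$. The symmetric statement follows by the involution exchanging $\scU$ with $\scV$, which negates the Alexander grading and converts the hypothesis $\delta\le 0$ into the $\delta\ge 0$ hypothesis for the reflected map. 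The only mild subtlety is organizing the two-case analysis above; otherwise the proof is a direct algebraic check.
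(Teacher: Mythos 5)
Your proof is correct and rests on essentially the same mechanism as the paper's: factor a maximal power of $U$ out of $\scV^b e^{(p,q)}$, push it through $f$ using $\cA_\R$-equivariance, and then do a short case analysis on the exponents. The one genuine streamlining you introduce is the observation that it suffices to bound only the $\scV$-coordinate of the output from below by $b$, since composing with $\scU^a$ trivially makes the $\scU$-coordinate at least $a$; the paper instead bounds $\min$ of the two coordinates together, packaging the computation into an auxiliary piecewise-linear function $d_x$ of the Alexander grading and evaluating at its corner points. Both arguments are correct, and yours is slightly leaner for this particular statement.
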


\begin{rem} In the above, we only require $f$ to be an $\cA_{\R}$-linear map which has Alexander grading $\delta$ and sends $\scE_0$ to $\scE_0$. In particular, we are not requiring $f$ be  an extended morphism in the sense of Definition~\ref{def:extended-morphism}, or to be filtered in the sense of Definition~\ref{def:filtered-morphism}. 
\end{rem}

\begin{proof} We will focus on the claim about $\scU^a \circ f\circ \scV^b$. Given $x\ge 0$, we define  auxiliary function $d_x\colon \R\to \R^{\ge 0}$ by the formula
\[
d_x(s)=
\begin{cases}0& \text{ if } s\le 0\\
s& \text{ if } 0\le s\le x\\
x& \text{ if } x\le s.
\end{cases}
\]
On Alexander grading $s\in \R$, the map $\scV^b$ will have $U$-weight $d_{b}(-s)$ while $\scU^a$ will have $U$-weight $d_a(s)$. Hence the composition $\scU^a \circ f\circ \scV^b$ will have $U$-weight which is at least the infimum of the function
\[
d_a(s+\delta+b)+d_{b}(-s),
\]
over $s\in \R$. This is a piecewise linear function which is nonnegative, and has only finitely many points of non-differentiability. Hence, its minimum must occur at a point of non-differentiability. These points are  $-\delta-b$, $a-\delta-b$, $-b$ and $0$. Hence $w_U(\scU^a\circ f\circ \scV^b)$ is at least the minimum of the four numbers
\[
d_a(0)+d_b(b+\delta),\quad  d_a(a)+d_b(b+\delta-a), \quad d_a(\delta)+d_b(b), \quad \text{and}\quad  d_a(b+\delta)+d_b(0).
\]
Since $\delta\ge 0$, each of these is at least $\min(a,b)$.

 The claim about $w_U(\scV^a\circ f\circ \scU^b)$ is proven similarly.
\end{proof}

\begin{lem}\label{lem:surgery-admissible} The diagrams $(\bT^2,\b_0, \b_1, \b_0')$ and $(\bT^2,\b_0, \b_1, \b_0',\b_1')$ are surgery admissible. 
\end{lem}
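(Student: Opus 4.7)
The plan is to reduce surgery admissibility of these two genus one diagrams to a finite inspection of the nonnegative cone of weakly periodic domains, and then use Lemma~\ref{lem:moving-inequality} at every filtered factor from $\scE_0$ to $\scE_0$ in the composition $\Map(P,\ve{f})$. First I would arrange the small Hamiltonian translates so that $\b_0$ meets $\b_0'$ in the two-point configuration of Figure~\ref{fig:39}, with the shadow $K$ running between the two intersection points, and (in the second diagram) so that $\b_1$ meets $\b_1'$ symmetrically near $w$ and $z$. Under this choice, any nonnegative weakly periodic domain $P$ on $(\bT^2,\b_0,\b_1,\b_0')$ is, up to the global $[\bT^2]$-class, a sum of: the two small $\b_0\b_0'$-bigons adjacent to $w$ and $z$, the small "winding" bigons between $\b_0$ (resp.\ $\b_0'$) and $\b_1$, and the large winding region supplied by the isotopy used to construct $\b_1$. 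The same list, enlarged by the two $\b_1\b_1'$-bigons, exhausts the case of $(\bT^2,\b_0,\b_1,\b_0',\b_1')$.

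With this inventory in hand, the next step is to compute $w_U(\Map(P,\ve{f}))$ for each generator. For each curve $\gs_i$ labeled with $\scE_0$ the map $\Map(P,\ve{f})$ contains a sandwich of the form $\scU^{a_i}\circ f_i\circ \scV^{b_i}$ (possibly with the roles of $\scU$ and $\scV$ swapped), where $a_i$ and $b_i$ are the multiplicities of $P$ at the two preferred regions flanking the relevant intersection point of $\gs_i$ with the adjacent copy of $\b_0$, and the total $\scV$-monodromy along $\gs_i$ has been rewritten using the $\cA_{\R}$-equivariance together with the overall factor $U^{n_w(P)}=\scU^{n_w(P)}\scV^{n_w(P)}$ so that no negative $U$-powers remain outside the sandwich. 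Since each $f_i$ is filtered and Alexander-grading preserving, Lemma~\ref{lem:moving-inequality} gives $w_U(\scU^{a_i}\circ f_i\circ \scV^{b_i})\ge \min(a_i,b_i)$, and the remaining composite factors are $\cA_{\R}$-equivariant with $w_U\ge 0$, so
\[
w_U(\Map(P,\ve{f}))\ \ge\ \min_i\,\min(a_i,b_i).
\]

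The main work is to verify that the right-hand side is strictly positive for every non-zero nonnegative $P$. The plan is to observe that each generator $P$ listed above has at least one pair $(a_i,b_i)$ that is strictly positive in both entries: for the small $\b_0\b_0'$-bigons this is immediate from their shape, for the winding bigons one uses that $K$ crosses both boundary arcs, and for the global $[\bT^2]$ contribution both $n_w$ and $n_z$ multiplicities are positive so Lemma~\ref{lem:surgery-admissible-weak-adm}-style reasoning applies. A nonnegative nontrivial combination of these generators inherits the positivity of $\min(a_i,b_i)$ at any index where one summand already contributed strictly, so the bound persists under addition. The hard part here is the bookkeeping to confirm that no two generators can conspire to produce a combination $P\ge 0$ where every $\min(a_i,b_i)$ vanishes; this I expect to handle by noting that such a $P$ would have to avoid the neighborhoods of all distinguished intersection points simultaneously, which forces $P=0$ by inspection of the winding picture of Figure~\ref{fig:17}. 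Once both diagrams pass this check, surgery admissibility in the sense of Definition~\ref{def:surgery-admissible} follows for every subdiagram, and an identical argument applied after removing curves covers each of the subdiagrams.
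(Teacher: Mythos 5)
There is a genuine gap, and it sits exactly where the real content of the lemma lies. Your key inequality $w_U(\Map(P,\ve{f}))\ge \min_i\min(a_i,b_i)$ presupposes that the monodromy contributions can be regrouped into sandwiches $\scU^{a_i}\circ f_i\circ \scV^{b_i}$ with $a_i,b_i\ge 0$ by spending the single overall factor $U^{n_w(P)}$. But there is only one global $U$-power available and two competing monodromy factors $\scV^{\delta_0}$ and $\scV^{\delta_0'}$, where $\delta_0=\#\d_{\b_0}(P)\cap K$ and $\delta_0'=\#\d_{\b_0'}(P)\cap K$; when these have opposite signs no such regrouping with nonnegative exponents exists, and this mixed-sign case is precisely the hard part. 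The paper's proof handles it by writing $\Map(P,\ve{f})=U^{n_w(P)}\, g\circ\scV^{\delta_0'}\circ f\circ\scV^{\delta_0}$, using weak admissibility at $w$ and $z$ for the same-sign cases, and in the mixed case invoking the diagram-specific facts that $a+\delta_0+2\delta_0'$ and $a+\delta_0+2\delta_0'+\delta_1$ are local multiplicities of $P$ (hence nonnegative), which is what licenses applying Lemma~\ref{lem:moving-inequality} — and note it is applied to the composite $f$ whose Alexander grading is $\delta_1$, not to a "filtered factor from $\scE_0$ to $\scE_0$": in $(\bT^2,\b_0,\b_1,\b_0',\b_1')$ no two $\scE_0$-labeled curves are consecutive, so there are no such input morphisms, and filteredness plays no role in this particular lemma. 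Your proposal asserts the rewriting "so that no negative $U$-powers remain outside the sandwich" without justification, which is the step that fails in general.

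Two further problems: first, the claim that every nonnegative weakly periodic domain is, up to $[\bT^2]$, a nonnegative sum of the listed bigons and winding regions is an unproven statement about the extremal rays of a $4$-dimensional cone, and "up to the global $[\bT^2]$-class" is not harmless since adding or subtracting $[\bT^2]$ changes nonnegativity; moreover a bound of the form $\min_i\min(a_i,b_i)$ does not propagate from cone generators to sums (one vanishing sandwich in any summand kills the minimum — you would at least need the additive bound $\sum_i\min(a_i,b_i)$). Second, the degenerate case in which all the distinguished multiplicities vanish is deferred as "bookkeeping", but that is the crux of the argument; in the paper it is resolved concretely: if $a+\delta_0+2\delta_0'=0$ and $\min(-\delta_0',\delta_0)=0$ then $\delta_0=\delta_0'=a=0$, whence $\delta_1=\delta_1'=0$ and $P=0$. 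As written, your argument establishes positivity only in the situations where the naive distribution of $U^{n_w(P)}$ already works, which is the easy half of the lemma.
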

\begin{proof} We focus on $(\bT^2, \b_0,\b_1,\b_0',\b_1')$ since the same argument works for $(\bT^2, \b_0,\b_1,\b_0')$. Let $\ve{f}$ be a sequence of Alexander grading preserving inputs which are positive.
We define
\[
\delta_0=\#\d_{\b_0}(P)\cap K,\quad \delta_0'=\# \d_{\b_0'}(P)\cap K,  \]
\[
\delta_1=\#\d_{\b_1}(P)\cap K, \quad \text{and} \quad \delta_1'=\#\d_{\b_1'}(P)\cap K.
\]
Additionally, we write $a$ for the multiplicity in  one of the regions, as in Figure~\ref{fig:58}.

The map $\Map(P,\ve{f})$ can be written as
\begin{equation}
U^{n_w(P)} \cdot g\circ \scV^{\delta_0'}\circ f\circ \scV^{\delta_0}, \label{eq:output-morphism-9}
\end{equation}
where $f$ has Alexander grading $\delta_1$ and $g$ has Alexander grading $\delta_1'$. Furthermore, $w_U(f),w_U(g)\ge 0$. 

We observe firstly that $(\bT^2,\b_0,\b_1,\b_0',\b_1')$ is easily seen to be weakly admissible with respect to both $w$ and $z$.

 If $\delta_0,\delta_0'\ge 0$, then Equation~\eqref{eq:output-morphism-9} implies that $w_U(\Map(P,\ve{f}))\ge n_w(P)$. By weak admissibility, we must have $n_w(P)>0$ if $P\ge 0$ and $P\neq 0$. Symmetrically (cf. Section~\ref{sec:symmetry}) if $\delta_0,\delta_0'\le 0$, then $w_U(\Map(P,\ve{f}))\ge n_z(P)>0$. Hence, it suffices to consider the case that $\delta_0$ and $\delta_0'$ have opposite signs.   We focus on the case that $\delta_0>0$ and $\delta_0'<0$.
Using the relation $\scV^{\delta_0'}=U^{\delta_0'} \scU^{-\delta_0'}$, we may rewrite $\Map(P,\ve{f})$ as
\begin{equation}
U^{a+\delta_0+2\delta_0'} g\circ  \scU^{-\delta_0'} \circ f\circ \scV^{\delta_0}.\label{eq:output-morphism-10}
\end{equation}

We observe that $a+\delta_0+2\delta_0'$ is a multiplicity on the diagram (see Figure~\ref{fig:58}), so it is nonnegative. If $a+\delta_0+2\delta_0'>0$, we are done since $w_U(g\circ \scU^{-\delta_0'}\circ f\circ \scV^{\dt_0})\ge 0$, so that 
\[
w_U( \scF(P,\ve{f}))\ge a+\delta_0+2\delta_0'>0.
\]

 Hence, it remains to consider the case that 
\[
a+\delta_0+2\delta_0'=0.
\]
 We oberve that $a+\delta_0+2\delta_0'+\delta_1$ is also a multiplicity on the diagram. (See Figure~\ref{lem:surgery-admissible}).  We conclude therefore that $\delta_1\ge 0$. We now apply Lemma~\ref{lem:moving-inequality}, which implies that
\[
w_U(\scU^{-\delta_0'}\circ f\circ \scV^{\delta_0})\ge \min(-\delta_0',\delta_0).
\]
Therefore, we conclude in the present case that
\[
w_U(\Map(P,\ve{f}))\ge \min(-\delta_0',\delta_0).
\]
If $w_U(\Map(P,\ve{f}))=0$, then we conclude that $\delta_0=\delta_0'=0$ as well. Since $a+\delta_0+2\delta_0'=0$ by our earlier assumption, we conclude that $a=\delta_0=\delta_0'=0$. It is straightforward to see using multiplicities on the diagram that $\dt_1=\dt_1'=0$ as well, so we conclude that $P=0$. 

The case that $\delta_0<0$ and $\delta_0'>0$ is handled by a parallel argument.
\end{proof}

\begin{figure}[ht]
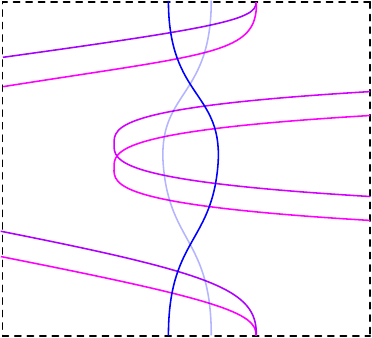
\caption{A semi-periodic domain on $(\bT^2, \b_0,\b_1,\b_0',\b_1')$ with several multiplicities labeled.}
\label{fig:58}
\end{figure}

We now complete our proof that $\cD$ and $\cD'$ are surgery admissible.

\begin{proof}[Proof of Proposition~\ref{prop:D-surgery admissible}]
Lemma~\ref{lem:surgery-admissible-weak-adm} implies $\cD$ is surgery admissible, as well as subdiagrams of $\cD'$ which contain only one of $\b_0$ and $\b_0'$. It remains to consider $\cD'=(\b_0,\b_1, \b_{\lambda}, \b_0',\b_1')$ and its subdiagrams. Observe that there are no weakly periodic domains on this diagram which have $\b_{\lambda}$ in their boundary, since $[\b_{\lambda}]$ is linearly independent in $H_1(\bT^2)$ from $[\b_0]=[\b_1]$. Therefore the main claim follows from Lemma~\ref{lem:surgery-admissible}.
\end{proof}

We now complete our proof that $\cH$ and $\cH'$ are surgery admissible:

\begin{proof}[Proof of Proposition~\ref{prop:HH'-surgery-admissible}] We note that in our present setting (of the knot surgery formula), the above Heegaard diagrams have only $w$ and $z$ as basepoints. In particular, we may pick a collection of curves $\g_1,\dots, \g_g$ on $\Sigma$ which are dual to the curves of $\as$. We wind the $\as$ curves in both directions parallel to $\g_1,\dots, \g_g$, using the standard procedure of Ozsv\'{a}th and Szab\'{o} \cite{OSDisks}*{Section~5}. By part~\eqref{item:w_U-rescale} of Lemma~\ref{lem:basic-facts-w_U}, it is sufficient to show that the above diagrams satisfy the surgery admissibility condition when applied to weakly periodic domains $P\ge 0$ with $|P|_\infty=1$.  For $m\in \N$ we write $\as_m$ for a copy of $\as$ which has been wound $m$-times in both directions parallel to $\g_1$, we consider a weakly-periodic domain $P_m$. If $\a\in \as_m$ appears in the boundary of $P_m$, then it must appear with coefficient $s\in [-1/m, 1/m]$, since there is a multiplicity $x$ on the diagram such that both $x+m\cdot s$ and $x-m\cdot s$ appear, due to the winding construction. In particular, as $m\to \infty$, we can extract a subsequence so that $P_m$ converges pointwise to a weakly periodic domain $P_\infty$ on $\cH$ and $\cH'$ which has no change along any of the $\as$ curves. It is straightforward to see that $P_\infty\neq 0$.

Let $\ve{f}$ be an input sequence of filtered, Alexander grading preserving maps which are positive and filtered. Using continuity of $w_U$ from Lemma~\ref{lem:basic-facts-w_U}, we see that $w_U(\Map( P_m,\ve{f}))\to w_U(\Map(P_\infty,\ve{f}))$. Proposition~\ref{prop:D-surgery admissible} (surgery admissibility of $\cD$ and $\cD'$) implies that $w_U(\Map(P_\infty,\ve{f} ))>0$, completing the proof. 
\end{proof}

We now prove Theorem~\ref{thm:admissibility} for the case of knots, i.e. we show that the maps in the homotopy equivalence $\bX_{\lambda}(Y,K)\iso \ve{\CF}^-(Y_{\lambda}(K))$ converge in the chiral and $U$-adic topologies:

\begin{proof}[Proof of Theorem~\ref{thm:admissibility} for knots]
Write $F$ for one of the component maps of the homotopy equivalence in the statement. The map $F$ counts holomorphic polygons with a fixed sequence of input morphisms $\ve{f}$ between the corresponding $E_\veps$ spaces. The maps $\ve{f}$ do not determine extended morphisms between the spaces $\scE_{\veps}^\infty$ because the map $\Pi$ does not satisfy the continuity hypothesis of Definition~\ref{def:extended-morphism}. Instead, we replace $\Pi$ with the map $\Pi'\colon \scE_1^\infty\to \scE_\lambda^\infty$ as in Example~\ref{example:Pi}. Let $\ve{f}'$ denote the input sequence, modified in this way and viewed as a map between the $\scE_\veps^\infty$ spaces. We observe that since each morphism in $\ve{f}$ is homogeneous with respect to the Alexander grading, the map $\Map(\psi,\ve{f})$ is a restriction of $\Map(\psi,\ve{f}')$ to certain Alexander gradings, and therefore
\[
w_U(\Map(\psi,\ve{f}))\ge w_U(\Map(\psi,\ve{f}')).
\]
Lemma~\ref{lem:f-admissible} therefore implies that there are only finitely many $\psi$ such that $w_U(\Map(\psi,\ve{f}))<N$. 

 We may therefore decompose $F$ as an infinite sum
\begin{equation}
F=\sum_{n=0}^\infty U^n F_n \label{eq:infinite-sums}
\end{equation}
where $F_n$ is $U^{-n}$ times the map which counts the (finitely many) holomorphic polygons with $w_U(\Map(\psi,\ve{f}))=n$. The $\bF[U]$-module morphism factor of an output from the representatives of a single polygon class $\psi$ is continuous since it is the composition of continuous maps. Hence, each $F_n$ is continuous. 

We recall the general and easily verifiable fact about linear topological spaces. If $X$ and $Y$ are linear topological spaces and $\{f_i\colon X\to Y\}_{i\in \N}$ is a sequence of continuous maps such that $f_i\to 0$ uniformly, then the infinite sum $\sum_{i=1}^\infty f_i$ converges uniformly to a continuous map between the completions of $X$ and $Y$.

In particular, an infinite sums as in Equation~\eqref{eq:infinite-sums} converges in both the chiral and $U$-adic topologies to a continuous map.
\end{proof}

\subsection{Weak admissibility}

In this section we describe how the simpler notion of weak admissibility may be used for many of the maps appearing in the surgery formula and its proof. 

We recall that a Heegaard diagram $(\Sigma,\gs_1,\dots, \gs_n,\ps)$ is called \emph{weakly admissible at $\ve{p}$} if each periodic domain $P$ with $n_{\ve{p}}(P)=0$ is either zero or has both positive and negative multiplicities. 

If $F\colon M\to N$ is a map between free $\bF[U_1,\dots, U_\ell]$-modules, we define $w_U(F)$ as follows. Given $n\in \N$, let $I_n\subset \bF[U_1,\dots, U_\ell]$ denote the ideal $(U_1,\dots, U_n)^n$. We define $w_U(F)$ to be the supremum over $n$ such that $F\in \Hom(M, I_n\cdot N)$.

For many of the maps involving the link surgery formula, it is sufficient to consider Heegaard multi-diagrams $(\Sigma,\gs_1,\dots, \gs_n, \ws, \zs)$ which are weakly admissible at each complete collection $\ve{p}\subset \ws\cup \zs$. For example, such diagrams are sufficient to construct the following maps:
\begin{enumerate}
\item The differential on the link surgery complex $\bX_{\Lambda}(Y,L)$;
\item The chain maps $\ve{\CF}^-(Y_{\Lambda}(L))\to \bX_{\Lambda}(Y,L)$ and $\bX_{\Lambda}(Y,L)\to \ve{\CF}^-(Y_{\Lambda}(L))$;
\item The chain homotopy $\ve{\CF}^-(Y_{\Lambda}(L))\to \ve{\CF}^-(Y_{\Lambda}(L))$ in the proof of the link surgery formula.
\item The $A_\infty$-algebra $\End_{\Fil}(\b_0^{E_0}\oplus \b_1^{E_1})$. 
\end{enumerate}
The reason that weak admissibility is sufficient in these cases because if $\psi$ is a class of polygons and $\ve{f}$ is an input sequence for $\psi$ (as counted by one of the above maps), then
\[
w_U(\scF(\psi, \ve{f}))\ge \min(n_{w_1}(\psi),n_{z_1}(\psi))+\cdots+ \min(n_{z_\ell}(\psi), n_{z_\ell}(\psi)). 
\]
Weak admissibility at all complete collections $\ve{p}\subset \ws\cup \zs$ implies that there are only finitely many classes $\psi$ with $\min(n_{w_1}(\psi),n_{z_1}(\psi))+\cdots+ \min(n_{z_\ell}(\psi), n_{z_\ell}(\psi))$ below a given number, implying that the maps above are defined by convergent series (in the uniform topology) of continuous maps.

Weak admissibility does not appear sufficient for defining the chain homotopy $\bX_{\Lambda}(Y,L)\to \bX_{\Lambda}(Y,L)$, appearing in the proof of the surgery formula. 

\subsection{Links and extra basepoints}
\label{sec:links}

In this section, we extend the ideas of the previous section to handle links and Heegaard diagrams with multiple basepoints. We will focus the claims necessary to construct the homotopy equivalence
\[
\cX(Y)^{\frL}\boxtimes {}_{\frK} \frD_0\simeq \cX(Y')^{\frL'}.
\]
By the discussion in the previous section, the only subtlety is in defining the chain homotopy from $\cX(Y)^{\frL}\boxtimes {}_{\frK} \frD_0$ to itself, since weak admissibility at all complete collections $\ve{p}\subset \ws\cup \zs$ suffices for all of the other maps.

We therefore consider a diagram of attaching curves $\scB'$ which is indexed by $\bE_{\ell-1}\times \{0,1,\lambda,0',1'\}$. The chain homotopy from $\cX(Y)^{\frL}\boxtimes {}_{\frK} \frD_0$ to itself which appears in the equivalence is obtained by counting holomorphic polygons on a subdiagrams of 
\[
\scH':=(\Sigma,\as,\scB').
\] 
In the above, we assume that we have $\ell=|L|$ special genus 1 regions where the curves $\scB'$ are small translates of the curves $\b_0$, $\b_1$ and $\b_\lambda$ shown in Figure~\ref{fig:torus_intro}.

Definition~\ref{def:surgery-admissible}, surgery admissibility, generalizes to our present setting with the understanding that we only consider morphism sequences $\ve{f}$ corresponding to componentwise increasing sequences in $\bE_{\ell-1}\times \{0,1,\lambda,0',1'\}$. It suffices to show that $(\Sigma,\as,\scB')$ is surgery admissible. 

Note that Lemma~\ref{prop:D-surgery admissible} adapts to show that $\scB'$ itself may be constructed to be surgery admissible. It remains to prove the following:

\begin{lem} The curves $\as$ and $\scB'$ may be chosen so that $(\Sigma,\as,\scB')$ is surgery admissible. 
\end{lem}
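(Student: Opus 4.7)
The plan is to adapt the winding argument from Proposition~\ref{prop:HH'-surgery-admissible} to the multi-component setting, reducing surgery admissibility of $(\Sigma,\as,\scB')$ to the (already addressed) surgery admissibility of $\scB'$ by itself via a rescaling/limiting argument.

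First, I would specify the winding data. Since $(\Sigma,\as,\bs_0,\ws,\zs)$ is a meridianal Heegaard link diagram, each component of $\Sigma\setminus \as$ is planar and contains exactly one point of $\ws\cup\zs$ from each of two basepoints of a single link component. Choose for each $\a\in\as$ a dual curve $\g_\a\subset \Sigma$ intersecting $\a$ once and disjoint from all the other alpha curves and from the knot shadows, chosen to be supported away from the special genus $1$ regions where the beta curves of $\scB'$ are wound. As in \cite{OSDisks}*{Section~5}, for $m\in\mathbb{N}$ let $\as_m$ be obtained from $\as$ by winding each $\a$ in both directions $m$ times along $\g_\a$.

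Next, suppose $\ve{f}$ is an input sequence of Alexander grading preserving, positive, filtered extended morphisms for a subdiagram of $(\Sigma,\as_m,\scB')$. By Lemma~\ref{lem:basic-facts-w_U}~\eqref{item:w_U-rescale}, we may restrict to weakly periodic domains $P_m$ with $|P_m|_\infty=1$. Because of the two-directional winding, every alpha curve $\a$ appearing in $\partial P_m$ must do so with coefficient in $[-1/m,1/m]$: there is a region on either side of $\g_\a$ whose multiplicities differ by $m$ times this coefficient, yet both are bounded by $1$ in absolute value. Thus after extracting a subsequence we may assume $P_m\to P_\infty$ pointwise, where $P_\infty$ is a weakly periodic domain on $(\Sigma,\scB')$ with no alpha boundary and $|P_\infty|_\infty=1$; in particular $P_\infty\neq 0$.

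Finally, I would invoke continuity of $w_U$. By parts~\eqref{item:continuity-D}--\eqref{item:continuity-w_U} of Lemma~\ref{lem:basic-facts-w_U}, $w_U(\scF(P_m,\ve{f}))\to w_U(\scF(P_\infty,\ve{f}))$, so it suffices to show $w_U(\scF(P_\infty,\ve{f}))>0$. But $P_\infty$ is a nonzero nonnegative weakly periodic domain on $\scB'$, and the generalization of Proposition~\ref{prop:D-surgery admissible} to diagrams indexed by $\bE_{\ell-1}\times\{0,1,\lambda,0',1'\}$ asserts exactly that $\scB'$ is surgery admissible; applied to $\ve{f}$ restricted to the relevant subdiagram of $\scB'$, this yields $w_U(\scF(P_\infty,\ve{f}))>0$.

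The main obstacle is actually the reduction step that I am taking for granted: the generalization of Proposition~\ref{prop:D-surgery admissible} to many copies of $\b_0,\b_1$ across different link components. The single-component genus $1$ argument (Lemma~\ref{lem:surgery-admissible}) combined component-by-component should go through, since for each link component the beta curves used are small translates of $\b_0,\b_1,\b_\lambda$ in an independent genus~$1$ region, and the rearrangement from Section~\ref{sec:link-surgery-positivity} (using Lemma~\ref{lem:filtered-morphism-compose-Vs} to regroup $\scV_j$-factors, plus Lemma~\ref{lem:moving-inequality} to bound $w_{U_j}$ across a $\b_{0,i}/\b_{0,i}'$-pair) decouples the contributions from different components. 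The estimate $w_U(\scF(P,\ve{f}))\geq \sum_j \min(-\delta_{0,j}',\delta_{0,j})$ then forces either a positive multiplicity in some component, or $P$ to have trivial $\b_0/\b_0'$ boundary in every component, whence local multiplicity considerations (as in the end of Lemma~\ref{lem:surgery-admissible}) force $P=0$.
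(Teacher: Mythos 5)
Your overall architecture matches the paper's: rescale to $|P_m|_\infty=1$, wind the alpha curves, extract a limiting weakly periodic domain with no alpha boundary, and finish by continuity of $w_U$ together with surgery admissibility of $\scB'$ alone (which the paper likewise only asserts as an adaptation of Proposition~\ref{prop:D-surgery admissible}). However, there is a genuine gap in your first step. You choose, for \emph{every} $\a\in\as$, a dual curve $\g_\a$ meeting $\a$ once and disjoint from all other alpha curves. In a multi-pointed link diagram with $\ell\ge 2$ there are $g+\ell-1$ alpha curves and $\Sigma\setminus\as$ has $\ell$ components, so some alpha curves are adjacent to two distinct components of $\Sigma\setminus\as$; for such a curve no closed dual disjoint from the remaining alpha curves exists (a curve crossing $\a$ exactly once and no other alpha would have to travel between two different complementary components without crossing $\as$). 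Consequently, for these curves the two-directional winding argument does not bound their coefficient in $\d P_m$ by $1/m$, your limit $P_\infty$ may retain nonzero alpha boundary, and the reduction to surgery admissibility of $\scB'$ collapses.

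The paper's proof addresses exactly this point: it splits $\as$ into ``non-separating'' curves (adjacent to one complementary region), which are wound along duals as you propose, and ``separating'' curves, which are instead finger-moved along arcs so that portions of them lie in the regions adjacent to $w_i$ or $z_i$ (for $i<\ell$) and in all four regions adjacent to $\theta^+\in\b_0\cap\b_0'$ for the surgered component (Figure~\ref{fig:55}). Combined with the refined lower bound $w_U(\Map(P_m,\ve{f}))\ge \sum_{i=1}^{\ell-1}\min(n_{w_i},n_{z_i})+\min(n_{w_\ell},n_{z_\ell},N,M)$, the assumption $w_U=0$ forces the multiplicities in those regions to vanish and hence forces the separating alpha coefficients to be exactly zero, so the limit really is a nonzero nonnegative weakly periodic domain on $\scB'$ alone. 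Your sketch never produces a mechanism of this kind for the separating curves, and your closing paragraph (regrouping via Lemma~\ref{lem:filtered-morphism-compose-Vs} and Lemma~\ref{lem:moving-inequality}) concerns the admissibility of $\scB'$ itself, not the alpha boundary issue; so as written the argument does not go through for $\ell\ge 2$.
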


\begin{proof} Our method for winding $\as$ will be a variation of the standard winding procedure for attaching curves. Call a curve in $\as$ \emph{separating} if it lies in boundary of two components of $\Sigma\setminus \as$, and call it \emph{non-separating} it lies in the boundary of just one component. We first pick a collection of curves $\g_1,\dots, \g_n$ which are algebraically dual to the non-separating alpha curves. 

We index the link components so that $K_\ell$ (the final component) is the one that we are performing surgery on.

Additionally, for each component $D\subset\Sigma\setminus \as$, we pick a collection of pairwise disjoint arcs which connect the components of $\d D$ to the two basepoints in $D$. If $D$ contains $w_i$ and $z_i$ for $i\in \{1,\dots, \ell-1\}$, we pick two arcs from each separating component of $\d D$. We assume one arc connects $\d D$ to $w_i$ and the other connects $\d D$ to $z_i$. If instead $D$ contains $w_i$ and $z_i$, we pick a single arc for each separating component of $\d D$, and we assume this arc connects $\d D$ to $w_\ell$.  We assume these arcs are disjoint from the $\g_1,\dots, \g_n$ curves.

We wind the $\as$ curves in both directions parallel to the $\g_1,\dots, \g_n$, and additionally perform a finger move of $\as$ curves following the other arcs. If a region $D$ contains $w_i$ and $z_i$ for $i\in \{1,\dots, \ell-1\}$, then we assume that finger move is performed so that a small portion of the corresponding alpha curve lies in the same region as $w_i$ or $z_i$. If $D$ contains $w_\ell$ and $z_\ell$, we perform a finger move along all of the arcs to achieve the configuration as shown in Figure~\ref{fig:55}.  Compare \cite{OSLinks}*{Proposition~3.6}.

\begin{figure}[ht]
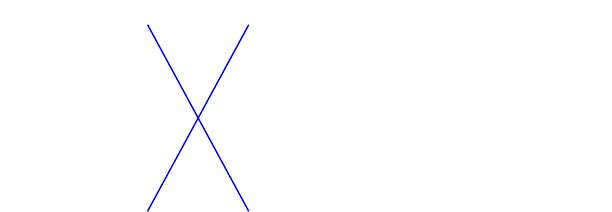
\caption{Winding the separating $\as$ curves along arcs $\lambda_i$ and $\lambda_j$.}
\label{fig:55}
\end{figure}

We claim that after the above manipulation, the diagram $(\Sigma,\as,\scB')$ is surgery admissible. We consider a weakly periodic domain $P_m$, (where $m$ denotes the number of times we have wound $\as$ along the $\g_i$). We let $\ve{f}$ be a sequence of Alexander grading preserinv extended morphisms which are positive and filtered.

 Analogously to Remark~\ref{rem:slightly-stronger-positivity}, we have 
\begin{equation}
\begin{split}
w_U(\Map(P_m,\ve{f}))\ge&\sum_{i=1}^{\ell-1} \min(n_{w_i}(P_m), n_{z_i}(P_m))\\
& +\min(n_{w_\ell}(P_m), n_{z_\ell}(P_m), N(P_m), M(P_m))
\end{split} \label{eq:U-weight-inequality}
\end{equation}
In the above, $n_{w_\ell}$, $n_{z_\ell}$, $N$ and $M$ denote the multiplicities near $w_\ell$ and $z_\ell$, as in Figure~\ref{fig:39}.  In a bit more detail, the proof of Lemma~\ref{lem:positivity-knot} adapts to show that only nonnegative powers of $U_\ell$ appear. In fact, as in Remark~\ref{rem:slightly-stronger-positivity}, the power of $U_\ell$ is in fact at least $\min(n_{w_\ell}(P_m), n_{z_\ell}(P_m), N(P_m), M(P_m))$. 
 The first part of the proof of Lemma~\ref{lem:positivity-knot} (i.e. when there is just one of $\bs_0$ or $\bs_0'$ present) shows that for $i\in \{1,\dots, \ell-1\}$, the overall $U_i$ power must be at least $\min(n_{w_i}(P_m), n_{z_i}(P_m))$. Combining these observations yields Equation~\eqref{eq:U-weight-inequality}.

We may assume without loss of generality that $|P_m|_\infty=1$. We suppose to the contrary that $w_U(\scF(P_m, \ve{f}))= 0$ for all $m$. If $\a\in \as$ is non-separating, then the coefficient $s$ of $\a$ in $\d P_m$ must be in $[-1/m, 1/m]$ since there is a number $c\in [0,1]$ such that $c+sm$ and $c-sm$ are multiplicities of $P_m$. On the other hand, if $\a\in \as$ is separating, then $\a$ has multiplicity 0 due to fact that the finger moves (shown in Figure~\ref{fig:55}) move a portion of $\a$ into each of the four regions adjacent to $\theta^+$ in Figure~\ref{fig:55}, combined with our lower bound for $w_U$ from Equation~\eqref{eq:U-weight-inequality}. 

 In particular $P_m$ has a subsequence which converges to a nonnegative weakly periodic domain $P_\infty$ on $\scB'$. It is straightforward to see that $P_\infty>0$, so $w_U(\Map(P_\infty,\ve{f}))>0$ by surgery admissibility. By continuity of $w_U$ (cf. part~\eqref{item:continuity-w_U} of Lemma~\ref{lem:basic-facts-w_U}), it follows that $w_U(\Map(P_m, \ve{f}))>0$ for large $m$. 
\end{proof}

\bibliographystyle{custom}
\def\MR#1{}
\bibliography{biblio}

\end{document}